\theoremstyle{plain}
\newtheorem*{theorem*}{Theorem}
\newtheorem*{remark*}{Remark}
\newtheorem*{example*}{Example}
\newtheorem{lemma}{Lemma}[subsection]
\newtheorem{proposition}[lemma]{Proposition}
\newtheorem{corollary}[lemma]{Corollary}
\newtheorem{theorem}[lemma]{Theorem}
\newtheorem*{conjecture*}{Conjecture}
\newtheorem{prop}[lemma]{Proposition}
\newtheorem{introtheorem}{Theorem}
\theoremstyle{definition}
\newtheorem{definition}[lemma]{Definition}
\newtheorem{cond}[lemma]{Condition}
\newtheorem{example}[lemma]{Example}
\theoremstyle{remark}
\newtheorem{remark}[lemma]{Remark}
\newtheorem{construction}[lemma]{Construction}
\newtheorem{conc}[lemma]{Conclusion}
\newtheorem{obsr}[lemma]{Observation}
\newtheorem{notation}[lemma]{Notation}
\newtheorem{notn}[lemma]{Notation}
\numberwithin{equation}{subsection}
\newcommand{\Gcd}{\mathbf{gcd}}
 \newcommand{\idealI}{\mathfrak{I}}
\newcommand{\Hom}{\operatorname{Hom}}
\newcommand{\triv}{{\mathbbm 1}}
\newcommand{\id}{\operatorname{Id}}
\renewcommand{\Im}{\operatorname{Im}}
\newcommand{\co}{\mathcal{O}}
\newcommand{\End}{\operatorname{End}}
\newcommand{\bC}{{\mathbb C}}
\newcommand{\bZ}{{\mathbb Z}}
\newcommand{\bQ}{{\mathbb Q}}
\newcommand{\bR}{{\mathbb R}}
\newcommand{\lam}{{\lambda}}
\newcommand{\fh}{{\mathfrak{h}}}
\newcommand{\cS}{{\it{S}}}
\newcommand{\T}{\tau}
\newcommand{\hhh}{\mathfrak{h}_0}
\newcommand{\sign}{\operatorname{sign}}
\newcommand{\abs}[1]{\left|{#1}\right|}
\newcommand{\brh}{{\rm{\boldsymbol h}}}
\newcommand{\InnaA}[1]{{{#1}}}
\newcommand{\InnaB}[1]{{#1}}
\newcommand{\InnaC}[1]{{#1}}
\newcommand{\InnaD}[1]{{#1}}
\newcommand{\InnaE}[1]{{#1}}
\newcommand{\InnaF}[1]{{#1}}
\begin{document}

\date{\today}
\title{On representations of rational Cherednik algebras of complex rank}
 \author{Inna Entova Aizenbud}
\address{Inna Entova Aizenbud,
Massachusetts Institute of Technology,
Department of Mathematics,
Cambridge, MA 02139 USA.}
\email{inna.entova@gmail.com}

\begin{abstract} 
We study a family of abelian categories $\InnaE{\underline{\co}}_{\text{  } c,\nu}$ depending on complex parameters $c, \nu$ which are interpolations of the category $\co$ for the rational Cherednik algebra $H_c(\nu)$ of type $A$, where $\nu$ is a positive integer. We define the notion of a Verma object in such a category (a natural analogue of the notion of Verma module).

We give some necessary conditions and some sufficient conditions for the existence of a non-trivial morphism between two such Verma objects. We also compute the character of the irreducible quotient of a Verma object for sufficiently generic values of parameters $c, \nu$, and prove that a Verma object of infinite length exists in $\InnaE{\underline{\co}}_{\text{  } c,\nu}$ only if $c \in \bQ_{<0}$. We also show that for every $c \in \bQ_{<0}$ there exists $\nu \in \bQ_{<0}$ such that there exists a Verma object of infinite length in $\InnaE{\underline{\co}}_{\text{  } c,\nu}$. 

The latter result is an example of a degeneration phenomenon which can occur in rational values of $\nu$, as was conjectured by P. Etingof.
\end{abstract}
\keywords{Deligne categories, rational Cherednik algebra}
\maketitle
\setcounter{tocdepth}{3}

\section{Introduction}
The study of representations in complex rank involves defining and studying families of abelian categories depending on a parameter $t$ which are polynomial interpolations of the categories of representations of objects such as finite groups, Lie groups, Lie algebras and more. This was done by P. Deligne in \cite{D} for finite dimensional representations of the general linear group $GL_n$, the orthogonal and symplectic groups $O_n, Sp_{2n}$ and the symmetric group $S_n$. Deligne defined Karoubian tensor categories $\InnaE{\underline{\mathrm{Rep}}}(GL_t), \InnaE{\underline{\mathrm{Rep}}}(OSp_t), \InnaE{\underline{\mathrm{Rep}}}(S_t)$, $t \in \bC$, which in points $n=t \in \bZ_+$ admit an essentially surjective functor onto the standard categories $\InnaE{\mathrm{Rep}}(GL_n), \InnaE{\mathrm{Rep}}(OSp_n), \InnaE{\mathrm{Rep}}(S_n)$. The category $\InnaE{\underline{\mathrm{Rep}}}(S_{t})$ was subsequently studied by himself and others (F. Knop in \cite{K}, V. Ostrik, J. Comes in \cite{CO}). 

The resulting categories can be shown to have superexponential growth, and are not equivalent to the category of finite-dimensional representations of any affine algebraic group or supergroup. One can look for rational non-integer values of $t$ where degeneration occurs, which would allow us to understand better how the structure of the classical category of representations is similar for all ranks $n$, and why degenerate phenomena occur at specific values of $n$.

P. Etingof, in \cite{E1}, suggested using these results to define and study the categories of representations of rational Cherednik algebras, as well as Lie superalgebras, affine Lie algebras and other ``non-compact'' representation theory objects. A possible approach, as stated in \cite{E1}, would be using the tensor categories $\InnaE{\underline{\mathrm{Rep}}}(GL_t), \InnaE{\underline{\mathrm{Rep}}}(OSp_t), \InnaE{\underline{\mathrm{Rep}}}(S_t)$ and defining the required category for a ``non-compact'' object in complex rank as a category of tuples which consist of an \InnaE{\rm{ind}}-object of the corresponding tensor category along with morphisms satisfying some relations.  

In this paper, we study the categories which are interpolations of the classical $\co$-category of lowest weight modules for the rational Cherednik algebra of type $A$, denoted by $H_c(n)$. These categories are BGG-type categories, where one can define Verma objects and study morphisms between these, as well as their irreducible quotients. 

The categories $\InnaE{\underline{\mathrm{Rep}}}(H_c(\nu))$ (the parameter $\nu$ replacing the parameter $t$ used above) have been defined in \cite{E1} as categories whose objects are \InnaE{\rm{ind}}-objects of Deligne's category $\InnaE{\underline{\mathrm{Rep}}}(S_{\nu})$ with additional structure; namely, two morphisms in $\InnaE{\underline{\mathrm{Rep}}}(S_{\nu})$, denoted by $x,y$, satisfying some relations. These morphisms represent the actions of the elements $x_1, .., x_n$ and $y_1, .., y_n$ of $H_c(n)$ respectively. We proceed to define the full subcategory $\InnaE{\underline{\co}}_{\text{  } c,\nu}$ of $\InnaE{\underline{\mathrm{Rep}}}(H_c(\nu))$, which is an interpolation of the category $\co$ of the classical Cherednik algebra $H_c(n)$.

In this category $\InnaE{\underline{\co}}_{\text{  } c,\nu}$ we have objects we will call ``Verma objects'', as they are analogues of the Verma modules in the classical category $\co$. They are parameterized by arbitrary partitions $\lambda$, and denoted by $M_{c, \nu}(\lambda)$. As in the $H_c(n)$ case, they have a natural grading as $\InnaE{\underline{\mathrm{Rep}}}(S_{\nu})$ \InnaE{\rm{ind}}-objects, with the lowest grade consisting of an indecomposable $\InnaE{\underline{\mathrm{Rep}}}(S_{\nu})$-object given by the partition $\lambda$. 

The Verma objects have about the same properties as the classical Verma modules, and one can ask for which values of parameters $c, \nu$ they are reducible, and for which values of parameters $c, \nu$ there are non-trivial morphisms between different Verma objects. It would also be interesting to compute the character of the unique irreducible quotient of $M(\lambda)$ (the character being its decomposition as a $\InnaE{\underline{\mathrm{Rep}}}(S_{\nu})$ \InnaE{\rm{ind}}-object into a sum of irreducible $\InnaE{\underline{\mathrm{Rep}}}(S_{\nu})$-objects).
 

\subsection{Summary of the results}

Recall that the category $\InnaE{\underline{\mathrm{Rep}}}(S_{\nu})$ is a \InnaB{Karoubian rigid symmetric monoidal category which is} abelian semisimple whenever $\nu \notin \bZ_+$; for any $\nu$, its indecomposable objects are parameterized by Young diagrams of arbitrary size. One can treat an indecomposable object $X_{\T}$ of $\InnaE{\underline{\mathrm{Rep}}}(S_{\nu})$, with $\T$ being a Young diagram, as corresponding to the \InnaD{``virtual'' Young diagram of size $\nu$} obtained from $\T$ by adding a very long top row (``of size $\nu-\abs{\T}$''). 

Throughout the paper we will assume that $\nu \notin \bZ_+$ unless it is explicitly stated otherwise.

\mbox{}

The indecomposable object in $\InnaE{\underline{\mathrm{Rep}}}(S_{\nu})$ corresponding to a Young diagram with one box is denoted by $\hhh$, and has dimension $\nu -1$ (it corresponds to the reflection representation of $S_n$ when $\nu=n$). One can define an algebra object $S\InnaD{\fh^*_0}$ in $\InnaE{\mathrm{ind}}-\InnaE{\underline{\mathrm{Rep}}}(S_{\nu})$ (which would correspond to the ring $\bC[x_1, ..., x_n]/\InnaE{\langle \sum_i x_i \rangle}$ when $\nu=n$).

We define the Verma objects $M_{c, \nu}(\lambda)$ of $\InnaE{\underline{\co}}_{\text{  } c,\nu}$ in Subsection \ref{ssec:Verma_obj} \InnaB{(these are sometimes denoted $M(\lambda)$ for short)}. The underlying \InnaE{\rm{ind}}-object of $\InnaE{\underline{\mathrm{Rep}}}(S_{\nu})$ is just $S\InnaD{\fh^*_0} \otimes \InnaB{X_{\lambda}}$.

\mbox{}

\InnaA{For every non-negative integer $n$, we construct a functor $\mathcal{F}_{c,n}$ from $\InnaE{\underline{\mathrm{Rep}}}(H_c(\nu=n))$ (the category $\InnaE{\underline{\mathrm{Rep}}}(H_c(\nu))$ for integer value $n \in \bZ_+$ of $\nu$) to $\InnaE{\mathrm{Rep}}(H_c(n))$ (the category of representations of $H_c(n)$). Considering the full subcategory of $\InnaE{\underline{\co}}_{\text{  } c,\nu=n}$ whose objects are $M_{c, \nu=n}(\lambda)$ such that $\abs{\lambda} \leq n/2$, we see that the restriction of $\mathcal{F}_{c,n}$ to this subcategory is fully faithful; in fact, $\mathcal{F}_{c,n}$ takes Verma object $M_{c, \nu=n}(\lambda)$ to Verma module in $\co(H_c(n))$ whose lowest weight corresponds to the Young diagram obtained from $\lambda$ by adding a top row of size $n - \abs{\lambda}$.}

\InnaD{We would like to understand the structure of the Verma objects $M_{c, \nu}(\lambda)$ of the category $\InnaE{\underline{\co}}_{\text{  } c,\nu}$. We start by introducing the following notation:}
\begin{notation}\label{b_tau_mu_notn}
\mbox{}
 \begin{itemize}
  \item Denote the set of points $(c, \nu)$ on the complex plane where $M_{c, \nu}(\T)$ is reducible by $\mathtt{B}_{\T}$. The image of $\mathtt{B}_{\T} \setminus \{c=0\}$ under the map $c \mapsto 1/c$ will be denoted by $\mathcal{B}_{\T}$.
  \item Denote the set of points $(c, \nu)$ on the complex plane where there is a non-zero morphism $M_{c, \nu}(\mu) \longrightarrow M_{c, \nu}(\T)$ by $\mathtt{B}_{\mu, \T}$. The image of $\mathtt{B}_{\mu, \T} \setminus \{c=0\}$ under the map $c \mapsto 1/c$ will be denoted by $\mathcal{B}_{\mu, \T}$.
 \end{itemize}

\end{notation}

\InnaD{The first step in studying the reducibility of the Verma objects $M_{c, \nu}(\lambda)$ would be describing the sets $\mathtt{B}_{\T}$ (resp. $\mathcal{B}_{\T}$).}

As in the classical case, we have: $\mathtt{B}_{\T} = \bigcup_{\mu} \mathtt{B}_{\mu, \T}$, the parameter $\mu$ running over all Young diagrams of arbitrary size. \InnaD{This allows us to concentrate on studying the sets $ \mathtt{B}_{\mu, \T}$ (resp. $\mathcal{B}_{\mu, \T}$).

We will show that if $\mu \neq \T$, the set $\mathtt{B}_{\mu, \T}$ lies inside a countable (disjoint) union of curves $\bigcup_{m \in \bZ_{>0}} \mathtt{L}_{\T, \mu, m}$ for which we give explicit equations (c.f. Section \ref{sec:Morph_Verma_necessary_cond}). These curves become straight lines, denoted by $\mathcal{L}_{\T, \mu, m}$, when we switch to parameters $(1/c, \nu)$ instead of $(c, \nu)$. 

For a fixed positive integer $m$, the following condition on a point $(c, \nu)$ implies $(c, \nu) \in \mathtt{L}_{\T, \mu, m}$: 
\begin{cond}
 There exists a non-zero map $M_{c,\nu}(\mu) \longrightarrow M_{c, \nu}(\T)$ with the lowest grade of $M_{c,\nu}(\mu)$ mapping to grade $m$ of $M_{c,\nu}(\T)$.
\end{cond}

}

It is easy to see that the intersection of the set $\mathcal{B}_{\mu, \T}$ with each of these lines $\mathcal{L}_{\T, \mu, m}$ is either the entire line or a finite number of points. \InnaD{As a step towards decribing the set} $\mathcal{B}_{\mu, \T}$, we give a full description of the straight lines $\mathcal{L}_{\T, \mu, m}$ lying inside $\mathcal{B}_{\mu, \T}$ whenever $\abs{\T} \neq \abs{\mu}$ in Theorem \ref{introthrm:thrm1}.

\InnaD{
\mbox{}

Recall that in the theory of classical rational Cherednik algebras, the existence of a non-zero map between Verma modules $M_{c, n}(\T)$, $M_{c, n}(\mu)$ depends on whether $\T, \mu$ have the same $e$-core, where $e$ is an integer depending on $n, c$.

Therefore, in order to understand when there \InnaE{is} a non-zero map between two given Verma objects in $\InnaE{\underline{\co}}_{\text{  } c,\nu}$, we would like to explain what does it mean for two virtual Young diagrams of size $\nu$ to have the same core.

Let $\T$ be a Young diagram. As we explained before, we have the corresponding virtual Young diagram $\tilde{\T}(\nu)$ obtained by adding a very long top row of size $\nu -\abs{\T}$ to $\T$.

Define $C_{\T}$ to be the set of integers $\{\abs{\T} -1 +j- \T\check{}_j \mid j \in \bZ_{>0}\}$ (here $\T\check{}$ is the transpose of $\T$, and so $\T\check{}_j$ is the length of the $j$-th column of $\T$). 

The set $C_{\T}$ is the set of all integers $s$ for which the virtual Young diagram $\tilde{\T}(\nu)$ has a hook of length $\nu-s$ (this is again a ``virtual'' hook, with a ``very long arm'', and a leg of length $\T\check{}_{j}$). 

When this virtual hook is removed, we obtain an ``normal'' Young diagram of size $s$, denoted by $\mathbf{core}_{(\nu-s)}(\T)$. Inserting into $\mathbf{core}_{(\nu-s)}(\T)$ another virtual hook of size $\nu-s$ with leg $\T\check{}_{j}+l$ (for any integer $l \geq -\T\check{}_{j}$), we obtain a new virtual Young diagram of size $\nu$. 

Removing the very long top row of the new virtual Young diagram, we obtain a (``normal'') Young diagram which we will call $\Gamma(\T, s, l)$. 

\mbox{}

The conditions under which there exists a non-zero map between Verma modules for $H_c(n)$ allow us to expect that there exists a non-zero morphism $M_{c, \nu}(\mu) \longrightarrow M_{c, \nu}(\T)$ only if $\mu = \Gamma(\T, s, l)$ for some $l$. In fact, we prove the following:}
%
%
%

 \begin{introtheorem}\label{introthrm:thrm1}
 For two \InnaA{distinct} Young diagrams $\mu, \T$ and an integer $m>0$, the following are equivalent:
\begin{enumerate}
 \item $\abs{\mu} \neq \abs{\T}$ and $\mathcal{L}_{\T, \mu, m} \subset \mathcal{B}_{\mu, \T}$,
\item $\mu = \Gamma(\T, s, \sign({\abs{\mu} -\abs{\T}}))$ for some $s \in C_{\T}$ (in particular, $\abs{\mu} \neq \abs{\T}$), \InnaA{ and $(\abs{\mu} -\abs{\T}) \mid m $}.
\end{enumerate}

\end{introtheorem}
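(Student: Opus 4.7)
The plan is to prove the equivalence by reducing both directions to the classical theory of morphisms between Verma modules in category $\co(H_c(n))$, using the functor $\mathcal{F}_{c,n}$ introduced in the introduction. The key structural fact I rely on is that each line $\mathcal{L}_{\T,\mu,m}$ meets $\mathcal{B}_{\mu,\T}$ in either a finite set or the entire line, so establishing infinitely many ``integer'' specialization points on the line suffices to force containment.

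For the implication $(2) \Rightarrow (1)$: Assume $\mu = \Gamma(\T,s,\sign(\abs{\mu}-\abs{\T}))$ for $s\in C_\T$ and $(\abs{\mu}-\abs{\T})\mid m$. I would invoke the classical description of homomorphisms between Verma modules for $H_c(n)$ in type $A$ (of Dunkl--Griffeth, etc.): such a homomorphism $\Delta(\mu[n]) \to \Delta(\T[n])$ of graded shift $m$ exists precisely when the contents of the boxes moved at the corner $s$ satisfy an arithmetic condition that is linear in $(1/c,n)$ --- and this condition is exactly the equation defining $\mathcal{L}_{\T,\mu,m}$ in the $(1/c,\nu)$ plane. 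Choosing infinitely many integers $n$ large enough that $\abs{\mu},\abs{\T}\leq n/2$ and such that the corresponding $(c,n)$ lies on $\mathcal{L}_{\T,\mu,m}$, the classical morphism exists in $\co(H_c(n))$. Full faithfulness of $\mathcal{F}_{c,n}$ on the subcategory of Verma objects $M_{c,n}(\lambda)$ with $\abs{\lambda}\leq n/2$ lifts each such morphism to a nonzero $M_{c,n}(\mu)\to M_{c,n}(\T)$ in $\co_{c,n}$, placing infinitely many points of $\mathcal{L}_{\T,\mu,m}$ inside $\mathcal{B}_{\mu,\T}$, and hence the whole line.

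For the implication $(1)\Rightarrow(2)$: Conversely, suppose $\mathcal{L}_{\T,\mu,m}\subset \mathcal{B}_{\mu,\T}$. Specialize to infinitely many integer values $n$ on the line with $n$ large (so that $\abs{\mu},\abs{\T}\leq n/2$ and, in particular, $\mathcal{F}_{c,n}$ is fully faithful on the relevant objects). For each such $n$, the nonzero morphism $M_{c,n}(\mu)\to M_{c,n}(\T)$ maps under $\mathcal{F}_{c,n}$ to a nonzero morphism $\Delta(\mu[n])\to \Delta(\T[n])$ between classical Verma modules of the same total size $n$. The classical classification then forces $\mu[n]$ and $\T[n]$ to differ by a box-moving operation at a corner; removing the shared top row of length $n-\max(\abs{\mu},\abs{\T})$ translates this back into the statement $\mu=\Gamma(\T,s,\sign(\abs{\mu}-\abs{\T}))$ for some $s\in C_\T$. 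The divisibility $(\abs{\mu}-\abs{\T})\mid m$ emerges because the graded shift of the classical morphism equals the number of boxes moved times the absolute grading step induced on $S\hhh^*$, so only multiples of $\abs{\abs{\mu}-\abs{\T}}$ are realizable.

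The main obstacle I anticipate is the faithful bookkeeping of the grading across the specialization: one must verify that the degree-$m$ component of $M_{c,n}(\T)$ as an object of $\mathrm{ind}\text{-}Rep(S_{\nu=n})$ is sent by $\mathcal{F}_{c,n}$ exactly onto the corresponding graded piece of $\Delta(\T[n])$, so that a Verma morphism of shift degree $m$ in $\co_{c,n}$ corresponds to a classical morphism of shift $m$ (and not some rescaling), and similarly in reverse. Once this compatibility is in place, the translation between the content-arithmetic on the classical side and the linear equations cutting out $\mathcal{L}_{\T,\mu,m}$ on the complex-rank side becomes a direct calculation.
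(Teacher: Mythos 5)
Your overall strategy — specialize to integer $\nu=n$, use the functor $\mathcal{F}_{c,n}$ together with Lemma~\ref{lem:Verma_int_dim_hom} to pass back and forth between $\co_{c,\nu=n}$ and $\co(H_c(n))$, invoke the classical classification of Verma morphisms, and then use the dichotomy ``finite set or entire line'' for $\mathcal{L}_{\T,\mu,m}\cap\mathcal{B}_{\mu,\T}$ — is indeed the paper's approach. But the execution has genuine gaps, concentrated in $(1)\Rightarrow(2)$.

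The central missing step is the arithmetic argument that forces the divisibility $(\abs{\mu}-\abs{\T})\mid m$. When you fix $\mu,\T$ and intersect $\mathcal{L}_{\T,\mu,m}$ with $\{\nu=n\}$, the resulting value $c' = \frac{an-b}{m}$ (with $a=\abs{\mu}-\abs{\T}$, $b=f(\mu)-f(\T)$) is \emph{not} an integer in general, and the Hecke-algebra classification (Theorem~\ref{Hecke_alg_rep_main_thrm} via the KZ functor, hence Corollary~\ref{int_blocks_conclusion}) applies directly only at integer $c'$. The paper's route around this uses Rouquier's equivalence (Theorem~\ref{Rouquier}) to pass from parameter $(an-b)/m$ in lowest terms to $(an-b)/d_n$ with grade shift $d_n=\Gcd(an-b,m)$, and then a separate number-theoretic lemma to show that, for infinitely many large $n$, $d_n=a$ — which is precisely what yields $a\mid b$ and $a\mid m$. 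Your explanation for the divisibility (``the graded shift ... equals the number of boxes moved times the absolute grading step'') is not the actual mechanism and would not survive scrutiny: the degree shift is computed from the $\brh$-eigenvalue difference, and the factor of $a$ only appears after the Rouquier rescaling and the gcd analysis.

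Two further soft spots: first, in $(2)\Rightarrow(1)$ you assert that the classical existence condition ``is exactly the equation defining $\mathcal{L}_{\T,\mu,m}$''; this conflates the necessary condition (from matching $\brh$-eigenvalues, Proposition~\ref{main_for_prop}) with the full set of sufficient conditions. The $\mathcal{L}$ equation alone never guarantees a morphism — that is the entire content of the theorem — so the classical theorem you cite must be the one about $(n-s)$-cores and hook insertions (via Proposition~\ref{compat_of_constr_cores}), not a rephrasing of the eigenvalue equation. Second, the case $\abs{\mu}<\abs{\T}$ needs the duality from Observation~\ref{equiv_c_and_minus_c} (tensoring with the sign, i.e.\ replacing $c$ by $-c$ and diagrams by their transposes); as written, your reduction to the classical picture does not explain how the sign of $\abs{\mu}-\abs{\T}$ is fed into $\Gamma(\T,s,\pm1)$, and a direct application of the hook-insertion classification would only cover the case where the core sits inside $\tilde\T(n)$.
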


One can easily see that if \InnaD{$\mu, \lambda, m$ satisfy the conditions of the above Theorem, then for a generic point $(\frac{1}{c}, \nu)$ of $\mathcal{L}_{\T, \mu, m}$, the Verma object $M_{c, \nu}(\T)$ has length $2$. }

\InnaD{In such a case, we give a formula for the character of the simple object $L_{c, \nu}(\T)$; moreover, we show that
there exists a long exact sequence}
\begin{align*}
 &... \rightarrow M(\Gamma(\T, s, \pm l)) \rightarrow M(\Gamma(\T, s, \pm (l-1))) \rightarrow  ...
 \rightarrow M(\Gamma(\T, s, \pm 1) = \mu) \rightarrow M(\T) \rightarrow L(\T) \rightarrow 0
\end{align*}

The sign is $\sign({\abs{\mu} -\abs{\T}})$, and this is a finite sequence if $\abs{\mu}<\abs{\T}$.

The character formula for $L_{c, \nu}(\T)$ is then obtained from Euler's formula applied to the above long exact sequence. 

\InnaD{In particular, we compute explicitly the character of the simple object $L_{c, \nu}(\emptyset)$, whenever $(1/c, \nu)$ is a generic point on a line $\mathcal{L}_{\emptyset, \mu, k} = \{ (1/c, \nu) \mid c\nu=k\}$ (here $k \in \bZ_{>0}$ is fixed).}

We conclude the paper by proving the following theorem:

\begin{introtheorem}\label{introthrm:length_of_Verma_obj}
If there exists a Verma object of infinite length in $\InnaE{\underline{\co}}_{\text{  } c,\nu}$, then $c \in \bQ_{<0}$. In fact, one can show that for any $c \in \bQ_{<0}$, there exists $\nu \in \bQ_{<0}$ such that $M_{c, \nu}(\emptyset)$ has infinite length.
\end{introtheorem}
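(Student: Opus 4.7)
The plan is to handle the two implications separately, each leaning on the machinery developed earlier in the paper. The forward direction uses the classification of lines $\mathcal{L}_{\T,\mu,m} \subset \mathcal{B}_{\mu,\T}$ from the main theorem (combined with a bit of elementary geometry) to force rationality of $c$, plus a sign analysis of the explicit equations of these lines to force $c < 0$. The reverse direction is a direct construction using the infinite long exact sequence for $L(\emptyset)$ displayed just above the theorem, applied on the line $c\nu = k$ for a suitably chosen positive integer $k$.

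For necessity, suppose $M_{c,\nu}(\T)$ has infinite length. Using the BGG-type structure of $\co_{c,\nu}$ (socle filtration and the universal property of Verma objects) we produce infinitely many distinct Young diagrams $\mu_i$ together with non-zero morphisms $M(\mu_i) \to M(\T)$; hence $(1/c,\nu) \in \mathcal{B}_{\mu_i, \T}$, so $(1/c,\nu)$ lies on some line $\mathcal{L}_{\T, \mu_i, m_i}$. By the characterization theorem, only finitely many $\mu_i$ have $|\mu_i| \leq |\T|$ (they are controlled by the finite set $C_\T$ and a sign), so we may assume infinitely many $\mu_i$ satisfy $|\mu_i| > |\T|$ and that the sizes $|\mu_i|$ are unbounded. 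The equations of the lines $\mathcal{L}_{\T, \mu, m}$ given in Section \ref{sec:Morph_Verma_necessary_cond} have rational coefficients depending only on $\T$, $\mu$, and $m$; any two distinct non-parallel such lines intersect at a rational point, so $(1/c,\nu) \in \bQ^2$ and in particular $c \in \bQ$. The sign $c < 0$ then comes from inspecting the coefficient structure of these equations for $|\mu| > |\T|$: the prototype is $\T = \emptyset$, where the line reads $c\nu = k \in \bZ_{>0}$, and the analogous sign pattern for general $\T$ shows that having infinitely many such compatibility conditions simultaneously at $(1/c, \nu)$ rules out $c > 0$.

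For the construction, fix $c \in \bQ_{<0}$, take $\T = \emptyset$, and pick $s \in C_\emptyset = \bZ_{\geq 0}$ together with $\mu_1 := \Gamma(\emptyset, s, +1)$. Choose a positive integer $k$ divisible by $|\mu_1|$ and set $\nu := k/c \in \bQ_{<0}$. Then $(1/c,\nu) \in \mathcal{L}_{\emptyset, \mu_1, k} \subset \mathcal{B}_{\mu_1, \emptyset}$ by the characterization theorem, and the long exact sequence
\begin{equation*}
\cdots \to M(\Gamma(\emptyset, s, +l)) \to \cdots \to M(\Gamma(\emptyset, s, +1)) \to M(\emptyset) \to L(\emptyset) \to 0
\end{equation*}
is infinite since $|\mu_1| > 0 = |\emptyset|$. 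The partitions $\Gamma(\emptyset, s, +l)$ have strictly increasing sizes, so they yield pairwise distinct composition factors $L(\Gamma(\emptyset, s, +l))$ of $M(\emptyset)$, proving $M_{c,\nu}(\emptyset)$ has infinite length. If the first choice of $k$ makes $(1/c,\nu)$ non-generic on $\mathcal{L}_{\emptyset, \mu_1, k}$ (so that the resolution could degenerate), replace $k$ by a larger multiple of $|\mu_1|$; the set of non-generic points on the line is countable while the admissible $k$'s are infinite, so a good choice always exists.

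The main obstacle is the sign conclusion $c < 0$ in the forward direction. Rationality of $c$ is a clean consequence of infinitely many rational-coefficient lines sharing a common point, but separating the sign requires the explicit equations for $\mathcal{L}_{\T, \mu, m}$ with $|\mu| > |\T|$ and a careful accounting of how the shift $m > 0$ interacts with the size difference $|\mu| - |\T| > 0$ as $|\mu| \to \infty$. A secondary subtlety is the step from ``$L(\mu)$ is a composition factor of $M(\T)$'' to ``$\Hom(M(\mu), M(\T)) \neq 0$,'' which must be justified in the interpolated category $\co_{c,\nu}$ by adapting the classical highest-weight-category arguments.
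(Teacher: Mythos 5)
Your forward direction has a real gap at the rationality step. From infinitely many composition factors $L(\mu_i)$ you get infinitely many incidences $(1/c,\nu)\in\mathcal{L}_{\T,\mu_i,m_i}$, but you may not assume that two of these lines are distinct: the case in which \emph{all} of them coincide is precisely the hard case, and handling it is where the paper needs the degree lower bound of Proposition \ref{prop:least_grade_mu_in_verma_obj}, namely $m\geq \sum_k \mu_k k - l(\mu)\abs{\T}-\tfrac{3}{2}\abs{\T}^2-\tfrac{\abs{\T}}{2}$. Coinciding lines force $\tfrac{\abs{\mu_i}-\abs{\T}}{m_i}$ and $\tfrac{f(\mu_i)-f(\T)}{\abs{\mu_i}-\abs{\T}}$ to be constant, and only then does the quadratic lower bound on $m_i$ contradict unboundedness of $\abs{\mu_i}$; you never invoke this bound, so rationality of $c$ is not established. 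The sign conclusion is also not a matter of ``inspecting coefficients'': for $c\in\bQ_{>0}$ the point $(1/c,\nu)$ can lie on infinitely many lines $\mathcal{L}_{\T,\mu,m}$ with $m\in\bZ_{>0}$ (take $\mu=\pi^j$, where the required $m$ grows only linearly in $j$), since the lines are merely a necessary condition; excluding $c\in\bR_{>0}$ requires the genuine quantitative comparison of that linear growth against the same quadratic degree bound (the paper's two cases $0<c<1$ and $c\geq 1$), which you flag as an obstacle but do not carry out. Finally, a composition factor $L(\mu)$ of $M(\T)$ does \emph{not} give a nonzero map $M(\mu)\to M(\T)$ (its singular copy of $X_\mu$ exists only in a quotient of $M(\T)$); what it gives, and all that is needed, is the $\brh$-eigenvalue identity $h_{c,\nu}(\mu)=h_{c,\nu}(\T)+m$ as in Proposition \ref{main_for_prop}.

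In the reverse direction the choice $\nu=k/c$ is fine, but your justification fails twice. Genericity on the line $c\nu=k$ can never be arranged: for every $k$ the single candidate point $(1/c,k/c)$ lies on the lines $\mathcal{L}_{\emptyset,\tau^{s},(s+1)r}\subset\mathcal{B}_{\tau^{s},\emptyset}$ with $r=k-cs=k+\abs{c}s\in\bZ_{>0}$ for every $s$ divisible by $den(c)$, so it is non-generic for \emph{every} admissible $k$; your countability argument is invalid because each $k$ produces a different line carrying only one candidate point, so ``countably many bad points per line'' buys nothing. Moreover, even at an honestly generic point of $c\nu=k$, the displayed long exact sequence is a resolution of $L(\emptyset)$ by Verma objects, not a composition series of $M(\emptyset)$: there $M(\emptyset)$ has length two, $0\to L(\mu_1)\to M(\emptyset)\to L(\emptyset)\to 0$, so the terms $L(\Gamma(\emptyset,s,l))$ for $l\geq 2$ are not subquotients of $M(\emptyset)$ and the sequence cannot yield infinite length. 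The mechanism you actually need is the non-genericity itself, which is the paper's argument: for each of the infinitely many $s$ with $c(\nu-s)\in\bZ_{>0}$ there is a nonzero morphism $M(\tau^{s})\to M(\emptyset)$, its image has $L(\tau^{s})$ as its unique simple quotient, and these pairwise distinct simple subquotients force $M_{c,\nu}(\emptyset)$ to have infinite length.
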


Theorem \ref{introthrm:length_of_Verma_obj} is an example of a degenerate phenomenon which can occur for rational non-integer values of $\nu$. The classical representation theory of Cherednik algebras says that for a non-negative integer $n$, all modules in $\co(H_c(n))$ have finite length (cf. \cite[Corollary 3.26]{EM}), and we see from Theorem \ref{introthrm:length_of_Verma_obj} that the same is true for Verma objects in $\InnaE{\underline{\co}}_{\text{  } c,\nu}$ for $c \not\in \bQ$. 

It would be interesting to know about other phenomena which can occur in $\InnaE{\underline{\co}}_{\text{  } c,\nu}$ only for $\nu \in \bQ \setminus \bZ_+$. Such phenomena were conjectured by P. Etingof in \cite{E1}.

\subsection{Structure of the paper}
In Section \ref{sec:classical_Cherednik_alg}, we recall basic definitions and facts about $H_c(n)$, the Cherednik algebra of type $A$. Some further facts about the blocks of the $\co$-category for $H_c(n)$ will be given in Section \ref{sec:blocks_classical_case}.

In Section \ref{sec:Del_cat} we recall basic facts about the Deligne category $\InnaE{\underline{\mathrm{Rep}}}(S_{\nu})$. We do not give a definition (it can be found in \cite{D}), instead we mention the facts about this category which we will use.

In Sections \ref{sec:Rep_H_c_nu} and \ref{sec:O_H_c_nu}, we define the category $\InnaE{\underline{\mathrm{Rep}}}(H_c(\nu))$ (interpolation of the category of representations of $H_c(n)$) and the category $\InnaE{\underline{\co}}_{\text{  } c,\nu}$ (interpolation of the $\co$-category of $H_c(n)$). We then define the Verma objects $M_{c,\nu}(\T)$ in $\InnaE{\underline{\co}}_{\text{  } c,\nu}$.

\InnaA{In Section \ref{sec:functor_int_case}, we discuss the functor from $\InnaE{\underline{\mathrm{Rep}}}(H_c(\nu=n))$ (the category $\InnaE{\underline{\mathrm{Rep}}}(H_c(\nu))$ for integer value $n \in \bZ_+$ of $\nu$) to $\InnaE{\mathrm{Rep}}(H_c(n))$ (the category of representations of $H_c(n)$).}
In Section \ref{sec:Morph_Verma_necessary_cond}, we give a necessary condition on $(c,\nu)$ which is required in order for a non-trivial map $M_{c,\nu}(\mu) \longrightarrow M_{c,\nu}(\T)$ to exist. This condition is an equation which defines the lines $\mathcal{L}_{\T, \mu, m}$.

In Section \ref{sec:constr_for_H_c_nu} we define constructions for $\InnaE{\underline{\co}}_{\text{  } c,\nu}$ which are necessary in order to describe the triplets $(\T, \mu, m)$ for which $\mathcal{L}_{\T, \mu, m} \subset \mathcal{B}_{\mu, \T}$ (the latter is done in Section \ref{sec:blocks_O_H_c_nu}; we give a full description in the case $\abs{\T} \neq \abs{\mu}$, and a partial description when $\abs{\T} = \abs{\mu}$).

In Section \ref{sec:char_simple_obj} we define the formal character of an object of $\InnaE{\underline{\co}}_{\text{  } c,\nu}$, give a formula for the character of a Verma object $M_{c, \nu}(\T)$, and compute the character of a simple object $L_{c, \nu}(\T)$ in the simplest cases. We also give a positive character formula for the Verma objects and look at examples when such a formula can be derived for some simple objects $L_{c, \nu}(\T)$.


In Section \ref{sec:length_of_Verma_obj} we prove Theorem \ref{introthrm:length_of_Verma_obj} mentioned above, and in particular give a lower bound on the grade of a given Verma object $M_{c, \nu}(\T)$ in which a simple $\InnaE{\underline{\mathrm{Rep}}}(S_{\nu})$-object $X_{\mu}$ can lie.  

\subsection{Acknowledgments}
I would like to thank my advisor, Pavel Etingof, for suggesting the problem and guiding me throughout the project. I would also like to thank Alexander Kleshchev for an explanation on the blocks of the category $\co(H_c(n))$, and Avraham Aizenbud for the helpful discussions. 

I am very grateful to Oleksandr Tsymbaliuk for his comments on the paper, \InnaE{and to the anonymous referee for the helpful comments and suggestions.}

\section{Notation and conventions}

The base field throughout this paper will be $\bC$.

\begin{notation}\label{rational_number_notation}
For a rational number $q \in \bQ$, we write $q = \frac{num(q)}{den(q)}$, where $num(q), den(q) \in \bZ, den(q)>0, \Gcd(num(q), den(q)) =1$.

\end{notation}

\subsection{Symmetric group and Young diagrams}
\begin{notation}
\mbox{}

 \begin{itemize}

 \item $S_n$ will denote the symmetric group ($n \in \bZ_+$). We will denote by $\cS$ the set of reflections $s_{ij}$ in $S_n$. 

 \item The notation $\lambda$ will stand for a partition (weakly decreasing sequence of non-negative integers), a Young diagram $\lambda$ (considered in the English notation), and the corresponding irreducible representation of $S_{\abs{\lambda}}$. Here $\abs{\lambda}$ is the sum of entries of the partition, or, equivalently, the number of cells in the Young diagram $\lambda$. \InnaB{We will sometimes write $\lambda \vdash n$ to say that $\lambda$ is a partition of $n$.} \InnaE{We will denote the set of all Young diagrams of arbitrary size by $\mathcal{P}$}.

\item When referring to a cell in a Young diagram $\lambda$, $(i,j)$ will be the cell in row $i$ and column $j$, with $i,j \geq 1$.

\item The length of the partition $\lambda$, i.e. the number of rows of Young diagram $\lambda$, will be denoted by $l(\lambda)$. 

\item The $i$-th entry of a partition $\lambda$, as well as the length of the $i$-th row of the corresponding Young diagram, will be denoted by $\lambda_i$ (if $i>l(\lambda)$, then $\lambda_i :=0$). The transpose of the Young diagram $\lambda$ will be denoted by $\lambda\check{}$ (so the length of the $i$-th column of $\lambda$ is $\lambda\check{}_i$).

\begin{example}
Consider the Young diagram $\lambda$ corresponding to the partition $(6,5,4,1)$. The length of $\lambda$ is $4$, the size is $16$, and the transpose of $\lambda$ is the Young diagram corresponding to partition $(4,3,3,3,2,1)$:
$$\yng(6,5,4,1) \mapsto \yng(4,3,3,3,2,1) $$
\end{example}

\item $\fh$ (in context of representations of $S_n$) will denote the permutation representation of $S_n$, i.e. the $n$-dimensional representation $\bC^n$ with $S_n$ acting by $g.e_j =e_{g(j)}$ on the standard basis $e_1, .., e_n$ of $\bC^n$.

\item $\hhh$ (in context of representations of $S_n$) will denote the reflection representation of the symmetric group $S_n$: this is the $(n-1)$-dimensional irreducible representation of $S_n$ corresponding to the partition $(n-1, 1)$ of $n$. This representation is the subrepresentation of $\fh$ given by the $S_n$-invariant subspace of all vectors whose sum of coordinates is zero. 

We will use the same notation for the corresponding object in $\InnaE{\underline{\mathrm{Rep}}}(S_{\nu})$.

\item Let $s \in \bZ_{\geq -1}, k \in \bZ_{\geq 0}$.
We will denote by $\tau^s$ a Young diagram consisting of a row with $s+1$ cells, and by $\pi^k$ a Young diagram consisting of a column with $k$ cells. 
\end{itemize}
\end{notation}

\subsection{Monoidal categories}
In the rigid symmetric monoidal category $\InnaE{\underline{\mathrm{Rep}}}(S_{\nu})$, we will use the following notation:
\begin{itemize}
 \item $\triv $ denotes the unit object.
\item For any $U \in \InnaE{\underline{\mathrm{Rep}}}(S_{\nu})$, $ev_{U}: U \otimes U^* \longrightarrow \triv$ is the evaluation morphism, and $coev_{U}: \triv \longrightarrow U^* \otimes U  $ is the coevaluation morphism.
\item For any $U, U' \in \InnaE{\underline{\mathrm{Rep}}}(S_{\nu})$, $\InnaE{c}_{U, U'} : U \otimes U' \longrightarrow U' \otimes U$ is the symmetry isomorphism in $\InnaE{\underline{\mathrm{Rep}}}(S_{\nu})$.
\end{itemize}
 
\begin{notation}\label{notn:act_tens_prod}
 \InnaE{Consider a tensor product $V_1 \otimes V_2 \otimes ... \otimes V_n$ (of objects in a symmetric monoidal category) and a sequence $1\leq i_1 < ...< i_k \leq n$. Denote by $V_{i_1, i_2, ..., i_k}$ the tensor product $V_{i_1} \otimes V_{i_2} \otimes ... \otimes  V_{i_k}$. Let $\Omega \in \End( V_{i_1, i_2, ..., i_k})$.
 
 We write $$\Omega^{i_1,..., i_k} \in \End(V_1 \otimes V_2 \otimes ... \otimes V_n)$$ to denote the endomorphism
 which acts as $\Omega$ on the tensor product of factors $i_1, ..., i_k$, and as $\id$ on the rest. Namely, $\Omega^{i_1,..., i_k}$ corresponds to the endomorphism $ \Omega \otimes \id_{V_{\{1, ...,n\} \setminus \{i_1, i_2, ..., i_k\}}} \in \End(V_{i_1, i_2, ..., i_k} \otimes V_{\{1, ...,n\} \setminus \{i_1, i_2, ..., i_k\}})$ via the symmetry isomorphism $$V_1 \otimes V_2 \otimes ... \otimes V_n \cong V_{i_1, i_2, ..., i_k} \otimes V_{\{1, ...,n\} \setminus \{i_1, i_2, ..., i_k\}}$$
 
 Here is an example: given vector spaces $U_1, U_2, ..., U_5$ and an operator $A \in \End(U_2 \otimes U_4 \otimes U_5)$, the endomorphism $A^{2, 4, 5} \in \End( U_1 \otimes U_2 \otimes ... \otimes  U_5)$ would be the composition 
 $$U_1 \otimes ... \otimes  U_5 \stackrel{\sim}{\rightarrow}  U_2 \otimes U_4\otimes  U_5 \otimes U_1 \otimes U_3 \stackrel{A \otimes \id_{U_1 \otimes U_3}}{\longrightarrow} U_2 \otimes U_4\otimes  U_5 \otimes U_1 \otimes U_3 \stackrel{\sim}{\rightarrow} U_1 \otimes ... \otimes  U_5$$}
 
\end{notation}

\subsection{Blocks in Karoubian categories}

The following definition of a block in a \InnaB{Karoubian} category was given in \cite{CO} \InnaE{(we will mostly use this notion in the context of abelian categories)}:

\begin{definition}[Block of a \InnaB{Karoubian} category]

Let $\mathcal{C}$ be a \InnaB{Karoubian} category. Consider the weakest equivalence relation on the set of isomorphism classes of indecomposable objects such that any two indecomposable objects with a non-zero morphism between them are equivalent.

An equivalence class in this relation will be called a block.

We will use the same term to refer to a full subcategory in $\mathcal{C}$ whose objects are direct sums of indecomposable objects from a single block (in the above sense).

\end{definition}

\section{Classical Cherednik algebra of type A}\label{sec:classical_Cherednik_alg}

This section follows \cite[Chapter 2]{EM}, \cite[Section 2]{ES}.

\subsection{Rational Cherednik algebra of type A}

Let $c \in \bC$ (can be considered as a formal parameter).

\begin{definition}\label{int_Cherednik_alg_def}
 The rational Cherednik algebra of type A of rank $n$ with parameter $c$, denoted by $\overline{H}_c(n)$, is the 
quotient of the algebra $\bC S_n \ltimes T(\fh \oplus \fh^*)$ by the ideal generated by the following relations:

\begin{enumerate}

\item $[x, x']=0, [y,y']=0$ for any $x, x' \in \fh^*$, $y, y' \in \fh$,
\item $[y, x] = (y, x) -\InnaA{ c\sum_{s  \in \cS}((1-s)y, x)s}$ , where  $x\in \fh^*, y \in \fh$, $(,)$ is the natural pairing between $\fh, \fh^*$.

\end{enumerate}

\end{definition}

\begin{remark}
 \InnaD{The last relation can also be rewritten as $[y, x] = (y, x) -\InnaA{ \frac{c}{2}\sum_{s  \in \cS}((1-s)y, (1-s)x)s}$. Both forms of the relation can be interpolated to complex rank; we choose the first one since it is more convenient to interpolate it to complex rank.}
\end{remark}

If we choose dual bases $x_1, ..., x_n$ of $\fh^*$, $y_1, ..., y_n$ of $\fh$ such that $\forall i, j, (y_j, x_i) = \delta_{ij}$, and $s_{ij}(y_i) = y_j, s_{ij}(x_i) = x_j$, then the last relation from Definition \ref{int_Cherednik_alg_def} can be rewritten as 

$$ [y_j, x_i] = cs_{ij} \text{ for any } i \neq j, [y_i, x_i] = 1 - c\sum_{j \neq i} s_{ij}$$

Similarly to the universal enveloping algebra of a Lie algebra, we have:

\begin{proposition}[PBW-type theorem] 
 $\overline{H}_c(n) \cong S\fh^* \otimes \bC S_n \otimes S\fh$ as vector spaces.
\end{proposition}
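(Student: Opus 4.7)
The plan is to prove this standard PBW-type result by the usual two-step strategy, following the Dunkl-representation approach of Etingof--Ginzburg. First I would establish that the multiplication map
\[
S\fh^* \otimes \bC S_n \otimes S\fh \longrightarrow \overline{H}_c(n)
\]
is surjective. This is a straightforward rewriting argument using the defining relations: one can move each $y_j$ past each $x_i$ at the cost of a term $[y_j, x_i] \in \bC S_n$, and one can move each $s \in S_n$ past the $x$'s and $y$'s by permuting indices. Since $[y_j, x_i]$ has strictly lower total degree in $\fh \oplus \fh^*$ than $y_j x_i$, an induction on total degree shows that every element of $\overline{H}_c(n)$ is a linear combination of ordered monomials of the form $x^a g y^b$, where $a, b \in \bZ_{\geq 0}^n$ and $g \in S_n$.

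Second, and this is the substantive part, I would prove linear independence of these ordered monomials by exhibiting a faithful representation of $\overline{H}_c(n)$. The natural choice is the polynomial representation on $\bC[\fh] \cong \bC[x_1, \ldots, x_n]$, where $\fh^*$ acts by multiplication, $S_n$ acts by permuting the variables, and each $y_i$ acts by the Dunkl operator
\[
D_i \;=\; \frac{\partial}{\partial x_i} \;-\; c \sum_{j \neq i} \frac{1 - s_{ij}}{x_i - x_j}.
\]
I would check directly that these assignments satisfy the relations of Definition \ref{int_Cherednik_alg_def}, namely $[D_i, D_j] = 0$ and $[D_j, x_i] = \delta_{ij} - c \sum_{k \neq i} s_{ik}$ (for $i = j$), or $c s_{ij}$ (for $i \neq j$), thereby producing a genuine $\overline{H}_c(n)$-module structure.

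Given the representation, linear independence follows from a leading-symbol argument: if a linear combination $\sum_{a, g, b} \alpha_{a, g, b} \, x^a g y^b$ vanished in $\overline{H}_c(n)$, then the corresponding combination of operators $x^a g D^b$ would annihilate $\bC[\fh]$. Ordering by total $y$-degree, the top-degree part of $D_i$ is $\partial_i$ (the correction terms being of lower differential order), so the leading symbols are of the form $x^a g \partial^b$, which are manifestly linearly independent as differential operators on $\bC[\fh]$. Descending induction on degree then forces all $\alpha_{a, g, b} = 0$.

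The main obstacle is the verification that $[D_i, D_j] = 0$. This is a classical but genuinely nontrivial computation: expanding the commutator produces cross-terms of the form $\frac{1 - s_{ik}}{x_i - x_k} \cdot \frac{1 - s_{jl}}{x_j - x_l}$, and one must carefully track how each reflection $s_{ij}$ acts on the denominators $x_k - x_l$ to see that all terms cancel. Conceptually, one may regard this as the single nontrivial compatibility condition that must be checked to deduce PBW from the general Etingof--Ginzburg framework on symplectic reflection algebras; here I would simply carry it out by hand, since the paper assumes only type $A$.
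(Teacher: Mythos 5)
The paper states this proposition without proof, citing \cite{EM} and \cite{ES} at the top of Section 3, so there is no in-paper argument to compare against; I therefore evaluate your proof on its own merits. Your approach---surjectivity by a filtration/rewriting argument plus faithfulness of the Dunkl-operator representation---is exactly the standard elementary proof for the rational Cherednik algebra of type $A$, and it is the route taken in the cited lecture notes. The overall structure is correct, and you correctly flag that the real content is the verification that $[D_i, D_j] = 0$. Two small gaps worth closing: first, you should note that the Dunkl operators actually preserve $\bC[x_1,\dots,x_n]$ (this is because $f - s_{ij}f$ vanishes on the hyperplane $x_i = x_j$, hence is divisible by $x_i - x_j$), since the formula a priori lands in the localization; second, one must also check $S_n$-equivariance, $g D_i g^{-1} = D_{g(i)}$, which is easy but needed to get an $\overline{H}_c(n)$-module rather than just compatible $x$- and $y$-actions. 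On the leading-symbol step, your phrasing slides between ``$y$-degree'' and ``differential order''; the precise statement is that one works in the algebra of reflection--differential operators with rational-function coefficients and filters by \emph{differential order}, under which the divided-difference terms of $D_i$ have order $0$ and the symbol of $D_i$ is $\partial_i$, so that the leading term of $x^a g D^b$ is $x^a g \partial^b$, and these are independent in $\bC(\fh)\rtimes S_n \otimes \bC[\partial]$. With these clarifications the proof is complete. For context, the other standard route (used by Etingof--Ginzburg in the broader setting of symplectic reflection algebras) deduces PBW from a general Koszul-deformation flatness criterion, which avoids the Dunkl computation but requires more abstract machinery; your choice is more elementary and well suited to the type $A$ case treated here.
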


Continuing the analogy between the universal enveloping algebra of a Lie algebra and a Cherednik algebra, we have a Cartan-type element:

\begin{definition}\label{int_brh_def}
 The element $\overline{\brh} \in \overline{H}_c(n)$ is defined as 
$$\overline{\brh} := \sum x_i y_i + \dim(\fh)/2 -c\sum _{s  \in \cS} s = \sum x_i y_i + n/2 -c\sum _{s  \in \cS} s$$
\end{definition}

This element satisfies: $g \overline{\brh} g^{-1} =\overline{\brh}, [\overline{\brh}, x_i] = x_i, [\overline{\brh}, y_i] = -y_i \text{ for any } g \in S_n, i,j \in \{1,.., n\}$, and can also be written as $\overline{\brh}= \frac{1}{2} \sum_i (x_i y_i + y_i x_i)$.

We will use the ``reduced'' Cherednik algebra $H_c(n)$, which corresponds to the reflection representation $\hhh = span_{\bC}\{y_i -y_j \mid i, j \in \{1,...,n\}, i\neq j\}$ rather than the permutation representation $\fh$ of $S_n$. It is defined in the same way as $\overline{H}_c(n)$, but with $\hhh$ appearing instead of $\fh$ (see \cite{EM}, or \cite[Section 2]{GORS}). In fact, we have: $\overline{H}_c(n) = H_c(n) \otimes \mathbb{A}_1$, $\mathbb{A}_1$ being the Weyl algebra. This follows from the fact that the subalgebra of $\overline{H}_c(n)$ generated by $\sum y_i, \sum x_i$ is isomorphic to $\mathbb{A}_1$, and commutes with the subalgebra $H_c(n)$ of $\overline{H}_c(n)$.

For $H_c(n)$, $\overline{\brh}$ is not an element of $H_c(n)$, but the element $$\brh :=\overline{\brh} - \frac{(\sum x_i)(\sum y_i) + (\sum y_i)(\sum x_i)}{2n} $$ of $H_c(n)$ plays the analogous role.

\begin{remark}
\InnaE{Note that the relation between the algebras $H_c(n), \overline{H}_c(n)$ is similar to the relation between the Lie algebras $\mathfrak{sl}_n, \mathfrak{gl}_n$.} 
\end{remark}


\subsection{\texorpdfstring{Category $\co(H_c(n))$}{Category O for the rational Cherednik algebra}}

The category $\co(H_c(n))$ is defined as the category of all modules over $H_c(n)$ which are finitely generated over $S\InnaD{\fh^*_0}$, and on which $\hhh$ acts locally nilpotently.

\subsection{Verma modules}\label{ssec:class_Verma_modules}
Let $\T$ be an irreducible representation of $S_n$. 
Then one can define $M_{c,n}(\T) : = H_c(n) \otimes_{\bC  S_n \ltimes S\hhh} \T$, where \InnaE{the augmentation ideal of} $S\hhh$ acts on $\T$ by zero. This is the Verma module corresponding to $\T$. It is a lowest weight module whose underlying space is isomorphic to  $S\InnaD{\fh^*_0} \otimes \T$, where $S_n$ acts on each subspace $S^m \InnaD{\fh^*_0} \otimes \T$, $\InnaD{\fh^*_0}$ acts by multiplication on $S\InnaD{\fh^*_0}$, and $\hhh$ acts according to the commutator relation given in the definition (if $c=0$, then $\hhh$ acts by differentiation on $S\InnaD{\fh^*_0}$).
The element $\brh$ acts locally finitely on modules from $\co(H_c(n))$, with finite dimensional generalized eigenspaces.
In particular, $\brh$ acts semisimply on $M_{c,n}(\T)$, with lowest eigenvalue $$h_{c,n}(\T) = \dim(\hhh)/2 - c\sum _{s  \in \cS}s\rvert_{\T} $$
(the rest of the eigenvalues on \InnaB{$ S^m \InnaD{\fh^*_0} \otimes \T$}, $m > 0$ being $h_{c,n}(\T) + m$).

\subsubsection{Sum of transpositions element}\label{ssec:sum_of_transp_elem}
\InnaA{Recall that $\bC S_n$ has a central element $\Omega := \sum _{s  \in \cS} s \in \bC[S_n]$.} This element acts on an irreducible representation parameterized by the Young diagram $\T$ by the scalar $ct(\T) = \sum_{(i,j) \in \T} (j-i)$, called ``content of $\T$'' (here $(i,j)$ denotes cell in row $i$, column $j$ of the Young diagram $\T$).

So $$h_{c,n}(\T) = \dim(\hhh)/2 - c\sum _{s  \in \cS}s\rvert_{\T} = \dim(\hhh)/2 - c \Omega\rvert_{\T} = \frac{n-1}{2} - c \cdot ct(\T) $$


\section{\texorpdfstring{Category $\InnaE{\underline{\mathrm{Rep}}}(S_\nu)$}{Deligne's category}}\label{sec:Del_cat}

\subsection{General description}
We recall some basic facts about Deligne's category $\InnaE{\underline{\mathrm{Rep}}}(S_{\nu})$ (see \cite{D}, \cite{E1}, and \cite[Section 2]{M}).

This is a family of Karoubian \InnaE{rigid symmetric monoidal} categories over $\bC$ defined for any $\nu \in \bC$, and ``flat with respect to parameter $\nu$'' (one can view $\nu$ as a formal parameter, with $Obj(\InnaE{\underline{\mathrm{Rep}}}(S_{\nu}))$ not depending on $\nu$, and the $\Hom$ spaces being modules over $\bC((\nu))$).

\InnaA{For any $\nu \in \bC$, the category $\InnaE{\underline{\mathrm{Rep}}}(S_{\nu})$ is generated, as a Karoubian tensor category, by one object, denoted $\fh$. This object is the analogue of the permutation representation of $S_n$, and any object in $\InnaE{\underline{\mathrm{Rep}}}(S_{\nu})$ is a direct summand in a direct sum of tensor powers of $\fh$. 

\begin{remark}
 \InnaE{Since $\InnaE{\underline{\mathrm{Rep}}}(S_{\nu})$ is a rigid symmetric monoidal category, the notion of dimension of an object is defined. By definition, the dimension of the object $\fh$ is $\nu$.}
\end{remark}

For $\nu \notin \bZ_{+}$, $\InnaE{\underline{\mathrm{Rep}}}(S_{\nu})$ is a semisimple abelian category. 

For $\nu \in \bZ_{+}$, the category $\InnaE{\underline{\mathrm{Rep}}}(S_{\nu})$ has a tensor ideal $\idealI_{\nu}$, called the ideal of negligible morphisms (this is the ideal of morphisms $f: X \longrightarrow Y$ such that $tr(fu)=0$ for any morphism $u: Y \longrightarrow X$). In that case, the classical category $\InnaE{\mathrm{Rep}}(S_n)$ of finite-dimensional representations of the symmetric group for $n:=\nu$ is equivalent to $\InnaE{\underline{\mathrm{Rep}}}(S_{\nu})/\idealI_{\nu}$ (equivalent as Karoubian rigid symmetric monoidal categories). Note that $\fh$ is sent to the permutation representation of $S_n$ under this equivalence.

\begin{remark}
 Although $\InnaE{\underline{\mathrm{Rep}}}(S_{\nu})$ is not semisimple and not even abelian when $\nu \in \bZ_+$, a weaker statement holds (see \cite[Proposition 5.1]{D}): consider the full subcategory $\InnaE{\underline{\mathrm{Rep}}}(S_{\nu})^{(\nu/2)}$ of $\InnaE{\underline{\mathrm{Rep}}}(S_{\nu})$ whose objects are directs summands of sums of $\fh^{\otimes m}, 0\leq m \leq \frac{\nu}{2}$. This subcategory is abelian semisimple.
\end{remark}

}
\begin{notation}
 We will denote Deligne's category for integer value $n \geq 0$ of $\nu$ as $\InnaE{\underline{\mathrm{Rep}}}(S_{\nu=n})$, to distinguish it from the classical category $\InnaE{\mathrm{Rep}}(S_{n})$ of representations of the symmetric group $S_{n}$. Similarly for other categories arising in this text.
\end{notation}

The indecomposable objects of $\InnaE{\underline{\mathrm{Rep}}}(S_{\nu})$, regardless of the value of $\nu$, are labeled by all Young diagrams (of arbitrary size). We will denote the indecomposable object in $\InnaE{\underline{\mathrm{Rep}}}(S_{\nu})$ corresponding to the Young diagram $\T$ by $X_{\T}$.

\begin{notation}
 We will denote by $\tilde{\lambda}(n)$ the Young diagram obtained from $\lambda$ by adding a top row of size $n-\abs{\lambda}$, where $n \geq \lambda_1+\abs{\lambda}$ is an integer.
\end{notation}

\begin{example}
 $$ \lambda=\yng(6,5,4,1) \mapsto \tilde{\lambda}(\InnaA{31})=\yng(15,6,5,4,1)$$
\end{example}

\InnaA{For non-negative integer $\nu =:n$, we have: the partitions $\lambda$ for which $X_{\lambda}$ has a non-zero image in the quotient $\InnaE{\underline{\mathrm{Rep}}}(S_{\nu=n})/\idealI_{\nu=n} \cong \InnaE{\mathrm{Rep}}(S_n)$ are exactly the $\lambda$ for which $\lambda_1+\abs{\lambda} \leq n$.}

If $\lambda_1+\abs{\lambda} \leq n$, then the image of $X_{\lambda} \in \InnaE{\underline{\mathrm{Rep}}}(S_{\nu=n})$ in $\InnaE{\mathrm{Rep}}(S_n)$ would be the irreducible representation of $S_n$ corresponding to the Young diagram $\tilde{\lambda}(n)$. 

Intuitively, one can treat the indecomposable objects of $\InnaE{\underline{\mathrm{Rep}}}(S_{\nu})$ as if they were parameterized by ``Young diagrams with very long top row''. The indecomposable object $X_{\lambda}$ would be treated as if it corresponded to $\tilde{\lambda}(\nu)$, i.e. a Young diagram obtained by adding a very long top row (``of size $\nu -\abs{\lambda}$''). This point of view is useful to understand how to extend constructions for $S_n$ involving Young diagrams to $\InnaE{\underline{\mathrm{Rep}}}(S_{\nu})$.

\begin{example}
 The indecomposable object $X_{\lambda}$, where $$\lambda= \yng(6,5,4,1)$$ can be thought of as a Young diagram with a ``very long top row of length $(\nu- 16)$'':
$$ \yng(35,6,5,4,1)$$
\end{example}

\begin{notation}
 Let $\hhh:=X_{\yng(1)}$ be the indecomposable object in $\InnaE{\underline{\mathrm{Rep}}}(S_{\nu})$ corresponding to the one-box Young diagram (that would be the analogue of the reflection representation in $\InnaE{\mathrm{Rep}}(S_n)$). 
\end{notation}

Its dual in $\InnaE{\underline{\mathrm{Rep}}}(S_{\nu})$, denoted by $\InnaD{\fh^*_0}$, is isomorphic to $\hhh$, since all the objects in $\InnaE{\underline{\mathrm{Rep}}}(S_{\nu})$ have a symmetric form defined on them (analogue of the invariant symmetric form on the representations of the symmetric group over a field of characteristic zero).

\begin{remark}
 $\fh \cong \hhh \oplus \triv$ for any $\nu \neq 0$, in particular, for any $\nu \not\in \bZ_+$. But for $\nu=0$, we have $\fh \cong \hhh$.
 \InnaE{Computing the dimension of $\hhh$, we conclude that $\dim_{\InnaE{\underline{\mathrm{Rep}}}(S_{\nu})}(\hhh) = \nu-1$ when $\nu \neq 0$, and $\dim_{\InnaE{\underline{\mathrm{Rep}}}(S_{\nu})}(\hhh) = 0$ if $\nu =0$.}
\end{remark}

\begin{example}
$\Lambda^k (\hhh) = X_{\pi^k}$ for any $k \geq 0$; in particular, $X_{\emptyset} = \triv$.
\end{example}

\subsection{Pieri's rule}
We have an interpolation of Pieri's rule (see \cite[Proposition 5.15]{CO}, \cite{E1}, \cite{D} \InnaE{concerning the Pieri rule in the Deligne categories} $\InnaE{\underline{\mathrm{Rep}}}(S_{\nu})$, and \cite[Par. 4.3]{FH} for Pieri's rule \InnaE{for representations of $S_n$}):

\begin{prop}\label{Pieri}[Pieri's rule]
 Let \InnaA{$\nu \notin \bZ_+$ and let} $X_{\T}$ be a simple object of $\InnaE{\underline{\mathrm{Rep}}}(S_{\nu})$. As objects of $\InnaE{\underline{\mathrm{Rep}}}(S_{\nu})$, 
$$\hhh \otimes X_{\T} \cong \bigoplus_{\mu \in P_{\T}^{+} \cup P_{\T}^{-} \cup P_{\T}^{0}} X_{\mu} \oplus cc(\T)X_{\T} $$
where ${P_{\T}}^{+}, {P_{\T}}^{-},{P_{\T}}^{0}$ are the sets of all Young diagrams obtained from $\T$ by adding, deleting, or moving a corner cell, respectively (note that $\T \notin {P_{\T}}^{0}$), and $cc(\T)$ is the number of corner cells of $\T$.
\end{prop}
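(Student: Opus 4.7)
\medskip

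\textbf{Proof plan.} The plan is to reduce the statement to the classical Pieri-type decomposition for the symmetric group by exploiting the two features of $Rep(S_{\nu})$ recalled above: semisimplicity for $\nu \notin \bZ_+$, and the polynomial/flat dependence of morphism spaces on $\nu$. Since $\nu \notin \bZ_+$, the category $Rep(S_\nu)$ is abelian semisimple, so $\hhh \otimes X_{\T}$ is a finite direct sum of indecomposables $X_{\mu}$, and it suffices to compute each multiplicity $[\hhh \otimes X_{\T} : X_{\mu}] = \dim \Hom_{Rep(S_\nu)}(X_{\mu},\, \hhh \otimes X_{\T})$. Using the isomorphism $\fh \cong \hhh \oplus \triv$ (valid for $\nu \neq 0$), I may equivalently compute $[\fh \otimes X_{\T} : X_{\mu}]$ and then subtract $\delta_{\mu,\T}$ to recover $[\hhh \otimes X_{\T} : X_{\mu}]$.

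Next, I would invoke the flatness of $Rep(S_\nu)$ with respect to $\nu$: the $\Hom$-spaces in the ``universal'' version of the category are free $\bC[\nu]$-modules, so for fixed Young diagrams $\mu, \T$ the multiplicity $m_{\mu,\T}(\nu) := [\fh \otimes X_{\T} : X_{\mu}]$ is a locally polynomial function of $\nu$ on the open set $\bC \setminus \bZ_+$ where the category is semisimple. In particular, if $m_{\mu,\T}(\nu)$ agrees with a fixed integer value at infinitely many integers $n$ for which the evaluation functor $Rep(S_{\nu=n}) \to Rep(S_n)$ does not kill any of $X_{\mu}, X_{\T}$ (i.e., for all $n$ larger than some bound depending only on $|\mu|$ and $|\T|$), then this value is constant in $\nu$ on $\bC \setminus \bZ_+$.

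It therefore remains to carry out the computation for an integer $n \gg 2(|\T|+1)$. Under the tensor functor $Rep(S_{\nu=n}) \to Rep(S_n)$, the object $\fh \otimes X_{\T}$ maps to $\fh \otimes V_{\tilde{\T}(n)}$, where $\fh = \bC^n = \Ind_{S_{n-1}}^{S_n}\triv$. Thus $\fh \otimes V_{\tilde\T(n)} \cong \Ind_{S_{n-1}}^{S_n}\Res_{S_{n-1}}^{S_n} V_{\tilde\T(n)}$, which by the classical branching rule equals a direct sum of $V_{\lambda}$ over all Young diagrams $\lambda$ obtainable from $\tilde\T(n)$ by first removing a corner cell and then adding a corner cell. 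Enumerating these: keeping $\lambda = \tilde\T(n)$ contributes multiplicity equal to the number of removable corners of $\tilde\T(n)$, which for $n$ large equals $cc(\T)+1$; changing $\lambda$ by a genuine cell move gives, after identifying $\lambda = \tilde\mu(n)$, precisely the three families $\mu \in P_{\T}^{+}, P_{\T}^{-}, P_{\T}^{0}$ (where a cell is moved between the long top row and $\T$, or inside $\T$), each with multiplicity one. Subtracting the one copy of $V_{\tilde\T(n)}$ coming from the $\triv$-summand of $\fh$ yields the stated decomposition.

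The only technical point requiring care is the large-$n$ stabilization: I need to check that for $n$ larger than an explicit bound depending on $|\T|$, all removable and addable corners of $\tilde\T(n)$ outside the top row correspond bijectively to corners and addable corners of $\T$, and that none of the resulting Young diagrams $\lambda$ lies in the negligible ideal $\idealI_n$. Both are straightforward once $n$ is taken big enough so that $\tilde\T(n)$ has a top row strictly longer than $|\T|+\T_1+1$. With this verification, polynomiality in $\nu$ closes the argument.
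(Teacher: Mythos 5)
The paper does not prove this proposition; it cites it (Proposition 5.15 of Comes--Ostrik \cite{CO}, together with \cite{E1}, \cite{FH}), so there is no internal proof to compare against. Your reconstruction -- semisimplicity of $Rep(S_\nu)$ for $\nu\notin\bZ_+$, replacement of $\hhh$ by $\fh=\hhh\oplus\triv$, constancy of multiplicities in $\nu$ on the semisimple locus, and then the classical computation $\fh\otimes V_{\tilde\T(n)}\cong\Ind_{S_{n-1}}^{S_n}\Res_{S_{n-1}}^{S_n}V_{\tilde\T(n)}$ at $n\gg 0$ -- is the standard interpolation argument and is sound in outline, and the bookkeeping of removable/addable corners of $\tilde\T(n)$ (yielding multiplicity $cc(\T)+1$ for $\lambda=\tilde\T(n)$ and multiplicity one for each $\lambda$ obtained by a genuine move) is correct for $n$ large.

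One small but genuine imprecision: the condition you state for reading off multiplicities at integer $n$ -- that the functor $Rep(S_{\nu=n})\to Rep(S_n)$ ``does not kill any of $X_\mu,X_\T$'' -- is not sufficient. That functor is full with kernel the negligible ideal $\idealI_n$, so two objects can both have nonzero image while a morphism between them dies. What you actually need is that $X_\mu$ and $\fh\otimes X_\T$ both lie in the abelian semisimple full subcategory $Rep(S_{\nu=n})^{(\nu/2)}$ of direct summands of sums of $\fh^{\otimes m}$ with $m\leq n/2$, on which the functor to $Rep(S_n)$ is fully faithful (compare the remark in Section \ref{sec:Del_cat} and the use made of it in Lemma \ref{lem:Verma_int_dim_hom}). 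This is exactly what your bound $n\gg 2(\abs{\T}+1)$ buys, since $\abs{\mu}\leq\abs{\T}+1$ for every $\mu$ that can occur; you should just state the condition as full faithfulness on the relevant Hom-space rather than non-vanishing of the objects.
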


\subsection{Analogue of the sum of transpositions element for $\InnaE{\underline{\mathrm{Rep}}}(S_{\nu})$}\label{ssec:Omega_elem}
Consider the following function \InnaE{$C_{\nu}: \mathcal{P} \rightarrow \bC$}:
$$C_{\nu}(\mu):= \frac{(\nu - \abs{\mu})(\nu - \abs{\mu} -1)}{2}- \abs{\mu} + ct(\mu)$$
This function interpolates the function \InnaE{$ct:\mathcal{P} \rightarrow \bZ$ in the sense that $ct(\tilde{\mu}(n)) = C_{\nu = n}(\mu)$ for any Young diagram $\mu$ and any $n \in \bZ_{\geq \abs{\mu}+\mu_1}$.}

J. Comes and V. Ostrik defined in \cite{CO} an endomorphism $\Omega$ of the identity functor of the category $\InnaE{\underline{\mathrm{Rep}}}(S_{\nu})$ for arbitrary $\nu$ (including non-negative integers) \InnaE{such} that for any indecomposable object $X_{\mu}$ in $\InnaE{\underline{\mathrm{Rep}}}(S_{\nu})$, the morphism $(\Omega - C_{\nu}(\mu) \id_{X_{\mu}})$ is a nilpotent endomorphism of $X_{\mu}$. \InnaE{The endomorphism $\Omega$ is an interpolation of the central element $\Omega \in \bC[S_n]$ defined in Subsection \ref{ssec:sum_of_transp_elem}}.

\begin{remark}\label{rmk:Omega_endom_Deligne}
 For $X_{\mu} \in \InnaE{\underline{\mathrm{Rep}}}(S_{\nu})$ such that $\nu \notin \{0, 1, ...,\abs{\mu}+\mu_1 \}$, we have: 
$$\dim \End_{\InnaE{\underline{\mathrm{Rep}}}(S_{\nu})}(X_{\mu}) =1$$ Since the endomorphism $\Omega \rvert_{X_{\mu}} - C_{\nu}(\mu) \id_{X_{\mu}}$ of $X_{\mu}$ is nilpotent, we conclude that it is zero, and 
 $$\Omega \rvert_{X_{\mu}} = C_{\nu}(\mu) \id_{X_{\mu}} = \left(\frac{(\nu - \abs{\mu})(\nu - \abs{\mu} -1)}{2}- \abs{\mu} + ct(\mu)\right) \id_{X_{\mu}} $$
\end{remark}

\section{\texorpdfstring{Category $\InnaE{\underline{\mathrm{Rep}}}(H_c(\nu))$}{Category of representations of the rational Cherednik algebra in complex rank}}\label{sec:Rep_H_c_nu}

Let $c, \nu \in \bC$.

Based on the definition of the category $\InnaE{\underline{\mathrm{Rep}}}(S_{\nu})$ defined by Deligne, P. Etingof defined in \cite{E1} the category $\InnaE{\underline{\mathrm{Rep}}}(H_c(\nu))$:
\InnaE{
 \begin{definition}[Category $\InnaE{\underline{\mathrm{Rep}}}(H_c(\nu))$]\label{Cat_Rep_H_nu_def}
  $\InnaE{\underline{\mathrm{Rep}}}(H_c(\nu))$ is the category whose objects are triples $(M, {x}, {y})$, where $M$ is an \InnaE{\rm{ind}}-object of Deligne's category $\InnaE{\underline{\mathrm{Rep}}}(S_{\nu})$, and $${x}: \InnaD{\hhh^*} \otimes M \longrightarrow  M $$ 
$${y}: \hhh \otimes M \rightarrow  M$$ are morphisms in $\InnaE{\underline{\mathrm{Rep}}}(S_{\nu})$ satisfying the conditions: 

\begin{enumerate}
 \item The morphism $${x} \circ (\id \otimes {x}) \circ ((\id- \InnaE{c}_{\InnaD{\hhh^*}, \InnaD{\hhh^*}}) \otimes \id): \InnaD{\hhh^*} \otimes \InnaD{\hhh^*} \otimes M \longrightarrow  M$$ is 0. 
 \item The morphism $${y} \circ (\id \otimes {y}) \circ ((\id- \InnaE{c}_{\hhh, \hhh}) \otimes \id): \hhh \otimes \hhh \otimes M \longrightarrow  M$$ is $0$.
 \item The morphism $${y} \circ (\id \otimes {x}) - {x} \circ (\id \otimes {y}) \circ (\InnaE{c}_{\hhh, \InnaD{\hhh^*}} \otimes \id): \hhh \otimes \InnaD{\hhh^*} \otimes M \longrightarrow  M$$ equals
$$ev_{\hhh} \otimes  \id - c (ev_{\hhh} \otimes \id) \circ (\InnaF{ \Omega^3 - \Omega^{1,3}}) $$
\end{enumerate}

Here 
\begin{itemize}
 \item $\sigma$ is the symmetry isomorphism in $\InnaE{\underline{\mathrm{Rep}}}(S_{\nu})$ (i.e. $\InnaE{c}_{U, U'} : U \otimes U' \longrightarrow U' \otimes U$ is the canonical isomorphism in $\InnaE{\underline{\mathrm{Rep}}}(S_{\nu})$, where $U, U'$ are objects of $\InnaE{\underline{\mathrm{Rep}}}(S_{\nu})$),
\item $ev_{\hhh}: \hhh \otimes \InnaD{\hhh^*} \longrightarrow \triv $ is the evaluation map in \InnaE{the} category $\InnaE{\underline{\mathrm{Rep}}}(S_{\nu})$, 
\item $\Omega$ is the endomorphism of the identity functor of $\InnaE{\underline{\mathrm{Rep}}}(S_{\nu})$ described in \ref{ssec:Omega_elem}, \InnaF{and the notation $(\cdot)^3, (\cdot)^{1,3}$ is defined in \ref{notn:act_tens_prod}}.

\end{itemize}
 
Morphisms in $\InnaE{\underline{\mathrm{Rep}}}(H_c(\nu))$ are defined to be morphisms in $\InnaE{\mathrm{ind}}-\InnaE{\underline{\mathrm{Rep}}}(S_{\nu})$ compatible with maps ${x},{y}$.
 \end{definition}

 This category corresponds to the category $\mathrm{Rep}(H_c(n))$ of representations of the reduced rational Cherednik algebra. One can similarly define the Deligne category $\InnaE{\underline{\mathrm{Rep}}}(\overline{H}_c(\nu))$ corresponding to the category $\mathrm{Rep}(\overline{H}_c(\InnaF{n}))$ of representations of the non-reduced rational rational Cherednik algebra. 
 
 As in the classical case, the relation between the categories $\InnaE{\underline{\mathrm{Rep}}}({H}_c(\nu))$ and $\InnaE{\underline{\mathrm{Rep}}}(\overline{H}_c(\nu))$ will be similar to the relation between the categories $\mathrm{Rep}(\mathfrak{sl}_n), \mathrm{Rep}(\mathfrak{gl}_n)$.

}
\section{\texorpdfstring{Category $\InnaE{\underline{\co}}_{\text{  } c,\nu}$}{Category O in complex rank}}\label{sec:O_H_c_nu}

\subsection{\texorpdfstring{Definition of $\InnaE{\underline{\co}}_{\text{  } c,\nu}$}{Definition of the category O in complex rank}}\label{ssec:O_H_c_nu_def}

Consider the algebra objects $S\InnaD{\fh^*_0}$, $S\hhh$ in $\InnaE{\mathrm{ind}}-\InnaE{\underline{\mathrm{Rep}}}(S_{\nu})$. 


Given maps $x,y$ as above, we \InnaE{obtain} maps $$\overline{x}_M:S\InnaD{\fh^*_0} \otimes M \longrightarrow  M $$ 
$$\overline{y}_M:S\hhh \otimes M \rightarrow  M$$
So we can speak of $M$ as an $S\InnaD{\fh^*_0}$-module and an $S\hhh$-module in $\InnaE{\mathrm{ind}}-\InnaE{\underline{\mathrm{Rep}}}(S_{\nu})$.

\begin{definition}
 The category $\InnaE{\underline{\co}}_{\text{  } c,\nu}$ is defined as the full subcategory of $\InnaE{\underline{\mathrm{Rep}}}(H_c(\nu))$ whose objects are $(M, x, y)$ such that 
\begin{itemize}
 \item $M$ is finitely generated over $S\InnaD{\fh^*_0}$ in $\InnaE{\underline{\mathrm{Rep}}}(S_{\nu})$ (i.e. $M$ is a quotient of a ``finitely generated free $S\InnaD{\fh^*_0}$ module'' $S\InnaD{\fh^*_0} \otimes V$, where $V$ is an object of $\InnaE{\underline{\mathrm{Rep}}}(S_{\nu})$). 
\item $M$ is locally nilpotent over $\hhh$, in the following sense:

 for any $\InnaE{\underline{\mathrm{Rep}}}(S_{\nu})$-subobject $T \subset M $, there exists a non-negative integer $m$ (depending on $T$) such that the map $S^m \hhh \otimes T \longrightarrow M$ (this is the restriction to $T$ of the map $S^m\hhh \otimes M \rightarrow  M$) equals zero.
\end{itemize}

\end{definition}



\begin{definition}\label{def:brh}
\InnaE{Define ${\brh} \in \End(\InnaE{\mathrm{Forget}}_{\InnaE{\underline{\co}}_{\text{  } c,\nu} \rightarrow \InnaE{\mathrm{ind}}-\InnaE{\underline{\mathrm{Rep}}}(S_{\nu})})$ \InnaA{by letting ${\brh}_M := \frac{1}{2}(\psi_1 + \psi_2)$ for every $M \in \InnaE{\underline{\co}}_{\text{  } c,\nu}$, where $\psi_1, \psi_2: M \longrightarrow M$, }
$$\psi_1 = {x} \circ (\id \otimes {y}) \circ (coev_{\hhh^*} \otimes \id_M): M \longrightarrow \hhh^* \otimes \hhh \otimes M \longrightarrow M $$
$$\psi_2 = {y} \circ (\id \otimes {x}) \circ (coev_{\hhh}\otimes \id_M) : M \longrightarrow \hhh \otimes \hhh^* \otimes M \longrightarrow M $$}
\end{definition}


Similarly to the classical case, \InnaE{an easy computation shows that $\brh$ has the following properties:}
\begin{lemma}\label{lem:brh_property}
\InnaE{The endomorphism $\brh \in \End(\InnaE{\mathrm{Forget}}_{\InnaE{\underline{\co}}_{\text{  } c,\nu} \rightarrow \InnaE{\mathrm{ind}}-\InnaE{\underline{\mathrm{Rep}}}(S_{\nu})})$ can also be defined by setting 
$\brh \rvert_M: M \rightarrow M$ to be $$\brh \rvert_M:= {x} \circ (\id \otimes {y}) \circ (coev_{\hhh^*} \otimes \id_M) + \frac{\dim (\hhh)}{2} - c \Omega_M$$ (the endomorphism $\Omega$ is defined in Subsection \ref{ssec:Omega_elem}).

In addition, $\brh$ satisfies the the following commutation relations:} 
 \begin{align*}
&y_M \circ (\id_{\InnaA{\hhh}} \otimes \brh) - \brh \circ y_M = y_M : \hhh \otimes M \longrightarrow M \\
&x_M \circ (\id_{\InnaA{\InnaD{\hhh^*}}} \otimes \brh) - \brh \circ x_M = \InnaA{-} x_M : \InnaD{\hhh^*} \otimes M \longrightarrow M
\end{align*}
\end{lemma}
\InnaA{
\begin{definition}[Indecomposable singular subobject]
Let $U \in \InnaE{\underline{\co}}_{\text{  } c,\nu}, V \in \InnaE{\underline{\mathrm{Rep}}}(S_{\nu})$ \InnaE{such} that $V \subset U$ and the map $y_U$ restricted to $\hhh \otimes V$ is zero. Then $V$ is called ``singular in $U$''. 
If, furthermore, $V$ is an indecomposable (resp. simple) object in $\InnaE{\underline{\mathrm{Rep}}}(S_{\nu})$, then we say that $V$ is an indecomposable (resp. simple) singular $\InnaE{\underline{\mathrm{Rep}}}(S_{\nu})$-subobject of $U$.
 
\end{definition}

\begin{remark}
 Note that by definition of $\InnaE{\underline{\co}}_{\text{  } c,\nu}$, each object contains an indecomposable singular $\InnaE{\underline{\mathrm{Rep}}}(S_{\nu})$-subobject.
\end{remark}

%
%
%
}

\subsection{\texorpdfstring{Verma objects in \InnaE{the} category $\InnaE{\underline{\co}}_{\text{  } c,\nu}$}{Verma object in category O (complex rank)}}\label{ssec:Verma_obj}

One can define Verma objects in the category $\InnaE{\underline{\co}}_{\text{  } c,\nu}$ as follows: 

Consider the category $\InnaF{\underline{\mathrm{Rep}}}(B_{\nu})$ of pairs $(M, y_M)$, where $M$ is an \InnaE{\rm{ind}}-object of $\InnaE{\underline{\mathrm{Rep}}}(S_{\nu})$, $$y: \hhh \otimes M \rightarrow  M$$ is a morphism in $\InnaE{\underline{\mathrm{Rep}}}(S_{\nu})$ satisfying the condition: the morphism $$y \circ (\id \otimes y) \circ ((\id- \InnaE{c}_{\hhh, \hhh}) \otimes \id): \hhh \otimes \hhh \otimes M \longrightarrow  M$$ is zero.
The morphisms in $\InnaF{\underline{\mathrm{Rep}}}(B_{\nu})$ are morphisms of $\InnaE{\underline{\mathrm{Rep}}}(S_{\nu})$ \InnaE{\rm{ind}}-objects compatible with the $y_M$ maps.

This is the analogue of the category of representations of the ``Borel'' subalgebra $B_n = \bC[S_n] \ltimes S\hhh$.

Then we have the restriction functor $Res: \InnaE{\underline{\mathrm{Rep}}}(H_c(\nu)) \longrightarrow \InnaF{\underline{\mathrm{Rep}}}(B_{\nu})$, and it has a left adjoint which is the induction functor $Ind: \InnaF{\underline{\mathrm{Rep}}}(B_{\nu}) \longrightarrow \InnaE{\underline{\mathrm{Rep}}}(H_c(\nu))$. The induction functor takes a pair $(M, y_M)$ to a triple $(  S\InnaD{\fh^*_0} \otimes M, y_{S\InnaD{\fh^*_0} \otimes M}, x_{S\InnaD{\fh^*_0} \otimes M})$, where the map $x_{S\InnaD{\fh^*_0} \otimes M}:\InnaD{\fh^*_0} \otimes S\InnaD{\fh^*_0} \otimes M \rightarrow S\InnaD{\fh^*_0} \otimes M$ is the multiplication map of $\InnaD{\fh^*_0} \otimes S\InnaD{\fh^*_0} \rightarrow S\InnaD{\fh^*_0}$ tensored with $\id_{M}$, and $y_{S\InnaD{\fh^*_0} \otimes M}:\hhh \otimes S\InnaD{\fh^*_0} \otimes M \rightarrow S\InnaD{\fh^*_0} \otimes M$ is \InnaE{the unique} map satisfying conditions (1, 3) of Definition \ref{Cat_Rep_H_nu_def}, \InnaE{such} that $y_{S\InnaD{\fh^*_0} \otimes M}\mid_{\hhh \otimes 1 \otimes M} = 
y_M$.

\begin{definition}[Verma Object]
 
Consider an indecomposable object $X_{\T}$ in $\InnaE{\underline{\mathrm{Rep}}}(S_{\nu})$, and let the map $y_{X_{\T}}:\hhh \otimes X_{\T} \rightarrow  X_{\T}$ be zero. This makes $(X_{\T},y_{X_{\T}})$ an object of $\InnaF{\underline{\mathrm{Rep}}}(B_{\nu})$. 

Define the Verma object of lowest weight $X_{\T}$ as $Ind(X_{\T},y_{X_{\T}})$ (so it is an \InnaE{\rm{ind}}-object of $\InnaE{\underline{\mathrm{Rep}}}(S_{\nu})$). It will be denoted by $M_{c,\nu}(\T)$ (or, if $c, \nu$ are fixed, $M(\T)$ for short).
\end{definition}
 
\begin{obsr}\label{obsr:x_y_act_Verma_obj}

It is easy to see that as a \InnaB{$\InnaE{\underline{\mathrm{Rep}}}(S_{\nu})$ \InnaE{\rm{ind}}-object} $M_{c,\nu}(\T) =  S\InnaD{\fh^*_0} \otimes X_{\T}$, where $X_{\T}$ is an indecomposable object of $\InnaE{\underline{\mathrm{Rep}}}(S_{\nu})$. 

The map $\overline{x}_{M_{c,\nu}(\T)}: S\InnaD{\fh^*_0} \otimes S\InnaD{\fh^*_0} \otimes X_{\T} \longrightarrow S\InnaD{\fh^*_0} \otimes X_{\T}$ is the multiplication map of $S\InnaD{\fh^*_0}$ tensored with $\id_{X_{\T}}$.

The map $\overline{y}_{M_{c,\nu}(\T)}: S\hhh \otimes S\InnaD{\fh^*_0} \otimes X_{\T} \longrightarrow S\InnaD{\fh^*_0} \otimes X_{\T}$ is the map for which condition (3) from Definition \ref{Cat_Rep_H_nu_def} holds, and thus is a deformation of the differentiation map $ S\hhh \otimes S\InnaD{\fh^*_0} \longrightarrow S\InnaD{\fh^*_0}$ tensored with $\id_{X_{\T}}$.

\end{obsr}


\begin{notation}
 To avoid confusion, the Verma module in $\co(H_c(n))$ whose lowest weight corresponds to the Young diagram $\tilde{\T}(n)$ is denoted by $M_{c,n}(\tilde{\T}(n))$, while the Verma object in $\InnaE{\underline{\co}}_{\text{  } c,\nu=n}$ corresponding to the Young diagram $\T$ is denoted by $M_{c,\nu=n}(\T)$. 
\end{notation}

\begin{definition}\label{h_c_nu}
\InnaE{Let $c, \nu \in \bC$. We define the function $h_{c, \nu}: \mathcal{P} \rightarrow \bC$ as $$h_{c,\nu}(\T) := \frac{\dim(\hhh)}{2} - c \cdot C_{\nu}(\T) = \frac{\nu-1}{2} - c \cdot \left(\frac{(\nu - \abs{\T})(\nu - \abs{\T} -1)}{2} - \abs{\T} +ct(\T)\right) $$ }

\end{definition} 
\InnaE{We immediately see that this function interpolates the functions $h_{c, n}$ defined in Subsection \ref{ssec:sum_of_transp_elem}. Namely, $h_{c, n}(\tilde{\T}(n)) = h_{c, \nu=n}(\T)$ for large enough $n$.}

\begin{remark}
 Note that given $X_{\T} \in \InnaE{\underline{\mathrm{Rep}}}(S_{\nu})$ such that $\nu \notin \{0, 1, ...,\abs{\T}+\T_1 \}$, we have $$h_{c,\nu}(\T) \id_{X_{\T}} = \frac{\dim(\hhh)}{2}\id_{X_{\T}} - c \Omega\rvert_{X_{\T}}$$ \InnaE{which is an interpolation of the formula in Subsection \ref{ssec:class_Verma_modules}. Here $\Omega$ is the endomorphism of the identity functor of $\InnaE{\underline{\mathrm{Rep}}}(S_{\nu})$ described in \InnaB{Subsection} \ref{ssec:Omega_elem}, and identity follows from Remark \ref{rmk:Omega_endom_Deligne}.}  
\end{remark}
\newpage
\begin{example}\mbox{} 
\begin{itemize}
 \item $ct(\T^{\InnaA{s-1}}) = \frac{s(s-1)}{2}$, so $h_{c,\nu}(\T^s) = \frac{\dim(\hhh)}{2} - c \cdot \left( \frac{\nu^2-\nu}{2} -s\nu +s^2-s \right)$.
\item $ct(\pi^n) = -\frac{n(n-1)}{2}$, so $h_{c,\nu}(\pi^n) = \frac{\dim(\hhh)}{2} - c \cdot \left( \frac{\nu^2-\nu}{2} -n\nu  \right)$.
\end{itemize}

\end{example}

\InnaE{We now consider the action of the endomorphism $\brh$ on Verma objects.}
\begin{prop}\label{prop:action_brh_Verma}
\InnaE{Let $c \in \bC$.

The endomorphism $\brh: M_{c,\nu}(\T) \longrightarrow  M_{c,\nu}(\T)$ of the \InnaE{\rm{ind}}-object $M_{c,\nu}(\T) = \bigoplus_{m \geq 0} S^m \InnaD{\fh^*_0} \otimes X_{\T}$ of $\InnaE{\underline{\mathrm{Rep}}}(S_{\nu})$ decomposes as a sum 
$$\brh = \oplus_{m \geq 0} \phi_m$$ where $\phi_m \in \End( S^m \InnaD{\fh^*_0} \otimes X_{\T})$ for every $m \geq 0$. \InnaF{Furthermore, $$\phi_0 = \frac{\dim(\hhh)}{2}\id_{X_{\T}} - c \Omega\rvert_{X_{\T}}$$ and the endomorphism $ (\phi_m - (h_{c,\nu}(\T) + m)\id_{X_{\T}})$ of $S^m \InnaD{\fh^*_0} \otimes X_{\T}$ is nilpotent for every $m \geq 0$. In particular, if $\nu \notin \{0, 1, ..., \abs{\T} + \T_1 \}$, then $$\phi_m  = ( h_{c,\nu}(\T) +m ) \id_{S^m \fh^*_0 \otimes X_{\T}}$$ for any $m \geq 0$.}}
\end{prop}

\begin{proof}
\InnaE{From the definition of $\brh$ (Definition \ref{def:brh}) and Observation \ref{obsr:x_y_act_Verma_obj}, we immediately obtain a decomposition $\brh = \oplus_{m \geq 0} \phi_m$ where $\phi_m \in \End( S^m \InnaD{\fh^*_0} \otimes X_{\T})$ (this amounts to checking that $\brh$ acts by operators of degree zero on the graded space $S \InnaD{\fh^*_0} \otimes X_{\T}$, with the $\bZ_+$-grading inherited from $S \InnaD{\fh^*_0}$). 

\mbox{}

We now want to prove that the endomorphism $ (\phi_m - (h_{c,\nu}(\T) + m)\id_{X_{\T}})$ of $S^m \InnaD{\fh^*_0} \otimes X_{\T}$ is nilpotent.



First we consider the case $m=0$. By Lemma \ref{lem:brh_property}, the endomorphism $\phi_0$ acts on $X_{\T}$ as the operator $\frac{\dim (\hhh)}{2} - c \Omega_M$, which has a unique generalized eigenvalue $h_{c,\nu}(\T)$, as it was said in Subsection \ref{ssec:Omega_elem}.

\InnaF{Next, we prove by induction on $m \geq 0$} that $\phi_m$ has only one generalized eigenvalue, which is $h_{c,\nu}(\T) + m$. Just like in the classical setting, this is a direct consequence of the commutation relation $$(\brh - (a+1))^N \circ x_{M_{c,\nu}(\T)} = x_{M_{c,\nu}(\T)} \circ \left( \id_{\hhh^*} \otimes (\brh - a)^N \right)$$ Here both sides are morphisms $\hhh^* \otimes S \hhh^* \otimes X_{\T} \rightarrow S \hhh^* \otimes X_{\T}$, $a$ is (any) scalar and $N$ is any non-negative integer. This commutation relation is easily obtained from Lemma \ref{lem:brh_property}.

\InnaF{Finally, if $\nu \notin \{0, 1, ..., \abs{\T} + \T_1 \}$, then the object $X_{\T}$ is simple, and thus 
$$\phi_0 = \frac{\dim(\hhh)}{2}\id_{X_{\T}} - c \Omega\rvert_{X_{\T}} = h_{c, \nu}(\T) \id_{X_{\T}}$$ Due to the commutation relation above, we conclude that 
$$\phi_m  = ( h_{c,\nu}(\T) +m ) \id_{S^m \InnaD{\fh^*_0} \otimes X_{\T}}$$ for any $m \geq 0$.
}}
\end{proof}

\mbox{}

This proposition means that $M(\T)$ (as a $\InnaE{\underline{\mathrm{Rep}}}(S_{\nu})$ \InnaE{\rm{ind}}-object) has a grading by eigenvalues of $\brh$, which is the natural $\bZ_{+}$-grading on \InnaF{$M(\T) \cong S \hhh^* \otimes X_{\T}$} shifted by $h_{c,\nu}(\T)$.

Now let $U$ be a subquotient of $M(\T)$. Then $U$ automatically inherits a grading from the grading of $M(\T)$ by eigenvalues of $\brh$, and the definition of $\brh$ implies that maps between subquotients of Verma objects preserve this grading.

For simplicity, we will use the natural $\bZ_{+}$-grading on \InnaF{$M(\T) \cong S \hhh^* \otimes X_{\T}$} when we refer to the degree in which a $\InnaE{\underline{\mathrm{Rep}}}(S_{\nu})$-object lies in $U$.

%
%
%

\begin{prop}\label{functoriality_prop_Verma_obj}
 Let $U \in \InnaE{\underline{\co}}_{\text{  } c,\nu}$. Let $X_{\mu}$ be \InnaA{an indecomposable} singular $\InnaE{\underline{\mathrm{Rep}}}(S_{\nu})$-subobject in $U$. Then there exists a morphism $M_{c,\nu}(\mu) \longrightarrow U$ in $\InnaE{\underline{\co}}_{\text{  } c,\nu}$, inducing $\id_{X_\mu}$ on $X_\mu$ when regarded as a morphism of $\InnaE{\underline{\mathrm{Rep}}}(S_{\nu})$ \InnaE{\rm{ind}}-objects.
\end{prop}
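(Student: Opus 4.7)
The plan is to derive this as a formal consequence of the adjunction $\mathrm{Ind} \dashv \mathrm{Res}$ between $Rep(B_{\nu})$ and $Rep(H_c(\nu))$ stated immediately before the definition of Verma objects in Subsection \ref{ssec:Verma_obj}. By construction, $M_{c,\nu}(\mu) = \mathrm{Ind}(X_{\mu}, 0)$, where the zero refers to the zero map $\hhh \otimes X_{\mu} \to X_{\mu}$.

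First I would repackage the singularity hypothesis. The assumption that $X_{\mu}$ is a singular $Rep(S_{\nu})$-subobject of $U$ means precisely that the composition
\[
\hhh \otimes X_{\mu} \xrightarrow{\,\id \otimes \iota\,} \hhh \otimes U \xrightarrow{\,y_U\,} U
\]
is zero, where $\iota: X_{\mu} \hookrightarrow U$ is the given inclusion of $Rep(S_{\nu})$ ind-objects. Equivalently, $\iota$ is a morphism $(X_{\mu}, 0) \to \mathrm{Res}(U)$ in $Rep(B_{\nu})$. Applying the adjunction
\[
\mathrm{Hom}_{Rep(H_c(\nu))}\bigl(M_{c,\nu}(\mu),\, U\bigr) \;\cong\; \mathrm{Hom}_{Rep(B_{\nu})}\bigl((X_{\mu},0),\, \mathrm{Res}(U)\bigr),
\]
$\iota$ corresponds to a unique morphism $\phi: M_{c,\nu}(\mu) \to U$ in $Rep(H_c(\nu))$. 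Since $\co_{c,\nu} \subset Rep(H_c(\nu))$ is a full subcategory containing $U$, this $\phi$ automatically lives in $\co_{c,\nu}$.

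To verify that $\phi$ restricts to $\id_{X_{\mu}}$ on the lowest-degree summand of $M_{c,\nu}(\mu) = S\hhh^* \otimes X_{\mu}$, I would use the unit $\eta: (X_{\mu},0) \to \mathrm{Res}(\mathrm{Ind}(X_{\mu},0))$ of the adjunction, which is the canonical inclusion of $X_{\mu}$ as the degree-zero summand of $S\hhh^* \otimes X_{\mu}$. The standard triangle identity for adjunctions gives $\iota = \mathrm{Res}(\phi) \circ \eta$, which says exactly that $\phi$ agrees with $\iota$ on the degree-zero summand, i.e.\ it induces $\id_{X_{\mu}}$ after the identification $X_{\mu} \subset M_{c,\nu}(\mu) \leftrightarrow X_{\mu} \subset U$.

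The main (minor) obstacle is that the excerpt asserts the existence of the induction functor and the adjunction without an explicit verification, so to be fully self-contained one should exhibit $\phi$ concretely. The natural construction is to set $\phi$ on $S^m \hhh^* \otimes X_{\mu}$ to be the composition $\overline{x}_U^{(m)} \circ (\id_{S^m \hhh^*} \otimes \iota)$, where $\overline{x}_U^{(m)}: S^m \hhh^* \otimes U \to U$ is the iterated $x$-action on $U$. Compatibility with $x_U$ is then built in by construction, and compatibility with $y_U$ follows from the defining commutation relation in $Rep(H_c(\nu))$ (condition (3) of Definition \ref{Cat_Rep_H_nu_def}) together with the vanishing $y_U \circ (\id \otimes \iota) = 0$, propagated through the $S^m\hhh^*$-grading by induction on $m$. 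This bookkeeping is routine and is essentially the verification that the induction functor described before the definition of $M_{c,\nu}(\T)$ really is left adjoint to restriction.
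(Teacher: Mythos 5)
Your argument is correct and is essentially the paper's own proof: the paper likewise rephrases singularity of $X_\mu$ as a $Rep(B_{\nu})$-morphism $(X_\mu,0)\to \mathrm{Res}(U)$ and invokes the adjunction $\Hom_{Rep(B_{\nu})}(X_{\mu}, \mathrm{Res}(U)) \cong \Hom_{\co_{c,\nu}}(M_{c,\nu}(\mu),U)$ from the definition of the Verma object. Your extra remarks (the triangle identity and the explicit formula via the iterated $x$-action) only spell out the adjunction the paper takes for granted.
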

\begin{proof}
 Recall the definition of a Verma object. $X_{\mu}$ being \InnaA{an indecomposable} singular $\InnaE{\underline{\mathrm{Rep}}}(S_{\nu})$-subobject in $U$ means that $\Hom_{\InnaF{\underline{\mathrm{Rep}}}(B_{\nu})}(X_{\mu}, Res(U)) \neq 0$, where $X_{\mu}$ is considered as a $\InnaF{\underline{\mathrm{Rep}}}(B_{\nu})$-object with $y_{X_{\mu}} =0$, and there exists a non-zero map in $\Hom_{\InnaF{\underline{\mathrm{Rep}}}(B_{\nu})}(X_{\mu}, Res(U))$ whose image is the chosen copy of $X_{\mu}$ in $U$. Then by definition of a Verma object, we have: $\Hom_{\InnaF{\underline{\mathrm{Rep}}}(B_{\nu})}(X_{\mu}, Res(U)) = \Hom_{\InnaE{\underline{\co}}_{\text{  } c,\nu}}(M_{c,\nu}(\mu),U) \neq 0$ and there exists a non-zero map in $\Hom_{\InnaE{\underline{\co}}_{\text{  } c,\nu}}(M_{c,\nu}(\mu),U)$ inducing $\id_{X_\mu}$ on $X_\mu$ when regarded as a morphism of $\InnaE{\underline{\mathrm{Rep}}}(S_{\nu})$ \InnaE{\rm{ind}}-objects.
\end{proof}



\begin{prop}
Assume $\nu \notin \bZ_+$. Then 
\begin{itemize}
 \item Each Verma object $M_{c,\nu}(\T)$ has a maximal proper $\InnaE{\underline{\co}}_{\text{  } c,\nu}$-subobject $J$, i.e. it has a unique simple quotient $L_{c,\nu}(\T)$ in $\InnaE{\underline{\co}}_{\text{  } c,\nu}$. 
\item The simple objects of the category $\InnaE{\underline{\co}}_{\text{  } c,\nu}$ are exactly $L_{c,\nu}(\T)$.
\end{itemize}

\end{prop}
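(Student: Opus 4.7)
The plan is to follow the standard BGG argument, adapted to the Deligne-category setting. For the first claim, I would show that every proper subobject of $M(\T)$ must miss the lowest-grade piece $X_{\T}$, so that the sum of all proper subobjects remains proper. The key tool is the grading of $M(\T)$ coming from $\brh$: by the proposition immediately preceding the statement, each graded piece $S^m\hhh^*\otimes X_\T$ is the generalized $(h_{c,\nu}(\T)+m)$-eigenspace of $\brh$ on $M(\T)$, and these eigenvalues, for distinct $m$, differ by nonzero integers. Since $\brh$ is an endomorphism of the forgetful functor $\co_{c,\nu}\to Rep(S_\nu)$, any $\co_{c,\nu}$-subobject $N\subset M(\T)$ is preserved by $\brh$ and decomposes along the generalized eigenspaces:
\[
N \;=\; \bigoplus_{m\geq 0}\bigl(N\cap (S^m\hhh^*\otimes X_\T)\bigr).
\]
In particular $N\cap X_\T$ is a $Rep(S_\nu)$-subobject of $X_\T$; because $\nu\notin\bZ_+$ makes $Rep(S_\nu)$ semisimple and $X_\T$ simple, this intersection is either $0$ or all of $X_\T$. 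In the latter case $N$ would contain the $S\hhh^*$-submodule generated by $X_\T$, which is all of $M(\T)=S\hhh^*\otimes X_\T$, contradicting properness. Hence every proper subobject has trivial intersection with $X_\T$, and so does the sum $J$ of all proper subobjects; this $J$ is then the unique maximal proper subobject, and $L_{c,\nu}(\T):=M(\T)/J$ is the unique simple quotient.

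For the second claim, I would start from an arbitrary simple $V\in\co_{c,\nu}$. The remark after the definition of singular subobject guarantees an indecomposable singular $Rep(S_\nu)$-subobject $X_\mu\subset V$, and Proposition \ref{functoriality_prop_Verma_obj} yields a nonzero morphism $f\colon M_{c,\nu}(\mu)\to V$ inducing the identity on $X_\mu$. Simplicity of $V$ forces $f$ to be surjective; its kernel is a proper subobject of $M_{c,\nu}(\mu)$, hence contained in the maximal proper subobject from the first part. Composing the quotient maps gives a nonzero surjection $V\twoheadrightarrow L_{c,\nu}(\mu)$ (nonzero because $L_{c,\nu}(\mu)$ still contains $X_\mu$ in its lowest grade), and simplicity of $V$ upgrades it to an isomorphism $V\cong L_{c,\nu}(\mu)$.

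The main obstacle I expect is the clean decomposition of $N$ by $\brh$-eigenspaces. One must verify that the generalized eigenspaces of $\brh$ acting on the ind-object $M(\T)$ are exactly the graded pieces $S^m\hhh^*\otimes X_\T$, and that an arbitrary $\co_{c,\nu}$-subobject, a priori only a sub-ind-object, splits accordingly; this reduces to checking that $\brh$ is locally finite on $M(\T)$ (which is immediate from the proposition giving $\phi_m-(h_{c,\nu}(\T)+m)\id$ nilpotent) so that restriction to $N$ inherits the generalized eigenspace decomposition. A secondary technicality is interpreting the sum of all proper subobjects as a filtered colimit in $\mathrm{ind}\text{-}Rep(S_\nu)$ so that taking intersection with the compact object $X_\T$ commutes with that colimit; this is routine.
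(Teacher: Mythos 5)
Your proof is correct and follows essentially the same route as the paper: both parts hinge on the $\brh$-grading of a Verma object (showing every proper subobject lives in strictly positive grades, so the sum of all proper subobjects remains proper), and on Proposition \ref{functoriality_prop_Verma_obj} to realize an arbitrary simple as a quotient of a Verma object. You supply slightly more detail than the paper at the step where one argues a proper subobject cannot meet grade $0$ (invoking semisimplicity of $Rep(S_\nu)$ for $\nu\notin\bZ_+$ so that $X_\T$ is simple, and observing that $X_\T$ generates $M(\T)$ over $S\hhh^*$), but the underlying idea is the same one the paper leaves partly implicit.
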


\begin{proof}
\InnaF{Consider the natural $\bZ_+$-grading on $M_{c,\nu}(\T)$ (as an \InnaE{\rm{ind}}-object of $\InnaE{\underline{\mathrm{Rep}}}(S_{\nu})$). Recall that by Proposition \ref{prop:action_brh_Verma}, $\brh$ acts on grade $m$ of $M_{c,\nu}(\T)$ by $(h_{c,\nu}(\T) + m)\id$.}
\begin{itemize}
 \item Let $J$ be the sum of all the proper $\InnaE{\underline{\co}}_{\text{  } c,\nu}$-subobjects of $M_{c,\nu}(\T)$.
For any proper $\InnaE{\underline{\co}}_{\text{  } c,\nu}$-subobject $N$ of $M_{c,\nu}(\T)$, $N$ inherits the \InnaF{$\bZ_+$-}grading from $M_{c,\nu}(\T)$, with $\brh$ acting on grade $m$ of $N$ by $(h_{c,\nu}(\T) + m)\id$. Since $N$ is a proper \InnaF{$S\hhh^*$-submodule}, we have $m>0$. So $J$ can be presented as a direct sum of $\InnaE{\underline{\mathrm{Rep}}}(S_{\nu})$-objects on which the restriction of $\brh$ acts by \InnaF{diagonally, with eigenvalues whose real part is greater or equal to $h_{c,\nu}(\T) +1$}. This proves that $J$ is a proper $\InnaE{\underline{\co}}_{\text{  } c,\nu}$-subobject of $M_{c,\nu}(\T)$.

\item Let $L$ be a simple object of $\InnaE{\underline{\co}}_{\text{  } c,\nu}$. Then $L$ has a simple singular $\InnaE{\underline{\mathrm{Rep}}}(S_{\nu})$-subobject $X_{\T}$. By Proposition \ref{functoriality_prop_Verma_obj}, there exists a non-zero map $M(\T) \longrightarrow L$, with induced map on $X_{\T}$ being $\id_{\T}$. But $L$ is simple, so $M(\T) \longrightarrow L$ is surjective.
\end{itemize}

\end{proof}

\begin{remark}

Let $M_{c,\nu}(\T)$ be a Verma object in $\InnaE{\underline{\co}}_{\text{  } c,\nu}$. By the propositions above, to check its reducibility, we only need to check whether there are any non-zero morphisms from other Verma objects to $M_{c,\nu}(\T)$.
\end{remark}

\section{\texorpdfstring{Functor $\InnaE{\underline{\co}}_{\text{  } c,\nu=n} \rightarrow \co(H_c(n))$}{Specialization functors}}\label{sec:functor_int_case}
\InnaA{
In this section we construct a functor $\mathcal{F}_{c,n}: \InnaE{\underline{\mathrm{Rep}}}(H_c(\nu=n)) \longrightarrow \InnaE{\mathrm{Rep}}(H_c(n))$ restricting to $\mathcal{F}_{c,n}: \InnaE{\underline{\co}}_{\text{  } c,\nu=n} \longrightarrow \co(H_c(n))$ for any $c \in \bC, n\in \bZ_+$. This functor is analogous to (and based on) the functor $\mathcal{F}_n: \InnaE{\underline{\mathrm{Rep}}}(S_{\nu=n}) \longrightarrow \InnaE{\mathrm{Rep}}(S_{n})$ which induces the equivalence $\InnaE{\underline{\mathrm{Rep}}}(S_{\nu=n})/\idealI_{\nu=n} \cong \InnaE{\mathrm{Rep}}(S_n)$ discussed in Section \ref{sec:Del_cat}.

The functor $\mathcal{F}_{c,n}$ is constructed as follows: 
let $(M, x, y) \in \InnaE{\underline{\mathrm{Rep}}}(H_c(\nu))$ (see Section \ref{sec:Rep_H_c_nu}), and consider $\mathcal{F}_n (M) \in \InnaE{\mathrm{ind}}-\InnaE{\mathrm{Rep}}(S_n)$. The maps $\mathcal{F}_n (x), \mathcal{F}_n (y)$ then make $\mathcal{F}_n (M)$ a module over the algebra $\bC S_n \ltimes T(\InnaE{\hhh \oplus \hhh^*})$, and the conditions on the maps $x,y$ given in Section \ref{sec:Rep_H_c_nu} imply that this action factors through $H_c(n)$, thus making $\mathcal{F}_n (M)$ an $H_c(n)$-module. 

Recall that a morphism $\phi: (M_1, x_1, y_1) \longrightarrow (M_2, x_2, y_2) $ in $\InnaE{\underline{\mathrm{Rep}}}(H_c(\nu))$ is by definition a morphism $\phi: M_1 \longrightarrow M_2$ of $\InnaE{\underline{\mathrm{Rep}}}(S_{\nu=n})$ \InnaE{\rm{ind}}-objects such that $x_2 \circ (\id_{\fh^*} \otimes \phi) = \phi \circ x_1$ and $y_2 \circ (\id_{\fh} \otimes \phi) = \phi \circ y_1$.
So $\mathcal{F}_n (\phi)$ is automatically a morphism of $H_c(n)$-modules.

Thus we define $$\mathcal{F}_{c,n}(M, x,y) : = \mathcal{F}_n (M) \text{ with action of } H_c(n) \text{ given by } \mathcal{F}_n (x), \mathcal{F}_n (y) $$
$$ \mathcal{F}_{c,n}(\phi):= \mathcal{F}_n ( \phi)$$

To see that $\mathcal{F}_{c,n}$ restricts to $ \InnaE{\underline{\co}}_{\text{  } c,\nu=n} \longrightarrow \co(H_c(n))$, note that $\mathcal{F}_n(S \fh) = S \fh, \mathcal{F}_n(S \fh^*) = S \fh^*$. The definitions of $ \InnaE{\underline{\co}}_{\text{  } c,\nu=n}$, $\co(H_c(n))$ then imply that $\mathcal{F}_{c,n}( \InnaE{\underline{\co}}_{\text{  } c,\nu=n}) \subset \co(H_c(n))$. In particular, if $\abs{\lambda}+\lambda_1 \leq n$ then the Verma object of lowest weight $\lambda$ goes to the Verma module of lowest weight $\tilde{\lambda}(n)$ .

Finally, a lemma which will be useful to us later on:

\begin{lemma}\label{lem:Verma_int_dim_hom}
 Fix Young diagrams $\mu, \T$, and let $n>>0$ (in fact, it is enough that $n >2\max(\abs{\mu}, \abs{\T}) +1$). Then $$\mathcal{F}_{c,n}: \Hom_{\InnaE{\underline{\co}}_{\text{  } c,\nu=n}}(M_{c, \nu=n}(\mu), M_{c, \nu=n}(\T)) \longrightarrow \Hom_{\co(H_c(n))}(M_{c, n}(\tilde{\mu}(n)), M_{c, n}(\tilde{\T}(n)))$$ is a bijection.
\end{lemma}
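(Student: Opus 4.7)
The plan is to use the universal property of Verma objects to reduce the statement to a claim about singular morphism spaces in $Rep(S_{\nu=n})$ versus $Rep(S_n)$, and then to exploit that $\mathcal{F}_n$ is the quotient functor of $Rep(S_{\nu=n})$ by the tensor ideal $\idealI_{\nu=n}$ of negligible morphisms.

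First, I would apply Proposition~\ref{functoriality_prop_Verma_obj} (and its proof) to identify $\Hom_{\co_{c,\nu=n}}(M_{c,\nu=n}(\mu), M_{c,\nu=n}(\T))$ with the space of singular morphisms $f\colon X_\mu \to M_{c,\nu=n}(\T)$ in $Rep(S_{\nu=n})$ (i.e.\ those satisfying $y\circ(\id_\hhh\otimes f)=0$), and similarly on the classical side with $X_\mu$ replaced by $V_{\tilde{\mu}(n)}$. Since $\brh$ acts by $h_{c,\nu=n}(\mu)$ on $X_\mu$ and by $h_{c,\nu=n}(\T)+m$ on the $m$-th graded piece of $M_{c,\nu=n}(\T)$, every such $f$ factors through a single graded piece $S^d\hhh^*\otimes X_\T$, with $d=h_{c,\nu=n}(\mu)-h_{c,\nu=n}(\T)$. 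A direct expansion gives $ct(\tilde{\lambda}(n)) = \binom{n-|\lambda|}{2} + ct(\lambda) - |\lambda|$, whence $h_{c,\nu=n}(\lambda) = h_{c,n}(\tilde{\lambda}(n))$ whenever $|\lambda|+\lambda_1 \leq n$---a condition satisfied by both $\mu$ and $\T$ thanks to the hypothesis $n > 2\max(|\mu|,|\T|)+1$---so the same degree $d$ governs both Hom spaces.

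Next, I would establish that $\mathcal{F}_n$ restricts to bijections on both morphism spaces $\Hom_{Rep(S_{\nu=n})}(X_\mu, S^d\hhh^*\otimes X_\T)$ and $\Hom_{Rep(S_{\nu=n})}(\hhh\otimes X_\mu, M_{c,\nu=n}(\T))$. These Hom spaces sit in a commuting square with the linear map $f\mapsto y\circ(\id_\hhh\otimes f)$, whose kernel is the space of singular morphisms; since $\mathcal{F}_n$ commutes with this map, a bijection on both Hom spaces automatically restricts to a bijection on the kernels, yielding the lemma. Surjectivity on each Hom space is automatic: $\mathcal{F}_n$ is full, being the quotient by $\idealI_{\nu=n}$. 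For injectivity, the hypothesis $n > 2\max(|\mu|,|\T|)+1$ places $X_\mu$---and, by Pieri's rule, also $\hhh\otimes X_\mu$---in the abelian semisimple subcategory $Rep(S_{\nu=n})^{(n/2)}$, so every simple summand $X_{\mu'}$ that arises has $\End(X_{\mu'})=\bC$ and $\dim X_{\mu'}\neq 0$. A standard trace argument then shows that a negligible morphism $f\colon X_{\mu'}\to Y$ must take values only in indecomposable summands of $Y$ that lie in the same $Rep(S_{\nu=n})$-block as $X_{\mu'}$ but are non-isomorphic to it. Invoking the Comes--Ostrik description of blocks \cite{CO}, the block of $X_{\mu'}$ is trivial whenever $\mu'$ is not a single column, and then $f=0$ follows.

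The main obstacle is the column case $\mu' = \pi^k$: the block of $X_{\pi^k}$ in $Rep(S_{\nu=n})$ is the infinite chain of all $X_{\pi^{k''}}$, which does carry nonzero negligible morphisms $X_{\pi^k}\to X_{\pi^{k''}}$ for $k''>k$. To handle this, I would compute explicitly how $y$ acts on the column summands of $S^d\hhh^*\otimes X_\T$ (using the deformation of the differentiation map described in Section~\ref{ssec:Verma_obj}) and verify that any nonzero negligible morphism in this chain fails to be annihilated by $y$, so singularity forces it to vanish. Once injectivity is established in this final case, bijectivity on the underlying Hom spaces follows uniformly, and taking kernels of the $y$-map gives the claimed bijection on $\Hom$ of Verma objects.
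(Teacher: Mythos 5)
Your reduction to singular morphism spaces, the observation that $\brh$ pins down the degree $d$ in which the lowest weight must land, and the use of fullness of $\mathcal{F}_n$ for surjectivity all coincide with the paper's argument. The two proofs part ways at the injectivity step. The paper's route is to note that for $n$ large, not only $X_\mu$ and $\hhh\otimes X_\mu$ but also the relevant graded pieces $S^d\hhh^*\otimes X_\T$ of the target all lie in the abelian semisimple subcategory $Rep(S_{\nu=n})^{(\nu/2)}$; in a semisimple full subcategory of a Karoubian rigid tensor category negligible morphisms vanish, so $\mathcal{F}_n$ is fully faithful there and the injectivity is immediate. You instead only control the source side and fall back on a block-theoretic argument in the ambient category $Rep(S_{\nu=n})$.

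That block-theoretic route is a legitimate alternative, but the description of the blocks you cite is not what Comes--Ostrik prove. For $\nu=n\in\bZ_+$ there is no dichotomy between single columns and the rest: a partition $\mu'$ lies in a trivial (singleton) block of $Rep(S_{\nu=n})$ as soon as $n$ is large relative to $\mu'$ (this is guaranteed by the hypothesis $n>2\max(\abs{\mu},\abs{\T})+1$ for every $\mu'$ appearing as a summand of $X_\mu$ or $\hhh\otimes X_\mu$), \emph{whatever its shape}, and the infinite chain $\{X_{\pi^{k''}}\}_{k''}$ you describe is not a block of $Rep(S_{\nu=n})$ in the first place. Consequently the column case you single out as the ``main obstacle,'' and the explicit computation of the $y$-action you propose there, are both unnecessary. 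With the block description corrected your argument actually closes more quickly than you expect: each $X_{\mu'}$ involved is in a trivial block of nonzero categorical dimension, so a negligible morphism out of it has all components zero, giving injectivity of $\mathcal{F}_n$ on both $\Hom$-spaces and hence on the kernels of the $y$-maps. This is a genuinely different (and arguably tighter) way to finish than the paper's, since it never requires the target $S^d\hhh^*\otimes X_\T$ to sit in $Rep(S_{\nu=n})^{(\nu/2)}$.
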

\begin{proof}
Recall that we have:
\begin{align*}
&\Hom_{\InnaE{\underline{\co}}_{\text{  } c,\nu=n}}(M_{c, \nu=n}(\mu), M_{c, \nu=n}(\T)) \cong \\
&\cong \{ \phi \in \Hom_{\InnaE{\underline{\mathrm{Rep}}}(S_{\nu=n})}(X_{\mu}, M_{c, \nu=n}(\T)): y_{M_{c, \nu=n}(\T)} \lvert_{\hhh \otimes \phi(X_{\mu})} =0 \}\end{align*}
and
\begin{align*}
&\Hom_{\co(\overline{H}_c(n))}(M_{c, n}(\tilde{\mu}(n)), M_{c, n}(\tilde{\T}(n)))  \cong \\
&\cong \{ \psi \in \Hom_{\InnaE{\mathrm{Rep}}(S_n)}(\tilde{\mu}(n), M_{c, n}(\tilde{\T}(n))): \mathcal{F}_{c,n} \left( y_{M_{c, \nu=n}(\T)} \right) \lvert_{\hhh \otimes \psi(\tilde{\mu}(n))} =0 \}
\end{align*}

It remains to check that 
\begin{align*}
&\mathcal{F}_{n}: \{ \phi \in \Hom_{\InnaE{\underline{\mathrm{Rep}}}(S_{\nu=n})}(X_{\mu}, M_{c, \nu=n}(\T)):  y_{M_{c, \nu=n}(\T)} \lvert_{\hhh \otimes \phi(X_{\mu})} =0 \}  \longrightarrow \\
&\longrightarrow \{ \psi \in \Hom_{\InnaE{\mathrm{Rep}}(S_n)}(\tilde{\mu}(n), M_{c, n}(\tilde{\T}(n))):  \mathcal{F}_{c,n} \left( y_{M_{c, \nu=n}(\T)} \right) \lvert_{\hhh \otimes \psi(\tilde{\mu}(n))} =0 \}
\end{align*}

is an isomorphism. 
Let $\psi \in \Hom_{\InnaE{\mathrm{Rep}}(S_n)}(\tilde{\mu}(n), M_{c, n}(\tilde{\T}(n)))$ such that $ \mathcal{F}_{c,n} \left( y_{M_{c, \nu=n}(\T)} \right) \lvert_{\hhh \otimes \psi(\tilde{\mu}(n))} =0$.

Since $\mathcal{F}_n$ is surjective on $\Hom$-spaces, there is a $\phi \in \Hom_{\InnaE{\underline{\mathrm{Rep}}}(S_{\nu=n})}(X_{\mu}, M_{c, \nu=n}(\T))$ such that $\mathcal{F}_{n}(\phi) = \psi$. We need to check that there exists a unique such $\phi$ so that $y_{M_{c, \nu=n}(\T)} \lvert_{\hhh \otimes \phi(X_{\mu})} =0$.

Let $m \in \bZ_+$ be such that $\psi(\tilde{\mu}(n)) \subset S^m \InnaD{\fh^*_0} \otimes \tilde{\T}(n) \subset M_{c, n}(\tilde{\T}(n))$. Recall that $$ \mathcal{F}_{n}(y\lvert_{\hhh \otimes \phi(X_{\mu})})=y_n \lvert_{\hhh \otimes \tilde{\mu}(n)} = 0$$ so $y \lvert_{\hhh \otimes \phi(X_{\mu})}:\hhh \otimes X_{\mu} \longrightarrow S^m \InnaD{\fh^*_0} \otimes X_{\T}$ is a negligible morphism in $\InnaE{\underline{\mathrm{Rep}}}(S_{\nu=n})$. But since $n$ is assumed to be very large, $\hhh \otimes \phi(X_{\mu}), S^m \InnaD{\fh^*_0} \otimes X_{\T} \in \InnaE{\underline{\mathrm{Rep}}}(S_{\nu=n})^{(\nu/2)}$, which is a semisimple subcategory of $\InnaE{\underline{\mathrm{Rep}}}(S_{\nu=n})$ (see Section \ref{sec:Del_cat}). So a negligible morphism in this subcategory is zero, and thus $y \lvert_{\hhh \otimes \phi(X_{\mu})}=0$.

Similarly, the fact that $X_{\mu}, S^m \InnaD{\fh^*_0} \otimes X_{\T} \in \InnaE{\underline{\mathrm{Rep}}}(S_{\nu=n})^{(\nu/2)}$ implies that such a morphism $\phi$ is unique.

\end{proof}
}

%

\newpage
\section{Morphisms between Verma objects: general remarks and necessary conditions}\label{sec:Morph_Verma_necessary_cond}
Let $\nu \notin \bZ_+$.
\subsection{Necessary conditions}
The following formula will serve as one of the main tools of this paper.

\begin{prop}\label{main_for_prop}
Let $\mu$ be a partition and $m > 0$ an integer, and assume there is a non-zero morphism $\InnaF{\phi:} M_{c,\nu}(\mu) \longrightarrow M_{c,\nu}(\T)$ such that $\InnaF{\phi(X_{\mu})}$ (image of the lowest weight of $M(\mu)$) sits in degree $m$ of $M_{c,\nu}(\T)$. Then 
\begin{equation}\label{main_for1}
c\left( \frac{\abs{\T}^2-\abs{\mu}^2 -(\abs{\T} - \abs{\mu} ) }{2} +ct(\T) - ct(\mu) \right) = m + (\abs{\T} - \abs{\mu})c\nu 
\end{equation}
\end{prop}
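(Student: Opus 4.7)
The plan is to use the element $\brh \in \End(Forget_{\co_{c,\nu} \rightarrow Rep(S_{\nu})})$ as essentially the only tool. By construction $\brh$ is natural, hence commutes with every morphism in $\co_{c,\nu}$; in particular, any $\phi: M_{c,\nu}(\mu) \longrightarrow M_{c,\nu}(\T)$ carries each generalized $\brh$-eigenspace of the source into the generalized $\brh$-eigenspace of the target with the same eigenvalue.

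First I would invoke the proposition immediately preceding this statement, which identifies those generalized eigenvalues grade-by-grade: on grade $k$ of $M_{c,\nu}(\T)$ the element $\brh$ has unique generalized eigenvalue $h_{c,\nu}(\T)+k$. Applied to the lowest grade $X_{\mu} \subset M_{c,\nu}(\mu)$ (generalized eigenvalue $h_{c,\nu}(\mu)$) and to grade $m$ of $M_{c,\nu}(\T)$ (generalized eigenvalue $h_{c,\nu}(\T)+m$), the hypothesis that a non-zero $\phi$ sends $X_{\mu}$ into grade $m$ forces the scalar identity
\[ h_{c,\nu}(\mu) = h_{c,\nu}(\T) + m. \]

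The last step is pure bookkeeping: expand this identity using Definition \ref{h_c_nu}, after which the common $\dim(\hhh)/2$ summands cancel. The main algebraic move is the factorisation
\[ (\nu-\abs{\T})(\nu-\abs{\T}-1) - (\nu-\abs{\mu})(\nu-\abs{\mu}-1) = (\abs{\mu}-\abs{\T})(2\nu - \abs{\T} - \abs{\mu} - 1), \]
after which separating out the $\nu$-linear contribution and moving the resulting $c\nu(\abs{\T}-\abs{\mu})$ term to the right-hand side produces exactly the claimed formula. I do not expect a real obstacle here; the only subtlety worth flagging is that $\brh$ need not act semisimply on the grades, but since the matching principle applies to \emph{generalized} eigenvalues of $\brh$, semisimplicity is irrelevant for the argument.
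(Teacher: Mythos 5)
Your argument is correct and is essentially identical to the paper's: both proceed by noting that any morphism $\phi$ must intertwine $\brh$, reading off the generalized eigenvalue of $\brh$ on $X_{\mu}$ inside $M_{c,\nu}(\mu)$ and on grade $m$ of $M_{c,\nu}(\T)$ to obtain $h_{c,\nu}(\mu)=h_{c,\nu}(\T)+m$, and then expanding via Definition \ref{h_c_nu}. Your flagged subtlety (generalized versus genuine eigenvalues) is correct and harmless, and the factorisation you cite reduces to exactly the claimed identity.
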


\begin{proof}
\InnaF{Consider the endomorphism $\brh \in \End(\InnaE{\mathrm{Forget}}_{\InnaE{\underline{\co}}_{\text{  } c,\nu}})$ acting on Verma objects} $M_{c,\nu}(\mu) , M_{c,\nu}(\T)$.

\InnaF{By Proposition \ref{prop:action_brh_Verma}, $\brh_{M_{c,\nu}(\mu)}$ acts on $X_{\mu} \subset M_{c,\nu}(\mu)$ with generalized eigenvalue $h_{c,\nu}(\mu)$, while $\brh_{M_{c,\nu}(\T)}$ acts on $\phi(X_{\mu}) \subset S^m \hhh^* \otimes X_{\T}$ with generalized eigenvalue $ h_{c,\nu}(\T) + m$. We conclude that }
$$ h_{c,\nu}(\T) + m = h_{c,\nu}(\mu)$$
\InnaF{That is, }
\begin{align*}
 &\frac{\nu-1}{2} - c \cdot \left(\frac{(\nu - \abs{\T})(\nu - \abs{\T} -1)}{2} -\abs{\T} + ct(\T)\right) + m = \\
&= \frac{\nu-1}{2} - c \cdot \left( \frac{(\nu - \abs{\mu})(\nu - \abs{\mu} -1)}{2} -\abs{\mu} + ct(\mu)\right) 
\end{align*}

which can be rewritten as 
$$ c \cdot \left( \frac{\abs{\T}^2-\abs{\mu}^2- (\abs{\T} - \abs{\mu})}{2} - (\abs{\T} - \abs{\mu})\nu + ct(\T) - ct(\mu) \right) = m $$

\end{proof}

\begin{remark}
 Note that if $c=0$, Equation \eqref{main_for1} implies that $m=0$, $\mu = \T$ and the morphism $M(\mu) \longrightarrow M(\T)$ is the identity map. This means that for $c=0$, all the Verma objects are simple. The category $\co(H_0(\nu))$ is a continuation of the categories of modules over $\bC S_n \ltimes \InnaB{\mathbb{A}}_n$, which are $\mathcal{O}$-coherent $D$-modules and whose Fourier transform has support $\{0\}$. Here $\InnaB{\mathbb{A}}_n$ is the $n$-th Weyl algebra (the algebra of differential operators on $\bC^n$). 
\end{remark}

From now on, we will assume that $c \neq 0$ and denote:
\begin{notn}
  $c':= \frac{1}{c}$.
\end{notn}

\begin{remark}
 In this notation, Equation \eqref{main_for1} can be rewritten as
\begin{equation}\label{main_for}
 \frac{\abs{\T}^2-\abs{\mu}^2 -(\abs{\T} - \abs{\mu} ) }{2} +ct(\T) - ct(\mu) = mc' + (\abs{\T} - \abs{\mu})\nu 
\end{equation}
\end{remark}

\begin{notation}\label{L_tau_mu_m_notn}
Let $\T, \mu$ be Young diagrams, and $m$ be a positive integer.
 Denote by $\mathcal{L}_{\T, \mu, m}$ the set of points $(c', \nu)$ in $\bC^2$ satisfying the Equation \eqref{main_for}. 
\end{notation}

The above proposition shows (with the notations as in Notation \ref{b_tau_mu_notn}): $\mathcal{B}_{\T, \mu} \subset \biguplus_{m \in \bZ_{>0}} \mathcal{L}_{\T, \mu, m}$.

\InnaB{
\begin{example}
$\mathcal{L}_{\T, \T, 0} = \bC^2$ and $\mathcal{L}_{\T, \T, m} = \emptyset$ for any Young diagram $\T$ and any $m \in \bZ_{>0}$.
\end{example}}

\InnaA{\begin{notation}\label{B_tau_mu_m_notn}
Let $\T, \mu$ be Young diagrams, and $m$ be a positive integer.
 Denote: $\mathcal{B}_{\T, \mu, m}:= \mathcal{L}_{\T, \mu, m} \InnaB{\cap} \mathcal{B}_{\T, \mu}$.
\end{notation}}
\begin{remark}
 Note that Proposition \ref{main_for_prop} implies the following statement:

Fix $c, \nu$, and consider the lowest weight $X_{\mu}$ of the Verma object $M_{c,\nu}(\mu)$. Then for all non-trivial maps $M_{c,\nu}(\mu) \longrightarrow M_{c,\nu}(\T)$ the degree $m$ of $M_{c,\nu}(\T)$ in which $X_{\mu}$ sits is the same, since it is given by Equation \eqref{main_for}.
\end{remark}

\InnaB{As a special case of Proposition \ref{main_for_prop}}, we have the following lemma (for its statement and proof for the \InnaB{classical Cherednik algebra}, see \cite[Lemma 3.5]{ES}):

\begin{lemma}\label{necessary_and_suff_cond_in_deg_1}
 Let $X_{\mu} \subset \InnaD{\fh^*_0} \otimes X_{\T}$ in $\InnaE{\underline{\mathrm{Rep}}}(S_{\nu})$. We can regard $X_{\mu}$ as sitting in degree $1$ of the $\InnaE{\underline{\co}}_{\text{  } c,\nu}$-object $M_{c,\nu}(\T)$. Then there is a morphism $M_{c,\nu}(\mu) \longrightarrow M_{c,\nu}(\T)$ in $\InnaE{\underline{\co}}_{\text{  } c,\nu}$, inducing $\id_{X_{\mu}}$ on $X_{\mu}$, if and only if 
$$
  \frac{\abs{\T}^2-\abs{\mu}^2 -(\abs{\T} - \abs{\mu} ) }{2} +ct(\T) - ct(\mu)  = c' + (\abs{\T} - \abs{\mu})\nu 
$$

\end{lemma}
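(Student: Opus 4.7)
Necessity is immediate from Proposition~\ref{main_for_prop} applied with $m=1$, since the image of the lowest weight $X_{\mu}$ of $M_{c,\nu}(\mu)$ would sit in degree~$1$ of $M_{c,\nu}(\T)$.

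For sufficiency, assume the displayed equation holds, and let $\iota : X_{\mu} \hookrightarrow \hhh^{*} \otimes X_{\T}$ be the given embedding. By Proposition~\ref{functoriality_prop_Verma_obj}, producing the desired morphism is equivalent to showing that the copy of $X_{\mu}$ sitting in degree~$1$ of $M_{c,\nu}(\T)$ is singular, i.e.\ that $y_{M_{c,\nu}(\T)} \circ (\id_{\hhh}\otimes \iota) : \hhh \otimes X_{\mu} \to M_{c,\nu}(\T)$ (whose image automatically lies in degree~$0$, namely $X_{\T}$) vanishes. The plan is to compute this composition directly from condition~(3) of Definition~\ref{Cat_Rep_H_nu_def}: since $y$ vanishes on degree~$0$, the relation collapses, after restriction to $\hhh \otimes \hhh^{*} \otimes X_{\T}$ and postcomposition with $\id_{\hhh}\otimes \iota$, into
\[
y \circ (\id_{\hhh}\otimes \iota) = (ev_{\hhh} \otimes \id_{X_{\T}}) \circ \bigl( \id - c\,\Omega^{3} + c\,\Omega^{13} \bigr) \circ (\id_{\hhh}\otimes \iota).
\]

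Because $\nu \notin \bZ_{+}$, $Rep(S_{\nu})$ is semisimple, and Pieri's rule (Proposition~\ref{Pieri}) makes $\Hom_{Rep(S_{\nu})}(\hhh \otimes X_{\mu}, X_{\T})$ one-dimensional for $\mu \neq \T$, spanned by the canonical map $\alpha := (ev_{\hhh} \otimes \id_{X_{\T}}) \circ (\id_{\hhh}\otimes \iota)$. (The case $\mu = \T$ is handled separately: the displayed equation collapses to $0 = c'$, which has no solution, matching the fact that the $\brh$-eigenvalues on $X_{\T}$ in degrees~$0$ and~$1$ never coincide.) Hence the above map equals $f(c,\nu)\,\alpha$ for some scalar $f(c,\nu)$. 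Two identifications then pin down $f$: naturality of $\Omega$ gives $\Omega^{3} = C(\T)\,\id$ on the third factor, so the $\Omega^{3}$-contribution is $-c\,C(\T)\,\alpha$; and applying the symmetry $\sigma_{12}$ to identify $\Omega^{13}$ with $\Omega^{23}$ on $\hhh^{*}\otimes\hhh\otimes X_{\T}$, decomposing $\hhh\otimes X_{\T}$ by Pieri, and invoking naturality applied to the Pieri-dual embedding $X_{\mu}\hookrightarrow \hhh\otimes X_{\T}$ inherited from $\iota$ via $\hhh^{*}\cong\hhh$, one obtains $(ev_{\hhh}\otimes \id)\circ \Omega^{13}\circ (\id\otimes \iota) = C(\mu)\,\alpha$. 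Combining, $f(c,\nu) = 1 - c\bigl(C(\T)-C(\mu)\bigr)$, and expanding the formula for $C(\lambda)$ from Section~\ref{ssec:Omega_elem} shows that $f(c,\nu) = 0$ is precisely the displayed equation.

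The main technical hurdle is the identification $(ev_{\hhh}\otimes\id)\circ \Omega^{13}\circ(\id\otimes\iota) = C(\mu)\,\alpha$, which requires tracking $\iota$ through the braiding and the Pieri decomposition of $\hhh\otimes X_{\T}$. A safer alternative, bypassing this computation, is to argue by interpolation: $f$ depends polynomially on $(c,\nu)$, its zero locus is contained in $\mathcal{L}_{\T,\mu,1}$ by Proposition~\ref{main_for_prop}, and via Lemma~\ref{lem:Verma_int_dim_hom} together with the classical \cite[Lemma~3.5]{ES} it vanishes at infinitely many integer points $\nu = n \gg 0$ lying on this line. Since $f$ is linear in $(c,\nu)$, this forces $f$ to vanish on the entire line $\mathcal{L}_{\T,\mu,1}$, completing the sufficiency proof.
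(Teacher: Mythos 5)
Your primary argument is essentially the paper's: both reduce the claim to showing that the copy of $X_{\mu}$ in degree~$1$ of $M_{c,\nu}(\T)$ is singular, compute the map $y$ on degree~$1$ from relation~(3) of Definition~\ref{Cat_Rep_H_nu_def}, observe that $\Hom_{Rep(S_{\nu})}(\hhh\otimes X_{\mu}, X_{\T})$ is one-dimensional, and pin down the scalar as $1 - c\bigl(C(\T)-C(\mu)\bigr)$, i.e.\ $1 + h_{c,\nu}(\T) - h_{c,\nu}(\mu)$. Where you diverge from the paper is in how the $\Omega^{13}$-term is evaluated. You stay in the three-factor picture $\hhh\otimes\hhh^{*}\otimes X_{\T}$ and then worry, rightly, that identifying $(ev_{\hhh}\otimes\id)\circ\Omega^{13}\circ(\id\otimes\iota)$ with $C(\mu)\alpha$ requires chasing $\iota$ through the braiding. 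The paper avoids this entirely: it first uses rigidity to convert $y|_{\hhh^{*}\otimes X_{\T}}\in\Hom(\hhh\otimes\hhh^{*}\otimes X_{\T}, X_{\T})$ into an endomorphism $Y$ of the two-factor object $\hhh^{*}\otimes X_{\T}$, where the relation reads $Y = 1 - c\Omega^{2} + c\Omega^{1,2}$. In this picture $\Omega^{1,2}$ is just $\Omega$ acting on the whole object $\hhh^{*}\otimes X_{\T}$, and since $\Omega$ is a natural endomorphism of the identity functor it restricts to $C(\mu)\,\id_{X_{\mu}}$ on the summand $X_{\mu}$ with no braiding gymnastics. So what you flagged as the main technical hurdle dissolves once you dualize first. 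Your interpolation fallback is a genuinely different and perfectly valid route (the scalar $f(c,\nu)$ is affine in $(c,\nu)$, its zero locus lies in $\mathcal{L}_{\T,\mu,1}$ by the eigenvalue argument, and it vanishes at infinitely many integer points via Lemma~\ref{lem:Verma_int_dim_hom} and the classical result), though it trades the one-line naturality observation for a compactness/specialization argument; the paper's direct computation is tighter and self-contained. Also note that your $\mu=\T$ aside has a small misstatement: one should still verify that $X_{\T}\subset\hhh^{*}\otimes X_{\T}$ (which happens with multiplicity $cc(\T)\geq 1$ for $\T\neq\emptyset$), but the conclusion is correct since $h_{c,\nu}(\T)+1 = h_{c,\nu}(\T)$ has no solution, consistent with the equation $c'=0$ being unsolvable.
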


\begin{proof}
 \InnaE{Consider the morphism $y_{M_{c,\nu}(\T)} \mid_{\InnaD{\fh^*_0} \otimes X_{\T}}: \hhh \otimes \InnaD{\fh^*_0} \otimes X_{\T} \longrightarrow X_{\T}$. By the commutation relations in Section \ref{sec:Rep_H_c_nu}, we have an equality between the following morphisms $\hhh \otimes \InnaD{\fh^*_0} \otimes X_{\T} \longrightarrow X_{\T}$ in $\underline{Rep}(S_{\nu})$:
 \begin{align*}
&y_{M_{c,\nu}(\T)} \mid_{\InnaD{\fh^*_0} \otimes X_{\T}}\circ \left(\id_{\hhh} \otimes x_{M_{c,\nu}(\T)} \rvert_{X_{\T}} \right)- x_{M_{c,\nu}(\T)} \circ \left(\id_{\hhh^*} \otimes  y_{M_{c,\nu}(\T)} \rvert_{X_{\T}} \right)\circ \left( c_{\hhh, \hhh^*} \otimes \id_{X_{\T}} \right) = \\
&= (ev_{\hhh, \hhh^*} \otimes_{X_{\T}})\circ  (\id-c\Omega^3 - c\Omega^{23})  
 \end{align*}

 Furthermore, the definition of $x_{M_{c,\nu}(\T)}, y_{M_{c,\nu}(\T)}$ tells us that the right hand side of the above equality is just $y_{M_{c,\nu}(\T)} \mid_{\InnaD{\fh^*_0} \otimes X_{\T}}$, so 
 $$y_{M_{c,\nu}(\T)} \mid_{\InnaD{\fh^*_0} \otimes X_{\T}} = (ev_{\hhh, \hhh^*} \otimes_{X_{\T}})\circ  (\id-c\Omega^3 - c\Omega^{23})$$
 
 Denote by $Y$ the endomorphism the isomorphism of $\InnaD{\fh^*_0} \otimes X_{\T}$ corresponding to the morphism $y_{M_{c,\nu}(\T)} \mid_{\InnaD{\fh^*_0} \otimes X_{\T}}$ under the correspondence $$ \Hom_{Rep(S_{\nu})}(\hhh \otimes \InnaD{\fh^*_0} \otimes X_{\T}, X_{\T}) \cong \End_{Rep(S_{\nu})}(\InnaD{\fh^*_0} \otimes X_{\T})$$ We immediately see that
 $$Y= \id-c\Omega^2 - c\Omega^{12}$$ (note that in the classical category $\co(H_c(n))$, we would have: $Y = 1- c \sum_{s \in \cS} 1\otimes s + c \sum_{s \in \cS} s\otimes s$).
 
 \mbox{}
 
 By definition, an indecomposable object $X_{\mu} \subset \hhh^* \otimes X_{\T}$ is singular in $M_{c,\nu}(\T)$ iff $y_{M_{c,\nu}(\T)} \rvert_{X_{\mu}}=0$, which is equivalent to requiring that $Y \rvert_{X_{\mu}}=0$.
 


As it was said in Subsection \ref{ssec:Omega_elem}, the endomorphism $c\Omega^2 = c \id_{\hhh^*} \otimes \Omega$ has a unique generalized eigenvalue $$ c \cdot C_{\nu}(\T)= h_{c,\nu}(\T)-\frac{\nu-1}{2}$$ on $\InnaD{\fh^*_0} \otimes X_{\T}$ (i.e. $c\Omega^2 - c \cdot C_{\nu}(\T)$ is a nilpotent endomorphism of $\InnaD{\fh^*_0} \otimes X_{\T}$). Similarly, the restriction of the endomorphism $c\Omega^{12} \in End_{Rep(S_{\nu})}(\InnaD{\fh^*_0} \otimes X_{\T})$ to $X_{\mu}$ is just $\Omega_{X_{\mu}}$, and has a unique generalized eigenvalue $$c \cdot C_{\nu}(\mu)=h_{c,\nu}(\mu)-\frac{\nu-1}{2}$$

Thus $Y$ acts on \InnaF{an indecomposable} object $X_{\mu} \subset \hhh^* \otimes X_{\T}$ with the generalized eigenvalue 

$$1 +h_{c,\nu}(\T) -\frac{\nu-1}{2} - \left(h_{c,\nu}(\mu)-\frac{\nu-1}{2}\right)=1 +h_{c,\nu}(\T) - h_{c,\nu}(\mu)$$ 

This proves that $X_{\mu}$ is an indecomposable singular $\InnaE{\underline{\mathrm{Rep}}}(S_{\nu})$-subobject of $M(\T)$ iff 

$1 +h_{c,\nu}(\T) - h_{c,\nu}(\mu)=0$, which is equivalent to \InnaF{the requirement} $$
  \frac{\abs{\T}^2-\abs{\mu}^2 -(\abs{\T} - \abs{\mu} ) }{2} +ct(\T) - ct(\mu)  = c' + (\abs{\T} - \abs{\mu})\nu 
$$}
\end{proof}

An additional condition for $X_{\mu}$ to sit in degree $m$ of $M_{c,\nu}(\T)$ arises from Pieri's rule (cf. Proposition \ref{Pieri}). 

 In terms of Notations \ref{L_tau_mu_m_notn}, \ref{B_tau_mu_m_notn}, Lemma \ref{necessary_and_suff_cond_in_deg_1} means that $\mathcal{B}_{\T, \mu, 1} = \mathcal{L}_{\T, \mu, 1}$ whenever $\mu$ is obtained from $\T$ as in Pieri's rule, i.e. by adding/moving/deleting a cell, or if $\mu =\T$.

\subsection{Remarks on Equation \eqref{main_for}}\label{ssec:rmrks_main_for}

For a Young diagram $\mu$, denote: $$f(\mu):= \frac{\abs{\mu}^2- \abs{\mu} }{2} +ct(\mu)$$ Then Equation \eqref{main_for} can be rewritten as 
\begin{equation}\label{main_for_rewritten} 
f(\T) - f(\mu)  = c'm + (\abs{\T} - \abs{\mu})\nu 
\end{equation}

\begin{lemma}\label{lemma_values_f}
 Let $\mu$ be a Young diagram. Then $f(\mu)$ is a non-negative integer less or equal to $\abs{\mu}^2 - \abs{\mu}$, and it is equal to zero if $\mu$ is a column diagram (i.e. $l(\mu\check{}) \leq 1$), and to $\abs{\mu}^2 - \abs{\mu}$ if $\mu$ is a row diagram (i.e. $l(\mu) \leq 1$).
\end{lemma}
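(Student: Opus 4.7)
The plan is to prove the three claims by induction on $n := |\mu|$, using a simple duality to reduce the upper bound to the lower bound. Integrality of $f(\mu)$ is immediate: $\binom{n}{2} \in \bZ$ and $ct(\mu) \in \bZ$. The extremal values at column and row diagrams are direct computations: for $\mu = \pi^n$ the cells are $(i,1)$ for $1 \leq i \leq n$, giving $ct(\pi^n) = -\binom{n}{2}$ and hence $f(\pi^n) = 0$; similarly a row of $n$ cells gives $ct = \binom{n}{2}$ and $f = n(n-1)$.

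First I would establish the \emph{conjugation duality}
\[ f(\mu) + f(\mu\check{}) = |\mu|^2 - |\mu|. \]
This is immediate from $ct(\mu\check{}) = -ct(\mu)$, which in turn follows from the bijection $(i,j) \leftrightarrow (j,i)$ of cells between $\mu$ and $\mu\check{}$. Granted this, the upper bound $f(\mu) \leq n^2 - n$ for all $\mu$ follows from the lower bound $f(\mu\check{}) \geq 0$ applied to the conjugate, and the characterization of the maximum at rows follows from the characterization of the minimum at columns.

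Next I would prove the lower bound $f(\mu) \geq 0$ by induction on $n$. The base case $\mu = \emptyset$ is trivial. For the inductive step, I would remove the bottom-rightmost corner $(i,j) = (l(\mu), \mu_{l(\mu)})$ to obtain a Young diagram $\mu'$ with $|\mu'| = n-1$. A routine manipulation gives the key identity
\[ f(\mu) \;=\; f(\mu') + (n-1) + (j - i), \]
using $\binom{n}{2} - \binom{n-1}{2} = n-1$ and $ct(\mu) = ct(\mu') + (j-i)$. Since $i = l(\mu) \leq n$ and $j \geq 1$, one has $j - i \geq 1 - n$, so the bracketed term $(n-1)+(j-i)$ is non-negative, and $f(\mu) \geq f(\mu') \geq 0$ by induction.

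There is no serious obstacle: the only thing to notice is that the bottom-rightmost corner is always a legitimate corner cell (removing it yields a Young diagram), and that this particular choice of corner is exactly the one that saturates the inequality $j - i \geq 1 - n$ precisely when $\mu$ is a column, which is consistent with the known minimizer. Tracking the case of equality through the induction also recovers the statement that $f(\mu) = 0$ forces $\mu$ to be a column and (dually) $f(\mu) = n^2 - n$ forces $\mu$ to be a row, although the lemma as stated only asserts the values in one direction.
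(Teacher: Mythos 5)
Your proof is correct and takes a genuinely different route from the paper's. The paper attacks the two-sided bound $\abs{ct(\mu)} \leq \tbinom{\abs{\mu}}{2}$ directly, by induction on the number of \emph{columns} of $\mu$, peeling off the last column and tracking the change in content and in $\tbinom{\cdot}{2}$. You instead observe the conjugation symmetry $ct(\mu\check{}) = -ct(\mu)$, which converts the upper bound into the lower bound for the transpose via $f(\mu)+f(\mu\check{})=\abs{\mu}^2-\abs{\mu}$, and then you prove only $f(\mu)\geq 0$ by induction on $\abs{\mu}$, removing a single corner cell each step. Your cell-removal identity $f(\mu)=f(\mu')+(n-1)+(j-i)$ (with $n=\abs{\mu}$, $(i,j)=(l(\mu),\mu_{l(\mu)})$) is correct and checks out directly, and the inequality $(n-1)+(j-i)\geq 0$ holds since $j\geq 1$ and $i=l(\mu)\leq n$. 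The duality step is short but is the key structural simplification: it halves the work and makes the induction very clean, replacing the paper's somewhat more involved accounting of column lengths by a one-line linear estimate. Both arguments are elementary and fully rigorous; yours is a bit more conceptual because the symmetry $\mu \leftrightarrow \mu\check{}$ explains \emph{why} the row and column diagrams are the two extremes.
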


\begin{proof}
 This is equivalent to saying that $\abs{ct(\mu)} \leq \frac{\abs{\mu}^2 - \abs{\mu}}{2} $. The latter can be proved by induction on the number of columns of $\mu$:
 
Base: Assume the number of columns of $\mu$ is zero, i.e. $\mu = \emptyset$. Then  $\abs{ct(\mu)} = 0 = \frac{\abs{\mu}^2 - \abs{\mu}}{2} $. Also, if $\mu$ is a column diagram, then $ct(\mu) = -\frac{\abs{\mu}^2 - \abs{\mu}}{2} $.

Step: Denote by $k$ the number of columns of $\mu$ ($k >1$), by $\mu'$ the diagram $\mu$ without the last column, and by $l$ the number of boxes in the last column of $\mu$. By induction assumption, $\abs{ct(\mu')} \leq \frac{\abs{\mu'}^2 - \abs{\mu'}}{2} $. Next, $ct(\mu) =ct(\mu') + (k-1)\cdot l - \ell(l-1)/2$, and so $\frac{\abs{\mu}^2 - \abs{\mu}}{2} = \frac{\abs{\mu'}^2 - \abs{\mu'}}{2} +\abs{\mu'}l +\frac{l^2 - l}{2} \geq \abs{ct(\mu')} + \frac{l^2 - l}{2} +\abs{\mu'}l \geq \abs{ct(\mu') + (k-1)\cdot l - \ell(l-1)/2} = \abs{ct(\mu)} $ (for the last inequality, note that by definition, $\abs{\mu'} \geq k-1$, with equality if and only if $ \mu$ is a row diagram).
For a row diagram $\mu$ (with $k$ cells), there is an equality $ct(\mu) = \frac{\abs{\mu}^2 - \abs{\mu}}{2} $, and in general, $\abs{ct(\mu)} \InnaA{<} \frac{\abs{\mu}^2 - \abs{\mu}}{2} $ for $\mu$ having $k >1$ columns and not a row diagram.

\end{proof}

For the right hand side of Equation \eqref{main_for_rewritten}, note that we have, by Pieri's rule (Proposition \ref{Pieri}): $m \geq \abs{\abs{\mu}-\abs{\T}}$.

\section{Blocks of the category \texorpdfstring{$\co(H_c(n))$}{O} (classical case)}\label{sec:blocks_classical_case}

\subsection{KZ functor and connection to the representations of the \InnaE{Hecke algebras}}

A powerful tool in studying the category $\co$ for the Cherednik algebra is the KZ functor (see e.g. \cite[2.8.2]{GS}, \cite[Section 6]{EM}, \cite[Section 3.3]{BEG}). The KZ functor is a functor $$\co(H_c(n)) \longrightarrow \InnaE{\mathrm{Rep}}(\mathcal{H}_q(n))$$ $\InnaE{\mathrm{Rep}}(\mathcal{H}_q(n))$ being the category of representations of the Hecke algebra \InnaE{$\mathcal{H}_q(n)$} of type A, where $q = exp(2\pi i c)$.

It turns out that this functor is essentially surjective on objects, surjective on Homs and exact (see \cite{GGOR}).
This functor induces an equivalence of categories $\overline{KZ}: \co/\co^{tor} \longrightarrow \InnaE{\mathrm{Rep}}(\mathcal{H}_q(n))$, where $\co^{tor}$ is \InnaE{the} full subcategory of the category $\co$ whose objects are modules which, when considered as $\bC[\hhh]$-modules, have Krull dimension less than $n-1$ (see \cite[Section 6.3]{EM}). The non-zero objects of $\co/\co^{tor}$ would thus be $H_c(n)$-modules which are supported on the whole $\bC^n$.

Moreover, by \cite[Proposition 2.10]{BEG1} (also proved in \cite{GGOR}), the KZ functor is faithful on the full subcategory of Verma modules in $\co(H_c(n))$.

We will use the fact that the $KZ$ functor takes $M(\lambda)$ to $S_{\lambda}\check{}$ (the dual of the Specht module) if $c \geq 0$, and to $S_{\lambda}$ (Specht module) if $c < 0$. For $c<0$, the module $L(\lambda)$ goes to $D_{\lambda}$ (which is either the unique simple quotient of the Specht module $S_{\lambda}$, if it exists, or zero).

\subsection{Representations of \InnaE{Hecke algebras} of type A}

We will use the following facts about the representations of \InnaE{Hecke algebras} of type A (see \cite{Mat}, \cite[Section 3]{BEG}):

Some definitions:
\begin{notation}
 Denote $e := ord_{\bC}(q)$ (i.e. $e$ is the order of $q$ if $q$ is a root of unity and $\infty$ otherwise), and denote $s := n-e$.
\end{notation}

First of all, we have:

\begin{theorem}\label{Hecke_alg_rep_semisimple}
 If $e > n$ (in particular, if $e = \infty$), then the category $\InnaE{\mathrm{Rep}}(\mathcal{H}_q(n))$ is semisimple.
\end{theorem}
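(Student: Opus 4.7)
The plan is to reduce the semisimplicity statement to the non-vanishing of the quantum factorial $[n]_q! := \prod_{k=1}^{n}[k]_q$ (where $[k]_q := 1 + q + \cdots + q^{k-1}$), which is precisely the condition $e > n$. The strategy combines Tits' deformation principle with an explicit construction of the irreducible modules.

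First I would recall the standard presentation: $\mathcal{H}_q(n)$ has a $\bC[q,q^{-1}]$-basis $\{T_w\}_{w \in S_n}$ of size $n!$, so it is a flat deformation of $\bC[S_n] = \mathcal{H}_1(n)$. Since $\bC[S_n]$ is semisimple (Maschke), a standard argument (semicontinuity of the Jacobson radical in a flat family of finite-dimensional algebras) shows that $\mathcal{H}_q(n)$ is semisimple for $q$ in a Zariski open subset of $\bC^*$. The task is then to identify this locus explicitly. For this I would use the symmetrizing trace $\tau(T_w) = \delta_{w,1}$: the algebra is semisimple precisely when the associated bilinear form $(a,b) \mapsto \tau(ab)$ is non-degenerate, equivalently when a certain discriminant polynomial in $q$ is nonzero.

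Next I would construct the Specht modules $S_\lam$ for each partition $\lam \vdash n$ via the usual tableaux combinatorics (polytabloids acted on by the $T_i$ using the Murphy basis), together with the invariant bilinear form $\langle\cdot,\cdot\rangle_\lam$. These deform the classical Specht modules, so $\dim S_\lam = f^\lam$ and $\sum_{\lam \vdash n} (f^\lam)^2 = n!$. The key computation, and the main obstacle, is the James determinant formula expressing $\det\langle\cdot,\cdot\rangle_\lam$ as an explicit product of quantum hook-length factors $[h]_q$, with $h \leq n$. Carrying out this combinatorial calculation — tracking how the Gram matrix transforms under the Garnir relations and passing to a triangular basis — is the technical heart of the argument; it is the step I would expect to grind through using Mathas's cellular-algebra framework.

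Once the determinant formula is available, the rest is immediate. When $e > n$, no factor $[h]_q$ with $h \leq n$ vanishes, so every $\langle\cdot,\cdot\rangle_\lam$ is non-degenerate; hence each $S_\lam$ is irreducible, and distinct $\lam$ yield non-isomorphic modules (they have distinct characters on the Jucys--Murphy elements, which also deform from $\bC[S_n]$). The dimension identity $\sum_\lam (f^\lam)^2 = n! = \dim \mathcal{H}_q(n)$ then forces $\mathcal{H}_q(n) \cong \bigoplus_{\lam \vdash n} \End_\bC(S_\lam)$ as algebras, which gives semisimplicity and simultaneously classifies the simples. For the case $e = \infty$ one could also avoid the Gram determinant by noting that $[k]_q \neq 0$ for all $k$ and invoking Tits' deformation theorem directly, but the uniform treatment through James's formula handles both situations at once.
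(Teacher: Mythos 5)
The paper does not prove this statement: it is quoted as a standard result from the literature on Hecke algebras of type $A$ (the reference is to Mathas's book \cite{Mat}, whose semisimplicity criterion in turn rests on Dipper--James). Your argument is a correct sketch of exactly that literature proof — the cellular-algebra framework, Specht modules with their invariant forms, and the Gram determinant factoring into quantum hook-length factors $[h]_q$ with $1 \le h \le n$, none of which vanish when $e>n$ — so you have essentially reproduced the cited proof rather than given a new one. One small imprecision you should be aware of: $[n]_q! \neq 0$ is not literally equivalent to $e>n$, since at $q=1$ one has $e=1 \le n$ yet $[n]_1! = n! \neq 0$; this does not affect the theorem (which is only a sufficient condition and $q=1$ gives the semisimple group algebra anyway), but the phrase ``precisely the condition $e>n$'' is slightly off. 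The dimension count at the end ($\sum_\lambda (f^\lambda)^2 = n!$ forcing $\mathcal{H}_q(n) \cong \bigoplus_\lambda \End(S_\lambda)$) is fine, though in the cellular framework you invoke it is redundant: non-degeneracy of all cell forms already implies semisimplicity by Graham--Lehrer.
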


From now on, we will assume $e \leq n$, and thus $0 \leq s \leq n$.

\begin{definition}[$e$-hook]\label{e_hook}
\mbox{}\begin{itemize}
 
\item
Hooks are parameterized by cells $(i, j)$ in the diagram; a hook corresponding to $(i,j)$ consists of all the cells $(i', j')$ such that either $i' = i, j' \geq j$, or $i' \geq i, j'=j$. The cell $(i,j)$ is the ``vertex'' of the hook corresponding to it. Cells $(i, j'), j'>j$ are called the ``arm" of the hook, while cells $(i', j), i'>i$ are called the ``leg" of the hook.
\item An $e$-hook of a Young diagram is a hook of length $e$ (i.e. contains exactly $e$ cells).
\end{itemize}

\begin{example}
    Let $\lambda = (5,4,2,2)$. The hook corresponding to $(1,2)$ is a $7$-hook with arm $3$ and leg $3$:
$$ \young(\hfil\circ\circ\circ\circ,\hfil\circ\hfil\hfil,\hfil\circ,\hfil\circ) $$ 
\end{example}

\end{definition}

\begin{lemma}
 For $n$ large relatively to $s$ (namely, $n>2s, n>1$), a Young diagram $\T$ with $\abs{\T}=n$ can have at most
one $e$-hook.
\end{lemma}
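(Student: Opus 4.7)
The plan is to assume for contradiction that $\T$ has two distinct $e$-hooks $H_1, H_2$ with vertices $v_1 = (i_1, j_1)$ and $v_2 = (i_2, j_2)$, arm lengths $a_1, a_2$ and leg lengths $\ell_1, \ell_2$ (so $a_k + \ell_k + 1 = e$), and then examine the possible relative positions of $v_1, v_2$. The key numerical observation is that the hypothesis $n > 2s$ is equivalent to $n < 2e$, since $e = n - s$.

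First I would dispose of the cases where $v_1, v_2$ share a row or a column; these are ruled out purely by the Young-diagram monotonicity of row/column lengths, without the hypothesis. For instance, if $i_1 = i_2$ and $j_1 < j_2$, then the common row $i_1$ must end at both column $j_1 + a_1$ and column $j_2 + a_2$, forcing $a_2 = a_1 - (j_2 - j_1) < a_1$; but the column-length monotonicity $\T\check{}_{j_1} \geq \T\check{}_{j_2}$ forces $\ell_1 \geq \ell_2$, which together with $a_k + \ell_k = e - 1$ yields $\ell_2 > \ell_1$, a contradiction. The case $j_1 = j_2$ is symmetric.

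For $v_1, v_2$ in generic position I would split into two subcases. In the ``northwest--southeast'' subcase (WLOG $i_1 < i_2$ and $j_1 < j_2$) a direct check on which cells of $H_1$ can lie on row $i_2$ or column $j_2$ (and vice versa) shows $H_1 \cap H_2 = \emptyset$, so $H_1 \cup H_2 \subset \T$ gives $n = |\T| \geq 2e$, contradicting $n < 2e$.

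The remaining ``northeast--southwest'' subcase ($i_1 < i_2$ and $j_1 > j_2$) is the crux. Here a case analysis shows $H_1 \cap H_2 = \{(i_2, j_1)\}$, so $|H_1 \cup H_2| = 2e - 1$ and the union alone is not enough. To close the case I would use that $(i_2, j_1) \in \T$ forces the entire rectangle $[1, i_2] \times [1, j_1]$ to lie in $\T$; this rectangle is disjoint from the ``outside'' portions of $H_1$ (the arm cells in row $i_1$ past column $j_1$, and the leg cells of column $j_1$ below row $i_2$) and of $H_2$ (the arm cells of row $i_2$ past column $j_1$, and the full leg in column $j_2$). Adding up all these mandatory cells gives
\[
|\T| \;\geq\; i_2 j_1 \;+\; \bigl(e - 1 - (i_2 - i_1)\bigr) \;+\; \bigl(e - 1 - (j_1 - j_2)\bigr),
\]
which, using $i_2, j_1 \geq 2$ and $i_1, j_2 \geq 1$, simplifies to $|\T| \geq 2e$, again contradicting $n < 2e$. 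The bookkeeping in this last subcase is the main obstacle; once it is set up, every case reduces to the same numerical contradiction $n \geq 2e$.
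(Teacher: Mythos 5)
Your proof is correct and follows essentially the same approach as the paper's: count the cells of $\T$ covered by the two $e$-hooks, show that the total forces $n \geq 2e$, and contradict $n > 2s \Leftrightarrow n < 2e$; yours is simply more careful, explicitly ruling out the shared row/column configurations and giving a precise cell count in the NE--SW case, both of which the paper treats only implicitly. One small imprecision: in the NE--SW subcase one only has $H_1 \cap H_2 \subseteq \{(i_2,j_1)\}$, since $(i_2,j_1)$ need not lie in $\T$ --- but if it does not, the hooks are disjoint and $|\T| \geq 2e$ follows at once, so the conclusion is unaffected.
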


\begin{proof}
\InnaC{Given two distinct hooks of same length, their intersection can contain at most one cell. Now, fix a Young diagram $\T$ with $\abs{\T}=n$. If $\T$ has two distinct $e$-hooks, \InnaE{we obtain} the inequality} $2e - 1 \leq \abs{\T}=n$, and thus $n \leq 2s +1$. Note that, in fact, a Young diagram cannot consist of two hooks of equal size intersecting each other in one cell unless this diagram contains just one cell. So if $n \geq 2$, then $2e - 1 < \abs{\T}=n$, i.e. $n < 2s+1$. So for $n>2s, n>1$, there is at most one $e$-hook in $\T$.
\end{proof}

\begin{definition}[Core]\label{int_s_core}
 Let $s \InnaC{\leq n}$ be a non-negative integer. The $(n-s)$-core of a Young diagram $\T$ is the Young diagram obtained by performing the following procedure on $\lambda:=\T$:

\begin{enumerate}
 \item Take the Young diagram $\lambda$.
\item If $\lambda$ has no $(n-s)$-hook, stop. The diagram $\lambda $ is then the $(n-s)$-core of $\T$.
\item Otherwise, remove an $(n-s)$-hook from $\lambda$ and move the boxes underneath this $(n-s)$-hook one position up and one position to the left. Denote the obtained Young diagram by $\lambda^{(1)}$.
\item Repeat the procedure for $\lambda := \lambda^{(1)}$.
\end{enumerate}
\end{definition}

So for $n>\max(2s, 1)$, either $\mathtt{core}_{(n-s)}(\T)=\T$, or $\mathtt{core}_{(n-s)}(\T)$ has $s$ boxes.

\begin{example}
    Let $\lambda = (5,4,2,2)$. Its $7$-core is obtained by removing the hook corresponding to $(1,2)$ and moving the boxes below it one space up and left, i.e. $\mathtt{core}_{7}(\lambda) = (3,1,1,1)$
$$ \lambda=\young(\hfil\circ\circ\circ\circ,\hfil\circ\hfil\hfil,\hfil\circ,\hfil\circ)  \mapsto \mathtt{core}_{7}(\lambda) =\yng(3,1,1,1)$$ 
 
\end{example}

Now, assume $n>\max(2s, 1)$.
Classification of partitions of size $n$ with given core $\lambda$ such that $\abs{\lambda}=s$ is given by the following proposition:

\begin{proposition}\label{insert_hook_prop}
\InnaC{Let $\lambda$ be a Young diagram of size $s$.} Let $hook(l,(n-s))$ ($0 \leq l \leq (n-s-1)$) be the $(n-s)$-hook with leg $l$:
$((n-s)-l,1,1,...,1)$. Then there is a unique Young diagram $\mathtt{rec}(l,\lambda)$ of size $n$ such that $\lambda$
is obtained by deleting the hook $hook(l,n-s)$ from $\mathtt{rec}(l,\lambda)$ \InnaC{(thus $\lambda$ is the $(n-s)$-core of $\mathtt{rec}(l,\lambda)$)}. 
\end{proposition}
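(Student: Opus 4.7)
The plan is to translate the statement into the language of the $e$-abacus with $e := n-s$ runners, where adding an $e$-hook corresponds to a single elementary bead move, and then to show that the $e$ possible such moves realize each leg length $l \in \{0, 1, \ldots, e-1\}$ exactly once.

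First I would encode each Young diagram $\mu$ (with at most $N$ parts, $N$ a large enough multiple of $e$) by its set of beta-numbers $B(\mu) := \{\mu_i + N - i : 1 \leq i \leq N\}$, a collection of $N$ distinct non-negative integers distributed among the $e$ residue classes modulo $e$ (the runners). The standard dictionary states that adding an $e$-hook to $\mu$ amounts to choosing some $b \in B(\mu)$ with $b + e \notin B(\mu)$ and replacing $b$ by $b+e$, and that the leg length of the added hook is exactly $|B(\mu) \cap (b, b+e)|$. Since $\lambda$ is an $e$-core, on every runner $r \in \{0, 1, \ldots, e-1\}$ the beads of $B(\lambda)$ form an initial segment $\{r, r+e, \ldots, c_r\}$ with $c_r := r + (k_r - 1)e$ (and $k_r \geq 1$ for $N$ large); hence the only permissible bead move on runner $r$ is to shift the top bead from $c_r$ to $c_r + e$. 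This yields exactly $e$ candidate diagrams of size $n$, one per runner.

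Next I would compute the leg length $l_r$ of the hook produced via runner $r$. The open interval $(c_r, c_r + e)$ has length $e$ and therefore contains exactly one position of every runner $r' \neq r$; a brief case check based on whether $r' < r$ or $r' > r$ (as residues in $\{0, \ldots, e-1\}$) shows that this position lies in $B(\lambda)$ if and only if $c_{r'} > c_r$. Consequently $l_r = \#\{r' \neq r : c_{r'} > c_r\}$.

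Finally, since the $c_r$ lie in distinct residue classes modulo $e$, they are pairwise distinct, and the map $r \mapsto \#\{r' : c_{r'} > c_r\}$ is a bijection from the set of runners onto $\{0, 1, \ldots, e-1\}$. Therefore every leg length $l$ in this range is realized by precisely one runner, which produces the unique diagram $\mathtt{rec}(l, \lambda)$ of size $n$ from which removing the hook $hook(l, n-s)$ recovers $\lambda$. The main obstacle is the leg-length computation in the third step, where one has to carefully track the residues modulo $e$; once this accounting is in place, the existence and uniqueness follow immediately from the bijection in the final step.
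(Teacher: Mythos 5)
Your proof is correct, but it takes a genuinely different route from the paper's. The paper argues directly on the diagram: it writes down the inequalities that the vertex $(i,j)$ of an inserted $(n-s)$-hook must satisfy, uses the hypothesis $n-s>s=\abs{\lambda}$ to force $i=1$ or $j=1$, and then exhibits and verifies the unique admissible position; this simultaneously produces the explicit shape of $\mathtt{rec}(l,\lambda)$ (split $\lambda$ into columns, delete a top row, insert a column of length $l$), which is exactly the description that gets interpolated to $\mathbf{rec}(l,\mathbf{core}_{(\nu-s)}(\T))$ in Section \ref{sec:constr_for_H_c_nu}. You instead pass to beta-numbers and the $e$-abacus with $e=n-s$: since $\abs{\lambda}=s<e$, the diagram $\lambda$ is automatically an $e$-core, so its beads form initial segments on each runner, the only admissible one-bead up-moves are the $e$ top-bead moves, and your computation $l_r=\#\{r'\neq r : c_{r'}>c_r\}$ together with the pairwise distinctness of the $c_r$ gives a bijection between runners and leg lengths, yielding existence and uniqueness in one stroke. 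Your argument is more conceptual and strictly more general --- it only needs $\lambda$ to be an $e$-core, not the bound $n>2s$ --- and uniqueness comes for free from the bijection; the trade-offs are that it leans on the standard abacus dictionary (a bead move $b\mapsto b\pm e$ corresponds to adding/removing an $e$-hook whose leg length is the number of beads strictly between the two positions, plus the initial-segment characterization of $e$-cores), which should be quoted with a reference such as James--Kerber or Olsson, and that it does not directly hand you the explicit column-by-column description of $\mathtt{rec}(l,\lambda)$ that the paper's coordinate argument provides and later reuses. As a proof of the proposition as stated, your argument is complete.
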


\begin{proof}
Basically, $\mathtt{rec}(l,\lambda)$ is obtained by "inserting" a $hook(l,(n-s))$ to $\lambda$ in a certain
not completely trivial way.
 To insert a hook $hook(l,(n-s))$ into a diagram $\lambda$, one should put its vertex in the position $(i,j)$ such that the following equations are satisfied:
\begin{align}\label{prop_hook_insert_ineq}
 &\lambda_i +1 \leq j-1 +n-s -l \leq \lambda_{i-1}
 &\lambda\check{}_j +1 \leq i+l \leq \lambda\check{}_{j-1}
\end{align}
($\lambda_0, \lambda\check{}_0 := \infty$).

We now explain how to find such $i,j$.

First, we show that if the inequalities \eqref{prop_hook_insert_ineq} hold, then $i=1$ or $j=1$. Indeed, adding the two inequalities \eqref{prop_hook_insert_ineq}, \InnaE{we obtain}: $$ j-1 +n-s -l +i+l = i+j-1+n-s  \leq \lambda_{i-1} + \lambda\check{}_{j-1}$$
But $n-s >s =\abs{\lambda}$, so $i+j-1+n-s \geq i+j +\abs{\lambda} \geq \abs{\lambda} +2 $. Thus \InnaE{we obtain}:  $\abs{\lambda} +2 \leq \lambda_{i-1} + \lambda\check{}_{j-1} $. But this is clearly possible only when $i=1$ or $j=1$.

Next, find $j$ such that $$ \lambda\check{}_j +1 \leq l+1 \leq \lambda\check{}_{j-1}$$
(which is always possible).
We now have two cases:
\begin{enumerate}
 \item If \InnaE{we obtain} $j \geq 2$, then we put $i:=1$ and we only need to check that $ \lambda_1 +1 \leq j-1 +n-s -l $.
But we assumed that $n>2s$, so $n-s > \abs{\lambda}=s$, and thus $$j-1 +n-s -l \geq j+\abs{\lambda} -l \geq 2+ \abs{\lambda} - \lambda\check{}_{j-1} \geq \lambda_1 +1$$
so we found indices $(i,j)$ satisfying the inequalities \eqref{prop_hook_insert_ineq}.
\item If \InnaE{we obtain} $j=1$, then $ \lambda\check{}_j \leq l$. We now find $i$ such that 
$$\lambda_i +1 \leq n-s -l \leq \lambda_{i-1}  $$ (again, this is always possible). To show that we found indices $(i,j)$ satisfying the inequalities \eqref{prop_hook_insert_ineq}, we only need to check that $\lambda\check{}_1 +1 \leq i+l $, which is true since we have $i \geq 1$ and $ \lambda\check{}_j \leq l$.
\end{enumerate}

This shows that there exists exactly one way to insert the hook $hook(l,(n-s))$ into $\lambda$.

\end{proof}

\begin{example}
    Let $\lambda = (5,4,2,2)$. 
\begin{enumerate}
 \item Adding the $7$-hook $(6,1)$ to its $7$-core ($\mathtt{core}_{7}(\lambda) = (3,1,1,1)$), we obtain the Young diagram $\mathtt{rec}(1, \mathtt{core}_{7}(\lambda) ) = (7,4,1,1)$:
\begin{align*}
 \lambda=\young(\hfil\circ\circ\circ\circ,\hfil\circ\hfil\hfil,\hfil\circ,\hfil\circ)  \mapsto \mathtt{core}_{7}(\lambda) =\yng(3,1,1,1) \mapsto \mathtt{rec}(1, \lambda) =
\young(\hfil\circ\circ\circ\circ\circ\circ,\hfil\circ\hfil\hfil,\hfil,\hfil)
\end{align*}
\item Adding the $7$-hook $(2,1,1,1,1,1)$ to $\mathtt{core}_{7}(\lambda)$, we obtain the Young diagram $\mathtt{rec}(5, \mathtt{core}_{7}(\lambda) ) = (3,2,2,2,2,1,1)$:
\begin{align*}
 \lambda=\young(\hfil\circ\circ\circ\circ,\hfil\circ\hfil\hfil,\hfil\circ,\hfil\circ)  \mapsto \mathtt{core}_{7}(\lambda) =\yng(3,1,1,1) \mapsto \mathtt{rec}(5, \lambda) =
\young(\hfil\hfil\hfil,\circ\circ,\circ\hfil,\circ\hfil,\circ\hfil,\circ,\circ)
\end{align*}
\end{enumerate}
\end{example}

\begin{definition}\label{def:e_regular_partition}
Let $e \in \bZ_+$. 
\begin{itemize}
 \item A partition $\lambda$ is called $e$-regular if the multiplicity of each part of $\lambda$ is smaller than $e$.
 \item A partition $\lambda$ is called $e$-restricted if $\lambda_i -\lambda_{i+1} < e$ for all $i$ (i.e. if $\lambda\check{}$ is $e$-regular).
\end{itemize}
\end{definition}

\begin{theorem}[cf. \cite{Mat}]\label{Hecke_alg_rep_main_thrm}
 Let $\T, \mu$ be partitions of $n$.
\InnaA{\begin{enumerate}
 \item 
\begin{itemize}
\item All the simple modules of $\mathcal{H}_q(n)$ are $D_{\lambda}
$, where $\lambda$ is $e$-regular. These are exactly the partitions for which $D_{\lambda}\neq 0$.
 \item The $\mathcal{H}_q(n)$-modules $S_{\T}\check{}$ and $S_{\T}$ have the same composition factors.                                                                         
\item Two modules $S_{\T}$, $S_{\mu}$ belong to the same block iff $\T, \mu$ have the same $(n-s)$-core.
\item If $\T$ has no $(n-s)$-hook, then $S_{\T}$ (and hence $S_{\T}\check{}$) is irreducible.

\end{itemize}
\item Assume $n >2s$. Then we have:
\begin{itemize}
\item Let $\T=\mathtt{rec}(l,\beta)$, $0 \leq l\leq (n-s-2)$ and $\mu\ne \T$.
Then there is a non-trivial morphism $S_{\mu} \to S_{\T}$ iff $\mu=\mathtt{rec}(l+1,\beta)$.
\item If $\T=\mathtt{rec}(l,\beta)$, $\mu=\mathtt{rec}(l+1,\beta)$, then the composition factors of $S_{\T}$ are $D_{\T}, D_{\mu}$, with multiplicity $1$ (only one of them if the other is zero).

\end{itemize}
\end{enumerate}}

\end{theorem}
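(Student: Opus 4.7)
The plan is to derive the theorem from the foundational results of Dipper and James on the modular representation theory of Hecke algebras of type $A$, together with Nakayama's conjecture on the block structure.

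First I would address part (1). Each Specht module $S_\lambda$ carries a canonical bilinear form, and one sets $D_\lambda := S_\lambda / \mathrm{rad}(S_\lambda)$. The classification of simple modules, together with the non-vanishing criterion $D_\lambda \neq 0 \Leftrightarrow \lambda$ is $e$-regular, is the Dipper--James theorem and I would simply invoke it. The statement that $S_\T$ and $S_\T\check{}$ have the same composition factors is a consequence of the fact that they represent the same class in the Grothendieck group (one can see this either via a duality argument on the Hecke algebra, or by observing that their characters agree). The classification of blocks by $e$-core is the Hecke-algebra version of Nakayama's conjecture, proved by James--Mathas. Finally, if $\T$ has no $e$-hook then its $e$-core equals $\T$ itself, so $\T$ is alone in its block and $S_\T$ is forced to be simple.

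For part (2), the hypothesis $n > 2s$ is the key structural input: it forces the block containing $S_\beta$ to have \emph{weight one}, i.e.\ only one $(n-s)$-hook needs to be added to $\beta$ to produce a partition of size $n$. I would then use the abacus model, in which a partition of size $n$ with $e$-core $\beta$ is obtained by moving a single bead one step up on one of the $e$ runners of the abacus display of $\beta$; the choice of runner corresponds bijectively to the choice of leg length $l \in \{0,1,\dots,n-s-1\}$, recovering exactly the $n-s$ partitions $\mathtt{rec}(l,\beta)$ produced by Proposition \ref{insert_hook_prop}. Weight-one blocks of Hecke algebras of type $A$ are completely understood (originally by James, and in the Hecke setting by Richards and others): the decomposition matrix has only the diagonal entries and the entries immediately below the diagonal equal to $1$, and all other entries zero, while the Ext quiver is a single line of length $n-s-1$. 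The morphism claim and the composition factor claim both read off from this picture.

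The main obstacle, were one to attempt a self-contained proof rather than cite the literature, would be establishing the decomposition numbers and the shape of the Ext quiver for a weight-one block. The combinatorics (abacus, cores, $e$-quotients) is clean, but turning it into statements about module maps requires either the explicit construction of Specht-to-Specht homomorphisms via Garnir relations and the action of Jucys--Murphy elements on the standard basis, or invoking the Jantzen--Schaper filtration, both of which require substantial setup. Since this theorem is quoted here purely as background from Mathas' textbook, the cleanest course in a paper of this kind is to cite rather than reprove.
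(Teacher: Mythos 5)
The paper gives no proof of this theorem at all: it is tagged ``[cf.\ \cite{Mat}]'' and invoked purely as background from Mathas' textbook, which is precisely the conclusion you reach in your last paragraph. Your sketch of why the result is true — Dipper--James for the classification of simple modules, duality for the equality of composition factors of $S_\T$ and $S_\T\check{}$, Nakayama's conjecture (James--Mathas) for the block classification by $(n-s)$-core, the observation that a coreless block is a singleton, and the weight-one block structure (abacus, decomposition matrix with $1$'s on the diagonal and subdiagonal, linear Ext quiver) under the hypothesis $n > 2s$ — is accurate and matches the standard literature, so there is no discrepancy with what the paper intends.
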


\subsection{Blocks of the category \texorpdfstring{$\co(H_c(n))$}{O}}
We now give the relevant results for the category $\co(H_c(n))$:

Let $n$ be a non-negative integer. 
\subsubsection{Equivalences}

First of all, we have a useful theorem proved by Rouquier and expanded by Losev (see \cite[Theorem 5.12]{Rou}, \cite{Lo}):

\begin{theorem}[Rouquier]\label{Rouquier}
Let $n >1, r,a> 0, b \neq 0$ be integers, $ \Gcd(r, a)=1$. Then the categories $\co(H_{c_1=b/a}(n)), \co(H_{c_2=(br)/a}(n))$ are equivalent if $c_1 \not\in \{\frac{2k+1}{2}, k\in \bZ\}$, with a correspondence:
\\
morphism $M_{c_1,n}(\mu) \longrightarrow M_{c_1,n}(\T)$ ($\abs{\T} =\abs{\mu} = n$) such that $\mu$ sits in degree $m$ of $M_{c_1,n}(\T)$

corresponds to 

morphism $M_{c_2,n}(\mu) \longrightarrow M_{c_2,n}(\T)$ ($\abs{\T} =\abs{\mu} = n$) such that $\mu$ sits in degree $rm$ of $M_{c_2,n}(\T)$.
\end{theorem}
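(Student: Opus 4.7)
The plan is to deduce this from Rouquier's theory of highest weight covers rather than to reprove it from scratch. Both $\co(H_{c_1}(n))$ and $\co(H_{c_2}(n))$ are highest weight categories, with standard objects $M_{c_i,n}(\lambda)$ ordered by the real parts of $h_{c_i,n}(\lambda)$. The KZ functor provides an exact quotient functor from each to $Rep(\mathcal{H}_{q_i}(n))$, where $q_i = \exp(2\pi i c_i)$. The first step is to match up the Hecke algebras: since $c_2 = rc_1$ with $\gcd(r,a) = 1$ and $a = \mathrm{den}(c_1)$, both $q_1$ and $q_2$ are primitive $a$-th roots of unity (up to sign adjustments handled by the hypothesis $c_1 \notin \{(2k+1)/2\}$), and the Galois automorphism $\zeta \mapsto \zeta^r$ of $\mathbb{Q}(\zeta_a)$ induces an isomorphism $\mathcal{H}_{q_1}(n) \cong \mathcal{H}_{q_2}(n)$, hence an equivalence of their module categories that takes Specht modules $S_\lambda$ to Specht modules.

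The second step is to verify that $\co(H_{c_i}(n))$ is a \emph{highest weight cover} of $Rep(\mathcal{H}_{q_i}(n))$ via KZ in Rouquier's sense, and that the cover has the separatedness property (the ``$(-1)$-faithful'' or ``$0$-faithful'' condition) needed for uniqueness. This separatedness is where the excluded half-integer values of $c_1$ enter: at those points the ordering of weights can fail to distinguish Vermas in different blocks. Granting separatedness, Rouquier's uniqueness theorem for covers (Theorem~4.49 of [Rou]) then produces an equivalence $\co(H_{c_1}(n)) \simeq \co(H_{c_2}(n))$ intertwining the two KZ functors and sending $M_{c_1,n}(\lambda)$ to $M_{c_2,n}(\lambda)$ for every $\lambda \vdash n$.

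The final step is the degree-shift assertion for morphisms between Verma modules. Fix $\mu, \T \vdash n$. A nonzero morphism $M_{c_i,n}(\mu) \to M_{c_i,n}(\T)$ places the lowest weight $\mu$ in degree $m_i$, where by the $\brh$-calculation (as in Proposition \ref{main_for_prop}) one has
\[
m_i = c_i \left( \frac{\abs{\T}^2-\abs{\mu}^2 -(\abs{\T} - \abs{\mu} ) }{2} +ct(\T) - ct(\mu) \right) - (\abs{\T} - \abs{\mu})c_i n.
\]
Because $\abs{\T} = \abs{\mu} = n$, this simplifies to $m_i = c_i(ct(\T) - ct(\mu))$, so $m_2 = r\,m_1$. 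Since the Rouquier equivalence is compatible with the KZ functor and the Hecke algebra has a faithful functor from the Verma subcategory (as recalled via [BEG, Prop.~3.4]), a nonzero Verma-to-Verma map on one side corresponds to a nonzero Verma-to-Verma map on the other, and the grading statement follows from the $\brh$-eigenvalue identity above.

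The main obstacle is the separatedness hypothesis needed to apply Rouquier's uniqueness theorem: one must check that for $c_1 \notin \frac{1}{2} + \bZ$ the KZ functor is $0$-faithful (i.e., injective on $\mathrm{Hom}$s between standard objects), and that the covering orders on the two sides coincide under the Galois matching of Hecke algebras. This requires a careful analysis of the Gordon--Stafford/Ginzburg--Guay--Opdam--Rouquier results on the image of KZ and on vanishing of $\mathrm{Ext}^1$ from standards to costandards, and is precisely the technical heart of Rouquier's [Rou, Thm.~5.12] and Losev's extension in [Lo].
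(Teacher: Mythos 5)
The paper does not actually prove this theorem: it is stated as a citation to Rouquier's \cite[Theorem 5.12]{Rou} and Losev's \cite{Lo}, with no proof given in the text. So there is no ``paper's proof'' to compare against; what you have written is an outline of the argument found in those sources.

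Your outline is consistent with the literature. Matching $\mathcal{H}_{q_1}(n)$ to $\mathcal{H}_{q_2}(n)$ by the Galois automorphism $\zeta \mapsto \zeta^r$ (valid because $\gcd(r,a)=1$ forces $q_1$ and $q_2 = q_1^r$ to have the same multiplicative order), then applying Rouquier's uniqueness of highest weight covers via the KZ functor, is indeed the structure of Rouquier's proof. Your degree-shift computation is also correct and is the only piece that is genuinely tautological: since both $\mu$ and $\T$ are partitions of $n$, the identity $h_{c_i,n}(\mu) - h_{c_i,n}(\T) = c_i\bigl(ct(\T) - ct(\mu)\bigr)$ gives $m_i = c_i\bigl(ct(\T) - ct(\mu)\bigr)$, hence $m_2 = r\,m_1$, with no need for the complex-rank formula of Proposition \ref{main_for_prop} (which is stated for $\co_{c,\nu}$ with small diagrams $\mu, \T$, a different setting from $\abs{\mu}=\abs{\T}=n$; the classical version of the calculation is the one you want, and you correctly reduce to it).

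The one genuine gap is the one you explicitly flag in your final paragraph: verifying $0$-faithfulness (separatedness) of the KZ cover at the two parameters, which is where the hypothesis $c_1 \notin \frac{1}{2}+\bZ$ actually enters and which is the technical heart of Rouquier's and Losev's arguments. As written, your argument is a correct outline pointing to the references rather than a self-contained proof — which, given that the paper itself only cites the result, is a reasonable stopping point, but you should be clear that without establishing the faithfulness condition you have not reproved the theorem.
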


Next, we have the following simple equivalence of categories: (see \cite[3.1.4]{Rou2}): 

\begin{obsr}\label{equiv_c_and_minus_c}
 The rational Cherednik algebras $H_{-c}(n), H_c(n)$ are isomorphic: $$ H_c(n) \longrightarrow H_{(-c)}(n),x \longmapsto x, y \longmapsto y, \sigma \in S_n \longmapsto (sign(\sigma)\cdot \sigma) \in \bC[S_n]$$

This means that the categories of representations of these algebras are equivalent, with equivalence given by 
$$\co(H_c(n)) \longrightarrow \co(H_{(-c)}(n)), V \longmapsto sign \otimes V,$$
where $sign$ is the sign representation of $S_n$. Note that $sign \otimes \mu \cong \mu\check{}$ for representation $\mu$ of $S_n$. 
\end{obsr}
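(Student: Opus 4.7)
The statement has two parts: the algebra isomorphism $\phi: H_c(n) \to H_{-c}(n)$ prescribed on generators, and the induced functor $\co(H_c(n)) \to \co(H_{-c}(n))$ sending $V$ to $\sign \otimes V$. I would verify them in that order.

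For the isomorphism, the plan is to check that the defining relations of Definition \ref{int_Cherednik_alg_def}, together with the semidirect product relations with $S_n$, are preserved by the prescribed map. The relations $[x,x']=0$ and $[y,y']=0$ are immediate since $x$ and $y$ are fixed. The conjugation relations $\sigma x \sigma^{-1} = \sigma(x)$ and $\sigma y \sigma^{-1} = \sigma(y)$ hold because the two factors of $\sign(\sigma)$ coming from $\phi(\sigma)$ and $\phi(\sigma^{-1})$ cancel. Multiplicativity of $\phi$ on $\bC[S_n]$ reduces to the fact that $\sign$ is a character. The only nontrivial point is the mixed relation $[y,x] = (y,x) - c \sum_{s \in \cS}((1-s)y, x) s$: since $\sign(s) = -1$ for every reflection $s \in \cS$, applying $\phi$ converts the coefficient $-c$ in front of each $s$ into $+c$, which matches the defining relation of $H_{-c}(n)$ (where the coefficient is $-(-c) = c$). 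Bijectivity of $\phi$ is automatic from $\phi \circ \phi = \id$.

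For the categorical statement, the plan is to use $\phi$ to transport module structure. Given $V \in \co(H_c(n))$, put an $H_{-c}(n)$-action on the underlying vector space of $V$ by $a \star v := \phi^{-1}(a) \cdot v$. Unwinding, this is exactly the natural $H_{-c}(n)$-action on $\sign \otimes V$: the generators $x$ and $y$ act unchanged, while $\sigma$ acts as $\sign(\sigma)$ times its original action, matching the tensor action on $\sign \otimes V$ after identifying $\sign$ with $\bC$ as a vector space. One then checks that the two conditions defining $\co$, namely finite generation over $S\hhh^*$ and local nilpotency of $\hhh$, are preserved, because tensoring with the one-dimensional sign representation changes neither the $S\hhh^*$-module structure nor the action of $\hhh$ on the underlying space. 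Applying the same construction with $c$ replaced by $-c$ yields an inverse equivalence, using $\sign \otimes \sign \cong \triv$. The final identification $\sign \otimes \mu \cong \mu\check{}$ is the classical fact that tensoring with the sign character on $S_n$ realizes partition conjugation on irreducibles.

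There is no serious obstacle: the entire content reduces to sign bookkeeping, with the observation $-c \cdot (-1) = c$ capturing precisely why negating $c$ is equivalent to twisting the $S_n$-action by $\sign$. The one step where care is needed is ensuring the semidirect product relations are not inadvertently twisted, and this works out because conjugation by $\phi(\sigma)$ picks up two factors of $\sign(\sigma)$ whose product is $1$.
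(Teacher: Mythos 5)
Your proof is correct, and it is more detailed than what the paper provides: the paper states this as an observation with a citation to \cite[3.1.4]{Rou2} and gives no verification at all, so there is no ``paper proof'' to compare against. Your argument is the standard one: the only relation that sees the parameter $c$ is the cross relation $[y,x]=(y,x)-c\sum_{s\in\cS}((1-s)y,x)s$, and since every $s\in\cS$ is a reflection, $\sign(s)=-1$, so applying the prescribed map flips the sign on $c$ exactly; the remaining semidirect-product and commutativity relations survive because $\sign$ is a character and $\sign(\sigma)\sign(\sigma^{-1})=1$. The translation to the categorical statement via transport of structure along the isomorphism, and the identification with $V\mapsto\sign\otimes V$ (since $\fh,\fh^*$ act trivially on $\sign$ and the $S_n$-action acquires the $\sign(\sigma)$ twist), is also correct, as is the observation that tensoring with a one-dimensional $S_n$-representation leaves the $S\hhh^*$-generation and $\hhh$-local-nilpotency conditions untouched. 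One small notational point: since $\phi$ goes from $H_c(n)$ to $H_{-c}(n)$, the ``involution'' you invoke is really the composition with the analogously defined map in the other direction; the formula is the same, so the intent is clear, but strictly it is not $\phi\circ\phi$ but $\psi\circ\phi$ with $\psi:H_{-c}(n)\to H_c(n)$ given by the same recipe.
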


We also have the following statement (see \cite[Section 6.2]{GGOR}):
\begin{prop}\label{pseudo_anti_equiv_c_and_minus_c}
If $c \not\in \frac{1}{2} + \bZ$, then $$\Hom_{H_c(n)}(M_{c, n}(\mu), M_{c, n}(\tau)) \cong \Hom_{H_{(-c)}(n)}(M_{(-c), n}(\tau), M_{(-c), n}(\mu))$$

\end{prop}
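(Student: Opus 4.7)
The strategy is to apply the KZ functor in order to transport the question from the Cherednik side to the Hecke algebra side, and then to use a natural isomorphism $\mathcal{H}_q(n)\cong\mathcal{H}_{q^{-1}}(n)$ (where $q=e^{2\pi i c}$, so $-c$ corresponds to $q^{-1}$) to match the two sides. The contravariance in the statement will come out of the transpose isomorphism $\Hom(A^*,B^*)\cong\Hom(B,A)$ for finite-dimensional Hecke modules.

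The first step is to apply KZ to both sides. By \cite[Proposition 3.4]{BEG} the KZ functor is faithful on the full subcategory of Verma modules; with a little extra work (using that a morphism of Vermas is determined by the image of the lowest weight, which sits in a singular subspace that KZ detects) one obtains a bijection
\[
\Hom_{H_c(n)}(M_{c,n}(\mu),M_{c,n}(\tau))\;\cong\;\Hom_{\mathcal{H}_q(n)}(KZ(M_{c,n}(\mu)),KZ(M_{c,n}(\tau))),
\]
and analogously for $-c$. Now, under the convention recalled before Theorem~\ref{Hecke_alg_rep_main_thrm}, for $c>0$ we have $KZ(M_{c,n}(\lambda))=S_\lambda\check{}$ while $KZ(M_{-c,n}(\lambda))=S_\lambda$ (in the corresponding Hecke algebras). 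Applying the transpose isomorphism $\Hom_{\mathcal{H}_q(n)}(S_\mu\check{},S_\tau\check{})\cong\Hom_{\mathcal{H}_q(n)}(S_\tau,S_\mu)$, the left-hand side of the proposition becomes $\Hom_{\mathcal{H}_q(n)}(S_\tau,S_\mu)$, while the right-hand side becomes $\Hom_{\mathcal{H}_{q^{-1}}(n)}(S_\tau,S_\mu)$.

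The final step is to construct a natural algebra isomorphism $\mathcal{H}_q(n)\cong\mathcal{H}_{q^{-1}}(n)$ that carries Specht modules to Specht modules. A candidate is the map sending the standard generator $T_i$ to $-q T_i^{-1}$, which one verifies respects the quadratic relation $(T_i-q)(T_i+1)=0$ and the braid relations; on Specht modules, both choices of presentation produce the same combinatorial object labelled by $\lambda$, which gives the required identification of Hom spaces. Putting the three steps together yields the desired isomorphism. The condition $c\notin\tfrac{1}{2}+\bZ$ is exactly the condition $q\neq -1$: at $q=-1$ the Hecke algebra degenerates and the comparison of Specht modules, as well as the expected faithfulness of KZ on Verma Homs, can fail. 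The main technical obstacle I would expect is making the bijectivity in Step~1 precise (as opposed to mere faithfulness), and carefully checking in Step~3 that the chosen algebra isomorphism $\mathcal{H}_q(n)\cong\mathcal{H}_{q^{-1}}(n)$ preserves Specht modules on the nose. An alternative route, if the KZ bookkeeping becomes delicate, is to construct a contravariant duality $\mathbb{D}\colon\co(H_c(n))^{op}\to\co(H_{-c}(n))$ from an anti-isomorphism $H_c(n)\cong H_{-c}(n)^{op}$ that swaps $x_i\leftrightarrow y_i$ (with a sign twist so the commutation relations transform correctly), and then to show that $\mathbb{D}$ sends $M_{c,n}(\lambda)$ to $M_{-c,n}(\lambda)$ up to a sign twist, the condition on $c$ entering precisely when comparing the resulting ``dual Verma'' with the actual Verma.
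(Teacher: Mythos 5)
The paper itself does not spell out a proof of this proposition; it simply points to \cite[Section~6.2]{GGOR}, so you are essentially reconstructing the argument. Your KZ-transport strategy is a reasonable one, but as written it has two genuine problems, both of which you are at least partially aware of.

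First, Step~1 is not ``a little extra work.'' The cited result \cite[Proposition~3.4]{BEG} gives only \emph{faithfulness} of KZ on the Verma subcategory, i.e.\ injectivity of $KZ$ on $\Hom(M(\mu),M(\tau))$, and faithfulness by itself is useless here: it bounds both $\dim\Hom_{H_c}(M_{c}(\mu),M_{c}(\tau))$ and $\dim\Hom_{H_{-c}}(M_{-c}(\tau),M_{-c}(\mu))$ from above by the corresponding Hecke $\Hom$-spaces, but two upper bounds by equal quantities do not give an isomorphism. What you actually need is that $KZ$ is fully faithful on $\Delta$-filtered objects, which is the ``$1$-faithful cover'' property of Rouquier; this is exactly where the hypothesis $q\ne -1$ (i.e.\ $c\notin\tfrac12+\bZ$) enters, and it is a substantial theorem rather than a remark. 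Your sketch (``the image of the lowest weight sits in a singular subspace that KZ detects'') does not address surjectivity of the map on $\Hom$-spaces at all.

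Second, in Step~3 the proposed isomorphism is wrong. With the Cherednik-side convention $(T_i-q)(T_i+1)=0$, one computes $T_i^{-1}=q^{-1}T_i+q^{-1}-1$, so $-qT_i^{-1}=q-1-T_i$, and a direct check gives $(-qT_i^{-1})^2=(q-1)(-qT_i^{-1})+q$: the element $-qT_i^{-1}$ satisfies the \emph{same} quadratic relation as $T_i$. Thus $T_i\mapsto -qT_i^{-1}$ is an automorphism of $\mathcal{H}_q(n)$ (the one related to the transpose of the partition), not an isomorphism onto $\mathcal{H}_{q^{-1}}(n)$. The map you want is $T_i\mapsto T_i^{-1}$, which does send the quadratic relation of $\mathcal{H}_q$ to that of $\mathcal{H}_{q^{-1}}$; and one must then still track carefully what it does to Specht modules (whether it preserves the label, transposes it, or dualizes), since this interacts with the conventions for $KZ(M_{\pm c}(\lambda))$ recalled before Theorem~\ref{Hecke_alg_rep_main_thrm}.

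Finally, your ``alternative route'' at the end --- a contravariant duality built from an anti-isomorphism $H_c(n)\cong H_{-c}(n)^{\mathrm{op}}$ that swaps $\fh$ and $\fh^*$, compared against the highest-weight duality fixing simples --- is in fact much closer to what \cite{GGOR} actually does, and it avoids the Hecke-algebra bookkeeping entirely. If you pursue the KZ route, you must replace the faithfulness citation by the $1$-faithful cover theorem and fix the Hecke isomorphism; otherwise I would develop the duality argument instead.
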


\subsubsection{Results on \texorpdfstring{$\co(H_c(n))$}{category O} obtained from the theory of representations of the Hecke algebra $\mathcal{H}_q(n)$}

The correspondence between the lowest weight representations of $H_c(n)$ and the finite dimensional representations of the Hecke algebra $\mathcal{H}_q(n)$ gives us the following theory (see \cite{EN}):

First of all, we have (see \cite[6.13]{EM}):

\begin{proposition}\label{prop:KZ_simples}
 For $c \leq 0$, $KZ(M_{c, n}(\T)) = S_{\T}, KZ(L_{c, n}(\T)) = D_{\T}$, and thus $KZ(L_{c, n}(\T)) \neq 0$ iff $\T$ is $(n-s)$-regular.

For $c > 0$,  $KZ(L_{c, n}(\T)) \neq 0$ iff $\T$ is $(n-s)$-restricted.
\end{proposition}

We next have the following theorem:

\begin{theorem}[Dipper, James; cf. \cite{DJ}]\label{DJ}

If $c'$ satisfies one of the following conditions:
\begin{itemize}
 \item $c' \not\in \bQ$,
\item $c' \in \bQ$, and $ c' = \frac{d}{a}, \Gcd(a,d)=1, d>n $,
\end{itemize}
then the category $\co(H_c(n))$ is semisimple (in particular, all Verma modules are simple).

\end{theorem}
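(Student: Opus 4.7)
The plan is to reduce to the classical Dipper--James semisimplicity criterion for the Hecke algebra $\mathcal{H}_q(n)$, where $q = \exp(2\pi i c)$, and then transfer the conclusion back to $\co(H_c(n))$ via the KZ functor.

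First I would translate the hypothesis on $c' = 1/c$ into a statement about the order $e$ of $q$. If $c' \notin \bQ$, then $c \notin \bQ$, so $q$ is not a root of unity and $e = \infty$. If $c' = d/a$ with $\Gcd(a,d) = 1$ and $d > n$, then $c = a/d$ is also in lowest terms, $q$ is a primitive $d$-th root of unity, and $e = d > n$. In either case $e > n$, so by Theorem \ref{Hecke_alg_rep_semisimple} the category $Rep(\mathcal{H}_q(n))$ is semisimple; equivalently, the Specht modules $S_\lambda$ ($\lambda \vdash n$) are pairwise non-isomorphic simple objects and $\Hom_{\mathcal{H}_q(n)}(S_\mu, S_\tau) = 0$ for $\mu \neq \tau$ (and similarly for their duals).

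Next I would show that every Verma module $M_{c,n}(\tau)$ is simple, arguing by contradiction. Assume $M_{c,n}(\tau)$ admits a proper nonzero submodule $N$, and pick an $S_n$-isotypic piece of some irreducible type $\mu$ sitting in the lowest $\brh$-degree of $N$; by minimality this subrepresentation is singular for the $\hhh$-action, and so the universal property of Verma modules produces a nonzero morphism $\phi \colon M_{c,n}(\mu) \to M_{c,n}(\tau)$ whose image lies in $N$. A standard argument (using that $\brh$ acts on the lowest weight space of $M_{c,n}(\lambda)$ by the scalar $h_{c,n}(\lambda)$ and is preserved by morphisms) shows that any nonzero endomorphism of $M_{c,n}(\tau)$ is a scalar multiple of the identity; since $\phi$ is not surjective, this forces $\mu \neq \tau$. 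Applying the KZ functor, $KZ(\phi)$ becomes a morphism between two distinct Specht modules (or their duals, depending on the sign of $c$), and by the faithfulness of KZ on the subcategory of Verma modules one has $KZ(\phi) \neq 0$. This contradicts the $\Hom$-vanishing from the previous step.

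The step I expect to lean on most heavily is the faithfulness of KZ on Verma modules; without it, semisimplicity on the Hecke side would not immediately force each Verma module in $\co(H_c(n))$ to be simple. The remaining work --- the arithmetic translation between $c'$ and $e$, and the lowest-weight bookkeeping --- is routine.
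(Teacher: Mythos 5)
Your proposal is correct and unpacks exactly the argument the paper gives in one line: semisimplicity of $Rep(\mathcal{H}_q(n))$ when $e>n$ (Theorem \ref{Hecke_alg_rep_semisimple}), the universal property of Verma modules (Proposition \ref{functoriality_prop_Verma_obj}) to convert a hypothetical proper submodule into a nonzero map $M(\mu)\to M(\tau)$ with $\mu\neq\tau$, and faithfulness of KZ on Verma modules to derive a contradiction. The arithmetic translation from $c'$ to the order $e$ of $q=\exp(2\pi ic)$ and the $\brh$-grading bookkeeping are filled in correctly, so this is the paper's proof spelled out in detail.
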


 \begin{proof}
 This is a direct consequence of Theorem \ref{Hecke_alg_rep_semisimple}, the standard property of Verma modules (Proposition \ref{functoriality_prop_Verma_obj}) and the fact that the KZ functor is faithful on the full subcategory of Verma modules in $\co(H_c(n))$.
\end{proof}

This means that we remain with only one interesting case: $$ c' = \frac{d}{a}, \Gcd(a,d)=1, a>0, 0 \leq d \leq n $$
For $d \leq n/2$, the theory is more complicated, but for $d > n/2$, it is rather simple and explained below.
\InnaA{Note that Theorem \ref{Rouquier} allows us to assume that $a=1$ and study just the case $c' = d \in \{0, 1, ..., n\}$.}

Fix integer $s \geq 0$, and from now on, consider $n > 2s$, $c' = d= n-s$, and a Young diagram $\T$ with $\abs{\T}=n$.

\begin{corollary}\label{int_red}
 $M_{c, n}(\T)$ is simple if and only if $\T$ has no $(n-s)$-hook, or its $(n-s)$-hook is a vertical strip.
\end{corollary}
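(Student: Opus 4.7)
\emph{Proof plan.} The plan is to use the $KZ$ functor $KZ: \co(H_c(n)) \to Rep(\mathcal{H}_q(n))$, where $q = \exp(2\pi i c) = \exp(2\pi i/(n-s))$, so that $e = ord_{\bC}(q) = n-s$. Since $KZ$ is surjective on $\Hom$ spaces (by \cite{GGOR}) and faithful on the full subcategory of Verma modules (by \cite[Proposition 3.4]{BEG}), it induces a bijection on $\Hom$ spaces between Verma modules, which will let us transport the reducibility question to a question about Specht modules.

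First I would reduce the question to the Hecke algebra side: $M(\T)$ has a unique simple quotient $L(\T)$, so it is simple if and only if there is no nonzero morphism $M(\mu) \to M(\T)$ with $\mu \ne \T$ -- indeed, any proper subobject of $M(\T) \in \co(H_c(n))$ carries a singular vector by local nilpotence of $\hhh$, yielding such a morphism by the universal property of Verma modules. Under the $KZ$ bijection, together with the duality $\Hom(S_\mu\check{}, S_\T\check{}) \cong \Hom(S_\T, S_\mu)$ used to handle the $c>0$ convention of $KZ$, this becomes the requirement that no nonzero morphism in $Rep(\mathcal{H}_q(n))$ relates $S_\T$ to any other Specht $S_\mu$.

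Next I would invoke Theorem \ref{Hecke_alg_rep_main_thrm}. Since $n > 2s$, each partition of $n$ has at most one $(n-s)$-hook; if $\T$ has one, we may write $\T = \mathtt{rec}(l, \beta)$ uniquely with $\beta = \mathtt{core}_{n-s}(\T)$ and $l \in \{0, \dots, n-s-1\}$ the leg of the hook. The theorem says that the only nontrivial Homs inside the block of $\T$ pair $\mathtt{rec}(l, \beta)$ with $\mathtt{rec}(l+1, \beta)$ for $0 \le l \le n-s-2$. So $\T$ has no such partner exactly when either (a) $\T$ has no $(n-s)$-hook (its block is a singleton, consisting of $\T = \beta$ alone), or (b) the leg of its hook is the extremal value $n-s-1$, i.e.\ the hook equals $hook(n-s-1, n-s) = \pi^{n-s}$, which is precisely a vertical strip.

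The main obstacle I expect is the careful bookkeeping of the direction of morphisms across the $KZ$ functor (which sends $M(\lambda)$ to $S_\lambda$ or $S_\lambda\check{}$ depending on the sign of $c$) together with the Hom-duality twist, so as to verify that the extremal endpoint of the block corresponding to simplicity is the vertical-strip hook (leg $n-s-1$) rather than the horizontal-strip hook (leg $0$). Once this alignment is confirmed, the corollary follows directly from the Hecke-algebra classification recorded in Theorem \ref{Hecke_alg_rep_main_thrm}.
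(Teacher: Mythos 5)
Your skeleton is the same as the paper's: reduce simplicity of $M(\T)$ to absence of nonzero morphisms from other Vermas (Proposition \ref{functoriality_prop_Verma_obj}), transfer to the Hecke algebra via $KZ$, and read off the answer from the Hecke classification (Theorem \ref{Hecke_alg_rep_main_thrm}). That is precisely the argument the paper compresses into one sentence. However, the "main obstacle" you flag is substantive, and the specific bookkeeping you propose would, carried out against the theorem as literally stated, land you on the \emph{wrong} endpoint. Your chain is $\Hom_{\co}(M(\mu),M(\T))\cong\Hom(S_\mu\check{},S_\T\check{})\cong\Hom(S_\T,S_\mu)$; plugging in the theorem's asserted direction $S_{\mathtt{rec}(l+1,\beta)}\to S_{\mathtt{rec}(l,\beta)}$, you would conclude that $M(\T)$ receives a nonzero morphism iff $\T=\mathtt{rec}(l',\beta)$ with $1\le l'\le n-s-1$, hence is simple iff the hook has leg $0$ -- the horizontal strip, not the vertical one. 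This contradicts the corollary (and the classical fact that for $c=1/(n-s)>0$ the trivial-lowest-weight Verma is reducible while the sign-lowest-weight Verma, whose $(n-s)$-hook is vertical, is simple).

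The discrepancy is a Specht-module convention mismatch in Theorem \ref{Hecke_alg_rep_main_thrm} rather than an error in your strategy: the morphism pattern in $\co(H_c(n))$ is stated directly and correctly in Theorem \ref{int_blocks} ($M(\mathtt{rec}(l+1,\beta))\to M(\mathtt{rec}(l,\beta))$, $0\le l\le n-s-2$, consistent with the BGG resolution for $c>0$), and it is this statement you should be citing, not a dualization of \ref{Hecke_alg_rep_main_thrm}. From Theorem \ref{int_blocks}, $M(\T)$ receives a nonzero morphism iff $\T$ has an $(n-s)$-hook with leg $\le n-s-2$, so $M(\T)$ is simple iff $\T$ has no $(n-s)$-hook or the leg equals $n-s-1$, i.e.\ the hook is the vertical strip. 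So: right approach, right identification of the pitfall, but the proposed dualization route needs to be replaced by a direct appeal to Theorem \ref{int_blocks} (or a careful re-derivation of the $KZ$/Specht conventions), or you will pick up the horizontal-strip endpoint by mistake.
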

 
\begin{proof}
 This is a consequence of Theorem \ref{Hecke_alg_rep_main_thrm} and of the standard property of Verma modules (Proposition \ref{functoriality_prop_Verma_obj}).
\end{proof}

\begin{theorem}\label{int_blocks}
\InnaD{ Let $s \geq 0$, $n > 2s$ be integers, and put $c' := n-s$. }
\InnaA{Let $\beta$ be an arbitrary Young diagram of size $s$.} Let $\T=\mathtt{rec}(l,\beta)$, with $0 \leq l \leq (n-s-2)$ and $\mu\neq \T$ be a Young diagram of size $n$.

Then there is a non-trivial morphism $\psi_l: M_{c, n}(\mu)\to M_{c, n}(\T)$ iff $\mu=\mathtt{rec}(l+1,\beta)$. 

In that case, we have:
\begin{itemize}
 \item $\dim \Hom( M_{c, n}(\mu), M_{c, n}(\T)) =1$.
\item There is a short exact sequence $$ 0 \longrightarrow L_{c, n}(\mu) \longrightarrow M_{c, n}(\T) \longrightarrow L_{c, n}(\T) \longrightarrow 0$$
\end{itemize}

\end{theorem}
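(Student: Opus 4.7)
The plan is to prove the theorem by transferring it through the KZ functor $KZ\colon \co(H_c(n)) \to Rep(\mathcal{H}_q(n))$, where $q = e^{2\pi i c}$ has multiplicative order $e = n-s$. The two key properties I would use are that KZ is exact and faithful on the full subcategory of Verma modules, and that on Vermas it realizes $M(\lambda) \mapsto S_\lambda\check{}$ (for $c = 1/(n-s) > 0$). Observation \ref{equiv_c_and_minus_c} and Proposition \ref{pseudo_anti_equiv_c_and_minus_c} would be invoked as needed to align the direction of morphisms with the sign of $c$, so that the Hecke-side statement in Theorem \ref{Hecke_alg_rep_main_thrm}(2) becomes directly applicable.

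For the ``only if'' implication and the bound $\dim\Hom \leq 1$, I would apply KZ-faithfulness on Vermas to embed
$$\Hom_{H_c(n)}(M(\mu), M(\T)) \hookrightarrow \Hom_{\mathcal{H}_q(n)}(KZ(M(\mu)), KZ(M(\T))),$$
and then read off the conclusion from Theorem \ref{Hecke_alg_rep_main_thrm}(2): since $n > 2s$ forces the $(n-s)$-core of $\T$ to be unique, this Hom space is nonzero only when $\mu = \mathtt{rec}(l+1, \beta)$, in which case it is one-dimensional.

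For the ``if'' direction, when $\mu = \mathtt{rec}(l+1, \beta)$, I would produce the morphism $\psi_l$ by exhibiting a singular vector. Since $l \leq n-s-2$, the $(n-s)$-hook of $\T$ is not a vertical strip, so Corollary \ref{int_red} tells us $M(\T)$ is reducible. Any proper nonzero submodule of $M(\T)$ must contain an $S_n$-singular vector, which by Frobenius reciprocity yields a nonzero morphism $M(\mu') \to M(\T)$ for some $\mu' \neq \T$; the ``only if'' analysis above forces $\mu' = \mu$.

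For the short exact sequence, set $N := \mathrm{Im}(\psi_l)$. As a quotient of $M(\mu)$ it has simple head $L(\mu)$, and $M(\T)/N$ inherits the simple head $L(\T)$ of $M(\T)$. To conclude that $N = L(\mu)$ and $M(\T)/N = L(\T)$, I would lift the composition series of $KZ(M(\T))$ back to $\co(H_c(n))$: by Theorem \ref{Hecke_alg_rep_main_thrm}(2), $KZ(M(\T))$ has exactly two composition factors (each of multiplicity one), so the composition length of $M(\T)$ modulo $\co^{tor}$ equals $2$. The \emph{main obstacle} is ruling out any additional torsion composition factors, which are invisible to KZ: this I would handle by showing that the relevant simples $L(\T), L(\mu)$ are themselves non-torsion (using the characterization of $KZ(L(\lambda)) \neq 0$ via $(n-s)$-regular/restricted partitions, which applies because $l$ and $l+1$ lie in the range $\{0, \dots, n-s-1\}$ where the inserted hooks are neither horizontal nor vertical strips), and then invoking block considerations to exclude any other simple as a composition factor of $M(\T)$. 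Once the composition length is pinned at two, the SES is forced and the uniqueness of $\psi_l$ up to scalar is already ensured by $\dim\Hom = 1$.
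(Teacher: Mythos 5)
Your proposal follows essentially the same route as the paper, which defers to \cite[Sections 3.4, 3.5]{BEG} for the $s=0$ case and then, in the remark immediately following the theorem, explains how to extend that argument to $s>0$. Reducing via KZ to Theorem \ref{Hecke_alg_rep_main_thrm}(2), using faithfulness on Vermas for the ``only if'' and $\dim\Hom\leq 1$, and then dealing with torsion composition factors, is exactly the mechanism the paper (and BEG) rely on. You also correctly identify the only genuinely delicate point: KZ is blind to simples in $\co^{tor}$.

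However, your proposed treatment of that delicate point contains a concrete error. You assert that $L(\T)$ and $L(\mu)$ are both non-torsion ``because $l$ and $l+1$ lie in the range $\{0,\dots,n-s-1\}$ where the inserted hooks are neither horizontal nor vertical strips.'' For $l=0$ (which is allowed, since $0\leq l\leq n-s-2$) the inserted hook in $\T=\mathtt{rec}(0,\beta)$ has leg $0$ and \emph{is} a horizontal strip: $\T$ is $\beta$ with $n-s$ cells appended to the first row, so $\T_1-\T_2\geq n-s$, $\T$ is not $(n-s)$-restricted, and $KZ(L(\T))=0$. In this case $KZ(M(\T))$ has only one composition factor, $D_\mu$, and the step ``compute the composition length of $M(\T)$ mod $\co^{tor}$ and show nothing torsion can occur'' collapses, since $L(\T)$ itself is torsion. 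The paper's remark addresses this head-on: among the diagrams $\lambda^{(0)},\lambda^{(1)},\dots$ with $(n-s)$-core $\beta$ (with $\lambda^{(i)}=\mathtt{rec}(i,\beta)$), only $\lambda^{(0)}$ fails to be $(n-s)$-restricted, so $L(\lambda^{(0)})$ is the \emph{unique} thin simple in the block, and one must separately control its multiplicity. The missing step in your proposal is to observe that any torsion factor of $M(\T)$ must be $L(\lambda^{(0)})$ (diagrams without $(n-s)$-hooks lie in semisimple blocks by Corollary \ref{int_red} and so cannot contribute), and that by comparing lowest $\brh$-eigenvalues $h(\lambda^{(i)})$ is strictly increasing in $i$, so $L(\lambda^{(0)})$ can occur in $M(\T)=M(\lambda^{(l)})$ only when $l=0$, in which case it occurs exactly once as the head. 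Only after this refinement is the composition length of $M(\T)$ pinned at two, and your concluding step (uniqueness of $\psi_l$ and the short exact sequence) goes through.
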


\begin{proof}
 This is a generalization of the result in \cite[Section 3]{BEG} (there it is proved for the case $s=0$, i.e. $e=n$). 
 
First of all, recall from \cite[Proposition 2.10]{BEG1} (also proved in \cite{GGOR}) that the KZ functor is fully faithful on the full subcategory of Verma modules in $\co(H_c(n))$. Then Theorem \ref{Hecke_alg_rep_main_thrm} implies that the dimension of $\Hom( M_{c, n}(\mu), M_{c, n}(\T)) $ is one iff $\mu=\mathtt{rec}(l+1,\beta)$, and zero otherwise.
 
To prove the rest of the theorem, we only need to check that $L_{c, n}(\mu)$ appears among the composition factors of $M_{c, n}(\T)$ with multiplicity at most one if $\mu=\mathtt{rec}(l+1,\beta)$, and with multiplicity zero otherwise.

If $KZ(L_{c, n}(\mu)) \neq 0$, then the statement follows immediately from Theorem \ref{Hecke_alg_rep_main_thrm}(2). So we only need to show that the same is true when $KZ(L_{c, n}(\mu)) = 0$. Recall from Proposition \ref{prop:KZ_simples} that the latter happens iff $\mu$ is not $(n-s)$-restricted, so we will assume that this is the case.

We now have two possible cases:
\begin{itemize}
 \item The Young diagram $\mu$ does not have an $(n-s)$-hook, and thus $\mathtt{core}_{(n-s)}(\mu)=  \mu$. In this case $M_{c, n}(\mu)$ is simple and lies in a semisimple block (see Corollary \ref{int_red}), so $L_{c, n}(\mu)$ cannot appear as a composition factor in $M_{c, n}(\T)$ unless $\mu = \T$ (which is clearly impossible, since $\mathtt{core}_{(n-s)}(\T)=  \beta$, which has size $s <n$).
 \item The Young diagram $\mu$ has an $(n-s)$-hook. Denote: $\beta' := \mathtt{core}_{(n-s)}(\mu)$. It is then easy to see that the condition that $\mu$ is not $(n-s)$-restricted implies that $\mu$ is obtained from $\beta'$ by adding $(n-s)$ cells to the first row (i.e. $\mu=\mathtt{rec}(0,\beta')$).
 
 Assume $M_{c, n}(\mu), M_{c, n}(\T)$ lie in the same block of $\co(H_c(n))$ (otherwise we are done). Since the KZ functor is fully faithful on the full subcategory of Verma modules in $\co(H_c(n))$, the modules $S_{\mu}\check{} \cong KZ(M_{c, n}(\mu)), S_{\T}\check{} \cong KZ(M_{c, n}(\T))$ belong to the same block of $\InnaE{\mathrm{Rep}}(\mathcal{H}_q(n))$ (here $q = exp(2\pi i c)$). By Theorem \ref{Hecke_alg_rep_main_thrm}(1), this implies that $\beta = \beta'$. 
 
 So it remains to check that for $l>0$ and $\mu=\mathtt{rec}(0,\beta)$, $L_{c, n}(\mu)$ does not appear among the composition factors of $M_{c, n}(\T)$. Indeed, since $\mu=\mathtt{rec}(0,\beta)$, $\T=\mathtt{rec}(l,\beta)$ and $l>0$, we have: $ct(\T) <ct(\mu)$, and thus $$h_{c,n}(\T)= \frac{n-1}{2} - c \cdot ct(\T) > \frac{n-1}{2} - c \cdot ct(\mu) = h_{c,n}(\mu)$$ But $h_{c,n}(\T)$ is the lowest eigenvalue of $\brh$ on $M_{c, n}(\T)$, so $L_{c, n}(\mu)$ cannot be a composition factor of $M_{c, n}(\T)$.
\end{itemize}
\end{proof}
%
%
%

\begin{example}
The Verma module $M_{c, n}(\T^{n \InnaA{-1}})$ (whose lowest weight is the trivial representation $\T^{n\InnaA{-1}}= \widetilde{\emptyset}(n)$ of $S_n$) is reducible if and only if the following condition on $c'=\frac{1}{c},n$ holds:

$$c' = \frac{n-s}{r}, s \geq 0, r \geq 1$$

In this case, $M_{c, n}(\T^{n\InnaA{-1}})$ contains a lowest weight $H_c(n)$-submodule, whose lowest weight $\mu$ lies in degree $r(s+1)$ of $M_{c, n}(\T^{n\InnaA{-1}})$. The Young diagram of $\mu$ is a two row diagram, the lower row containing $s+1$ boxes.
\end{example}

Theorem \ref{int_blocks} gives the following corollary:

\begin{corollary}\label{int_blocks_conclusion}
 Let $n >3, s \geq 0, r \geq 1$ be integers, $ n>2s, \Gcd(r, n-s)=1$. Then the blocks of the category $\co(H_{c=\frac{r}{n-s}}(n))$ correspond to $(n-s)$-cores, and we have two types of blocks: blocks containing only one Verma module (up to isomorphism), corresponding to a diagram which doesn't have an $(n-s)$-hook, and blocks which correspond to Young diagrams of size $s$; in such a block $Block_{\beta}$ ($\abs{\beta} = s$) lie the Verma modules whose $(n-s)$-core is $\beta$, and these Verma modules form a long exact sequence:
\begin{align*}
 &0 \longrightarrow M_{c, n}(\mathtt{rec}((n-s-1), \beta)) \stackrel{\psi_{(n-s-2)}}{\longrightarrow} M_{c, n}(\mathtt{rec}((n-s-2), \beta)) \stackrel{\psi_{(n-s-3)}}{\longrightarrow}  ...\\
&...
 \stackrel{\psi_{1}}{\longrightarrow} M_{c, n}(\mathtt{rec}(1, \beta))\stackrel{\psi_{0}}{\longrightarrow} M_{c, n}(\mathtt{rec}(0, \beta)) \longrightarrow L_{c, n}(\mathtt{rec}(0, \beta)) \longrightarrow 0
\end{align*}

\end{corollary}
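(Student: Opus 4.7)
My plan is to derive the corollary directly from Theorem~\ref{int_blocks}, using Rouquier's equivalence to reduce to the case $r=1$. First I apply Theorem~\ref{Rouquier} with $b=1$, $a=n-s$: this identifies $\co(H_{c=r/(n-s)}(n))$ with $\co(H_{c=1/(n-s)}(n))$, sending each $M_{c_2,n}(\T)$ to $M_{c_1,n}(\T)$ and rescaling the degree in which a Verma sits inside another by the factor $r$. The Rouquier hypothesis $c_1=1/(n-s)\notin\{(2k+1)/2:k\in\bZ\}$ is automatic because $n>3$ together with $n>2s$ forces $n-s\geq 3$ (indeed, $n-s\in\{1,2\}$ would force $n\leq 3$). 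This reduces the problem to the case $c'=n-s$, which is precisely the setting of Theorem~\ref{int_blocks}.

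Next, I assemble the long exact sequence by splicing the short exact sequences of Theorem~\ref{int_blocks}. For a fixed $\beta$ with $|\beta|=s$ and each $0\leq l\leq n-s-2$ the theorem provides
\[
0\longrightarrow L(\mathtt{rec}(l+1,\beta))\longrightarrow M(\mathtt{rec}(l,\beta))\longrightarrow L(\mathtt{rec}(l,\beta))\longrightarrow 0,
\]
and defines $\psi_l$, up to scalar, as the composition $M(\mathtt{rec}(l+1,\beta))\twoheadrightarrow L(\mathtt{rec}(l+1,\beta))\hookrightarrow M(\mathtt{rec}(l,\beta))$. Thus $\mathrm{im}(\psi_l)=L(\mathtt{rec}(l+1,\beta))$, and inductively $\ker(\psi_l)$ equals the maximal proper submodule of $M(\mathtt{rec}(l+1,\beta))$, which by the SES one step up is $L(\mathtt{rec}(l+2,\beta))=\mathrm{im}(\psi_{l+1})$. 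At the leftmost end, $\mathtt{rec}(n-s-1,\beta)$ has $(n-s)$-hook equal to a single column of $n-s$ boxes, a vertical strip, so by Corollary~\ref{int_red} the Verma $M(\mathtt{rec}(n-s-1,\beta))$ is simple; as Theorem~\ref{int_blocks} supplies a nonzero map into $M(\mathtt{rec}(n-s-2,\beta))$, the map $\psi_{n-s-2}$ is injective. At the rightmost end, the kernel of $M(\mathtt{rec}(0,\beta))\twoheadrightarrow L(\mathtt{rec}(0,\beta))$ equals $L(\mathtt{rec}(1,\beta))=\mathrm{im}(\psi_0)$ by the SES at $l=0$. Splicing yields the claimed long exact sequence.

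Finally, I identify the blocks. Theorem~\ref{int_blocks} already shows that the only nonzero morphisms between distinct Vermas link $M(\mathtt{rec}(l,\beta))$ with $M(\mathtt{rec}(l+1,\beta))$, so the Vermas with a fixed $(n-s)$-core $\beta$ of size $s$ all lie in one block, while $\T$ with no $(n-s)$-hook gives $M(\T)=L(\T)$ simple (Corollary~\ref{int_red}) and alone. To promote this to the block decomposition of the full category, I would go through the KZ functor: after using Observation~\ref{equiv_c_and_minus_c} to arrange $c<0$, the faithfulness of KZ on Vermas combined with the $\mathcal{H}_q(n)$-block classification (Theorem~\ref{Hecke_alg_rep_main_thrm}, indexed by $(n-s)$-cores) forces Vermas with different $(n-s)$-cores into different blocks of $\co$; the standard fact that the block decomposition of a highest-weight category is determined by the one on its standard objects then completes the proof. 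The main obstacle is precisely this last step, upgrading the Verma-only block statement of Theorem~\ref{int_blocks} to the full category.
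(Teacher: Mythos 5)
Your proposal is correct and follows exactly what the paper leaves implicit when it says ``Theorem \ref{int_blocks} gives the following corollary.'' The Rouquier reduction from $c=r/(n-s)$ to $c=1/(n-s)$ is indeed needed (the paper's Theorem \ref{int_blocks} is stated only for $c'=n-s$, while the corollary allows general $r$ with $\Gcd(r,n-s)=1$), and your check that the hypothesis $c_1\notin\{(2k+1)/2\}$ is automatic when $n>3$ and $n>2s$ is right: those inequalities force $n-s\geq 3$, so $c_1=1/(n-s)\in(0,1/3]$. The splicing of the short exact sequences from Theorem \ref{int_blocks} into the long exact sequence, and the observation that $\mathtt{rec}(n-s-1,\beta)$ has a vertical $(n-s)$-hook so that $M(\mathtt{rec}(n-s-1,\beta))$ is simple by Corollary \ref{int_red} (hence $\psi_{n-s-2}$ is injective), are precisely the intended steps.

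The one point you flag as ``the main obstacle'' --- passing from the Verma-only block statement to the block decomposition of the full category --- is not actually a gap. Since $\co(H_c(n))$ is a highest-weight category and every object has finite length, the block decomposition of the category coincides with the linkage classes of the standard objects, and this linkage is already read off from Theorem \ref{int_blocks} (which gives $[M(\mathtt{rec}(l,\beta)):L(\mu)]\neq0$ only for $\mu\in\{\mathtt{rec}(l,\beta),\mathtt{rec}(l+1,\beta)\}$, and says Vermas with no $(n-s)$-hook are simple and alone in their block by Corollary \ref{int_red}). So you can drop the detour through the KZ functor if you wish; the highest-weight formalism alone closes the argument. Either route is acceptable, and the one you sketch is consistent with the paper's use of KZ in Section \ref{sec:blocks_classical_case}.
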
 

(Note that $\mathtt{rec}((n-s-1), \beta)$ is the diagram $\beta$ with a vertical $(n-s)$-hook added, and $\mathtt{rec}(0, \beta)$ is the diagram $\beta$ with a horizontal $(n-s)$-hook added).

\begin{remark}
 For $s \geq n/2$, the situation is more complicated, but one can still say the following (see \cite{EN}). 

\InnaB{Let $c' = n-s$. If $M_{c, n}(\mu), M_{c, n}(\T)$ belong to the same block, then they have the same $(n-s)$-core. In particular, if $\T$ has no $(n-s)$-hooks, then $M_{c, n}(\T)$ is a simple module lying in a semisimple block.} 
\end{remark}

\section{Constructions for \texorpdfstring{$\InnaE{\underline{\co}}_{\text{  } c,\nu}$}{the category O in complex rank}}\label{sec:constr_for_H_c_nu}

We now define constructions analogous to those described in Section \ref{sec:blocks_classical_case} for the category $\InnaE{\underline{\co}}_{\text{  } c,\nu}$, where we consider generic values of $\nu$. We use the idea described in Section \ref{sec:Del_cat} of treating a simple object $X_{\T}$ of $\InnaE{\underline{\mathrm{Rep}}}(S_{\nu})$ as a Young diagram obtained by adding a very long top row to $\T$ (``the top row having length $(\nu-\abs{\T})$''). This idea is only meant to provide some intuition, but it is true that for $n\in \bZ, n>>0$, the image of $X_{\T}$ under the functor $\InnaE{\underline{\mathrm{Rep}}}(S_{\nu=n}) \longrightarrow \InnaE{\mathrm{Rep}}(S_n)$ is isomorphic to $\tilde{\T}(n)$, which is exactly a Young diagram obtained by adding a top row of length $(n-\abs{\T})$ to $\T$. 

So instead of adding and removing $(n-s)$-hooks as we did in Section \ref{sec:blocks_classical_case}, we would be adding and removing ``$(\nu-s)$-hooks'' from diagrams with ``very long top row of size $(\nu-\abs{\T})$''. Below is a formal description of the relevant constructions.

Let $\T$ be a Young diagram, and $s \geq 0$ be an integer.

\begin{definition}\label{s_core}
\mbox{}
\begin{itemize}
 \item Define $C_{\T} := \{ \abs{\T} -1 +j - \T\check{}_j  \mid j \geq 1\}$.
\item For $s \in C_{\T}, s= \abs{\T} -1 +j_s - \T\check{}_{j_s}$ for some $j_s \geq 1$, define the $(\nu-s)$-core of $\T$ as 
$\mathbf{core}_{(\nu-s)}(\T)\check{}_j = \T\check{}_j  +1$ if $ 1\leq j <j_s$, and $\mathbf{core}_{(\nu-s)}(\T)\check{}_j = \T\check{}_{j+1}$ if $ j\geq j_s$.
\end{itemize}
 
\end{definition}

That is, $\mathbf{core}_{(\nu-s)}(\T)$ is the Young diagram obtained by taking out column ${j_s}$ of $\T$, moving columns $1, .., {j_s}-1$ down and adding a row of length ${j_s}-1$ on top of them, and moving left the columns ${j_s}+1, ...$. 

\begin{example}
 Let $\T = (8,5,4,3,3,2)$. Then $\abs{\T} = 25$, $C_{\T} = \{19, 20, 22, 25, 27, 29, 30, 31\} \cup \bZ_{\geq 33}$. 
Let $s = 22$. Then $s \in  C_{\T}, s= \abs{\T} -1 +3 - \T\check{}_3$. So $j_s=3$, and $\mathbf{core}_{(\nu-s)}(\T) = (7, 4,3,2,2,2,2)$. 
 \begin{align*}
  &\T =\yng(8,5,4,3,3,2)  \mapsto \yng(2,2,2,2,2,2) + \yng(1,1,1,1,1) + \yng(5,2,1) 
 \mapsto \mathbf{core}_{(\nu-s)}(\T) = \young(\circ\circ\hfil\hfil\hfil\hfil\hfil,\hfil\hfil\hfil\hfil,\hfil\hfil\hfil,\hfil\hfil,\hfil\hfil,\hfil\hfil,\hfil\hfil)
 \end{align*}

Let $s = 34$. Then $s \in  C_{\T}, s= \abs{\T} -1 +10 - \T\check{}_{10}$. So $j_s =10$, $\mathbf{core}_{(\nu-s)}(\T) = (9,8,5,4,3,3,2)$.
 \begin{align*}
&\T =\yng(8,5,4,3,3,2)  \mapsto \mathbf{core}_{(\nu-s)}(\T) = \young(\circ\circ\circ\circ\circ\circ\circ\circ\circ,\hfil\hfil\hfil\hfil\hfil\hfil\hfil\hfil,\hfil\hfil\hfil\hfil\hfil,\hfil\hfil\hfil\hfil,\hfil\hfil\hfil,\hfil\hfil\hfil,\hfil\hfil)
 \end{align*}
\end{example}

\begin{remark}
 Deligne defined $\mathbf{core}_{(\nu-s)}(\lambda)$, which he denoted by $\{\lambda\}^+_n$, with $n:=s$, in \cite[7.5]{D}. 
\end{remark}

The process of ``reconstruction'' (corresponding to inserting a hook into a Young diagram) is defined as follows:

\begin{construction}\label{reconstr_description}
 Given any Young diagram $\eta$ and an integer $l \geq 0$, we define $\T := \mathbf{rec}(l, \eta)$ as the Young diagram obtained through the following steps:
 \begin{itemize}
  \item Find the index $k \geq 1$ such that $\eta_{k-1}\InnaA{\check{}} \geq l+1 > \eta_k\InnaA{\check{}} $ (here $\eta_0 := \infty$).
 \item Define $\T =\mathbf{rec}(l, \eta)$ by $\T\check{}_j  := \eta\check{}_{j}- 1$ for $j <k$, $\T\check{}_k  := l$ and $\T\check{}_j := \eta\check{}_{j-1}$ for $j > k$.
 \end{itemize}

\end{construction}

That is, we divide $\eta$ into two parts: part $1$ consisting of columns $1,..., k-1$ and part $2$ consisting of columns $k, k+1,...$. Then we delete the top row of part $1$, add a $k$-th column of length $l$, and add part $2$ as columns $k+1, k+2, ...$.

\begin{example} 
\mbox{}
 \begin{itemize} 
  \item For $ l > \eta\check{}_{1}$, $\mathbf{rec}(l, \eta)$ is the Young diagram obtained from $\eta$ by adding a column of length $l$ to $\eta$ (this will become the first column). 
\item For $l=0$, $\mathbf{rec}(l, \eta)$ is the Young diagram obtained from $\eta$ by removing its top row.
 \end{itemize}

\end{example}
\begin{example}
 $\mathbf{rec}(\T\check{}_{j_s}, \mathbf{core}_{(\nu-s)}(\T)) = \T$.
\end{example}
\begin{example}
Let $\T = (10,8,8,6,5,4,1)$, $s=\InnaD{43}$ (so $j_s =6$). Then $\mathbf{rec}(7, \mathbf{core}_{(\nu-s)}(\T)) =  (10,8,8,6,6,6,5)$.
\begin{align*}
&\yng(10,8,8,6,5,4,1)  \mapsto \yng(5,5,5,5,5,4,1) + \yng(1,1,1,1) + \yng(4,2,2)\mapsto \mathbf{core}_{(\nu-s)}(\T) =\young(\times\times\times\times\times\hfil\hfil\hfil\hfil,\hfil\hfil\hfil\hfil\hfil\hfil\hfil,\hfil\hfil\hfil\hfil\hfil\hfil\hfil,\hfil\hfil\hfil\hfil\hfil,\hfil\hfil\hfil\hfil\hfil,\hfil\hfil\hfil\hfil\hfil,\hfil\hfil\hfil\hfil,\hfil) \mapsto \\
&\mapsto 
\young(\times,\hfil,\hfil,\hfil,\hfil,\hfil,\hfil,\hfil) +\young(\times\times\times\times\hfil\hfil\hfil\hfil,\hfil\hfil\hfil\hfil\hfil\hfil,\hfil\hfil\hfil\hfil\hfil\hfil,\hfil\hfil\hfil\hfil,\hfil\hfil\hfil\hfil,\hfil\hfil\hfil\hfil,\hfil\hfil\hfil)
\mapsto
\yng(1,1,1,1,1,1,1) +\young(\circ,\circ,\circ,\circ,\circ,\circ,\circ) + \young(\times\times\times\times\hfil\hfil\hfil\hfil,\hfil\hfil\hfil\hfil\hfil\hfil,\hfil\hfil\hfil\hfil\hfil\hfil,\hfil\hfil\hfil\hfil,\hfil\hfil\hfil\hfil,\hfil\hfil\hfil\hfil,\hfil\hfil\hfil) \mapsto  \mathbf{rec}(7, \mathbf{core}_{(\nu-s)}(\T))=\young(\hfil\circ\times\times\times\times\hfil\hfil\hfil\hfil,\hfil\circ\hfil\hfil\hfil\hfil\hfil\hfil,\hfil\circ\hfil\hfil\hfil\hfil\hfil\hfil,\hfil\circ\hfil\hfil\hfil\hfil,\hfil\circ\hfil\hfil\hfil\hfil,\hfil\circ\hfil\hfil\hfil\hfil,\hfil\circ\hfil\hfil\hfil) 
\end{align*}

\end{example}

\begin{notation}\label{notn:rec_core}
\InnaA{ Let $\T$ be a Young diagram, $s\in C_{\T}, l\in \bZ_{\geq -\T\check{}_{j_s}}$ \InnaB{($j_s$ given by $s=\abs{\T} -1 +j_{s} - \T\check{}_{j_s}$)}. We will denote the Young diagram $\mathbf{rec}(\T\check{}_{j_s}+l, \mathbf{core}_{(\nu-s)}(\T))$ by $\Gamma(\T, s, l)$.}
\end{notation}

\begin{example}\label{empty_diagr_core}
 Consider $\T = \emptyset$. Given $s \geq 0$, the $(\nu-s)$-core of $\emptyset$ is a row of length $s$. Then $\Gamma(\emptyset, s, l)=\mathbf{rec}(l, \mathbf{core}_{(\nu-s)}(\emptyset))$ is a hook with arm length $s$ and leg length $l-1$.
\end{example}

\begin{lemma}\label{identity_for_s_and_diagrams}
 Let $\T$ be a Young diagram, $l >0 $ an integer, $s :=\abs{\T} -1 +j_s - \T\check{}_{j_s}$ for some $j_s \geq 1$, $\mu = \Gamma(\T, s, l)$. Then $\abs{\mu} - \abs{\T}>0$, and $s =  \frac{f(\mu)-f(\T)}{\abs{\mu} -\abs{\T}}$ ($f$ is defined in Subsection \ref{ssec:rmrks_main_for}).
\end{lemma}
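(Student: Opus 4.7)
The strategy is a direct combinatorial computation from the definitions. First I would unpack the construction $\mu = \Gamma(\T, s, l) = \mathbf{rec}(\T\check{}_{j_s} + l, \eta)$, where $\eta = \mathbf{core}_{(\nu-s)}(\T)$ has $\eta\check{}_j = \T\check{}_j + 1$ for $j < j_s$ and $\eta\check{}_j = \T\check{}_{j+1}$ for $j \geq j_s$. Let $k \geq 1$ be the insertion index for $\mathbf{rec}$, i.e.\ the unique integer satisfying $\eta\check{}_{k-1} \geq \T\check{}_{j_s} + l + 1 > \eta\check{}_k$. Since $\eta\check{}_{j_s} = \T\check{}_{j_s+1} \leq \T\check{}_{j_s} < \T\check{}_{j_s} + l + 1$ (using $l > 0$), this defining inequality forces $k \leq j_s$. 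A short case split then gives the explicit description: $\mu\check{}_j = \T\check{}_j$ for $j < k$ and for $j > j_s$, $\mu\check{}_k = \T\check{}_{j_s} + l$, and $\mu\check{}_j = \T\check{}_{j-1} + 1$ for $k < j \leq j_s$ (this middle range being empty when $k = j_s$).

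For the first claim I would simply sum the differences $\mu\check{}_j - \T\check{}_j$ across $j$; the contributions over $k < j \leq j_s$ telescope, giving
\[
\abs{\mu} - \abs{\T} \;=\; (\T\check{}_{j_s} + l - \T\check{}_k) \;+\; \sum_{k < j \leq j_s} \bigl(\T\check{}_{j-1} + 1 - \T\check{}_j\bigr) \;=\; (j_s - k) + l,
\]
which is strictly positive since $k \leq j_s$ and $l > 0$.

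For the content identity I would use the column-sum formula $ct(\lambda) = \sum_j \bigl[j\,\lambda\check{}_j - \binom{\lambda\check{}_j + 1}{2}\bigr]$ together with
\[
f(\mu) - f(\T) \;=\; \tfrac{1}{2}(\abs{\mu} - \abs{\T})(\abs{\mu} + \abs{\T} - 1) + ct(\mu) - ct(\T).
\]
Setting $\Delta := \abs{\mu} - \abs{\T} = j_s - k + l$, the columns outside $[k, j_s]$ contribute zero to $ct(\mu) - ct(\T)$. Inside that range, each summand in $ct(\mu) - ct(\T)$ reduces to a single value $j$ after expanding $\binom{\T\check{}_j + 2}{2} - \binom{\T\check{}_j + 1}{2} = \T\check{}_j + 1$, so $\sum_{k \leq j \leq j_s - 1} j = \frac{(j_s + k - 1)(j_s - k)}{2}$. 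Combining this with the boundary contribution at $j = k$ (using $\binom{\T\check{}_{j_s} + l + 1}{2} - \binom{\T\check{}_{j_s} + 1}{2} = (\T\check{}_{j_s} + 1)l + \binom{l}{2}$) and the boundary loss at $j = j_s$, and then substituting $s = \abs{\T} - 1 + j_s - \T\check{}_{j_s}$, straightforward algebra shows that $f(\mu) - f(\T) - s\Delta$ collapses to zero.

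The main obstacle is purely bookkeeping: one must track the four index regimes ($j < k$, $j = k$, $k < j \leq j_s$, $j > j_s$) together with the shift of one index in the middle range, and correctly handle the degenerate case $k = j_s$. A useful sanity check is Example \ref{empty_diagr_core}: for $\T = \emptyset$ one has $j_s = s + 1$, $k = 1$, and $\Gamma(\emptyset, s, l)$ is the hook of arm $s$ and leg $l-1$, giving $f(\mu) = s(s + l) = s \cdot \abs{\mu}$, which matches the formula. Conceptually, this lemma asserts that for every admissible $m$ the line $\mathcal{L}_{\T, \mu, m}$ in Notation \ref{L_tau_mu_m_notn} passes through the point $(c', \nu) = (0, s)$, which is the ``limit'' statement one expects from the classical block description in Section \ref{sec:blocks_classical_case} via the functor $\mathcal{F}_{c,n}$.
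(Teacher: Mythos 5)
Your proposal takes essentially the same route as the paper: explicit determination of $\mu\check{}$ from the two constructions, the telescoping sum giving $\abs{\mu}-\abs{\T}=j_s-k+l$, a column-by-column evaluation of $ct(\mu)-ct(\T)$ (with the moved columns each contributing their original index), and final substitution into $f$. Note that your explicit description $\mu\check{}_j=\T\check{}_{j-1}+1$ for $k<j\leq j_s$ is the correct one (it agrees with the paper's verbal description, its worked examples, and the displayed content calculation), even though the formula as typeset in the paper reads $\mu\check{}_j=\T\check{}_j+1$ --- an evident typographical slip.
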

\begin{proof}
 We first describe $\mu$ in terms of $\T$. By Constructions \ref{s_core}, \ref{reconstr_description} described above, $\mu$ is obtained from $\T$ by taking out column $j_s$ of $\T$, inserting a column of length $\T\check{}_{j_s}+l$ after column $k_{s,l}-1$ for some $k_{s,l} \leq j_s$ ($k_{s,l}$ is uniquely determined by Construction \ref{reconstr_description}), and adding a cell to each of the columns $k_{s,l}+1, ..., j_s$ of the newly constructed diagram. Thus $\mu\check{}_j=\T\check{}_j $ for $j<k_{s,l}$ and $j>j_s$, $\mu\check{}_{k_{s,l}}=\T\check{}_{j_s}+l$, and $\mu\check{}_j=\T\check{}_\InnaA{j}  +1$ for $j=k_{s,l}+1, ..., j_s$.
This means that we have:

$$\abs{\mu} - \abs{\T} = j_s -k_{s,l}+l$$

Note that since $k_{s,l} \leq j_s$, $l>0$, we have: $\abs{\mu} - \abs{\T}>0$.

\InnaB{From this description of $\mu$, we see that }

\begin{align*}
 &ct(\mu) -ct(\T)= -\left(j_s \T\check{}_{j_s} - \frac{ \T\check{}_{j_s}(\T\check{}_{j_s}+1)}{2}\right) + \left( \frac{ j_s(j_s-1)}{2}- \frac{ k_{s,l}(k_{s,l}-1)}{2}\right) + \\
&+\left(k_{s,l} (\T\check{}_{j_s} +l) - \frac{ (\T\check{}_{j_s} +l)(\T\check{}_{j_s}+ l+1)}{2}\right) =\\
&= -\T\check{}_{j_s}(j_s -k_{s,l}+l) + \frac{ j_s(j_s-1)}{2}- \frac{ k_{s,l}(k_{s,l}-1)}{2} - \frac{ \ell(l+1)}{2} +k_{s,l} l =\\
&= -\T\check{}_{j_s}(j_s -k_{s,l}+l) - \frac{j_s -k_{s,l}+l}{2} + \frac{ j_s^2 - (k_{s,l}- l)^2}{2} =\\
&= -\left(\T\check{}_{j_s}+\frac{1}{2} \right)(j_s -k_{s,l}+l) + \frac{ (j_s+k_{s,l}-l)(j_s -k_{s,l}+l)}{2}
\end{align*}

And thus
$$\frac{ct(\mu) -ct(\T)}{\abs{\mu} -\abs{\T}} = -\T\check{}_{j_s}-\frac{1}{2} + \frac{j_s+k_{s,l}-l}{2} $$

Recall that we also have, by definition:

$$ \frac{f(\mu)-f(\T)}{\abs{\mu} -\abs{\T}} = \frac{\abs{\mu} + \abs{\T} -1 }{2} +\frac{ct(\mu) -ct(\T)}{\abs{\mu} -\abs{\T}}$$

And so 
\begin{align*}
 &\frac{f(\mu)-f(\T)}{\abs{\mu} -\abs{\T}} =\frac{\abs{\mu} + \abs{\T} -1 }{2}  -\T\check{}_{j_s}-\frac{1}{2} + \frac{j_s+k_{s,l}-l}{2} =\\
&= \frac{2\abs{\T} +j_s -k_{s,l}+l-1 }{2}  -\T\check{}_{j_s}-\frac{1}{2} + \frac{j_s+k_{s,l}-l}{2} = \abs{\T} -1 +j_s - \T\check{}_{j_s} =s
\end{align*}
 
\end{proof}

These constructions are compatible with the constructions described in Section \ref{sec:blocks_classical_case} in the following sense:

\begin{proposition}\label{compat_of_constr_cores}
 Let $n >>0$. Put $\lambda^{(l)} := \mathbf{rec}(l,\mathbf{core}_{(\nu-s)}(\T))$. Then $\widetilde{\lambda^{(l)}}(n) = \mathtt{rec}(l,\mathtt{core}_{(n-s)}(\tilde{\T}(n)))$.
\end{proposition}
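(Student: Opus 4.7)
The plan is to reduce the proposition to two separate identifications of partitions: (i) $\mathtt{core}_{(n-s)}(\tilde{\T}(n)) = \mathbf{core}_{(\nu-s)}(\T)$ as ordinary partitions (for $n$ sufficiently large, at least $n>2s$), and (ii) $\mathtt{rec}(l,\eta) = \widetilde{\mathbf{rec}(l,\eta)}(n)$ whenever $\eta$ is a partition of size $s$. Combining (i) for $\T$ with (ii) for $\eta := \mathbf{core}_{(\nu-s)}(\T)$ gives the proposition.

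For step (i), since $s = |\T|-1+j_s - \T\check{}_{j_s}$, the hook of $\tilde{\T}(n)$ at position $(1,j_s)$ has length $(n-|\T|) - j_s + (\T\check{}_{j_s}+1) = n-s$; and for $n>2s$ the lemma stated just before Definition \ref{int_s_core} guarantees this is the unique $(n-s)$-hook. To identify the result of removing the associated rim hook, I would use the beta-number calculus: $\beta^{\tilde{\T}(n)}_1 = (n-|\T|) + N-1$ is replaced by $\beta^{\tilde{\T}(n)}_1 - (n-s)$, and a short check shows this new value lands, after resorting, between $\beta^{\tilde{\T}(n)}_{\T\check{}_{j_s}+1}$ and $\beta^{\tilde{\T}(n)}_{\T\check{}_{j_s}+2}$, using the strict inequalities $\T_{\T\check{}_{j_s}} \geq j_s > \T_{\T\check{}_{j_s}+1}$. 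Reading off the resulting parts gives $\T_i-1$ for $1\leq i \leq \T\check{}_{j_s}$, then $j_s-1$, then $\T_{i-1}$ for $i > \T\check{}_{j_s}+1$; transposing and comparing with the column formula in Definition \ref{s_core} confirms this partition is $\mathbf{core}_{(\nu-s)}(\T)$.

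For step (ii), I would deduce it from step (i) applied to $\lambda := \mathbf{rec}(l,\eta)$. Construction \ref{reconstr_description} directly gives $|\lambda| = s+l-k+1$ and $\lambda\check{}_k = l$, so $|\lambda|-1+k-\lambda\check{}_k = s$, i.e.\ $s \in C_{\lambda}$ with $j_s=k$. Definition \ref{s_core} then yields $\mathbf{core}_{(\nu-s)}(\lambda) = \eta$ by inspection. Step (i) applied to $\lambda$ gives $\mathtt{core}_{(n-s)}(\tilde{\lambda}(n)) = \eta$, while the leg of the unique $(n-s)$-hook in $\tilde{\lambda}(n)$ is $\tilde{\lambda}(n)\check{}_k - 1 = \lambda\check{}_k = l$. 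The uniqueness in Proposition \ref{insert_hook_prop} then forces $\tilde{\lambda}(n) = \mathtt{rec}(l,\eta)$, which is (ii).

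The main obstacle is the index bookkeeping in step (i): one must check that the modified $\beta_1$ lands in precisely the correct slot upon resorting, without colliding with any existing beta number. All such non-collision and position claims reduce to the strict inequalities $\T_{\T\check{}_{j_s}} \geq j_s > \T_{\T\check{}_{j_s}+1}$ inherent in the definition of the transpose, though writing the verification out cleanly is somewhat tedious.
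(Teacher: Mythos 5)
Your proposal is correct, and it supplies exactly the verification that the paper compresses into the single sentence ``one can easily see that the two procedures coincide.'' The underlying content is the same direct combinatorial comparison, but your organization differs in one useful way: for the reconstruction step you do not match Construction \ref{reconstr_description} against the explicit insertion algorithm of Proposition \ref{insert_hook_prop} (with its two cases $i=1$ or $j=1$); instead you apply your core statement (i) to $\lambda=\mathbf{rec}(l,\eta)$, check $s\in C_\lambda$ with $j_s=k$ and $\mathbf{core}_{(\nu-s)}(\lambda)=\eta$, read off that the unique $(n-s)$-hook of $\tilde\lambda(n)$ has leg $l$, and then invoke the uniqueness clause of Proposition \ref{insert_hook_prop} to conclude $\tilde\lambda(n)=\mathtt{rec}(l,\eta)$. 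This sidesteps the case analysis entirely and is arguably cleaner than the paper's implicit procedure-by-procedure check, at the cost of introducing the beta-number calculus for step (i); the paper's route needs no machinery but leaves all bookkeeping to the reader. Your inequalities $\T_{\T\check{}_{j_s}}\ge j_s>\T_{\T\check{}_{j_s}+1}$ are exactly what is needed for the resorting argument; just note the boundary case $j_s>\T_1$ (where $\T\check{}_{j_s}=0$) should be read with the convention $\T_0:=\infty$, analogous to $\eta_0:=\infty$ in Construction \ref{reconstr_description} --- there the modified beta-number simply stays in the first slot, and the same formula for the core results.
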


\begin{proof}

One can easily see that the procedure for constructing $\mathbf{rec}(l,\mathbf{core}_{(\nu-s)}(\T))$ coincides with the procedure for constructing $\mathtt{rec}(l,\mathtt{core}_{(n-s)}(\tilde{\T}(n)))$ and then removing the top row.
\end{proof}



\section{Blocks in \InnaE{the} category \texorpdfstring{$\InnaE{\underline{\co}}_{\text{  } c,\nu}$}{O in complex rank}}\label{sec:blocks_O_H_c_nu}

\subsection{Morphisms between two Verma objects}\label{ssec:morphs_between_2_objects}

We now give some necessary and some sufficient conditions for the existence of a non-trivial morphism between Verma objects.
Fix Young diagrams $\T$, $\mu$.

\InnaE{The purpose of this section is to prove the main theorem:}
\begin{theorem}\label{thrm:L_tau_mu_m_lies_in_B}
 For two \InnaA{distinct} Young diagrams $\mu, \T$ and an integer $m>0$, the following are equivalent:
\begin{enumerate}
 \item $\abs{\mu} \neq \abs{\T}$ and $\mathcal{L}_{\T, \mu, m} \subset \mathcal{B}_{\mu, \T}$,
\item $\mu = \Gamma(\T, s, \sign({\abs{\mu} -\abs{\T}}))$ for some $s \in C_{\T}$ (in particular, $\abs{\mu} \neq \abs{\T}$), \InnaB{ and $(\abs{\mu} -\abs{\T}) \mid m $}.
\end{enumerate}

\end{theorem}

\InnaE{We begin by proving the following proposition (this shows that $(2)\Rightarrow (1)$ in the Theorem above):}

\begin{proposition}\label{prop:suff_cond_for_morph}

Let $r \in \bZ \setminus \{0\}$, $s \in C_{\T}$ (i.e. $s =\abs{\T} -1 +j_s - \T\check{}_{j_s}$ for some $j_s \geq 1$). \InnaA{Assume $  \T\check{}_{j_s}+ \sign(r) \geq 0$.}

Let $ \mu = \Gamma(\T, s, \sign(r))$. 

If $c' = \frac{\nu -s}{r}$, then there exists a non-trivial morphism $M_{c, \nu}(\mu) \longrightarrow M_{c,\nu}(\T)$.
%

\end{proposition}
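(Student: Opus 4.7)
The plan is to reduce the problem at arbitrary $(c,\nu)$ on the line $C := \{(c',\nu) : c' = (\nu-s)/r\}$ to the classical Cherednik algebra case at infinitely many integer values of $\nu$ via the functor $\mathcal{F}_{c,n}$ of Section \ref{sec:functor_int_case}, and then to conclude using a Zariski density argument. The starting observation is a degree computation: combining Lemma \ref{identity_for_s_and_diagrams} (which extends to $\sign(r)=-1$ by exchanging the roles of $\T$ and $\mu$) with Equation \eqref{main_for_rewritten} under the hypothesis $c'=(\nu-s)/r$, one finds that in any non-trivial morphism $M_{c,\nu}(\mu)\to M_{c,\nu}(\T)$ the image of $X_\mu$ must sit in degree $m=r(\abs{\mu}-\abs{\T})>0$ of $M_{c,\nu}(\T)$.

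Next, I fix a large integer $n$ with $n>2\max(\abs{\mu},\abs{\T})+1$, $n\geq s+\T\check{}_{j_s}+2$, and $\Gcd(\abs{r},n-s)=1$; infinitely many such $n$ exist. At the specialization $(c,\nu)=(r/(n-s),n)\in C$, Proposition \ref{compat_of_constr_cores} shows that $\tilde{\T}(n)$ and $\tilde{\mu}(n)$ share the $(n-s)$-core $\beta_n := \mathtt{core}_{(n-s)}(\tilde{\T}(n))$, with $\tilde{\T}(n)=\mathtt{rec}(\T\check{}_{j_s},\beta_n)$ and $\tilde{\mu}(n)=\mathtt{rec}(\T\check{}_{j_s}+\sign(r),\beta_n)$. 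For $r\geq 1$, Corollary \ref{int_blocks_conclusion} together with Theorem \ref{Rouquier} produces a non-trivial morphism $M_{c,n}(\tilde{\mu}(n))\to M_{c,n}(\tilde{\T}(n))$ in $\co(H_c(n))$; for $r\leq -1$, I apply the same reasoning at parameter $-c>0$ and then dualize via Proposition \ref{pseudo_anti_equiv_c_and_minus_c}. By Lemma \ref{lem:Verma_int_dim_hom}, this lifts to a non-trivial morphism $M_{c,\nu=n}(\mu)\to M_{c,\nu=n}(\T)$ in $\co_{c,\nu=n}$ at each such $n$.

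To pass from these integer specializations to arbitrary $(c,\nu)\in C$, I view the Hom space as the kernel of the polynomial family
\[ A_{c,\nu}\colon \Hom_{Rep(S_\nu)}(X_\mu,\, S^m\hhh^* \otimes X_\T) \longrightarrow \Hom_{Rep(S_\nu)}(\hhh \otimes X_\mu,\, S^{m-1}\hhh^* \otimes X_\T), \quad \phi \mapsto y_{M_{c,\nu}(\T)} \circ (\id_\hhh \otimes \phi). \]
Since $y_{M_{c,\nu}(\T)}$ is built recursively from Definition \ref{Cat_Rep_H_nu_def}(3) and $\Omega$ acts on each indecomposable $X_\lambda$ by a polynomial in $\nu$, the entries of $A_{c,\nu}$ are polynomial in $(c,\nu)$, and the locus $Z := \{(c,\nu) : \ker A_{c,\nu}\neq 0\}$ is Zariski closed. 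By the previous step, $Z$ contains infinitely many points of the irreducible line $C$, whence $C\subset Z$, and by the degree computation any non-zero element of the kernel is exactly the desired morphism. The hard part will be rigorously matching the specialization $\ker A_{c,n}$ at integer $n$ with the true Hom space $\Hom_{\co_{c,\nu=n}}(M_{c,\nu=n}(\mu), M_{c,\nu=n}(\T))$ produced by Lemma \ref{lem:Verma_int_dim_hom}; for this, one relies on the fact that the objects involved land in the semisimple subcategory $Rep(S_{\nu=n})^{(\nu/2)}$ of Deligne's category for $n$ sufficiently large.
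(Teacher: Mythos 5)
Your proposal is correct and follows essentially the same route as the paper: reduce to infinitely many integer specializations $\nu=n$ with $\Gcd(|r|,n-s)=1$, invoke the classical block structure (Corollary \ref{int_blocks_conclusion} via Proposition \ref{compat_of_constr_cores}), lift to $\co_{c,\nu=n}$ by Lemma \ref{lem:Verma_int_dim_hom}, and conclude by the Zariski-closedness of the non-injectivity locus of the polynomial family $\phi \mapsto y_{M_{c,\nu}(\T)}\circ(\id_{\hhh}\otimes\phi)$ on the line $c'=(\nu-s)/r$. The only (harmless) deviation is that for $r<0$ you pass to $-c$ via the Hom-space duality of Proposition \ref{pseudo_anti_equiv_c_and_minus_c} (valid since $|c|<1/2$ for $n$ large), whereas the paper uses the sign-twist equivalence of Observation \ref{equiv_c_and_minus_c}.
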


\begin{proof}
Assume $(c', \nu), \mu$ are as above.
  
Let $n>>0$ be an integer, such that $\Gcd(n-s, r) =1$ (there are infinitely many such positive integers).

By Corollary \ref{int_blocks_conclusion}, \InnaA{Proposition \ref{equiv_c_and_minus_c}} and Proposition \ref{compat_of_constr_cores}, we have a non-trivial morphism $M_{c,n}(\tilde{\mu}(n)) \longrightarrow M_{c,n}(\tilde{\T}(n))$. Moreover, $s = \frac{f(\mu)-f(\T)}{\abs{\mu} -\abs{\T}}$ ($f$ defined in Subsection \ref{ssec:rmrks_main_for}) by Lemma \ref{identity_for_s_and_diagrams}, and the image of $\tilde{\mu}(n)$ sits in degree $m:=r(\abs{\mu} -\abs{\T})$ of 
$M_{c,n}(\tilde{\T}(n))$ (this is a direct consequence of Equation \eqref{main_for_rewritten}). 

Then by \InnaA{Lemma \ref{lem:Verma_int_dim_hom}}, there is a non-trivial morphism $M_{c,\nu=n}(\mu) \longrightarrow M_{c,\nu=n}(\T)$ with the image of $X_{\mu}$ sitting in degree $m=r(\abs{\mu} -\abs{\T})$ of $M_{c,\nu=n}(\T)$, and $s= \frac{f(\mu)-f(\T)}{\abs{\mu} -\abs{\T}}$.

\InnaA{
Consider the space $\InnaE{V^m}_{\nu} = \Hom_{\InnaE{\underline{\mathrm{Rep}}}(S_{\nu})}(X_{\mu}, S^m \InnaD{\fh^*_0} \otimes X_{\T})$ for any $\nu \in \bC$. By the construction of Deligne's categories, these spaces are isomorphic for almost every $\nu \in \bC$ (in particular, for all $\nu \notin \bZ_+$). We will denote the locus of such ``generic'' points $\nu \in \bC$ by $A$ (so $\abs{\bC \setminus A} < \infty, \bC \setminus \bZ_+ \subset A$), and write just $\InnaE{V^m}$ instead of $\InnaE{V^m}_{\nu}$ whenever $\nu \in A$.

\InnaB{
The map $y_{M_{c, \nu}(\T)}$ from the definition of $\InnaE{\underline{\mathrm{Rep}}}(H_c(\nu))$ defines a polynomial family of maps 
\begin{align*}
 \InnaE{Y^m}_{c, \nu}: \InnaE{V^m} &\longrightarrow \Hom_{\InnaE{\underline{\mathrm{Rep}}}(S_{\nu})}(\hhh \otimes X_{\mu}, S^{m-1} \InnaD{\fh^*_0} \otimes X_{\T})\\
 \phi &\mapsto y_{M_{c, \nu}(\T)} \circ \left( \id_{\hhh} \otimes \phi \right)
\end{align*}
}

It is easy to see that the locus of points $(c, \nu) \in \bC \times A$  such that $\InnaE{Y^m}_{c, \nu}$ is not injective is a Zariski closed subset of $\bC \times A$. Denote it by $\mathcal{C}$.

By definition, $\mathcal{C}$ is exactly the locus of points $(c, \nu) \in \bC \times A$ such that $y_{M_{c, \nu}(\T)} \lvert_{\hhh \otimes \phi(X_{\mu})} =0$ for some $\phi \in \InnaE{V^m \setminus \{0\}}$. The latter condition is equivalent to the existence of a non-trivial morphism $\Theta_{\nu}:M_{c,\nu}(\mu) \longrightarrow M_{c,\nu}(\T)$ such that $\Theta_{\nu}(X_{\mu})$ sits in degree $m$ of $M_{c,\nu}(\T)$).

The morphisms $M_{c,\nu=n}(\mu) \longrightarrow M_{c,\nu=n}(\T), n\in \bZ_{>>0}$ constructed above guarantee that $\InnaE{Y^m}_{c, \nu=n }$ is not injective whenever $n \in \bZ_{>>0}, c'=\frac{1}{c} = \frac{n -s}{r}$, which means that 
$(c, \nu=n) \subset \mathcal{C}$ for any $c, n$ satisfying $n>>0, \frac{1}{c} = \frac{n -s}{r}$.

Thus $\{ (c, \nu) \in \bC^2 \mid c' = \frac{1}{c} = \frac{\nu -s}{r} \} \subset \mathcal{C}$, and \InnaE{we obtain} the required statement.
}

\end{proof}

\begin{remark}
 In the proof of this proposition, we established that $s = \frac{f(\mu)-f(\T)}{\abs{\mu} -\abs{\T}}$, so by Equation \eqref{main_for_rewritten}, the image of $X_{\mu} \InnaE{\subset M_{c,\nu}(\mu)}$ sits in degree $m$ of $M_{c,\nu}(\T)$ for all $(c', \nu)$ such that $c' = \frac{\nu -s}{r}$. In general, for each pair $(c', \nu)$, Equation \eqref{main_for_rewritten} allows us to compute the unique degree $m$ in which the image of $X_{\mu}$ could sit in $M_{c,\nu}(\T)$.
\end{remark}

We continue with Young diagrams $\T$, $\mu$ fixed, \InnaE{and proceed to prove the remainder of Theorem \ref{thrm:L_tau_mu_m_lies_in_B}}.

\begin{proof}[Proof of Theorem \ref{thrm:L_tau_mu_m_lies_in_B}]
 \InnaE{We prove the direction $(1) \Rightarrow (2)$ of Theorem \ref{thrm:L_tau_mu_m_lies_in_B}.
 
 Assume $\abs{\mu} \neq \abs{\T}$.}

Consider the line $\mathcal{L}_{\T, \mu, m} \subset \bC^2 $ (defined in Notation \ref{L_tau_mu_m_notn}). We would like to check whether $\mathcal{L}_{\T, \mu, m}$ satisfies the following condition:

\begin{cond}\label{non_triv_morph_cond}
 For all \InnaE{$(c',\nu) \in \mathcal{L}_{\T, \mu, m}$ there exists a non-trivial morphism} $\Theta_{\nu}:M_{c,\nu}(\mu) \longrightarrow M_{c,\nu}(\T)$ such that $\Theta_{\nu}(X_{\mu})$ sits in degree $m$ of $M_{c,\nu}(\T)$.
\end{cond}

Assume this is indeed the case. \InnaE{We will show that this implies that $\T, \mu, m$ satisfy Part (2) of Theorem \ref{thrm:L_tau_mu_m_lies_in_B}.

Since $\abs{\mu} \neq \abs{\T}$, the Equation \eqref{main_for1} tells us that for almost any $n \in \bZ_+$ there exists $c' \in \bC$ such that $(c',\nu=n) \in \mathcal{L}_{\T, \mu, m}$.

Due to} Lemma \ref{lem:Verma_int_dim_hom}, Condition \ref{non_triv_morph_cond} implies that for integer $n>>0$ there must exist non-trivial morphisms $\tilde{\Theta}_{n}: M_{c,n}(\tilde{\mu}(n)) \longrightarrow M_{c,n}(\tilde{\T}(n))$ (corresponding to $\Theta_{\nu=n}$) with the image of $\tilde{\mu}(n)$ sitting in degree $m$ of $M_{c,n}(\tilde{\T}(n))$.

\InnaE{As we saw in Section \ref{sec:blocks_classical_case}, this statement imposes a rather strong condition on the Young diagrams $\T, \mu$, as well as on the integer $m$. The remainder of the proof consists of translating these conditions to the setting of Deligne categories, which will result in the conditions from Part (2) of the statement of Theorem \ref{thrm:L_tau_mu_m_lies_in_B}.}

\mbox{}


Recall that 
\InnaE{the requirement that} $(c',n) \in \mathcal{L}_{\T, \mu, m}$ is equivalent to \InnaE{the requirement} $$ c'= \frac{( \abs{\mu}- \abs{\T})n - (f(\mu) - f(\T))}{m}$$ 

Denote: $$a:= \abs{\mu}- \abs{\T}, \; b:=f(\mu) - f(\T), \; d_n:= \Gcd(an-b, m)$$ Then $$c' = \frac{an-b}{m} = \frac{(an-b)/d_n}{m/d_n}$$ where $$\frac{an-b}{d_n} \in \bZ, \; \frac{m}{d_n} \in \bZ_{>0}, \; \Gcd \left( \frac{an-b}{d_n}, \frac{m}{d_n} \right)=1$$

By Theorem \ref{Rouquier}, if $(an-b)/d_n \neq 2 $, we can pass to the case $(c' = (an-b)/d_n, n)$, with the image of $\tilde{\mu}(n)$ sitting in degree $d_n$ of $M_{c,n}(\tilde{\T}(n))$. 

Note that \InnaE{since} $a \neq 0$ \InnaE{and} since $0< d_n \leq m$, we have $(an-b)/d_n \neq 2 $ for $n>>0$.

Now we have the following cases: 

\mbox{}

{\bf Case $a>0$.}

\begin{lemma}
 For $n>>0$, \InnaE{if} there are no non-trivial morphisms $\tilde{\Theta}_{n}$ \InnaE{then} 

$a \mid b, \InnaE{a \mid} m$, i.e. $d_n = a$.
\end{lemma}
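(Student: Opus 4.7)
The plan is to combine the non-semisimplicity constraint from Dipper--James (Theorem~\ref{DJ}) with the explicit block structure of $\co(H_{c}(n))$ described in Corollary~\ref{int_blocks_conclusion}, then lift the resulting identities back to the Deligne picture via Proposition~\ref{compat_of_constr_cores}, and extract a contradiction from the fact that $\mu,\T$ are fixed while $s_n$ varies with $n$.

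First I would note that a nonzero $\tilde{\Theta}_n$, together with $\tilde{\mu}(n)\neq \tilde{\T}(n)$ (their top rows have different lengths since $|\mu|\neq|\T|$), forces $\co(H_c(n))$ to be non-semisimple. Applied to the reduced parameter $c' = ((an-b)/d_n)/(m/d_n)$, Theorem~\ref{DJ} then demands $(an-b)/d_n\leq n$, which for $n$ large yields $d_n\geq a$. It therefore suffices to rule out $d_n>a$ for all sufficiently large $n$.

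Assume $d_n>a$ on an arithmetic progression (on which the divisor $d_n$ of $m$ is constant). Then $s_n := n-(an-b)/d_n = (n(d_n-a)+b)/d_n$ grows linearly in $n$. After the passage through Rouquier's equivalence already used in the discussion preceding the lemma, $\tilde{\Theta}_n$ places $\tilde{\mu}(n)$ in degree $d_n$ of $M_{c,n}(\tilde{\T}(n))$ at the integer parameter $c'=(an-b)/d_n$, so the two Verma modules must lie in the same block. In the regime $d_n<2a$ (equivalently $n>2s_n$), Corollary~\ref{int_blocks_conclusion} applies and forces
\[
\tilde{\T}(n)=\mathtt{rec}(l_n,\beta_n),\qquad \tilde{\mu}(n)=\mathtt{rec}(l_n+1,\beta_n),
\]
with $\beta_n$ the common $(n-s_n)$-core. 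Proposition~\ref{compat_of_constr_cores} then translates this into
\[
\T = \mathbf{rec}(l_n,\mathbf{core}_{(\nu-s_n)}(\T)),\qquad \mu=\mathbf{rec}(l_n+1,\mathbf{core}_{(\nu-s_n)}(\T)),
\]
which, by uniqueness of the indices $j_s$ in Definition~\ref{s_core}, gives $l_n=\T\check{}_{j_{s_n,\T}}$ and $l_n+1=\mu\check{}_{j_{s_n,\mu}}$.

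The crux is then an easy contradiction exploiting $s_n\to\infty$: once $s_n>\max(|\T|+\T_1,|\mu|+\mu_1)-1$, the defining relation $s=|\T|-1+j-\T\check{}_j$ forces $j_{s_n,\T}>\T_1$, hence $\T\check{}_{j_{s_n,\T}}=0$ and $l_n=0$; the identical analysis for $\mu$ gives $\mu\check{}_{j_{s_n,\mu}}=0$ and $l_n+1=0$, absurd. Combined with $d_n\geq a$, this yields $d_n=a$ for $n$ large, which is equivalent to $a\mid m$ (since $d_n\mid m$) together with $a\mid b$ (since $d_n\mid an-b$ and $a\mid an$). The main obstacle the sketch above glosses over is the ``complicated'' regime $d_n\geq 2a$, where $n\leq 2s_n$ and Corollary~\ref{int_blocks_conclusion} no longer applies; there I would use the weaker fact, recalled at the end of Section~\ref{sec:blocks_classical_case}, that Verma modules in the same block must still share a common $(n-s_n)$-core, and perform an analogous (if combinatorially heavier) analysis of the iterated $(n-s_n)$-hook removal process starting from the long-top-row diagram $\tilde{\T}(n)$ to exclude this regime as well.
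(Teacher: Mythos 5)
Your first step (Theorem \ref{DJ} applied to the reduced parameter forces $d_n\geq a$ for $n\gg 0$) is exactly the paper's opening move, and your treatment of the regime $a<d_n<2a$ is essentially sound: there $n>2s_n$ for large $n$, so Corollary \ref{int_blocks_conclusion} applies, the unique $(n-s_n)$-hooks of $\tilde{\T}(n)$ and $\tilde{\mu}(n)$ are first-row hooks whose legs are $\T\check{}_{j_{s_n,\T}}$ and $\mu\check{}_{j_{s_n,\mu}}$, and since $s_n\to\infty$ these legs are eventually $0$, contradicting the requirement that they equal $l_n$ and $l_n+1$. However, this is a genuinely different (and much heavier) route than the paper's: the paper never touches block combinatorics here, but instead argues purely arithmetically that $\Gcd(a'n-b',m)=1$ for infinitely many $n$ (where $a'=a/\Gcd(a,b)$, $b'=b/\Gcd(a,b)$), so that for those $n$ the bound $d_n\geq a$ combined with $d_n\mid \Gcd(a,b)\cdot$(unit mod $m$) forces $d_n=\Gcd(a,b)=a$, hence $a\mid b,m$ — thereby simply avoiding any unfavorable congruence class of $n$.

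The genuine gap is your regime $d_n\geq 2a$ (equivalently $n\leq 2s_n$), which you propose to ``exclude'' using only the necessary condition that $\tilde{\mu}(n)$ and $\tilde{\T}(n)$ share their $(n-s_n)$-core. That cannot work as stated, because in this regime the cores really can coincide: take $\T=\emptyset$, $\mu=(2)$ (so $a=2$, $b=2$), $m=4$, $n=9$; then $d_n=\Gcd(16,4)=4=2a$, $n-s_n=4$, and both $\tilde{\T}(9)=(9)$ and $\tilde{\mu}(9)=(7,2)$ have $4$-core $(1)$. So the same-core criterion yields no contradiction, and indeed no contradiction should be expected, since in such situations one typically already has $a\mid b,m$; the correct goal in this regime is not to exclude it but to show that persistent core coincidences force $a\mid b$ and $a\mid d_n\mid m$ directly. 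That can be done (a beta-number/abacus computation shows equality of cores for $n$ large forces $\mu$ to be $\T$ plus a single rim hook of size $a$ together with a divisibility $(n-s_n)\mid (n-\mathrm{const})$, which for infinitely many $n$ on the progression forces $a\mid b$ and $a\mid d_n$), but this is a substantive extra argument that your ``analogous, combinatorially heavier analysis of iterated hook removal'' neither states nor sketches, and it is precisely the part the paper's arithmetic trick with well-chosen $n$ renders unnecessary.
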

\begin{proof}
 \InnaE{Assume there are non-trivial morphisms $\tilde{\Theta}_{n}$ for $n>>0$}. Then we have, by Theorem \ref{DJ}: $$\frac{an-b}{d_n} < n$$ for $n >>0$; \InnaE{that is}, $a \leq d_n$. Also, putting $$d := \Gcd(a,b), \; a':= \frac{a}{d}, \; b':= \frac{b}{d}$$ we have: $d_n = \Gcd\left(d (a'n-b'), m\right) \geq a= d a' \geq  1$ for $n>>0$. This implies, for $n>>0$: either $d_n \mid d$ (possible only if $d_n=d=a$, and then we have $a \mid b$, $\InnaE{a \mid } m$), or $\Gcd(a'n -b', m) \neq 1$.

So we only need to check that there is no $n_0 \in \bZ$ such that $\forall n > n_0, \Gcd(a'n -b', m) \neq 1$. Indeed, $b',\Gcd(a', m)$ are relatively prime (since $a',b'$ are), and so $\Gcd(a'n -b', m) = \Gcd(\frac{a'}{\Gcd(a', m)}n -b', \frac{m}{\Gcd(a', m)})$. Now $$\Gcd \left(\frac{a'}{\Gcd(a', m)}, b'\right)=\Gcd \left(\frac{a'}{\Gcd(a', m)}, \frac{m}{d'}\right)=1$$ so $$\Gcd(a'n -b', m) =\Gcd \left(\frac{a'}{\Gcd(a', m)}n -b', \frac{m}{\Gcd(a', m)}\right)= 1$$ for infinitely many integer values of $n$.
\end{proof}

Thus $d_n = a$, and $(an-b)/d_n \neq 2 $ for $n>>0$. So to understand what happens for $(c', n) \in \mathcal{L}_{\T, \mu,m }$, it is enough to check what happens the case $(c' = n-(b/a), n)$ (recall: $b/a = \frac{f(\mu)-f(\T)}{\abs{\mu} -\abs{\T}} \in \bZ$). Still assuming the existence of non-trivial morphisms $\tilde{\Theta}_{n}$, Proposition \ref{compat_of_constr_cores} and Corollary \ref{int_blocks_conclusion} give:

For $n>>0$ and $c' = n- b/a$, put $s:=b/a$, and we have $$ \mu = \Gamma(\T, s, 1) \text{ for } j_{s} \geq 1, \text{ such that } s=\abs{\T} -1 +j_{s} - \T\check{}_{j_s}$$

 Note that a priori, $j_{s}$ could depend on $n$, but the equality $s=\abs{\T} -1 +j_{s} - \T\check{}_{j_s}$ defines $j_{s}$ uniquely for each $s$, so since $s$ doesn't depend on $n$, neither does $j_{s}$.


\begin{conc}\label{case_a_positive_conclusion}
 For $\abs{\mu} >\abs{\T}$, $\mathcal{L}_{\T, \mu, m} $ satisfies Condition \ref{non_triv_morph_cond} if and only if the following hold:
\begin{enumerate}
  \item $s :=  \frac{f(\mu)-f(\T)}{\abs{\mu} -\abs{\T}} \in C_{\T} \subset \bZ$,
  \item $\mu = \Gamma(\T, s, 1)$,
  \item $(\abs{\mu} -\abs{\T}) \mid m $.
\end{enumerate}

In that case, points $(c', \nu) \in \mathcal{L}_{\T, \mu, m}$ satisfy the condition: $c' = \frac{\nu -s}{r} $ for $ r=\frac{m}{\abs{\mu} -\abs{\T}}$.

\end{conc}

\mbox{}


{\bf Case $a<0$.} 

By Observation \ref{equiv_c_and_minus_c}, the existence of a non-trivial morphism  $\tilde{\Theta}_{n}$ for $n>>0$ is equivalent  to the existence of a non-trivial morphism $\tilde{\Upsilon}_{n}: M_{-c',n}(\tilde{\T}(n)) \longrightarrow M_{-c',n}(\tilde{\mu}(n))$. The necessary and sufficient conditions for the existence of a non-trivial morphism $\tilde{\Upsilon}_{n}$ for all $n>>0$ are given in Conclusion \ref{case_a_positive_conclusion}, with $\mu$ and $\T$ switching roles.

\begin{conc}
For $\abs{\mu} <\abs{\T}$, $\mathcal{L}_{\T, \mu, m} $ satisfies Condition \ref{non_triv_morph_cond} if and only if the following hold:
\begin{enumerate}
  \item $s :=  \frac{f(\mu)-f(\T)}{\abs{\mu} -\abs{\T}} \in C_{\mu} \subset \bZ$,
  \item $\mu = \Gamma(\T, s, -1)$,
  \item $(\abs{\T} -\abs{\mu}) \mid m $.
\end{enumerate}

In that case, points $(c', \nu) \in \mathcal{L}_{\T, \mu, m}$ satisfy the condition: $c' =- \frac{\nu -s}{r}$ for $r=\frac{m}{\abs{\T} -\abs{\mu}}$.
\end{conc}

\end{proof}

\begin{remark}
 The results in this subsection agree with the statement of Lemma \ref{necessary_and_suff_cond_in_deg_1} when $m=1$, but they imply the statement of Lemma \ref{necessary_and_suff_cond_in_deg_1} only if $\abs{\mu} \neq \abs{\T}$.
\end{remark}

\subsection{Properties of lines \texorpdfstring{$\mathcal{L}_{\T, \mu, m}$}{defined in Section 8}}\label{ssec:prop_lines_L_tau_mu_m}
We conclude this section with a list of (almost trivial) properties of the lines $\mathcal{L}_{\T, \mu, m}$:

\begin{lemma} \InnaA{Let $m_1, m_2 \in \bZ_+$, and let $\T, \mu, \mu^{(1)}, \mu^{(2)}$ be Young diagrams. Then }
 \begin{itemize}
  \item $\mathcal{L}_{\T, \mu, m_1} \cap \mathcal{L}_{\T, \mu, m_2} \neq \emptyset \Leftrightarrow m_1 =m_2$.
  \item If $(c', \nu) \in \mathcal{L}_{\T, \mu^{(1)}, m_1} \cap \mathcal{L}_{\T, \mu^{(2)}, m_2} $, and the lines $\mathcal{L}_{\T, \mu^{(1)}, m_1}, \mathcal{L}_{\T, \mu^{(2)}, m_2}$ do not coincide, then $c', \nu \in \bQ$ (and there is only one point $(c', \nu)$ like this).
  \item Assume the lines $\mathcal{L}_{\T, \mu^{(1)}, m_1}, \mathcal{L}_{\T, \mu^{(2)}, m_2}$ coincide, $\abs{\mu^{(1)}} \neq \abs{\T}$, and $\mathcal{L}_{\T, \mu^{(1)}, m_1} \subset \mathcal{B}_{\mu^{(1)}, \T}$. Then $\mathcal{L}_{\T, \mu^{(2)}, m_2} \subset \mathcal{B}_{\mu^{(2)}, \T}$ iff $\mu^{(1)}= \mu^{(2)}$.
 \end{itemize}
 \end{lemma}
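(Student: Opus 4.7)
The plan is to view each $\mathcal{L}_{\T, \mu, m}$ as an affine line in $\bC^2$ cut out by
\[
 m\,c' + (\abs{\T} - \abs{\mu})\,\nu \;=\; f(\T) - f(\mu),
\]
and to read off all three bullets from linear algebra on these equations, together with the classification Theorem~\ref{trm:L_tau_mu_m_lies_in_B} and the identity in Lemma~\ref{identity_for_s_and_diagrams}.

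For the first bullet, the implication $m_1 = m_2 \Rightarrow \mathcal{L}_{\T, \mu, m_1} \cap \mathcal{L}_{\T, \mu, m_2} \neq \emptyset$ is trivial because the two lines coincide, and a line $\mathcal{L}_{\T, \mu, m}$ in the relevant open locus ($c' \neq 0$, under the standing convention $c \neq 0$) is non-empty in all non-degenerate configurations. Conversely, if $(c', \nu)$ lies in both lines, subtracting the defining equations gives $(m_1 - m_2) c' = 0$; since $c' \neq 0$, this forces $m_1 = m_2$.

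For the second bullet, two distinct affine lines in $\bC^2$ either are parallel (hence disjoint) or meet in a unique point, so under the hypothesis the intersection is a singleton. Cramer's rule applied to the $2\times 2$ system then expresses this singleton as a pair of rationals, since all coefficients and right-hand sides (the $m_i$, the $\abs{\T} - \abs{\mu^{(i)}}$, and the $f(\T) - f(\mu^{(i)})$) are integers.

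For the third bullet, the forward implication is immediate ($\mu^{(1)} = \mu^{(2)}$ makes $\mathcal{B}_{\mu^{(1)}, \T} = \mathcal{B}_{\mu^{(2)}, \T}$). For the converse, divide the defining equation of $\mathcal{L}_{\T, \mu^{(i)}, m_i}$ by $\abs{\T} - \abs{\mu^{(i)}}$ (nonzero by hypothesis for $i = 1$, and then for $i = 2$ by coincidence of the lines), rewriting it as
\[
 \nu + \frac{m_i}{\abs{\T} - \abs{\mu^{(i)}}}\,c' \;=\; s_i, \qquad s_i := \frac{f(\mu^{(i)}) - f(\T)}{\abs{\mu^{(i)}} - \abs{\T}}.
\]
Theorem~\ref{trm:L_tau_mu_m_lies_in_B} together with Lemma~\ref{identity_for_s_and_diagrams} gives $s_i \in C_\T$ and $\mu^{(i)} = \Gamma(\T, s_i, \sign(\abs{\mu^{(i)}} - \abs{\T}))$. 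Coincidence of the two rewritten lines forces $s_1 = s_2$ and $\tfrac{m_1}{\abs{\T}-\abs{\mu^{(1)}}} = \tfrac{m_2}{\abs{\T}-\abs{\mu^{(2)}}}$, and the positivity $m_1, m_2 > 0$ then forces the two signs to agree; hence $\mu^{(1)} = \Gamma(\T, s, \epsilon) = \mu^{(2)}$.

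The main obstacle is really just the bookkeeping around the degenerate loci (e.g., $\mu = \T$, or $\abs{\mu} = \abs{\T}$ with $f(\mu) = f(\T)$), which the standing conventions $c \neq 0$ and $\mu \neq \T$ rule out; once these are set aside, each bullet is a one-line consequence of either Cramer's rule or Theorem~\ref{trm:L_tau_mu_m_lies_in_B}.
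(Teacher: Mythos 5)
Your proposal is correct and takes essentially the same route as the paper's own proof: bullets one and two are one-line consequences of the linearity of the defining equations (subtraction in the first, Cramer's rule in the second), and bullet three combines the classification Theorem~\ref{trm:L_tau_mu_m_lies_in_B} with the identity pinning down $s = \frac{f(\mu)-f(\T)}{\abs{\mu}-\abs{\T}}$ from Lemma~\ref{identity_for_s_and_diagrams}, using coincidence of the lines to force $s_1 = s_2$ and agreement of the signs. You handle the convention $c' \neq 0$ and the degenerate cases more explicitly than the paper (which simply says "follows immediately from the definition" for the first two bullets), but the argument is the same in substance.
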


 \begin{proof}
\mbox{}
  \begin{itemize}
   \item Follows immediately from the definition of $\mathcal{L}_{\T, \mu, m}$ (see Notation  \ref{L_tau_mu_m_notn}).
   \item Follows immediately from the definition of $\mathcal{L}_{\T, \mu, m}$ by a linear equation with rational coefficients.
   \item First, $\abs{\mu^{(1)}} \neq \abs{\T}$, and $\mathcal{L}_{\T, \mu^{(1)}, m_1} \subset \mathcal{B}_{\mu^{(1)}, \T}$, so by Theorem \ref{thrm:L_tau_mu_m_lies_in_B}, putting $$s:= \frac{f(\mu^{(1)}) - f(\T)}{\abs{\mu^{(1)}} - \abs{\T}}$$ \InnaE{we obtain}
    $$\mu^{(1)} = \Gamma(\T, s, \sign({\abs{\mu^{(1)}} -\abs{\T}}))$$ with $j_s$ given by $s=\abs{\T} -1 +j_{s} - \T\check{}_{j_s}$.
    
    Next, since the lines $\mathcal{L}_{\T, \mu^{(1)}, m_1}, \mathcal{L}_{\T, \mu^{(2)}, m_2}$ coincide and  $\abs{\mu^{(1)}} \neq \abs{\T}$, \InnaE{we obtain}: 
   \begin{align*}
    [m_1: (\abs{\mu^{(1)}} - \abs{\T}): f(\mu^{(1)}) - f(\T)] =  [m_2: (\abs{\mu^{(2)}} - \abs{\T}): f(\mu^{(2)}) - f(\T)]
    \end{align*}
 which implies $ s = \frac{f(\mu^{(1)}) - f(\T)}{\abs{\mu^{(1)}} - \abs{\T}} = \frac{f(\mu^{(2)}) - f(\T)}{\abs{\mu^{(2)}} - \abs{\T}}$.
   
    \InnaA{We also get:} $\abs{\mu^{(2)}} \neq \abs{\T}$ and $\sign({\abs{\mu^{(1)}} -\abs{\T}}) = \sign({\abs{\mu^{(2)}} -\abs{\T}})$.
    
    Now assume that $\mathcal{L}_{\T, \mu^{(2)}, m_2} \subset \mathcal{B}_{\mu^{(2)}, \T}$. Since 
    $$\abs{\mu^{(2)}} \neq \abs{\T}, \text{ } s = \frac{f(\mu^{(1)}) - f(\T)}{\abs{\mu^{(1)}} - \abs{\T}} = \frac{f(\mu^{(2)}) - f(\T)}{\abs{\mu^{(2)}} - \abs{\T}}$$ and $$\sign({\abs{\mu^{(1)}} -\abs{\T}}) = \sign({\abs{\mu^{(2)}} -\abs{\T}})$$
    \InnaE{we obtain}, from \ref{thrm:L_tau_mu_m_lies_in_B}:
    $$\mu^{(2)} = \Gamma(\T, s, \sign({\abs{\mu^{(2)}} -\abs{\T}})) = \mu^{(1)}$$
  \end{itemize}

 \end{proof}

From the third part of the above lemma, we immediately get: 
\begin{corollary}
 Let $\abs{\mu} \neq \abs{\T}$, and $\mathcal{L}_{\T, \mu, m} \subset \mathcal{B}_{\mu, \T}$. Then for a generic point $(\InnaE{c'}, \nu) \in \mathcal{L}_{\T, \mu, m}$, $(\InnaE{c'}, \nu) \notin \mathcal{B}_{\mu', \T}$ for any $\mu' \neq \mu$.
\end{corollary}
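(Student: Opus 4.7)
The plan is to deduce this corollary directly from the third bullet of the preceding lemma, together with the containment $\mathcal{B}_{\mu',\T} \subset \bigcup_{m' \in \bZ_{>0}} \mathcal{L}_{\T, \mu', m'}$ (recorded earlier in the paper) and the second bullet of the lemma.

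First I would reformulate the statement set-theoretically. The ``bad locus'' on $\mathcal{L}_{\T,\mu,m}$ is
\[
\mathrm{Bad} \;:=\; \bigcup_{\mu' \neq \mu} \bigl( \mathcal{L}_{\T,\mu,m} \cap \mathcal{B}_{\mu',\T} \bigr)
\;\subset\; \bigcup_{\mu' \neq \mu}\ \bigcup_{m' \in \bZ_{>0}} \bigl( \mathcal{L}_{\T,\mu,m} \cap \mathcal{L}_{\T,\mu',m'} \bigr),
\]
so it suffices to show the right-hand side is a proper (in fact thin) subset of the line $\mathcal{L}_{\T,\mu,m}$. Fix $\mu' \neq \mu$ and $m' \in \bZ_{>0}$, and consider the line $\mathcal{L}_{\T,\mu',m'}$.

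Next I would split into two cases according to whether $\mathcal{L}_{\T,\mu',m'}$ coincides with $\mathcal{L}_{\T,\mu,m}$ or not. If the two lines coincide, then by the hypothesis $\mathcal{L}_{\T,\mu,m} \subset \mathcal{B}_{\mu,\T}$ and $\abs{\mu}\neq \abs{\T}$, the third bullet of the preceding lemma applied to $(\mu^{(1)},m_1)=(\mu,m)$ and $(\mu^{(2)},m_2)=(\mu',m')$ would force $\mu = \mu'$ whenever $\mathcal{L}_{\T,\mu',m'}\subset \mathcal{B}_{\mu',\T}$; since we assumed $\mu' \neq \mu$, this means $\mathcal{L}_{\T,\mu',m'}$ is \emph{not} contained in $\mathcal{B}_{\mu',\T}$. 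Hence in the coinciding case the intersection of $\mathcal{L}_{\T,\mu,m}$ with $\mathcal{B}_{\mu',\T}$ is a proper subset of $\mathcal{L}_{\T,\mu',m'}$, and by definition of $\mathcal{B}_{\mu',\T}$ (a Zariski closed subset of $\bC^{2}$ of dimension at most $1$, as noted in the remark in the previous subsection) the intersection is a finite set of points. If the two lines do not coincide, the second bullet of the lemma says they meet in at most one (rational) point.

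Combining both cases, $\mathrm{Bad}$ is contained in a countable union of finite subsets of the line $\mathcal{L}_{\T,\mu,m}$, hence in a countable subset of $\mathcal{L}_{\T,\mu,m}$. Since a complex affine line is uncountable, the complement of $\mathrm{Bad}$ in $\mathcal{L}_{\T,\mu,m}$ is nonempty (indeed, dense), and so a generic point $(1/c,\nu)\in\mathcal{L}_{\T,\mu,m}$ satisfies $(1/c,\nu)\notin \mathcal{B}_{\mu',\T}$ for all $\mu' \neq \mu$, proving the corollary. The only nontrivial ingredient is the third bullet of the lemma (handling the coincident-lines case); everything else is a bookkeeping argument, so no serious obstacle is anticipated.
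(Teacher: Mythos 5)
Your proposal is correct, and it is essentially the paper's intended argument spelled out carefully. The paper dispatches the corollary with ``From the third part of the above lemma, we immediately get,'' but as you correctly observe, the third bullet alone is not enough: it only rules out the case where $\mathcal{L}_{\T,\mu',m'}$ \emph{coincides} with $\mathcal{L}_{\T,\mu,m}$. To handle lines $\mathcal{L}_{\T,\mu',m'}$ that merely cross $\mathcal{L}_{\T,\mu,m}$ you also need the second bullet (at most one intersection point) and, for the coinciding case, the earlier-recorded dichotomy that $\mathcal{B}_{\mu',\T}\cap\mathcal{L}_{\T,\mu',m'}$ is either the whole line or finite (which rests on $\mathtt{B}_{\mu',\T}$ being Zariski closed of dimension $\leq 1$, as in the remark you cite). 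Your decomposition of the bad locus and the countability bookkeeping make all of this explicit, and the passage from ``countable bad set on an uncountable complex line'' to ``generic point avoids it'' is the right reading of ``generic'' here (very general, rather than Zariski-open dense, since a countably infinite subset of a line is Zariski dense). There is no gap; you have simply expanded the paper's one-liner into a complete argument using the same ingredients.
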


\section{Characters of simple objects in \texorpdfstring{$\InnaE{\underline{\co}}_{\text{  } c,\nu}$}{category O (complex rank)}}\label{sec:char_simple_obj}

\InnaB{\InnaF{Throughout this section, we assume that} $\nu \not\in \bZ_+, c \not= 0$. We also continue to identify $\hhh$ with $\InnaD{\fh^*_0}$, $\fh$ with $\fh^*$.}

\subsection{Definitions}

\begin{definition}
 The character of a graded vector space $V = \bigoplus_{ k \in \bZ} V_k$ will be defined as $ch_q (V) := \sum_k q^k \dim V_k$.
\end{definition}

\begin{definition}[Character of a graded \InnaE{\rm{ind}}-object of $\InnaE{\underline{\mathrm{Rep}}}(S_{\nu})$]
The character of an \InnaE{\rm{ind}}-object $V$ of $\InnaE{\underline{\mathrm{Rep}}}(S_{\nu})$ with the grading $V = \bigoplus_{j \in \bZ_+} V_j, V_j \in \InnaE{\underline{\mathrm{Rep}}}(S_{\nu})$ is defined to be a formal power series in $t$: 

$$ \mathbf{ch}_t V = \sum_{\mu \InnaE{\in \mathcal{P}}} X_{\mu} ch_t \left( \Hom_{S_\nu}(X_{\mu}, V) \right) $$

\end{definition}

\begin{definition}[Character of a object of $\InnaE{\underline{\co}}_{\text{  } c,\nu}$]

Let $M$ be an object of the category $\InnaE{\underline{\co}}_{\text{  } c,\nu}$. The character of $M$ is defined to be a formal power series in $t$:
 $$ch_t M:=\sum_{\mu \InnaE{\in \mathcal{P}}} X_{\mu} tr \left( t^{\brh} \mid_{\Hom_{S_\nu}(X_{\mu}, M)} \right)$$
with complex degrees, and coefficients in $K_0(\InnaE{\underline{\mathrm{Rep}}}(S_{\nu}))$, which is the Grothendieck ring of $\InnaE{\underline{\mathrm{Rep}}}(S_{\nu})$.
We will usually write $ch M$ instead of $ch_t M$ for short.
\end{definition}
\begin{remark}
 The trace of the operator $t^{\brh}$ is defined since $\brh$ acts locally finitely on objects in $\InnaE{\underline{\co}}_{\text{  } c,\nu}$.
\end{remark}

\subsection{Characters of Verma objects}\label{ssec:char_Verma_object}
 
 Recall that $M(\lambda) \cong \InnaB{S\hhh \otimes  X_{\lambda}}$ as Ind-objects of $\InnaE{\underline{\mathrm{Rep}}}(S_{\nu})$. So it is enough to compute the character of $S\hhh$, and then use the formula $$ ch_t M(\lambda) = \mathbf{ch}_t (S\hhh) X_{\lambda}t^{h_{c, \nu}(\lambda) } $$ 

We now give a formula for computing the character of the \InnaE{\rm{ind}}-object $S\hhh$ of $\InnaE{\underline{\mathrm{Rep}}}(S_{\nu})$, which comes from the character formula in \cite[3.7]{EM}.

\begin{proposition}
We have the following formula for the character of $S\hhh$: $$ \mathbf{ch}_t (S\hhh)= \frac{1}{\sum_{n \geq 0} (-1)^{n} \;\; \Lambda^n \hhh \;\; t^n}$$
\end{proposition}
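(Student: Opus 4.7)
The proof plan is to establish the identity by exhibiting the Koszul resolution of the trivial object $\triv$ in $ind\text{-}Rep(S_\nu)$ and then extract the character identity by taking the alternating sum.

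First, I would recall that in any symmetric monoidal category with a dualizable object $\hhh$, there is a canonical Koszul-type differential
\[
d: \Lambda^n \hhh \otimes S^m \hhh \longrightarrow \Lambda^{n-1} \hhh \otimes S^{m+1} \hhh
\]
built from the symmetry isomorphisms together with the multiplication on $S\hhh$ and the wedge structure on $\Lambda\hhh$; these assemble into a complex
\[
\cdots \to \Lambda^n \hhh \otimes S\hhh \to \Lambda^{n-1} \hhh \otimes S\hhh \to \cdots \to \hhh \otimes S\hhh \to S\hhh \to \triv \to 0
\]
of graded ind-objects of $Rep(S_\nu)$, where the summand $\Lambda^n \hhh \otimes S^m \hhh$ is placed in total degree $n+m$. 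The main step is to check that this complex is exact. Since the construction and the statement of exactness are given by polynomial equations in $\nu$ and since for each integer $\nu = N \gg 0$ the complex maps under the functor to $Rep(S_N)$ to the classical Koszul resolution of $\mathbb{C}$ over $S\hhh_N$ (where $\hhh_N$ is the reflection representation), flatness of $Rep(S_\nu)$ in the parameter $\nu$ (Section \ref{sec:Del_cat}) yields exactness for all $\nu \notin \bZ_+$.

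Next, I would take graded characters of the exact complex. Since $\mathbf{ch}_t$ is additive on short exact sequences and the alternating sum of the terms of an exact complex vanishes in $K_0(Rep(S_\nu))[[t]]$, I obtain
\[
\sum_{n \geq 0} (-1)^n t^n \, \Lambda^n \hhh \cdot \mathbf{ch}_t(S\hhh) \;=\; \triv.
\]
Note that the product on the right-hand side makes sense: $S\hhh$ is a graded ind-object with finite-dimensional graded pieces (each $S^m \hhh \in Rep(S_\nu)$ is a finite direct sum of indecomposables), and $\Lambda^n \hhh$ contributes in total degree $n$, so only finitely many terms contribute to each $t$-coefficient.

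Finally, I would rearrange. The formal series $\sum_{n \geq 0} (-1)^n \Lambda^n \hhh \cdot t^n$ has constant term $\Lambda^0 \hhh = \triv$, which is the multiplicative identity of $K_0(Rep(S_\nu))[[t]]$, and hence it is invertible as a formal power series. Multiplying both sides of the identity by this inverse yields the claimed formula
\[
\mathbf{ch}_t(S\hhh) \;=\; \frac{1}{\sum_{n \geq 0} (-1)^n \Lambda^n \hhh \cdot t^n}.
\]
The only subtle point I anticipate is the justification of the exactness of the Koszul complex in $Rep(S_\nu)$; this is the step where one genuinely needs the interpolation/flatness properties of Deligne's category rather than a direct classical argument, since $\hhh$ is not a vector space of any fixed finite dimension for generic $\nu$.
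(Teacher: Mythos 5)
Your proof uses the same Koszul complex as the paper, but you justify its exactness by a different (and heavier) route. The paper observes that the Koszul resolution of $\bC$ can be written once and for all in the ``universal'' Schur-functor category $\mathit{Schur}\cong\bigoplus_{n}Rep(S_n)$, which is semisimple, so the resolution is automatically \emph{split} exact there; it then applies the unique symmetric tensor functor $\mathit{Schur}\to Rep(S_\nu)$ sending the tautological object $V$ to $\hhh$, and any additive functor preserves split exactness. Thus exactness in $ind\text{-}Rep(S_\nu)$ is immediate, uniformly in $\nu$, with no interpolation argument. You instead reduce to integer $\nu=N\gg 0$ and invoke flatness. This can be made to work, but it needs unpacking: ``exactness is a polynomial condition'' really means that, after identifying the $\nu$-independent Hom-spaces $\Hom_{Rep(S_\nu)}(X_\lambda, \Lambda^n\hhh\otimes S^m\hhh)$ with fixed vector spaces, the differentials become matrices whose entries are rational functions of $\nu$ (the identifications themselves involve $\nu$ through evaluation/trace maps), and rank semicontinuity then only gives exactness on a Zariski-dense open set of $\nu$, not a priori for every $\nu\notin\bZ_+$. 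That gap happens to be harmless for the statement at hand, because $\mathbf{ch}_t(S\hhh)$ lives in $K_0(Rep(S_\nu))[[t]]$, which does not depend on $\nu$ for $\nu\notin\bZ_+$, so exactness at a single value of $\nu$ already pins down the character — but you should say this explicitly rather than claim exactness for all $\nu$. The paper's functorial lift buys you split exactness for all $\nu$ at once, sidestepping these subtleties.
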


\begin{proof}
\InnaA{Consider the category $\it{Schur}$ of all Schur functors: the objects are Schur functors 
$$S_R: (FinVect) \rightarrow (FinVect), S_R(\cdot)= \bigoplus_{n \geq 0} R_n \otimes_{\bC[S_n]}(\cdot)^{\otimes n}$$
where $R = (R_n)_{n \geq 0}$, each $R_n$ being a finite-dimensional representation of $S_n$, and only finitely many of these representations being nonzero; the morphisms are natural transformations between such functors.} This category is equivalent to the category $\bigoplus_{n \in \bZ} \InnaE{\mathrm{Rep}}(S_n)$, and thus semisimple, \InnaC{with the simple objects parameterized (up to isomorphism) by the set of all Young diagrams.}

\InnaC{The tensor structure on $\it{Schur}$ comes from the tensor structure on $(FinVect)$. Namely, given two simple objects parametrized by Young diagrams $\lambda$ and $\mu$, their tensor product decomposes as a direct sum of simple objects $\rho$, with the multiplicity of $\rho$ being the Littlewood-Richardson coefficient $c^{\rho}_{\lambda, \mu}$.}

One can see that $Fun(\it{Schur}, \it{\InnaE{\underline{\mathrm{Rep}}}(S_{\nu})}) \cong \it{\InnaE{\underline{\mathrm{Rep}}}(S_{\nu})}$ (this is true for any symmetric tensor category), where $Fun(\it{Schur}, \it{\InnaE{\underline{\mathrm{Rep}}}(S_{\nu})})$ are tensor functors between additive linear tensor categories.


\InnaC{
Fix the object $V$ in $\it{Schur}$ corresponding to the identity functor (which is, of course, a Schur functor as well). Then we can identify the simple objects in $\it{Schur}$ with $S^{\lambda} V$ (the Schur functor $S^{\lambda}$ applied to $V$), where $\lambda$ runs through all Young diagrams. 

Consider the commutative algebra $SV$, and the exterior power $\Lambda^m V$ ($m \geq 0$). Then we have (c.f. \cite[6.8]{FH}): $$SV \otimes \Lambda^m V = \bigoplus_{\lambda \in \mathcal{I}^+_{\pi^m}} S^{\lambda} V$$ where $\mathcal{I}^+_{\pi^m}$ is the set of all Young diagrams obtained from $\pi^m$ by adding several boxes, no two in the same column.

One can easily see that for any $m \geq 1$, $$ \mathcal{I}^+_{\pi^m} =  \left( \mathcal{I}^+_{\pi^{m+1}} \cap \mathcal{I}^+_{\pi^m} \right) \sqcup \left( \mathcal{I}^+_{\pi^{m-1}} \cap \mathcal{I}^+_{\pi^m} \right)$$
and $ \mathcal{I}^+_{\pi^0} =  \mathcal{I}^+_{\emptyset}=  \mathcal{I}^+_{\pi^{1}} \sqcup \{ \emptyset\}$. So we have an infinite exact sequence in $\it{Schur} \cong \bigoplus_{n \in \bZ} \InnaE{\mathrm{Rep}}(S_n)$:

$$ ...\longrightarrow SV \otimes \Lambda^m V \longrightarrow SV \otimes \Lambda^{m-1} V \longrightarrow ...\longrightarrow  SV \otimes V \longrightarrow SV \longrightarrow S^{\emptyset} V \longrightarrow 0$$

Notice that when applying these functors to a finite-dimensional vector space $W$, \InnaE{we obtain} a finite exact sequence which is the Koszul resolution of $\bC$ over the symmetric algebra $SW$.}

\InnaB{Thus we obtain an exact complex of Ind-objects of $\InnaE{\underline{\mathrm{Rep}}}(S_{\nu})$:
$$ ...\longrightarrow S\hhh \otimes \Lambda^m \hhh  \longrightarrow S\hhh \otimes \Lambda^{m-1} \hhh  \longrightarrow ... \longrightarrow S\hhh \otimes \hhh \longrightarrow S\hhh \longrightarrow \triv \longrightarrow 0$$}
Now the character formula follows directly from Euler's formula applied to this complex.
\end{proof}

\subsection{The graded space \texorpdfstring{$Hom_{\InnaE{\underline{\mathrm{Rep}}}(S_{\nu})}(X_\mu, S\hhh \otimes X_\T)$}{Hom(X_mu, M(Tau))}}\label{ssec:graded_hom_study}

In this subsection we describe explicitly the decomposition of $S\fh \otimes X_\T$ and $S\hhh \otimes X_\T$ as $\InnaE{\underline{\mathrm{Rep}}}(S_{\nu})$ \InnaE{\rm{ind}}-objects into graded sums of simple objects.

\begin{lemma}\label{lem:char_classical_verma}
 For $n>>0$ (in fact, for $n> 2(\abs{\mu} +\abs{\T})$), we have:
 $$ch_q(\Hom_{S_n}(\tilde{\mu}(n), \bC[x_1,...,x_n] \otimes \tilde{\T}(n))) = (s_{\tilde{\T}(n)} \ast s_{\tilde{\mu}(n)})(1, q, q^2, ...)$$
 \end{lemma}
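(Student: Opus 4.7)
The plan is to recognize this as a standard symmetric-function identity via two short reductions.

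First, since $\tilde{\T}(n)$ is self-dual as an $S_n$-representation, Hom-tensor adjunction gives a graded isomorphism
\begin{equation*}
\Hom_{S_n}(\tilde{\mu}(n),\, \bC[x_1,\ldots,x_n] \otimes \tilde{\T}(n)) \cong \Hom_{S_n}\bigl(\tilde{\mu}(n) \otimes \tilde{\T}(n),\, \bC[x_1,\ldots,x_n]\bigr),
\end{equation*}
where both sides inherit a $q$-grading from the polynomial degree. Decomposing $\tilde{\mu}(n) \otimes \tilde{\T}(n) = \bigoplus_{\nu \vdash n} g^{\nu}_{\tilde{\mu}(n),\tilde{\T}(n)} V_\nu$ into $S_n$-irreducibles (so that by definition of the Kronecker product, the Frobenius character of this tensor product is $s_{\tilde{\T}(n)} \ast s_{\tilde{\mu}(n)}$), and using linearity of principal specialization, the lemma reduces to the statement that for every $\nu \vdash n$,
\begin{equation*}
ch_q \Hom_{S_n}(V_\nu,\, \bC[x_1,\ldots,x_n]) = s_\nu(1, q, q^2, \ldots).
\end{equation*}

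Second, this last identity is the classical graded Frobenius character computation for the polynomial ring, namely
\begin{equation*}
\sum_{k \geq 0} q^k\, \mathrm{Frob}\bigl(\bC[x_1,\ldots,x_n]_k\bigr) = \sum_{\lambda \vdash n} s_\lambda \cdot s_\lambda(1, q, q^2, \ldots),
\end{equation*}
whose coefficient of $s_\nu$ (extracted via the Hall inner product) gives exactly the desired formula. The plan to prove this identity is to use Molien's theorem: a permutation $\sigma$ of cycle type $\lambda$ acts with trace generating function $\prod_i (1-q^{\lambda_i})^{-1}$ on $\bigoplus_k q^k\bC[x_1,\ldots,x_n]_k$, so applying the Frobenius characteristic map yields
\begin{equation*}
\sum_k q^k\, \mathrm{Frob}(\bC[x_1,\ldots,x_n]_k) = \sum_{\lambda \vdash n} \frac{p_\lambda}{z_\lambda \prod_i (1-q^{\lambda_i})} = \sum_{\lambda \vdash n} \frac{p_\lambda \cdot p_\lambda(1, q, q^2, \ldots)}{z_\lambda},
\end{equation*}
using $p_m(1,q,q^2,\ldots) = (1-q^m)^{-1}$; the power-sum form of Cauchy's identity $\sum_{\lambda \vdash n} z_\lambda^{-1} p_\lambda(x) p_\lambda(y) = \sum_{\lambda \vdash n} s_\lambda(x) s_\lambda(y)$ then converts this directly into the Schur expansion above.

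No step is a real obstacle: the reduction is mechanical and the graded Frobenius identity is entirely standard. The hypothesis $n > 2(\abs{\mu}+\abs{\T})$ is in fact stronger than what this particular lemma requires (it would suffice that $\tilde{\mu}(n)$ and $\tilde{\T}(n)$ be valid partitions, for which $n \geq \abs{\mu}+\mu_1$ and $n \geq \abs{\T}+\T_1$ are enough); I expect the stronger bound is chosen with a view to the subsequent interpolation arguments that will pass from integer $n$ to complex $\nu$.
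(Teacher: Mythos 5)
Your proof is correct and is essentially the same argument as the paper's, differing only in bookkeeping. The paper applies the Frobenius characteristic map $\mathcal{F}_q$ directly to the whole tensor product $\bC[x_1,\ldots,x_n]\otimes\tilde{\T}(n)$, computes $\mathcal{F}_q$ via Molien / MacMahon (exactly your $\det(1-qg_w)^{-1}=\prod_j(1-q^{\rho_j(w)})^{-1}$ step), and then extracts the coefficient of $s_{\tilde\mu(n)}$, invoking Macdonald's power-sum formula for the internal product to arrive at $(s_{\tilde\T(n)}\ast s_{\tilde\mu(n)})(1,q,q^2,\ldots)$. You instead peel off $\tilde\T(n)$ first by self-duality and Hom-tensor adjunction, expand $\tilde\mu(n)\otimes\tilde\T(n)$ via Kronecker coefficients, and reduce to the classical fact $ch_q\Hom_{S_n}(V_\nu,\bC[x_1,\ldots,x_n])=s_\nu(1,q,q^2,\ldots)$, which you then prove by the same Molien computation combined with Cauchy's identity in power-sum form. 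The two routes pass through the same identity; yours makes the appearance of Kronecker coefficients (and hence the corollary about reduced Kronecker coefficients that follows in the paper) more transparent, while the paper's is slightly more self-contained in that it never names Cauchy and instead cites the internal-product formula directly. Your side remark about the hypothesis is also right: $n>2(\abs{\mu}+\abs{\T})$ is stronger than what this identity needs (validity of $\tilde\mu(n),\tilde\T(n)$ as partitions suffices), and the stronger bound is there to guarantee stabilization of the Kronecker coefficients $\gamma^{\tilde\lambda(n)}_{\tilde\T(n),\tilde\mu(n)}$ to the reduced ones, which is used immediately after the lemma.
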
 

 The latter expression can be rewritten as

 \begin{align*}
  &ch_q(\Hom_{S_n}(\tilde{\mu}(n), \bC[x_1,...,x_n] \otimes \tilde{\T}(n)))= \sum_{\lambda, \abs{\lambda}+\lambda_1 \leq n} \gamma^{\tilde{\lambda}(n)}_{\tilde{\T}(n), \tilde{\mu}(n)} s_{\tilde{\lambda}(n)}(1, q, q^2, ...) =\\
&= \sum_{\lambda} \overline{\gamma}^{\lambda}_{\T, \mu} s_{\tilde{\lambda}(n)}(1, q, q^2, ...)
 \end{align*}

Here 
\begin{itemize}
 \item $s_{\tilde{\lambda}(n)}$ is the Schur symmetric function corresponding to the partition $\tilde{\lambda}(n)$ of $n$ (see \cite[Chapter I, Par. 3, p.41]{Mac}),
\item $s_{\tilde{\T}(n)} \ast s_{\tilde{\mu}(n)}$ is the internal product of Schur symmetric functions as defined in \cite[Chapter I, Par. 7, p.116]{Mac},
\item $\gamma^{\alpha'}_{\alpha'',\alpha'''} := \frac{1}{n!} \sum_{w \in S_n} \chi^{\alpha'}(w) \chi^{\alpha''}(w) \chi^{\alpha'''}(w)$ is the Kronecker coefficient of partitions $\alpha', \alpha'', \alpha'''$ of $n$ (here $\chi^{\alpha'}(w)$ is the value at $w \InnaA{ \in S_n}$ of the character of the irreducible representation of $S_n$ corresponding to \InnaA{the Young diagram $\alpha'$).}

\item $\overline{\gamma}^{\lambda}_{\T, \mu}$ is the reduced Kronecker coefficient (equals $\gamma^{\tilde{\lambda}(n)}_{\tilde{\T}(n), \tilde{\mu}(n)}$ for $n>>0$). A good reference for standard and reduced Kronecker coefficients is \cite{BOR}.
\end{itemize}
 
\begin{proof}
 We define the map $\mathcal{F}$ (denoted by $ch$ in \cite[Chapter I, Par. 7]{Mac}) from conjugation-invariant functions on $S_n$ to symmetric functions in countably many variables by putting $$\mathcal{F}(f) : =\frac{1}{n!} \sum_{ w \in S_n} f(w) \prod_{j \geq 1} (\sum_{i} x_i^{\rho_j(w)}) $$
where $\rho(w) = (\rho_1, \rho_2, ...)$ is the cycle-type of $w$. 

Denoting $$p_{\rho} := \prod_{j \geq 1} (\sum_i x_i^{\rho_j}), m(\rho)_i \InnaB{:=\text{ number of parts of size }} i \text{ in } \rho, z_{\rho}:= \prod_{i\geq 1} i^{m(\rho)_i} m(\rho)_i!$$ 
\InnaE{we obtain}: $$\mathcal{F}(f) : = \sum_{\rho \vdash n} \frac{1}{z_{\rho}} f(\rho) p_{\rho}  $$
(here $f(\rho)$ is the value of $f$ on the conjugacy class of $S_n$ consisting of permutations of cycle-type $\rho$).

Let $V$ be a representation of $S_n$. Denote by $\chi^{V}$ the character of $V$, and by abuse of notation, $\mathcal{F}(V) :=  \mathcal{F}(\chi^{V})$. If $V = \bigoplus_{j} V_j$ is a $\bZ$-graded representation of $S_n$, then put $\mathcal{F}_q(V) :=  \sum_{j \in \bZ} \mathcal{F}(\chi^{V_j})q^j$.

We have: $\mathcal{F}(\chi^{\alpha}) = s_{\alpha}$ for partition $\alpha$ of $n$, and $\mathcal{F}(\chi^{V'}\chi^{V''}) =: \mathcal{F}(V') \ast \mathcal{F}(V'')$ ($\chi^{V'}\chi^{V''}$ is the character of the representation $\alpha' \otimes \alpha''$ of $S_n$).

Denote by $g_w$ the action of $w \in S_n$ on the $n$-dimensional complex vector space $\bC^n$, $g_w$ given by the permutation matrix corresponding to $w$.

By MacMahon's Master theorem (see \cite[Lemma 3.28]{EM}; the proof relies on an argument similar to the one used in \ref{ssec:char_Verma_object}), we have:

\begin{align*}
 &\mathcal{F}_q(\bC[x_1,...,x_n]) = \frac{1}{n!} \sum_{ w \in S_n} \sum_{k \geq 0} tr(Sym^k(g_w))q^k p_{\rho(w)}  = \frac{1}{n!} \sum_{ w \in S_n}  \frac{1}{det(1-q g_w)} p_{\rho(w)}= \\
&=\frac{1}{n!} \sum_{ w \in S_n} \prod_{1 \leq k \leq \ell(\rho(w))} \frac{1}{(1-q^{\rho(w)_k})}p_{\rho(w)} =  \sum_{\rho \vdash n} \frac{1}{z_{\rho}}\prod_{1 \leq k \leq \ell(\rho)} \frac{1}{(1-q^{\rho_k})}p_{\rho}
\end{align*}

Similarly,
 
\begin{align*}
 &\mathcal{F}_q(\bC[x_1,...,x_n]\otimes \tilde{\T}(n))) = \sum_{\rho \vdash n} \frac{1}{z_{\rho}}\prod_{1 \leq k \leq \ell(\rho)} \frac{1}{(1-q^{\rho_k})} p_{\rho} \chi^{\tilde{\T}(n)}(\rho) =\\
& = \sum_{\alpha, \rho \vdash n} \frac{1}{z_{\rho}}\prod_{1 \leq k \leq \ell(\rho)} \frac{1}{(1-q^{\rho_k})} \chi^{\alpha}(\rho) \chi^{\tilde{\T}(n)}(\rho) s_{\alpha}
\end{align*}

Thus

\begin{align*}
 &ch_q(\Hom_{S_n}(\tilde{\mu}(n), \bC[x_1,...,x_n] \otimes \tilde{\T}(n))) = \text{ coefficient of } s_{\tilde{\mu}(n)} \text{ in } \mathcal{F}_q(\bC[x_1,...,x_n]\otimes \tilde{\T}(n))) = \\
& = \sum_{\rho \vdash n} \frac{1}{z_{\rho}}\prod_{1 \leq k \leq \ell(\rho)} \frac{1}{(1-q^{\rho_k})} \chi^{\tilde{\mu}(n)}(\rho) \chi^{\tilde{\T}(n)}(\rho) = (s_{\tilde{\T}(n)} \ast s_{\tilde{\mu}(n)})(1, q, q^2, ...)
\end{align*}

The last equality holds by \cite[Chapter 6, par. 8., p.363]{Mac}.                                                                                                                                                                                 
\end{proof}

\begin{corollary}
Taking $\hhh$ to be the reflection representation of $S_n$, \InnaE{we obtain}: 
\begin{align*}
ch_q(\Hom_{S_n}(\tilde{\mu}(n), S\hhh \otimes \tilde{\T}(n))) = (1-q)(s_{\tilde{\T}(n)} \ast s_{\tilde{\mu}(n)})(1, q, q^2, ...) = (1-q)\sum_{\lambda} \overline{\gamma}^{\lambda}_{\T, \mu} s_{\tilde{\lambda}(n)}(1, q, q^2, ...)
\end{align*}
\end{corollary}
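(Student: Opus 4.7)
The plan is to reduce the corollary directly to the preceding lemma by exploiting the splitting $\fh \cong \hhh \oplus \triv$ of the permutation representation of $S_n$ into the reflection representation plus the trivial representation. Since $\bC[x_1,\dots,x_n] = S\fh$ as graded $S_n$-modules (with $\fh$ in degree $1$), this splitting yields a graded isomorphism
\begin{equation*}
S\fh \;\cong\; S\hhh \otimes S(\triv) \;\cong\; S\hhh \otimes \bC[z],
\end{equation*}
where $z$ is an indeterminate of degree $1$ on which $S_n$ acts trivially (concretely, one may take $z = x_1 + \dots + x_n$, so that $\bC[x_1,\dots,x_n] = \bC[x_1-\tfrac{z}{n},\dots,x_n-\tfrac{z}{n}]\otimes \bC[z]$).

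First I would tensor the above isomorphism with $\tilde{\T}(n)$ on the right and take $\Hom_{S_n}(\tilde{\mu}(n), -)$. Because $\bC[z]$ carries the trivial $S_n$-action, it passes through the $\Hom_{S_n}$ functor, yielding the graded isomorphism
\begin{equation*}
\Hom_{S_n}(\tilde{\mu}(n),\, \bC[x_1,\dots,x_n]\otimes \tilde{\T}(n)) \;\cong\; \Hom_{S_n}(\tilde{\mu}(n),\, S\hhh \otimes \tilde{\T}(n)) \otimes \bC[z].
\end{equation*}
Taking $q$-characters and using $ch_q(\bC[z]) = \tfrac{1}{1-q}$ gives
\begin{equation*}
ch_q\bigl(\Hom_{S_n}(\tilde{\mu}(n), \bC[x_1,\dots,x_n]\otimes \tilde{\T}(n))\bigr) \;=\; \tfrac{1}{1-q}\, ch_q\bigl(\Hom_{S_n}(\tilde{\mu}(n), S\hhh \otimes \tilde{\T}(n))\bigr).
\end{equation*}

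Finally, I would substitute Lemma \ref{lem:char_classical_verma} on the left to obtain
\begin{equation*}
ch_q\bigl(\Hom_{S_n}(\tilde{\mu}(n), S\hhh \otimes \tilde{\T}(n))\bigr) = (1-q)\,(s_{\tilde{\T}(n)} \ast s_{\tilde{\mu}(n)})(1,q,q^2,\dots),
\end{equation*}
and then expand the internal product via its decomposition $s_{\tilde{\T}(n)} \ast s_{\tilde{\mu}(n)} = \sum_{\lambda} \gamma^{\tilde{\lambda}(n)}_{\tilde{\T}(n),\tilde{\mu}(n)}\, s_{\tilde{\lambda}(n)}$, which for $n > 2(\abs{\T}+\abs{\mu})$ equals $\sum_{\lambda} \overline{\gamma}^{\lambda}_{\T,\mu}\, s_{\tilde{\lambda}(n)}$ by stability of Kronecker coefficients (definition of the reduced Kronecker coefficient). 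This yields the second equality in the statement.

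There is no real obstacle here; the only mild point worth checking is that the isomorphism $S\fh \cong S\hhh \otimes \bC[z]$ respects both the grading and the $S_n$-action, which follows immediately from the decomposition of $\fh$ into $S_n$-subrepresentations. The rest is formal manipulation of graded characters.
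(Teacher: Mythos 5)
Your proposal is correct and follows essentially the same route as the paper: the paper's proof also rests on the splitting $\fh \cong \hhh \oplus \bC$, which gives $S^m\fh \cong \bigoplus_{0 \leq j \leq m} S^j\hhh$ (equivalently $S\fh \cong S\hhh \otimes \bC[z]$), so that the character of the $\Hom$-space for $S\fh$ is $\tfrac{1}{1-q}$ times that for $S\hhh$, and the result follows from Lemma \ref{lem:char_classical_verma}. Your write-up just spells out the graded isomorphism and the stability of Kronecker coefficients in slightly more detail.
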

\begin{proof}
This follows directly from the fact that $\bC[x_1,...,x_n] = S \fh$, where $\fh = \hhh \oplus \bC$ is the permutation representation of $S_n$, and so $S^m \fh = \bigoplus_{0 \leq j \leq m} S^j \hhh$.
\end{proof}

\begin{corollary}\label{cor:char_hom_simple_to_Verma}
$$ch_q(\Hom_{S_{\nu}}(X_{\mu}, S\hhh \otimes X_{\T})) = (1-q)\sum_{\lambda \InnaE{\in \mathcal{P}} } \overline{\gamma}^{\lambda}_{\T, \mu} \overline{s}_{\lambda}(1, q, q^2, ...)$$
where $\overline{\gamma}^{\lambda}_{\T, \mu}$ is the reduced Kronecker coefficient, and $$\overline{s}_{\lambda}(1, q, q^2, ...) := \frac{q^{\abs{\lambda}} s_{\lambda}(1, q, q^2,...)}{\prod_{j \geq 1} (1-q^j)}$$
\end{corollary}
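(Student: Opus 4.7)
The plan is to deduce the formula from the previous corollary by letting $n \to \infty$ and using the ``stability'' of Deligne's category $Rep(S_\nu)$. For each fixed $k \geq 0$, the graded piece $\Hom_{S_\nu}(X_\mu, S^k\hhh \otimes X_\T)$ has dimension independent of generic $\nu$, and by the semisimplicity of $Rep(S_{\nu=n})^{(\nu/2)}$ together with the argument already used in Lemma \ref{lem:Verma_int_dim_hom}, this dimension agrees with $\dim \Hom_{S_n}(\tilde{\mu}(n), S^k\hhh \otimes \tilde{\T}(n))$ for all $n$ sufficiently large (depending on $k$, $\mu$, $\T$). Consequently $ch_q(\Hom_{S_\nu}(X_\mu, S\hhh \otimes X_\T))$ is the coefficient-wise ($q$-adic) limit as $n \to \infty$ of the classical character $ch_q(\Hom_{S_n}(\tilde{\mu}(n), S\hhh \otimes \tilde{\T}(n)))$, which the previous corollary expresses as $(1-q)\sum_{\lambda} \overline{\gamma}^\lambda_{\T,\mu}\, s_{\tilde{\lambda}(n)}(1,q,q^2,\dots)$.

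The core calculation is to show that $s_{\tilde{\lambda}(n)}(1,q,q^2,\dots)$ converges to $\overline{s}_\lambda(1,q,q^2,\dots)$ in the $q$-adic topology. I would use the hook-length (principal specialization) formula
\[
s_{\tilde{\lambda}(n)}(1,q,q^2,\dots) \;=\; \frac{q^{n(\tilde{\lambda}(n))}}{\prod_{(i,j) \in \tilde{\lambda}(n)}(1-q^{h(i,j)})},
\]
with $n(\alpha) := \sum_i (i-1)\alpha_i$. A direct bookkeeping gives $n(\tilde{\lambda}(n)) = n(\lambda) + |\lambda|$, and the hook-length product factors into three pieces: cells in rows $i \geq 2$ reproduce exactly $\prod_{(i,j) \in \lambda}(1-q^{h_\lambda(i,j)})$; cells $(1,j)$ with $j \leq \lambda_1$ contribute factors $(1-q^{(n-|\lambda|)-j+\lambda\check{}_j+1})$, each tending to $1$ in the $q$-adic topology as $n \to \infty$; and cells $(1,j)$ with $\lambda_1 < j \leq n-|\lambda|$ contribute $\prod_{k=1}^{n-|\lambda|-\lambda_1}(1-q^k) \to \prod_{k \geq 1}(1-q^k)$. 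Putting these together produces exactly
\[
\overline{s}_\lambda(1,q,q^2,\dots) \;=\; \frac{q^{|\lambda|} s_\lambda(1,q,q^2,\dots)}{\prod_{j \geq 1}(1-q^j)}.
\]

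Finally I would justify the interchange of the $n \to \infty$ limit with the (a priori infinite) sum over $\lambda$. For any fixed $K$, only partitions with $|\lambda| \leq K$ contribute to the coefficient of $q^K$, because $\overline{s}_\lambda(1,q,q^2,\dots)$ (and likewise $s_{\tilde{\lambda}(n)}(1,q,q^2,\dots)$ for $n$ large compared to $K$) has lowest-order term $q^{|\lambda|+n(\lambda)}$ divided by a unit in $\mathbb{C}[[q]]$; thus every coefficient involves only a finite sum and the limit commutes with summation. The stability of reduced Kronecker coefficients $\gamma^{\tilde{\lambda}(n)}_{\tilde{\T}(n),\tilde{\mu}(n)} = \overline{\gamma}^\lambda_{\T,\mu}$ for $n \gg 0$ is built into the notation. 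The main technical obstacle is the careful hook-length accounting in the limit computation of $s_{\tilde{\lambda}(n)}$ — everything else is standard stability and a routine interchange of limits.
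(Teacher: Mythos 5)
Your proposal follows essentially the same route as the paper: both deduce the formula from the classical ($S_n$) case via $q$-adic limits, both use the principal specialization (hook-length) formula for $s_{\tilde\lambda(n)}(1,q,q^2,\dots)$, and both split the hook product over $\tilde\lambda(n)$ into the rows $i\geq 2$ (reproducing the $\lambda$-hooks) and the first row (producing $\prod_{j\geq 1}(1-q^j)^{-1}$ in the limit). Your explicit justification of the interchange of the $n\to\infty$ limit with the sum over $\lambda$ is a welcome precision that the paper leaves implicit, though your appeal to the machinery of Lemma \ref{lem:Verma_int_dim_hom} is heavier than needed — the paper relies only on the more elementary stability of $\Hom$-spaces in $Rep(S_\nu)$ itself.
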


\begin{proof}
By the structure of Deligne's category described in Section \ref{sec:Del_cat}, if there exist integers $k, N$ such that for any $n \geq N$, $\dim \Hom_{S_n}( \tilde{\mu}(n), S^m \hhh) = k$ (here for each $n$, $\hhh$ is the reflection representation of $S_n$), then $\dim \Hom_{S_{\nu}}( X_{\mu}, S^m \hhh) = k$ in Deligne's category $\InnaE{\underline{\mathrm{Rep}}}(S_{\nu})$ as well. 

Thus the power series in $ch_q(\Hom_{S_{\nu}}(X_{\mu}, S\hhh \otimes X_{\T}))$ is the $q$-adic limit, as $n$ tends to infinity, of $ch_q(\Hom_{S_n}(\tilde{\mu}(n), S\hhh \otimes \tilde{\T}(n)))$; recall that, by definition, a sequence $\{P^{(n)}(q)\}_n, P^{(n)}(q) = \sum_{k \geq 0} a^{(n)}_k q^k$ of formal power series in $q$ converges, in the $q$-adic metric, to the formal power series $P(q)$ if
for any $k \geq 0$ there exists $N_k \in \bZ_{>0}$ such that for any $n > N_k$, $q^{k+1} \mid (P(q) - P^{(n)}(q))$. 

By \cite[Chapter I, Par. 5, Example 1]{Mac}, $$s_{\lambda}(1, q, q^2, ...) = \frac{q^{\sum_{i \geq 1} i\lambda_i - \abs{\lambda}}}{\prod_{x \in \lambda} (1-q^{h(x)})}$$ where $h(x)$ is the size of the hook in $\lambda$ with vertex $x$, so
$$s_{\tilde{\lambda}(n)}(1, q, q^2, ...) = \frac{q^{\sum_i i\lambda_i}}{\prod_{x \in \lambda} (1-q^{h(x)})} \cdot \prod_{1 \leq j \leq n-\abs{\InnaA{\lambda}}} \frac{1}{1-q^{n-\abs{\InnaA{\lambda}}-j +1+\InnaA{\lambda}\check{}_j}} $$

The $q$-adic limit of $s_{\tilde{\lambda}(n)}(1, q, q^2, ...)$, as $n$ tends to infinity, is then
$$ \overline{s}_{\lambda}(1, q, q^2, ...):=\frac{q^{\sum_i i\lambda_i}}{\prod_{x \in \lambda} (1-q^{h(x)})} \cdot  \frac{1}{\prod_{j \geq 1} (1-q^{j})} = \frac{q^{\abs{\lambda}} s_{\lambda}(1, q, q^2,...)}{\prod_{j \geq 1} (1-q^j)}$$

This completes the proof of the statement.
\end{proof}

\begin{example}\label{ex:char_poly_repr}
For $\T = \emptyset$, \InnaE{we obtain}:
 \begin{align*}
&ch_q(\Hom_{S_{\nu}}(X_{\mu}, S\hhh)) = (1-q)\overline{s}_{\mu}(1, q, q^2, ...) = \frac{q^{\abs{\mu}} s_{\mu}(1, q, q^2,...)}{\prod_{j \geq 2} (1-q^j)} =\\
& = \frac{q^{\sum_{i \geq 1} \mu_i i}}{\prod_{x \in \mu} (1-q^{h(x)}) \prod_{j \geq 2} (1-q^j)}
 \end{align*}
\end{example}

\subsection{Characters of simple objects: generic cases }\label{ssec:char_simple_obj_generic_cases}                               

\subsubsection{} 
Fix a Young diagram $\T$. Recall that for each Young diagram $\mu \neq \T$ and $m \in \bZ_{+}$, there is a set of points $(c', \nu)$ for which $M_{c, \nu}(\mu)$ maps to $M_{c, \nu}(\T)$ in degree $m$. These sets are either straight lines or finite sets. 

The union of these sets is the reducibility locus of $M_{c, \nu}(\T)$, denoted by $B_{\T}$. Outside this reducibility locus $L_{c, \nu}(\T)=M_{c, \nu}(\T)$, and the character of this object is given by the formula 

\begin{equation}\label{char_simple_obj_form_1}
  ch L(\T) = ch M(\T) = \frac{t^{h_{c, \nu}(\T)}X_{\T}}{\sum_{n \geq 0} (-1)^{n} \;\; \Lambda^n \hhh \;\; t^n}
\end{equation}

This is the most generic case.

\subsubsection{}\label{sssec:one_sing_subobj_char}
 The next most generic case is a generic point on a line $\mathcal{L}_{\T, \mu, m} \subset \mathcal{B}_{\mu, \T}$.



Fix a pair $s,r$ such that $s, r \in \bZ, r\neq 0, s \geq 0$, and consider a generic point $(c', \nu)$ on the line $c' = \frac{\nu - s}{r}$.
 
Let $\T$ be a Young diagram such that $s \in C_{\T}$ (this exactly means that $M(\T)$ is actually reducible at this point $(c, \nu)$), and denote $\mu := \Gamma(\T, s, \pm 1)$, where the sign equals $\sign(r)$.

Then there are only two distinct Verma objects which map non-trivially into $M(\T)$: $M(\mu)$ and $M(\T)$ itself (see Subsection \ref{ssec:prop_lines_L_tau_mu_m}). Both map uniquely (up to scalar multiple) into $M(\T)$.

\begin{lemma}
 The image of $M(\mu)$ in $M(\T)$ is a simple $\InnaE{\underline{\co}}_{\text{  } c,\nu}$-object.

\end{lemma}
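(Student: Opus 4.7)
The plan is to argue by contradiction: assume $N$, the image of the non-trivial map $M(\mu) \to M(\T)$, has a non-zero proper $\co_{c,\nu}$-subobject $N' \subsetneq N$, and derive a contradiction with the classification of non-trivial Verma maps into $M(\T)$ at a generic point of $\mathcal{L}_{\T,\mu,m}$. Since $\nu \notin \bZ_+$ the category $Rep(S_\nu)$ is semisimple, so the lowest non-zero graded piece of any subobject of $M(\T)$ decomposes as a direct sum of simple $Rep(S_\nu)$-objects, each of which is automatically singular (killed by $y$), since $y$ would send it to the grade below, where the subobject vanishes.

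First I would pin down, at the chosen generic $(c',\nu) \in \mathcal{L}_{\T,\mu,m}$, all Verma objects admitting a non-trivial map into $M(\T)$. By Proposition~\ref{main_for_prop}, the existence of a non-trivial map $M(\lambda) \to M(\T)$ whose lowest weight lands in degree $k$ forces $(c',\nu) \in \mathcal{L}_{\T,\lambda,k}$. The genericity assumption, combined with Subsection~\ref{ssec:prop_lines_L_tau_mu_m}, lets me exclude every other line $\mathcal{L}_{\T,\lambda,k}$ through $(c',\nu)$: two distinct lines meet in at most one point, so only countably many exceptional values need to be removed from $\mathcal{L}_{\T,\mu,m}$. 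Using that $\mathcal{L}_{\T,\T,k}=\emptyset$ for $k>0$ whenever $c' \neq 0$, the only remaining possibilities are $(\lambda,k)=(\T,0)$ and $(\lambda,k)=(\mu,m)$. Correspondingly, at such a generic point there are (up to scalar) only two non-trivial morphisms into $M(\T)$: the identity $M(\T) \to M(\T)$, whose image contains $X_\T$ in degree $0$, and the unique map $M(\mu) \to M(\T)$, whose image is exactly $N$ and which places $X_\mu$ in degree $m$.

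Given the putative $N' \subsetneq N$, let $d$ be its smallest non-zero grade and choose a simple singular $Rep(S_\nu)$-subobject $X_\lambda \subset N'$ in that grade. Proposition~\ref{functoriality_prop_Verma_obj} then produces a non-trivial map $M(\lambda) \to M(\T)$ whose image lies in $N'$ and whose lowest weight hits this copy of $X_\lambda$ in degree $d$ of $M(\T)$. The classification above forces $(\lambda,d) \in \{(\T,0),(\mu,m)\}$. The case $(\T,0)$ is excluded because $N$ is generated over $S\hhh^*$ by the image of $X_\mu$ sitting in degree $m>0$, so $N$ (and hence $N'$) vanishes in degree $0$. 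In the remaining case $(\lambda,d)=(\mu,m)$, the uniqueness up to scalar of the map $M(\mu) \to M(\T)$ forces the map produced from $N'$ to coincide with a non-zero scalar multiple of the original map; its image is therefore $N$ itself, contradicting $N' \subsetneq N$.

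The main subtlety will be making ``generic'' precise, namely specifying the countable set of points of $\mathcal{L}_{\T,\mu,m}$ that must be discarded in order to kill off every other potential Verma map into $M(\T)$. This is essentially the content of Subsection~\ref{ssec:prop_lines_L_tau_mu_m}, together with the elementary observation that $\mathcal{L}_{\T,\T,k}=\emptyset$ for $k>0$ once $c' \neq 0$. Once this enumeration is in place, the remainder of the argument is a short, purely formal consequence of Proposition~\ref{functoriality_prop_Verma_obj} and the one-dimensionality of $\Hom_{\co_{c,\nu}}(M(\mu),M(\T))$.
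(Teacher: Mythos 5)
Your proof is correct and follows essentially the same route as the paper's: use genericity of $(1/c,\nu)$ on $\mathcal{L}_{\T,\mu,m}$ to restrict the possible simple singular $Rep(S_\nu)$-subobjects of the image to the copy of $X_\mu$ in degree $m$, then conclude simplicity. The paper phrases the final step slightly differently — once the only simple singular subobject is the lowest weight, it generates $\Im(M(\mu))$ over $S\hhh^*$, so no proper non-zero subobject exists — whereas you instead invoke the one-dimensionality of $\Hom_{\co_{c,\nu}}(M(\mu),M(\T))$ to force the map produced from $N'$ to agree with the original, but these two closing arguments are interchangeable and both rely only on facts already established in Subsection \ref{ssec:prop_lines_L_tau_mu_m}.
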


\begin{proof}
Assume $X_{\lambda}$ is a simple singular $\InnaE{\underline{\mathrm{Rep}}}(S_{\nu})$-subobject in $\Im(M(\mu))$, and $X_{\lambda} \subset M(\T)$ lies in degree $m'$. 
Then the action of $\brh$ on $M(\T)$ and $J(\T)/ \Im(M(\mu))$ gives (see Proposition \ref{main_for_prop}):

\begin{align*}
 & c' = \frac{\nu -s}{r} \\
 &\frac{\abs{\T}^2-\abs{\lambda}^2 -(\abs{\T} - \abs{\lambda} ) }{2} +ct(\T) - ct(\lambda)  = c'm' + (\abs{\T} - \abs{\lambda})\nu
\end{align*}

By assumption, \InnaE{we obtain}: $\mu = \lambda$, which implies $m=m'$, and since $X_{\lambda}$ is a simple singular $\InnaE{\underline{\mathrm{Rep}}}(S_{\nu})$-subobject in $\Im(M(\mu))$, this means that $X_{\lambda}$ coincides with the lowest weight in $\Im(M(\mu))$.

Thus $\Im(M(\mu))$ is simple.
\end{proof}

We now want to show that there is a short exact sequence: $$ 0 \longrightarrow L(\mu) \longrightarrow M(\T) \longrightarrow L(\T) \longrightarrow 0$$

This is equivalent to saying that $\Im(M(\mu)) = J(\T)$. 

\begin{lemma}
 $M(\T)/ \Im(M(\mu))$ is a simple $\InnaE{\underline{\co}}_{\text{  } c,\nu}$-object.

\end{lemma}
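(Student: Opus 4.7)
By the previous lemma $\Im(M(\mu))$ is simple and therefore equals $L(\mu)$; being a proper subobject of $M(\T)$, it is contained in the maximal proper subobject $J(\T)$. The simplicity of $M(\T)/\Im(M(\mu)) = M(\T)/L(\mu)$ amounts to $J(\T) = L(\mu)$, so my goal is to rule out a strict containment $L(\mu) \subsetneq J(\T)$.

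First I will show that every simple singular $Rep(S_\nu)$-subobject of $J(\T)$ is contained in $L(\mu)$. Such a subobject $V \cong X_\lambda$ sitting in degree $m' > 0$ is a fortiori singular in $M(\T)$, so Proposition \ref{functoriality_prop_Verma_obj} produces a nonzero map $M(\lambda) \to M(\T)$, which places $(c',\nu)$ in $\mathcal{B}_{\lambda,\T} \cap \mathcal{L}_{\T,\lambda,m'}$ by the $\brh$-eigenvalue argument of Proposition \ref{main_for_prop}. The genericity of $(c',\nu)$ on $\mathcal{L}_{\T,\mu,m}$, combined with the properties of the lines in Subsection \ref{ssec:prop_lines_L_tau_mu_m} and Theorem \ref{trm:L_tau_mu_m_lies_in_B}, forces $(\lambda, m') = (\mu, m)$; the equality $\dim \Hom(M(\mu),M(\T)) = 1$ then identifies $V$ with the unique copy of the lowest weight of $M(\mu)$ inside $L(\mu)$.

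Next I will analyze $J(\T)/L(\mu) \subseteq M(\T)/L(\mu)$, assuming for contradiction that it is nonzero. It contains a simple singular $Rep(S_\nu)$-subobject $\bar V \cong X_\lambda$ at some degree $m' > 0$; using semisimplicity of $Rep(S_\nu)$ in the relevant graded piece, I lift $\bar V$ to $V \subset M(\T)$ with $V \cap L(\mu) = 0$ and $y(\hhh \otimes V) \subseteq L(\mu)$ sitting in degree $m' - 1$. When $m' \leq m$, the degree $m'-1$ component of $L(\mu)$ vanishes, so $V$ is singular in $M(\T)$; the previous paragraph then gives $V \subseteq L(\mu)$, contradicting $V \cap L(\mu) = 0$.

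The case $m' > m$ will be the main obstacle. My plan is to handle it by specialization to large integer $\nu$ along the line. The line $\mathcal{L}_{\T,\mu,m}$ contains a Zariski-dense sequence of integer points $(c_n,n)$ with $c_n' = (n-s)/r$ and $n$ large. At such points, Corollary \ref{int_blocks_conclusion} says $M_{c,n}(\tilde{\T}(n))/L_{c,n}(\tilde{\mu}(n)) \cong L_{c,n}(\tilde{\T}(n))$ is classically simple, so $\Hom$ into it from $M_{c,n}(\tilde{\lambda}(n))$ vanishes for $\lambda \neq \T$. An argument extending Lemma \ref{lem:Verma_int_dim_hom} to Hom spaces into such quotients---using right exactness of the functor $\mathcal{F}_{c,n}$ of Section \ref{sec:functor_int_case} together with the semisimplicity of $Rep(S_{\nu=n})^{(\nu/2)}$ in the relevant range of degrees---will give $\Hom_{\co_{c,\nu=n}}(M(\lambda), M(\T)/L(\mu)) = 0$ for all $\lambda \neq \T$ at these integer points. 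Upper semicontinuity of $\dim \Hom$ along $\mathcal{L}_{\T,\mu,m}$ will then transport the vanishing to the generic point, excluding the existence of $\bar V$ with $m' > m$ and completing the proof.
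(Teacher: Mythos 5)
Your decomposition and the paper's differ in a way that matters, so let me walk through the comparison.

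Your treatment of the case $m' \leq m$ is correct and is essentially the paper's argument: lift $\bar V$ to $V\subset M(\T)$ in degree $m'$ with $y(\hhh\otimes V)\subset \Im(M(\mu))$; since $\Im(M(\mu))$ lives in degrees $\geq m$ and $y(\hhh\otimes V)$ lives in degree $m'-1<m$, the map $y$ kills $\hhh\otimes V$, so $V$ is singular in $M(\T)$; by genericity $\lambda=\mu$, and by $\dim\Hom(M(\mu),M(\T))=1$ the singular copy of $X_\mu$ sits inside $\Im(M(\mu))$, contradicting $V\cap \Im(M(\mu))=0$. The paper states this only for the explicit case $\lambda=\mu$ (which forces $m'=m$), but as you observe the same degree argument disposes of $m'<m$ for free, so your version is if anything cleaner.

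The real divergence is the case $m'>m$, and here your plan is shaky. First, the specialization cannot be carried out literally in $\co_{c,\nu=n}$ for $n\in\bZ_+$: the category $Rep(S_{\nu=n})$ (and hence $\co_{c,\nu=n}$) is only Karoubian, not abelian, so the object $M_{c,\nu=n}(\T)/L_{c,\nu=n}(\mu)$ is not defined there. You'd have to work entirely after applying $\mathcal{F}_{c,n}$ — but then the claim $\mathcal{F}_{c,n}(L_{c,\nu=n}(\mu)) = L_{c,n}(\tilde{\mu}(n))$ is itself nontrivial and nowhere established in the paper (Lemma \ref{lem:Verma_int_dim_hom} compares $\Hom$'s between Verma objects, not between their quotients, and the required ``extension'' would need the exactness properties of $\mathcal{F}_{c,n}$ on ind-objects whose high graded pieces fall outside the semisimple range $Rep(S_{\nu=n})^{(\nu/2)}$). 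Second, ``upper semicontinuity of $\dim\Hom$ along $\mathcal{L}_{\T,\mu,m}$'' is not automatic: the target $M(\T)/\Im(M(\mu))$ is the cokernel of a polynomial family of maps, and the graded dimensions of $\Im(M(\mu)) = L(\mu)$ can jump along the line, so you would need to exhibit this as a flat family or otherwise justify the semicontinuity, and you have not done so. These are genuine gaps, not bookkeeping.

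The paper's own mechanism for disposing of the remaining possibilities is different and you have effectively bypassed it. Namely, it invokes the $\brh$-eigenvalue relation (Proposition \ref{main_for_prop}) for $X_\lambda$ sitting at degree $m'$ of the quotient: this gives $(c',\nu)\in\mathcal{L}_{\T,\lambda,m'}$, and at a generic point of $\mathcal{L}_{\T,\mu,m}$ this forces $\mathcal{L}_{\T,\lambda,m'}=\mathcal{L}_{\T,\mu,m}$, whence $m'=r(\abs{\lambda}-\abs{\T})$ and $s=\frac{f(\lambda)-f(\T)}{\abs{\lambda}-\abs{\T}}$. Combined with the setup of Subsection \ref{sssec:one_sing_subobj_char} (at a generic point only $M(\mu)$ and $M(\T)$ map nontrivially into $M(\T)$), the paper concludes $\lambda=\mu$ and hence $m'=m$, so the $m'>m$ case never arises. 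You should state and use this $\brh$-eigenvalue constraint rather than resorting to specialization; it collapses your ``main obstacle'' immediately once one knows $\lambda$ is forced to be $\mu$. (Admittedly the paper's ``contradicts our assumption'' is terse about exactly how coinciding lines with $\lambda \neq \mu$ are excluded, and if you believe there is a residual gap there, filling it by specialization is a legitimate idea — but then you owe the reader the flatness and comparison arguments above, none of which are sketched at a level that could be completed mechanically.)
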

\begin{proof}

Assume $X_{\lambda}$ is a simple singular $\InnaE{\underline{\mathrm{Rep}}}(S_{\nu})$-subobject in $M(\T)/ \Im(M(\mu))$, and its preimage $X_{\lambda} \subset M(\T)$ lies in degree $m'$. 

Then the action of $\brh$ on $M(\T)$ and $J(\T)/ \Im(M(\mu))$ gives (see Proposition \ref{main_for_prop}):

\begin{align*}
 & c' = \frac{\nu -s}{r} \\
 &\frac{\abs{\T}^2-\abs{\lambda}^2 -(\abs{\T} - \abs{\lambda} ) }{2} +ct(\T) - ct(\lambda)  = c'm' + (\abs{\T} - \abs{\lambda})\nu
\end{align*}

We now need to consider separately the case when $\lambda =\mu$. 
Indeed, in that case the above equations mean that $m'=m$, and thus we have $X:=X_{\mu} \subset M(\T)$ (not a singular subobject) which lies in degree $m'$ and whose image in $M(\T)/ \Im(M(\mu))$ is not zero and singular. 

The former implies that $y_{M(\T)}(\hhh \otimes X) \neq 0$ is a direct sum of simple $\InnaE{\underline{\mathrm{Rep}}}(S_{\nu})$-objects lying in degree $m-1$ of $M(\T)$. 

The fact that image of $X$ in $M(\T)/ \Im(M(\mu))$ is singular would mean that $$y_{M(\T)}(\hhh \otimes X) \subset \Im(M(\mu))$$ 

But this leads to a contradiction, since $\Im(M(\mu))$ lies in degrees strictly higher than $m-1$ of $M(\T)$.

So $\lambda \neq \mu$, and this contradicts our assumption.
\end{proof}

 Thus for generic $(c', \nu)$ such that $c' = \frac{\nu-s}{r}$, we have a short exact sequence: $$ 0 \longrightarrow L(\mu) \longrightarrow M(\T) \longrightarrow L(\T) \longrightarrow 0$$
 
 and a long exact sequence
 
 \begin{align*}
 &... \rightarrow M(\Gamma(\T, s, \pm l)) \longrightarrow M(\Gamma(\T, s, \pm (l-1))) \rightarrow ...
 \rightarrow M(\Gamma(\T, s, \pm 1)) \longrightarrow M(\T) \longrightarrow L(\T) \rightarrow 0
\end{align*}

As before, the sign corresponds to the sign of $r$, and this sequence ends (on the left) with $M(\mathbf{rec}( 0, \mathbf{core}_{(\nu-s)}(\T)))= M(\Gamma(\T, s, -\T\check{}_{j_s}))$ if $r<0$. 
 
 This allows us to compute the character of $L(\T)$ by Euler formula:
 
 
 $$ch L(\T) = \sum_{l \in \bZ_+, \text{ and } l\leq \T\check{}_{j_s} \text{ if } r<0} (-1)^l ch M(\Gamma(\T, s, \pm l))  $$
 
\InnaA{
Now, 
\begin{align*}
&h_{c, \nu}(\Gamma(\T, s, \pm l)) - h_{c, \nu}(\T) = \\
&=\sum_{1 \leq l' \leq l} h_{c, \nu}(\Gamma(\T, s, \pm l')) - h_{c, \nu}(\Gamma(\T, s, \pm (l'-1))) =\\
&= \sum_{1 \leq l' \leq l} m_{l'} = \sum_{1 \leq l' \leq l} c(\nu-s) \left(\abs{\Gamma(\T, s, \pm l')} - \abs{\Gamma(\T, s, \pm (l'-1))} \right)= \\
&=c(\nu-s)\left(\abs{\Gamma(\T, s, \pm l)} - \abs{\T}\right)= c(\nu-s)(j_s - k_{s,l} \pm l)
\end{align*}

Here we denote by $m_{l'}$ the degree of $M(\Gamma(\T, s, \pm (l'-1)))$ in which lies the image of the lowest weight of $M(\Gamma(\T, s, \pm l'))$, and use Lemma \ref{identity_for_s_and_diagrams}.

}

 Using Subsection \ref{ssec:char_Verma_object} and the fact that $$h_{c, \nu}(\Gamma(\T, s, \pm l)) - h_{c, \nu}(\T) = c(\nu-s)(j_s - k_{s,l} \pm l)$$ \InnaE{we obtain}:
 
 %
 \begin{equation}\label{char_simple_obj_form_2} 
  ch L(\T) =  \frac{t^{h_{c, \nu}(\T)} \left( \sum_{l \in \bZ_+, \text{ and } l\leq \T\check{}_{j_s} \text{ if } r<0} (-1)^l  X_{\Gamma(\T, s, \pm l)} t^{c(\nu-s)(j_s - k_{s,l} \pm l)} \right) }{\sum_{n \geq 0} (-1)^{n} \;\; \Lambda^n \hhh \;\; t^n} 
 \end{equation}

As before, the sign corresponds to the sign of $r$.


\begin{example}\label{example_cnu_is_1}
 Let $c\nu =1, Re(c)>0$. 

If $X_\lambda$ lies in degree $m'$ of $M_{\InnaE{c, \nu}}(\pi^n)$ as a singular $\InnaE{\underline{\mathrm{Rep}}}(S_{\nu})$-subobject ($\pi^n$ is a column diagram with $n$ cells), then Equation \eqref{main_for_rewritten} gives us:
$$ m = (\abs{\lambda} -n)c\nu - f(\lambda)c = (\abs{\lambda} -n) - f(\lambda)c $$
and Pieri's rule (Proposition \ref{Pieri}) implies that $m \geq \abs{\abs{\lambda} -n}$. Since $m, \abs{\lambda}, f(\lambda) \in \bZ_{\InnaA{+}}$, \InnaE{we obtain} that $c \in \bQ_{>0}$.

We also have: $f(\lambda) \geq 0$, with equality iff $\lambda$ is a column diagram (see Lemma \ref{lemma_values_f}). So 
$$ \abs{\abs{\lambda} -n} \leq m = (\abs{\lambda} -n) - f(\lambda)c \leq \abs{\lambda} -n$$
which means that $f(\lambda) =0, \abs{\lambda} \geq n$, i.e. $\lambda = \pi^k$ for some $k \geq n$ (in fact, $k >n$), and $X_{\pi^k}$ lies in degree $m=k-n$ of $M_{\InnaE{c, \nu}}(\pi^n)$.

But by Pieri's rule (Proposition \ref{Pieri}), for $k>1$, $\hhh^{\otimes k}$ contains only one copy of $X_{\pi^k} \cong \Lambda^k \hhh$, and it does not lie inside $S^k \hhh$. So $S^k \hhh$ doesn't contain subobjects isomorphic to $X_{\pi^k}$.

So $M_{\InnaE{c, \nu}}(\pi^n)$ has only one non-trivial singular $\InnaE{\underline{\mathrm{Rep}}}(S_{\nu})$-subobject: $X_{\pi^{n+1}}$, which lies in degree $1$ of $M_{\InnaE{c, \nu}}(\pi^n)$. 

 In this case, we can compute the character of $L_{\InnaE{c, \nu}}(\pi^n)$ as in Equation \eqref{char_simple_obj_form_2}. We assumed that $c\nu=1$, so $h_{c,\nu}(\pi^n) = \frac{(\nu-1)(1-c\nu)}{2} + n = n$, hence

 $$ch L_{\InnaE{c, \nu}}(\pi^n) =  \frac{ \left( \sum_{l \in \bZ_+} (-1)^l  X_{\pi^{n+l}} t^{n+l}\right) }{\sum_{l \in \bZ_+} (-1)^{l} \;\;\Lambda^l \hhh \;\; t^l}$$

But $\Lambda^{n+l} \hhh \cong X_{\pi^{n+l}}$ in $\InnaE{\underline{\mathrm{Rep}}}(S_{\nu})$, so 
$$ch L_{\InnaE{c, \nu}}(\pi^n) =  \frac{  \sum_{l \in \bZ_+} (-1)^l  \;\; \Lambda^{n+l}\hhh \;\; t^{n+l} }{\sum_{l \in \bZ_+} (-1)^{l} \;\;\Lambda^l \hhh \;\; t^l} $$

Note that $L_{\InnaE{c, \nu}}(\emptyset) \cong X_{\emptyset} =\triv$ as $\InnaE{\underline{\mathrm{Rep}}}(S_{\nu})$ (ind)-objects, with maps $x_{L_{\InnaE{c, \nu}}(\emptyset)}, y_{L_{\InnaE{c, \nu}}(\emptyset)} =0$.
\end{example}

\begin{example}\label{exm_char_cnu_k}
 Similarly, for a generic point $(c,\nu)$ on the line $c\nu=k$, $k \in \bZ_{>0}$, we have: $s=0, r=k$ and so 
$$ \Gamma(\pi^n, 0, \pm l) = X_{\pi^{n+l}} $$
\begin{align*}
 ch L(\pi^n) =  t^{kn -\frac{(\nu-1)(k-1)}{2}} \cdot \frac{ \sum_{l \in \bZ_+} (-1)^l \;\; \Lambda^{n+l} \hhh \;\; t^{kl}  }{\sum_{l \in \bZ_+} (-1)^{l} \;\; \Lambda^l \hhh \;\; t^l}
\end{align*}

and in particular 
\begin{align*}
 ch L(\emptyset) =  t^{-\frac{(\nu-1)(k-1)}{2}} \cdot \frac{ \sum_{l \in \bZ_+} (-1)^l  \Lambda^l \hhh t^{kl}}{\sum_{l \in \bZ_+} (-1)^{l}\Lambda^l \hhh t^l} = t^{-\frac{(\nu-1)(k-1)}{2}} \cdot \frac{\mathbf{ch}_t S\hhh}{\mathbf{ch}_{t^k} S\hhh}
\end{align*}

(see \cite{BEG} or \cite[Corollary 3.50]{EM} for the corresponding result for $\co(H_c(n))$). 
\end{example}

As in the case when $k=1$, we can also compute the character of $L_{c, \nu}(\emptyset)$ explicitly:


\begin{proposition}\label{prop:char_irr_repr_generic}
 Let $(c,\nu)$ be a generic point on the line $c\nu=k$, where $k \in \bZ_{>0}$ is fixed. Then 
 $$ch_q \Hom_{S_{\nu}}(X_{\mu}, L_{\InnaA{c, \nu}}(\emptyset)) =  \frac{q^{\abs{\mu}} s_{\mu}(1, q, q^2,..., q^{k-2})}{\prod_{2 \leq j \leq k} (1-q^j)}$$
\end{proposition}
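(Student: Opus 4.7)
The proof begins with the character formula of Example \ref{exm_char_cnu_k},
$$ch\,L_{c,\nu}(\emptyset) = t^{h_{c,\nu}(\emptyset)} \cdot \frac{\mathbf{ch}_t S\hhh}{\mathbf{ch}_{t^k} S\hhh}, \qquad h_{c,\nu}(\emptyset) = -\tfrac{(\nu-1)(k-1)}{2}.$$
Since $h_{c,\nu}(\emptyset)$ is precisely the $\brh$-eigenvalue on the lowest weight $X_{\emptyset}$ of $L_{c,\nu}(\emptyset)$, the prefactor $t^{h_{c,\nu}(\emptyset)}$ only translates between the $\brh$-grading and the natural $\bZ_{+}$-grading. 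The proposition therefore reduces to computing the coefficient of $X_\mu$ in $\mathbf{ch}_q S\hhh / \mathbf{ch}_{q^k} S\hhh$, with $q$ being the natural-grading variable.

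My plan is to use the resolution from Subsection \ref{sssec:one_sing_subobj_char},
$$\cdots \to M(\pi^l) \to M(\pi^{l-1}) \to \cdots \to M(\pi^{1}) \to M(\emptyset) \to L(\emptyset) \to 0.$$
Since $\Hom_{S_\nu}(X_\mu, -)$ is exact on $Rep(S_\nu)$ (which is semisimple for $\nu \notin \bZ_{+}$), Euler's formula combined with Corollary \ref{cor:char_hom_simple_to_Verma} yields
$$ch_q \Hom_{S_\nu}(X_\mu, L_{c,\nu}(\emptyset)) = (1-q)\sum_{l \geq 0,\,\lambda}(-1)^{l}\,q^{kl}\,\overline{\gamma}^{\lambda}_{\pi^{l}, \mu}\,\overline{s}_\lambda(1, q, q^2, \ldots),$$
and the problem reduces to collapsing this alternating double sum.

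The key ingredient is the factorization of the alphabet $(1, q, q^2, \ldots)$ as the disjoint union $y \sqcup xy$, with $y = (1, q^k, q^{2k}, \ldots)$ and $x = (q, q^2, \ldots, q^{k-1})$, combined with the classical Schur--Kronecker identity $s_\lambda(xy) = \sum_{\nu,\tau}\gamma^{\lambda}_{\nu,\tau}s_\nu(x)s_\tau(y)$ (for partitions of common size, with $\gamma$ the ordinary Kronecker coefficient) together with the stability relation between $\overline{\gamma}$ and $\gamma$ in the large-$n$ limit (cf. the proof of Lemma \ref{lem:char_classical_verma}). Expanding $s_\lambda(1,q,q^2,\ldots)$ across the disjoint-union split and applying the Schur--Kronecker identity, one reorganizes the double sum so that the ``$x$-part'' contributes precisely $q^{|\mu|}\,s_\mu(1,q,\ldots,q^{k-2})$, while the alternating sum over $l$ arising from the ``$y$-part'' collapses via a classical Euler-type $q$-identity. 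In the baseline case $\mu=\emptyset$ this is
$$\sum_{l \geq 0}(-1)^l \frac{q^{kl+\binom{l+1}{2}}}{\prod_{m=1}^l(1-q^m)} = \prod_{j \geq k+1}(1-q^j),$$
a direct consequence of Euler's identity $\prod_{j \geq 0}(1-zq^j) = \sum_{n \geq 0}(-1)^n z^n q^{\binom{n}{2}}/\prod_{m=1}^n(1-q^m)$ applied at $z=q^{k+1}$; the surviving factor $\prod_{j>k}(1-q^j)$ then cancels against the $1/\prod_{j\geq 2}(1-q^j)$ inside $\overline{s}_\lambda$, leaving exactly $1/\prod_{j=2}^{k}(1-q^j)$ as required.

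The main obstacle is executing this collapse cleanly for general $\mu$: one must carefully track how the reduced Kronecker coefficients $\overline{\gamma}^{\lambda}_{\pi^l,\mu}$ interact with the alphabet split in combination with the Kronecker-product identity, and must verify both that the Euler-type telescoping continues to apply and that no extraneous factors survive the stabilization $n \to \infty$. An alternative route, which bypasses this combinatorial bookkeeping, is to reduce to the classical case via the functor $\mathcal{F}_{c,n}$ of Section \ref{sec:functor_int_case} for $n\gg 0$ with $\Gcd(k,n)=1$ and $c=k/n$, and invoke the known $S_n$-equivariant character of the spherical module $L_{k/n,n}((n))$ from Berest--Etingof--Ginzburg \cite{BEG}; then the desired formula follows by taking the $q$-adic stable limit.
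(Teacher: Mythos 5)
Your setup is correct and identical to the paper's: you use the resolution by $M(\pi^l)$, Euler's formula, and Corollary~\ref{cor:char_hom_simple_to_Verma} to reduce the problem to collapsing the alternating double sum $(1-q)\sum_{l,\lambda}(-1)^l q^{kl}\,\overline{\gamma}^{\lambda}_{\pi^l,\mu}\,\overline{s}_\lambda(1,q,q^2,\ldots)$. Your $\mu=\emptyset$ baseline check via Euler's identity is also correct. But your primary route --- splitting the alphabet $(1,q,q^2,\ldots)$ as $y\sqcup xy$ and invoking the Schur--Kronecker product identity $s_\lambda(xy)=\sum\gamma^\lambda_{\nu,\tau}s_\nu(x)s_\tau(y)$ --- is not actually carried out, and you acknowledge as much. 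There is a genuine gap here: that identity requires all three partitions to have the same size, you would need to combine it with the skew expansion $s_\lambda(A\sqcup B)=\sum_\sigma s_{\lambda/\sigma}(A)s_\sigma(B)$, and the interaction with the \emph{reduced} Kronecker coefficients $\overline{\gamma}^{\lambda}_{\pi^l,\mu}$ (which involve padding by a long top row) is exactly where the bookkeeping gets delicate. As written, it is an idea, not a proof.

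The paper's actual proof takes a different tack that avoids all of this. It rewrites the double sum as the $q$-adic limit (as $n\to\infty$) of a finite-rank expression $P_n(q)$ indexed by $\lambda\vdash n$, and then evaluates $P_n(q)$ directly via the Frobenius characteristic: writing the Kronecker coefficient as a class-function sum $\frac{1}{n!}\sum_{w\in S_n}\chi^{\Lambda^l\hhh}(w)\chi^{\tilde\mu(n)}(w)\chi^\lambda(w)$, the Koszul complex for $\hhh$ gives $\sum_{l\ge 0}(-1)^l q^{kl}\chi^{\Lambda^l\hhh}(w)=\frac{1}{1-q^k}\prod_j(1-q^{k\rho(w)_j})$, while the Cauchy-type specialization gives $\sum_{\lambda\vdash n}\chi^\lambda(w)s_\lambda(1,q,\ldots)=\prod_j\frac{1}{1-q^{\rho(w)_j}}$. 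Multiplying, the per-cycle factors become $1+q^{\rho_j}+\cdots+q^{(k-1)\rho_j}$, which is precisely how the specialization $s_{\tilde\mu(n)}(1,q,\ldots,q^{k-1})$ emerges --- no alphabet splitting or Kronecker-product gymnastics required. Your second, alternative route (push down via $\mathcal{F}_{c,n}$ and cite the classical $S_n$-equivariant character from \cite{BEG}) is much closer in spirit to what the paper does --- both pass to the integer case and take a $q$-adic limit --- except the paper derives the classical formula from scratch rather than citing it, which is cleaner given that the cited sources do not state the required $S_n$-equivariant version in exactly this form. If you want a complete proof, I would drop the alphabet-splitting route entirely and develop either the class-function computation of $P_n(q)$ as in the paper, or the reduction to a precise classical citation with the $q$-adic limit made explicit.
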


\begin{proof}
From the exact sequence 
\begin{align*}
 &... \longrightarrow M( \pi^l) \longrightarrow M(\pi^{l-1}) \longrightarrow  ...
 \longrightarrow M(\pi^1 ) \longrightarrow M(\emptyset) \longrightarrow L(\emptyset) \longrightarrow 0
\end{align*} 
and the character formula Corollary \ref{cor:char_hom_simple_to_Verma} for $\Hom_{\InnaE{\underline{\mathrm{Rep}}}(S_{\nu})}(X_{\mu}, S\hhh \otimes X_{\pi^l})$, we have: 
$$ch_q \Hom_{S_{\nu}}(X_{\mu}, L(\emptyset)) = (1-q)\sum_{l \geq 0} \sum_{\lambda \InnaE{\in \mathcal{P}}} (-1)^l q^{kl} \overline{\gamma}^{\lambda}_{\Lambda^l \hhh, \mu} \frac{q^{\abs{\lambda}} s_{\lambda}(1, q, q^2,...)}{\prod_{j \geq 1} (1-q^j)}$$

One immediately sees that this is the $q$-adic limit, as $n$ tends to infinity, of the sequence of formal power series 
$$ P_n(q) := (1-q)\sum_{l \geq 0} \sum_{\lambda \vdash n} (-1)^l q^{kl} \gamma^{\lambda}_{\Lambda^l \hhh, \tilde{\mu}(n)}  s_{\lambda}(1, q, q^2,...)$$ where $\hhh$ is the reflection representation of $S_n$.

We now give a formula for $P_n(q)$. First, recall that (see \cite[Chapter I, Par. 7]{Mac}) $$\gamma^{\lambda}_{\Lambda^l \hhh, \tilde{\mu}(n)}  = \frac{1}{n!}\sum_{w \in S_n} \chi^{\Lambda^l \hhh}(w) \chi^{\tilde{\mu}(n)}(w) \chi^{\lambda}(w)$$
(here $\chi^{\beta}(w) = tr\mid_{\beta}(w)$ is the value at $w$ of the character of $S_n$ corresponding to the irreducible representation $\beta$). 

Now, from the exact sequence 
\InnaB{
$$ 0 \rightarrow S\hhh \otimes \Lambda^{n-1} \hhh \longrightarrow...\longrightarrow S\hhh \otimes \Lambda^m \hhh \longrightarrow  ...\longrightarrow S\hhh  \otimes \hhh \longrightarrow S\hhh \longrightarrow \bC \rightarrow 0$$}
we have:

\begin{align*}
&\sum_{l \geq 0} (-1)^l q^{kl} \chi^{\Lambda^l \hhh}(w) = \frac{1}{\sum_{l \geq 0} q^{kl} tr \mid_{S^l \hhh} (w)} =...\\
&...= \frac{1}{\sum_{l \geq 0} q^{kl} (1-q^k) tr \mid_{S^l \fh} (w)}= \frac{1}{1-q^k} \prod_{1 \leq j \leq \ell(\rho(w))} \left(1-q^{k\rho(w)_j}\right)
\end{align*}
(here $\rho(w)$ is the cycle type of $w$).

On the other hand, we have \InnaA{(see e.g. \cite[Par. 1, (7.7), (7.8), p. 114]{Mac})}:
$$ \sum_{\lambda \vdash n} \chi^{\lambda}(w) s_{\lambda}(1, q, q^2,...) = \prod_{1 \leq j \leq \ell(\rho(w))} \frac{1}{1-q^{\rho(w)_j}}$$

Thus we obtain:

\begin{align*}
&P_n(q) = (1-q)\sum_{l \geq 0} \sum_{\lambda \vdash n} (-1)^l q^{kl} \gamma^{\lambda}_{\Lambda^l \hhh, \tilde{\mu}(n)} s_{\lambda}(1, q, q^2,...) =\\
& = \frac{1-q}{1-q^k} \frac{1}{n!}\sum_{w \in S_n} \chi^{\tilde{\mu}(n)}(w) \prod_{1 \leq j \leq \ell(\rho(w))}  \frac{1-q^{k\rho(w)_j}}{1-q^{\rho(w)_j}} =\\
& = \frac{1-q}{1-q^k} \frac{1}{n!}\sum_{w \in S_n} \chi^{\tilde{\mu}(n)}(w) \prod_{1 \leq j \leq \ell(\rho(w))}  \left(1+q^{\InnaA{\rho(w)_j}}+q^{\InnaA{2\rho(w)_j}}+...+q^{k\rho(w)_j}\right)  = \frac{1-q}{1-q^k} s_{\tilde{\mu}(n)}(1,q,..., q^{k-1})
\end{align*}

Taking the $q$-adic limit when $n \rightarrow \infty$, we obtain: the $q$-adic limit of $s_{\tilde{\mu}(n)}(1,q,..., q^{k-1})$ is $\frac{q^{\abs{\mu}} s_{\mu}(1, q, q^2,..., q^{k-2})}{\prod_{1 \leq j \leq k-1} (1-q^j)}$, and thus
$$ch_q \Hom_{S_{\nu}}(X_{\mu}, L(\emptyset)) =  \frac{q^{\abs{\mu}} s_{\mu}(1, q, q^2,..., q^{k-2})}{\prod_{2 \leq j \leq k} (1-q^j)}$$

\end{proof}
\begin{remark}
Note that $s_{\mu}(1, q, q^2,..., q^{k-2}) =0$ if $l(\mu) \geq k$, so $\Hom_{S_{\nu}}(X_{\mu}, L(\emptyset)) =0$ whenever $l(\mu) \geq k$.
\end{remark}

\section{Length of Verma objects}\label{sec:length_of_Verma_obj}

In this section, we will discuss the set of points $(c,\nu)$ such that the Verma object $M_{c, \nu}(\T)$ is of finite length. We will prove the following theorem:

\begin{theorem}\label{thrm:length_of_Verma_obj}
For any $c \notin \bQ_{<0}$ \InnaB{and any Young diagram $\T$}, the Verma object $M_{c, \nu}(\T)$ is of finite length.
\end{theorem}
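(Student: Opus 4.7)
The plan is to argue by contradiction: assume $M_{c,\nu}(\T)$ has infinite length and deduce $c\in\bQ_{<0}$. First I discard the case $c=0$, in which every Verma object is simple (as remarked after Proposition \ref{main_for_prop}). Infinite length yields infinitely many pairwise non-isomorphic composition factors $L_{c,\nu}(\lambda_i)$; the $\brh$-action forces each lowest weight $X_{\lambda_i}$ to sit in a unique grade $m_i\in\bZ_{\ge 0}$ of $M_{c,\nu}(\T)$, and Proposition \ref{main_for_prop} gives
\[
 f(\T)-f(\lambda_i)\;=\;c' m_i+(|\T|-|\lambda_i|)\nu,\qquad m_i\ge\bigl||\lambda_i|-|\T|\bigr|.
\]
Since only finitely many Young diagrams have any bounded size, I may pass to a subsequence with $|\lambda_i|\to\infty$, and after a further pass assume $|\lambda_i|>|\T|$ for all $i$.

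The technical heart of the proof is the super-linear lower bound on $m_i$ promised in the introduction. Because $X_{\lambda_i}$ is a constituent of $S^{m_i}\hhh\otimes X_\T$, iterating Pieri's rule (Proposition \ref{Pieri}) shows that $\lambda_i$ is reachable from $\T$ by $m_i$ add/remove/move operations, each of which changes the length of the first row and of the first column by at most one. Hence $\lambda_{i,1}\le \T_1+m_i$ and $\lambda_i{}^{\check{}}{}_1\le \T^{\check{}}{}_1+m_i$, so $|\lambda_i|\le (\T_1+m_i)(\T^{\check{}}{}_1+m_i)$, giving $m_i=\Omega(\sqrt{|\lambda_i|})\to\infty$.

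Next I reduce to genuine morphisms of Verma objects. For each composition factor $L_{c,\nu}(\lambda_i)$ one obtains, via a standard inductive lifting argument through the composition series combined with Proposition \ref{functoriality_prop_Verma_obj}, a non-zero morphism $M_{c,\nu}(\lambda_i)\to M_{c,\nu}(\T)$, placing $(c',\nu)\in\mathcal{B}_{\lambda_i,\T}\cap\mathcal{L}_{\T,\lambda_i,m_i}$. By the dichotomy described in Subsection \ref{ssec:prop_lines_L_tau_mu_m}, this intersection is either the whole line $\mathcal{L}_{\T,\lambda_i,m_i}$ (in which case Theorem \ref{trm:L_tau_mu_m_lies_in_B} forces $\lambda_i=\Gamma(\T,s_i,+1)$ with $s_i\in C_\T$ and $(|\lambda_i|-|\T|)\mid m_i$) or a finite subset whose points have rational coordinates. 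An infinite family of distinct $\lambda_i$ sharing a common $(c',\nu)$ then splits into two cases: in the finite-intersection case, rationality of the point is immediate, and in the whole-line case, Equation \eqref{main_for_rewritten} reduces to $\nu-s_i=r_ic'$ with $r_i=m_i/(|\lambda_i|-|\T|)\in\bZ_{>0}$ and $s_i\in C_\T\subset\bZ_{\ge 0}$; two distinct such triples immediately yield $c'=(s_1-s_2)/(r_2-r_1)\in\bQ$.

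Finally, I conclude the sign. Writing $c'=q/p$ with $p,q\in\bZ$, $p>0$, $\gcd(p,q)=1$, the integer solutions of $p\nu-ps_i-qr_i=0$ with $r_i>0$ form a one-parameter family $s_i=s_0-qt$, $r_i=r_0+pt$ for $t\in\bZ$ ranging in an infinite cone. If $q>0$ (i.e.\ $c>0$), then $s_i\to-\infty$ as $t\to\infty$, which contradicts $s_i\in C_\T\subset\bZ_{\ge 0}$; hence $q<0$, i.e.\ $c\in\bQ_{<0}$. The hard step is the third paragraph: lifting composition factors to genuine Verma-to-Verma morphisms and excluding the ``isolated-point'' alternative for infinitely many $\lambda_i$. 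Here the quadratic Pieri bound $m_i=\Omega(\sqrt{|\lambda_i|})$ is essential, because it allows us to control how $(c',\nu)$ can be simultaneously an isolated intersection for infinitely many distinct $(\lambda_i,m_i)$: such accumulation at a single rational point is ruled out by the growth rate of $m_i$ against the values $f(\lambda_i)\in[0,|\lambda_i|^2-|\lambda_i|]$ permitted by Equation \eqref{main_for_rewritten}.
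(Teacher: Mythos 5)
Your proposal has several substantial gaps, and it departs from the paper's proof in a way that introduces more difficulties than it resolves.

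\textbf{Gap 1 (the lifting argument).} In your third paragraph you assert that every composition factor $L_{c,\nu}(\lambda_i)$ of $M_{c,\nu}(\T)$ gives rise, via a ``standard inductive lifting argument,'' to a non-zero morphism $M_{c,\nu}(\lambda_i)\to M_{c,\nu}(\T)$. This is not a standard fact. Proposition \ref{functoriality_prop_Verma_obj} only produces a map $M(\lambda_i)\to U$ when $X_{\lambda_i}$ is singular in $U$; for a composition factor one only gets $X_{\lambda_i}$ singular in some \emph{subquotient} of $M(\T)$, and there is no a priori way to lift the resulting map $M(\lambda_i)\to$ subquotient to a map into $M(\T)$. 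The analogue of Verma's/BGG's theorem (composition factor $\Rightarrow$ Verma morphism) for the Cherednik category $\mathcal{O}$ is not established in this paper and is not an elementary consequence of anything cited. The paper entirely sidesteps this: it uses only that $L(\mu^j)$ being a composition factor forces $X_{\mu^j}$ to appear in some degree $m^j$ of $M(\T)$ \emph{as a $Rep(S_\nu)$-subobject}, with $h_{c,\nu}(\mu^j)=h_{c,\nu}(\T)+m^j$, which gives $(1/c,\nu)\in\mathcal{L}_{\T,\mu^j,m^j}$ without ever invoking $\mathcal{B}_{\mu^j,\T}$.

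\textbf{Gap 2 (isolated points).} Your treatment of the ``finite-intersection'' case is not justified. You assert ``rationality of the point is immediate,'' but $\mathcal{B}_{\lambda_i,\T}\cap\mathcal{L}_{\T,\lambda_i,m_i}$, when finite, is just a finite algebraic subset of a line over $\bC$ and need not consist of rational points. The closing sentence invoking ``growth rate of $m_i$ against $f(\lambda_i)$'' is not an argument; it is precisely where the quantitative content of the proof should go, and it is missing.

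\textbf{Gap 3 (the Pieri bound).} Your bound $m_i=\Omega(\sqrt{|\lambda_i|})$ is correct but strictly weaker than the trivial Pieri bound $m_i\ge |\lambda_i|-|\T|$ (each step adds at most one cell), and much weaker than what is actually needed. The paper proves and uses Proposition \ref{prop:least_grade_mu_in_verma_obj}, namely $m\geq \sum_{k\geq 1}\mu_k k - l(\mu)|\T| - \tfrac{3}{2}|\T|^2-\tfrac{|\T|}{2}$, which for column-like $\mu$ is quadratic in $|\mu|$. This sharpness is essential: in the $c\in\bR_{>0}$ case, for $\mu=\pi^n$ the right-hand side of \eqref{main_for_rewritten} is only $O(n)$, so a linear or sub-linear lower bound on $m$ does not give a contradiction, while the paper's quadratic lower bound does.

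In short, the paper does not pass through Verma-to-Verma morphisms or the geometry of $\mathcal{B}_{\mu,\T}$ at all. Instead it treats $c\in\bR_{>0}$ and $c\notin\bQ$ separately, in each case combining the equality of Proposition \ref{main_for_prop} with the inequality of Proposition \ref{prop:least_grade_mu_in_verma_obj} to derive a quadratic inequality in $|\mu|$ that fails for all but finitely many $\mu$. You would need to either prove your lifting claim, repair the isolated-point analysis, and sharpen the Pieri bound, or switch to the paper's purely numerical approach.
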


 By Subsection \ref{ssec:morphs_between_2_objects}, $M_{c, \nu}(\T)$ has infinite length whenever $(c, \nu)$ is the intersection point of infinitely many curves of form $\frac{1}{c} = \frac{\nu -s}{r}$ where $s \in C_{\T}, r\in \bZ \setminus \{0\}$. 

 For instance, when $\T =\emptyset$, we have:
 
 \begin{lemma}
  For any $c \in \bQ_{<0}$ there exists $\nu \in \bQ$ such that the Verma object $M_{c, \nu}(\emptyset)$ has infinite length.
 \end{lemma}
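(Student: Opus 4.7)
The plan is to pick a rational $\nu$ depending on $c$ which lies on infinitely many reducibility lines $\mathcal{L}_{\emptyset, \mu, m}$ of $M_{c,\nu}(\emptyset)$ at once, and then extract infinitely many distinct composition factors from the resulting maps.

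First I will write $c = -q/p$ with $p, q \in \bZ_{>0}$ coprime, so that $c' = 1/c = -p/q$. For $\T = \emptyset$ we have $\abs{\T} = 0$ and $\T\check{}_j = 0$ for all $j \geq 1$, hence Definition \ref{s_core} gives $C_{\emptyset} = \bZ_{\geq 0}$, while Example \ref{empty_diagr_core} gives $\Gamma(\emptyset, s, 1) = \tau^s$ (a single row of length $s+1$). The first proposition of Subsection \ref{ssec:morphs_between_2_objects} then guarantees a non-trivial morphism $M_{c,\nu}(\tau^s) \to M_{c,\nu}(\emptyset)$ whenever $s \in \bZ_{\geq 0}$, $r \in \bZ_{>0}$, and $c' = (\nu - s)/r$.

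I would then take $\nu := -1 \in \bQ \setminus \bZ_+$ and, for each $k \in \bZ_{>0}$, set $s_k := pk - 1$ and $r_k := qk$. A direct check gives
$$\frac{\nu - s_k}{r_k} \;=\; \frac{-1 - (pk-1)}{qk} \;=\; -\frac{p}{q} \;=\; c',$$
so the cited proposition produces a non-trivial morphism $\phi_k : M_{c,\nu}(\tau^{s_k}) \to M_{c,\nu}(\emptyset)$ for every $k \geq 1$. Since $p \geq 1$, all the $s_k$ are non-negative integers, and they are pairwise distinct.

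To conclude, I will observe that $\Im(\phi_k)$ is a non-zero quotient of $M_{c,\nu}(\tau^{s_k})$, which has $L_{c,\nu}(\tau^{s_k})$ as its unique simple quotient; hence $L_{c,\nu}(\tau^{s_k})$ appears as a composition factor of $\Im(\phi_k) \subseteq M_{c,\nu}(\emptyset)$ for every $k$. The partitions $\tau^{s_k}$ being pairwise distinct, the simple objects $L_{c,\nu}(\tau^{s_k})$ are pairwise non-isomorphic, so $M_{c,\nu}(\emptyset)$ has infinitely many composition factors and therefore infinite length. The only real content here is the combinatorial choice of $\nu$ that aligns infinitely many reducibility lines simultaneously; once $\nu = -1$ is in hand, everything else follows directly from results already established in Subsection \ref{ssec:morphs_between_2_objects}.
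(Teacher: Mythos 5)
Your proposal is correct and follows essentially the same strategy as the paper: choose $\nu \in \bQ$ so that the point $(1/c,\nu)$ lies on infinitely many of the lines $\mathcal{L}_{\emptyset,\mu,m}$, invoke the sufficient condition from Subsection \ref{ssec:morphs_between_2_objects} to produce morphisms $M_{c,\nu}(\tau^{s}) \to M_{c,\nu}(\emptyset)$ for infinitely many $s \in C_\emptyset$, and observe that the images contribute pairwise non-isomorphic simple subquotients $L_{c,\nu}(\tau^{s})$. The only cosmetic difference is that you fix the single witness $\nu=-1$ (which indeed satisfies the paper's divisibility conditions $den(\nu)\mid num(c)$ and $den(c)\mid(num(\nu)+den(\nu))$ for every $c\in\bQ_{<0}$), whereas the paper characterizes the full family of admissible $\nu$.
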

 \begin{proof}
 Indeed, for any $c \in \bQ_{<0}$ and any $\nu \in \bQ$ such that $den(\nu) \mid num(c),  den(c) \mid (num(\nu) +den(\nu)) $ (i.e. $num(\nu)den(\nu)^{-1} \equiv -1 \mod den(c)$), we have: $$\{s \in \bZ_+ \mid c(\nu-s) \in \bZ_{>0}\}= \{s \in \bZ_+ \mid s \equiv -1 \mod den(c), s>\nu  \}$$ 
 Now, recall that $C_{\emptyset} = \bZ_+$. So any $s$ such that $s \equiv -1 \mod den(c), s>\nu$ gives us a non-trivial morphism $M_{c, \nu}(\T^s) \longrightarrow M_{c, \nu}(\emptyset)$. The image of each of these morphisms has a simple object $L_{c, \nu}(\T^s)$ as a quotient, and since all the weights $\T^s$ are different, each of these simple objects $L_{c, \nu}(\T^s)$ contributes to the length of $M_{c, \nu}(\emptyset)$. This proves that $M_{c, \nu}(\emptyset)$ has infinite length.
 \end{proof}
 
The proof consists of two parts: proving that whenever $c \InnaB{\in \bR_{>0}}$, the Verma object $M_{c, \nu}(\T)$ is of finite length (we will prove this statement in Subsection \ref{ssec:length_of_Verma_obj_pos_c}), and proving that whenever $c\notin \bQ$, the Verma object $M_{c, \nu}(\T)$ is of finite length, too (we will prove that in Subsection \ref{ssec:length_of_Verma_obj_irrat_c}).

\begin{remark}
 In the classical case, all modules in $\co(H_c(n))$ have finite length. See \cite[Corollary 3.26]{EM}.
\end{remark}

\subsection{Bounds on the graded space \texorpdfstring{$Hom_{\InnaE{\underline{\mathrm{Rep}}}(S_{\nu})}(X_\mu, S\hhh \otimes X_\T)$}{Hom(X_mu, M(Tau))}}\label{ssec:min_degr_obj_in_Verma}
In this subsection, we give a bound on the least degree in which a simple $\InnaE{\underline{\mathrm{Rep}}}(S_{\nu})$ object $X_{\mu}$ can lie in the $\InnaE{\underline{\mathrm{Rep}}}(S_{\nu})$ \InnaE{\rm{ind}}-object $S\hhh \otimes X_{\T}$. 

Recall that we have the following corollary of Lemma \ref{lem:char_classical_verma}. 

\begin{corollary}
 For $\T =\emptyset$, we have: $$ch_q(\Hom_{S_n}(\tilde{\mu}(n), \bC[x_1,...,x_n])) =  s_{\tilde{\mu}(n)}(1, q, q^2, ...)$$
\end{corollary}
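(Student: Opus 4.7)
The plan is to specialize Lemma \ref{lem:char_classical_verma} to $\T = \emptyset$. First I would identify $\tilde{\emptyset}(n)$: by the definition of the tilde operation (adding a top row of size $n - |\T|$), we have $\tilde{\emptyset}(n) = (n)$, the one-row partition, which parametrizes the trivial representation of $S_n$. This immediately gives $\bC[x_1,\ldots,x_n] \otimes \tilde{\emptyset}(n) \cong \bC[x_1,\ldots,x_n]$ as graded $S_n$-modules, so the left-hand side of Lemma \ref{lem:char_classical_verma} evaluated at $\T = \emptyset$ coincides with the left-hand side of the corollary.

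Second, I would simplify the internal product $s_{\tilde{\emptyset}(n)} \ast s_{\tilde{\mu}(n)} = s_{(n)} \ast s_{\tilde{\mu}(n)}$ appearing on the right-hand side of Lemma \ref{lem:char_classical_verma}. Since the internal (Kronecker) product of Schur functions corresponds under the characteristic map to pointwise multiplication of $S_n$-characters, and since $s_{(n)}$ is the image of the trivial character $\chi^{(n)} \equiv 1$, we obtain $s_{(n)} \ast s_{\tilde{\mu}(n)} = s_{\tilde{\mu}(n)}$. Substituting both simplifications into the lemma yields the stated formula. There is no real obstacle here — the corollary is a direct specialization of Lemma \ref{lem:char_classical_verma} in the case where one factor becomes the trivial representation.
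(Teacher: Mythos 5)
Your proof is correct and is exactly the specialization the paper has in mind: the paper states this result without comment as a direct corollary of Lemma \ref{lem:char_classical_verma}, and your observation that $\tilde{\emptyset}(n) = (n)$ is the trivial representation (so tensoring with it is the identity, and internal product with $s_{(n)}$ is the identity on Schur functions) is precisely the implicit reasoning.
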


By the definition of a Schur symmetric function, $s_{\tilde{\mu}(n)}(1, q, q^2, ...)$ is divisible by $q^{\sum_k \mu_k k}$. So for any $n>>0$, the minimal degree of $S\fh = \bC[x_1,...,x_n]$ in which $\tilde{\mu}(n)$ appears is greater than or equal $\sum_k \mu_k k$. By the description of Deligne's category $\InnaE{\underline{\mathrm{Rep}}}(S_{\nu})$, this implies:

\begin{corollary}\label{least_grade_mu_in_poly_repr}
 For any $\nu$, we have: the minimal degree of $S\fh$ (and thus of $S\hhh$ as well) in which $X_{\mu}$ appears is greater than or equal \InnaE{to} $\sum_k \mu_k k$.
\end{corollary}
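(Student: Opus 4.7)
\textbf{Proof plan for Corollary \ref{least_grade_mu_in_poly_repr}.}

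The plan is to combine the character formula proved in the previous corollary (actually the unnamed corollary just above stating $ch_q(\Hom_{S_n}(\tilde{\mu}(n), \bC[x_1,\dots,x_n])) = s_{\tilde{\mu}(n)}(1,q,q^2,\dots)$) with the classical Frobenius-type product formula for a principally specialized Schur function, and then transport the bound to $Rep(S_\nu)$ via the standard Deligne stability argument.

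First I would recall that, by \cite[Chapter I, Par.~5, Example~1]{Mac} (already used in the proof of Corollary \ref{cor:char_hom_simple_to_Verma}),
\[
 s_{\lambda}(1,q,q^2,\dots) \;=\; \frac{q^{\sum_{i\geq 1}(i-1)\lambda_i}}{\prod_{x\in\lambda}(1-q^{h(x)})}.
\]
Since every factor $(1-q^{h(x)})^{-1}$ is a power series with constant term $1$, the minimal power of $q$ with nonzero coefficient in $s_{\lambda}(1,q,q^2,\dots)$ is exactly $\sum_{i\geq 1}(i-1)\lambda_i$. Specializing to $\lambda=\tilde{\mu}(n)$, whose parts are $n-|\mu|,\mu_1,\mu_2,\dots$, gives
\[
 \sum_{i\geq 1}(i-1)\tilde{\mu}(n)_i \;=\; 0\cdot(n-|\mu|)+\sum_{k\geq 1}k\,\mu_k \;=\; \sum_{k\geq 1}k\,\mu_k,
\]
which is independent of $n$. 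Hence for every $n\gg 0$ and every $m<\sum_k k\mu_k$ the multiplicity $\dim\Hom_{S_n}(\tilde{\mu}(n),S^m\fh)=0$.

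Next, I would invoke the standard stability property of Deligne's category recalled in Section \ref{sec:Del_cat} (and used already in the proof of Corollary \ref{cor:char_hom_simple_to_Verma}): for fixed $\mu$ and $m$, the dimension $\dim\Hom_{Rep(S_\nu)}(X_\mu,S^m\fh)$ is given, for $\nu\notin\bZ_+$ and in fact for any $\nu\in A$, by its value at $\nu=n$ for $n\gg 0$. Combining this with the previous paragraph yields $\dim\Hom_{Rep(S_\nu)}(X_\mu,S^m\fh)=0$ whenever $m<\sum_k k\mu_k$, which is the desired bound for $S\fh$.

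Finally, to pass from $S\fh$ to $S\hhh$, I would use the decomposition $\fh\cong\hhh\oplus\triv$, valid for $\nu\ne 0$ (and in particular for $\nu\notin\bZ_+$, the running assumption of the section). This induces an isomorphism of graded ind-objects $S\fh\cong S\hhh\otimes S\triv=\bigoplus_{j\geq 0}S\hhh[\,j\,]$, so that $S^m\fh\cong\bigoplus_{i+j=m}S^i\hhh$. Thus if $X_\mu$ occurred in $S^i\hhh$ for some $i<\sum_k k\mu_k$, it would also occur in $S^i\fh$, contradicting the bound just established. No real obstacle arises: this is a routine packaging of the Schur-function identity, the known shape of principally specialized Schur polynomials, and Deligne's interpolation principle.
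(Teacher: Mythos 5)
Your proposal is correct and follows essentially the same approach as the paper: using the character formula $ch_q(\Hom_{S_n}(\tilde{\mu}(n), \bC[x_1,\dots,x_n])) = s_{\tilde{\mu}(n)}(1,q,q^2,\dots)$, observing that this power series is divisible by $q^{\sum_k k\mu_k}$, and then invoking stability in Deligne's category to transfer the bound to $Rep(S_\nu)$. You are merely more explicit where the paper is terse, spelling out the hook-length form of the principal specialization to identify the exact leading power $n(\tilde{\mu}(n)) = \sum_k k\mu_k$, and justifying the passage from $S\fh$ to $S\hhh$ via $\fh \cong \hhh \oplus \triv$, which the paper treats as self-evident.
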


For the convenience of the reader, we will use the following notation:
\begin{notation}
 \InnaE{For a Young diagram $\mu$, the sum $\sum_k \mu_k k$ will be denoted by $n(\mu)$ (c.f. \cite{Mac}).}
\end{notation}

\InnaE{The function $n(\cdot): \mathcal{P} \rightarrow \bZ_+$ has the following trivial property (see also \cite[Chapter I]{Mac}):
\begin{lemma}\label{lem:func_n_max_min}
 For any Young diagram $\mu$, we have:
 $$\abs{\mu} \leq n(\mu) \leq \frac{\abs{\mu}(\abs{\mu}+1)}{2}$$
\end{lemma}
\begin{proof}
 It is easy to see that when we consider the function $n(\cdot)$ on the set of Young diagrams of fixed size $k$, the maximum
  is obtained for the Young diagram $\pi^k$ (a column of length $k$), and it is $n(\pi^k) = \frac{\abs{k}(\abs{k}+1)}{2}$, while the minimum is obtained for $\tau^{k-1}$ (a row of length $k$), and it is $n(\tau^{k-1})=k$.
\end{proof}
}

We now generalize this result to the following:

\begin{proposition}\label{prop:least_grade_mu_in_verma_obj}
 Consider the $\bZ_+$-graded complex vector space $\Hom_{\InnaE{\underline{\mathrm{Rep}}}(S_{\nu})}(X_{\mu}, S\hhh \otimes X_{\T})$ (the grading inherited from $S \hhh$). Let $m$ be a grade of this space containing a non-zero morphism. Then $$m \geq \InnaE{n(\mu)} - \ell(\mu)\abs{\T} - \frac{\InnaE{\abs{\T}^2 }}{2} -\frac{\abs{\T}}{2}$$
\end{proposition}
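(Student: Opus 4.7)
The plan is to embed $X_{\T}$ as a direct summand of $\hhh^{\otimes \abs{\T}}$ in $Rep(S_{\nu})$, then use tensor-hom adjunction together with iterated Pieri (Proposition \ref{Pieri}) to reduce the problem to understanding the minimal grade in which a simple $X_{\alpha}$ can appear inside $S\hhh$. That minimal grade was computed in Example \ref{ex:char_poly_repr} and equals $\sum_k k\alpha_k$, so everything boils down to a purely combinatorial bound on $\sum_k k\alpha_k$ as $\alpha$ ranges over partitions obtainable from $\mu$ by $\abs{\T}$ successive Pieri moves.

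First I would establish the embedding $X_{\T} \hookrightarrow \hhh^{\otimes \abs{\T}}$ as a direct summand by induction on $\abs{\T}$: any choice of corner-addition path from $\emptyset$ to $\T$ yields the inclusion step by step via Proposition \ref{Pieri}. Consequently, $\Hom_{Rep(S_{\nu})}(X_{\mu}, S^m\hhh \otimes X_{\T})$ is a direct summand of $\Hom_{Rep(S_{\nu})}(X_{\mu}, S^m\hhh \otimes \hhh^{\otimes \abs{\T}})$, and using $\hhh^* \cong \hhh$ this equals $\Hom(X_{\mu} \otimes \hhh^{\otimes \abs{\T}}, S^m\hhh)$. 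Iterated application of Proposition \ref{Pieri} decomposes $X_{\mu} \otimes \hhh^{\otimes \abs{\T}} = \bigoplus_{\alpha} n_{\alpha} X_{\alpha}$, where $\alpha$ ranges over partitions reachable from $\mu$ by a chain $\mu = \alpha^{(0)} \to \alpha^{(1)} \to \cdots \to \alpha^{(\abs{\T})} = \alpha$ in which each transition adds, deletes, or moves a single corner cell, or leaves the diagram unchanged. Non-vanishing of the original Hom space in grade $m$ therefore forces $\Hom(X_{\alpha}, S^m\hhh) \neq 0$ for some such $\alpha$, which by Example \ref{ex:char_poly_repr} requires $m \geq \sum_k k\alpha_k$.

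Now for the combinatorial bound. Set $g(\lambda) := \sum_k k\lambda_k$. A case analysis of the four Pieri moves shows that a single step changes $g$ by $+i$ (add in row $i$), $-i$ (delete in row $i$), $i_2 - i_1$ (move between rows $i_1$ and $i_2$), or $0$ (stay); in particular, a single step decreases $g$ by at most $l(\lambda)$, the current length, with equality attained by deleting the bottom corner. Since each step can increase $l$ by at most $1$ (namely by adding a new bottommost row), we have $l(\alpha^{(k-1)}) \leq l(\mu) + k - 1$, and summing gives that the total decrease of $g$ over $\abs{\T}$ steps is at most
\[\sum_{k=1}^{\abs{\T}} (l(\mu) + k - 1) = l(\mu)\abs{\T} + \binom{\abs{\T}}{2}.\]
Hence $g(\alpha) \geq g(\mu) - l(\mu)\abs{\T} - \binom{\abs{\T}}{2}$, and the stated inequality follows because $\binom{\abs{\T}}{2} \leq \frac{3\abs{\T}^2 + \abs{\T}}{2}$.

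The only genuine subtlety is the joint bookkeeping of how $g$ and $l$ evolve under successive Pieri steps, but the considerable slack in the claimed bound---using $\frac{3\abs{\T}^2 + \abs{\T}}{2}$ in place of the tighter $\binom{\abs{\T}}{2}$---leaves plenty of room for the crude estimate above.
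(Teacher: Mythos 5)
Your proposal is correct and follows essentially the same route as the paper: dualize $\Hom(X_{\mu}, S\hhh \otimes X_{\T}) \cong \Hom(X_{\mu} \otimes X_{\T}, S\hhh)$, embed $X_{\T}$ as a summand of $\hhh^{\otimes\abs{\T}}$, iterate Pieri's rule, and invoke the minimal-grade formula for $S\hhh$ (the paper's Corollary \ref{least_grade_mu_in_poly_repr}). Your step-by-step bookkeeping of $g(\lambda) = \sum_k k\lambda_k$ along the chain $\mu = \alpha^{(0)} \to \cdots \to \alpha^{(\abs{\T})}$ actually yields the tighter constant $l(\mu)\abs{\T} + \binom{\abs{\T}}{2}$ where the paper settles for $l(\mu)\abs{\T} + \abs{\T}^2 + \binom{\abs{\T}+1}{2}$, but both dominate the displayed right-hand side, so the argument is sound.
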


\begin{proof}
 \InnaE{Recall that any object of $\InnaE{\underline{\mathrm{Rep}}}(S_{\nu})$ is isomorphic to its dual, so} $\Hom_{\InnaE{\underline{\mathrm{Rep}}}(S_{\nu})}(X_{\mu}, S\hhh \otimes X_{\T}) \cong \Hom_{\InnaE{\underline{\mathrm{Rep}}}(S_{\nu})}(X_{\mu} \otimes X_{\T}, S\hhh )$ as $\bZ_+$-graded vector spaces. 
 
 \InnaE{Assume $m \geq 0$ is such that $\Hom_{\InnaE{\underline{\mathrm{Rep}}}(S_{\nu})}(X_{\mu} \otimes X_{\T}, S\hhh ) \neq 0$}. This means that $X_{\mu} \otimes X_{\T}$ and $ S^m\hhh$ have a common composition factor. Now, $X_{\T} \subset \hhh^{\otimes \abs{\T}}$, so $X_{\mu} \otimes \hhh^{\otimes \abs{\T}}$ and $S^m\hhh$ have a common composition factor.

A simple subobject $X_{\lambda}$ of $X_{\mu} \otimes \hhh^{\otimes \abs{\T}}$ satisfies the condition arising from Pieri's rule (Proposition \ref{Pieri}):

\begin{cond}\label{cond_2_diag_differ_by_k}
 \InnaE{The Young diagram} $\lambda$ can be obtained from $\mu$ by performing at most $\abs{\T}$ steps, each consisting of either adding a cell, deleting a cell or moving a cell (i.e. deleting a cell and then adding a cell).
\end{cond}


We know that for some \InnaE{Young diagram $\lambda$ satisfying Condition \ref{cond_2_diag_differ_by_k}, we have: $X_{\lambda} \subset S^m\hhh$. Corollary \ref{least_grade_mu_in_poly_repr} then tells us that

$$m \geq \sum_k \lambda_k k$$

We are left with the following problem:\InnaE{

for two Young diagrams $\mu, \lam$ satisfying Condition \ref{cond_2_diag_differ_by_k}, find an upper bound for $\InnaE{n(\mu)} - \InnaE{n(\lambda)}$ in terms of $\abs{\T}, \ell(\mu)$}. 

By the Condition \ref{cond_2_diag_differ_by_k}, the difference $\InnaE{n(\mu)} - \InnaE{n(\lambda)}$ is maximal when $\lambda$ is obtained from $\mu$ by removing a sequence of $\abs{\T}$ boxes starting with bottom row and progressing upwards, from right to left. Namely, the maximum of the difference is obtained for $\lambda:=\lambda^{0}$, where $\lambda^{0}$ is constructed as follows:

let $k_0 \geq 0$ be such that $\sum_{i \geq k_0+1} \mu_i \leq \abs{\T} < \sum_{i \geq k_0} \mu_i$. Then $\lambda^{0}$ is a Young diagram of length $k_0$, defined as
$$\lambda^{0}_i:=\begin{cases} \mu_i & \text{ if } 1 \leq i \leq k_0-1 \\
\mu_{k_0} - \left( \abs{\T}  - \sum_{i \geq k+1} \mu_i \right) &  \text{ if } i=k_0 \\
\end{cases}$$}
\begin{example}
 \InnaE{Let $\abs{\T} = 6$, $\mu = (5,4,4,3,2)$. Then $k_0 = 3$, and $\lambda^{0} = (5,4,3)$:
 $$\mu= \young(\hfil\hfil\hfil\hfil\hfil,\hfil\hfil\hfil\hfil,\hfil\hfil\hfil\circ,\circ\circ\circ,\circ\circ) \mapsto \lambda^{0} = \yng(5,4,3)$$}
\end{example}

\InnaE{We now give an upper bound for $\InnaE{n(\mu)} - n(\lambda^{0})$. Writing it out explicitly, we obtain
\begin{align*}
&\InnaE{n(\mu)} - n(\lambda^{0}) = \sum_{k \geq k_0 + 1} \mu_k k + k_0 \left( \abs{\T}  - \sum_{i \geq k+1} \mu_i \right) = \\
&=\sum_{k \geq k_0 + 1} \mu_k (k - k_0) + k_0 \abs{\T}  = n(\bar{\mu}^{(k_0)}) + k_0 \abs{\T} 
\end{align*}
where $\bar{\mu}^{(k)}:= (\mu_{k_0 +1} , \mu_{k_0 +2}, ...)$ (the Young diagram obtained by removing the first $k_0$ rows of $\mu$). We know that $\abs{\bar{\mu}^{(k)}} \leq \abs{\T}$ and $k_0 \leq \ell(\mu)$, so by Lemma \ref{lem:func_n_max_min}, we have: 
$$ n(\bar{\mu}^{(k_0)}) + k_0 \abs{\T} \leq \frac{\abs{\T} (\abs{\T}+1)}{2} + \ell(\mu) \abs{\T} $$

}

%
%
\InnaE{
Thus for any two Young diagrams $\mu, \lam$ satisfying Condition \ref{cond_2_diag_differ_by_k}, we have: 
$$\InnaE{n(\mu)} - \InnaE{n(\lambda)} \leq l(\mu)\abs{\T}  + \frac{\abs{\T}(\abs{\T}+1)}{2}$$
which implies 
 \begin{align*}
 &m \geq n(\lambda) \geq \InnaE{n(\mu)} -l(\mu)\abs{\T} -  \frac{\abs{\T}(\abs{\T}+1)}{2} 
\end{align*}  }
\end{proof}

\subsection{Length of Verma objects for \texorpdfstring{$c \in \bR_{>0}$}{positive real c}}\label{ssec:length_of_Verma_obj_pos_c}

\begin{proposition}
 For any $c \in \bR_{>0}$, the Verma object $M_{c, \nu}(\T)$ is of finite length.
\end{proposition}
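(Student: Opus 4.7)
My plan is to combine the eigenvalue equation for $\brh$ with the grade lower bound from Proposition \ref{prop:least_grade_mu_in_verma_obj}. Any composition factor of $M_{c,\nu}(\T)$ is of the form $L_{c,\nu}(\mu)$ for some Young diagram $\mu$, and the lowest weight $X_\mu$ of $L_{c,\nu}(\mu)$ must appear as an indecomposable singular $Rep(S_\nu)$-subobject of some subquotient of $M_{c,\nu}(\T) \cong S\hhh \otimes X_\T$. In particular $X_\mu$ sits in some grade $m \in \bZ_{\geq 0}$ of $S\hhh \otimes X_\T$, and the $\brh$-action forces the relation of Equation \eqref{main_for_rewritten}, i.e. $m = cf(\T) - cf(\mu) - c(\abs{\T} - \abs{\mu})\nu$. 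On the other hand, Proposition \ref{prop:least_grade_mu_in_verma_obj} gives $m \geq \sum_{k \geq 1} k\mu_k - l(\mu)\abs{\T} - \tfrac{3}{2}\abs{\T}^2 - \tfrac{\abs{\T}}{2}$. Subtracting these yields the key inequality
$$cf(\mu) + \sum_k k\mu_k - c\nu\abs{\mu} - l(\mu)\abs{\T} \;\leq\; K_{c,\nu,\T},$$
where $K_{c,\nu,\T}$ is a constant depending only on $c,\nu,\T$.

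The heart of the proof will be to show that, for $c \in \bR_{>0}$, the left-hand side of this inequality tends to $+\infty$ as $\abs{\mu} \to \infty$; specifically, I plan to establish a quadratic lower bound $cf(\mu) + \sum_k k\mu_k \geq C \abs{\mu}^2$ for some constant $C > 0$ (depending on $c$) and all sufficiently large $\abs{\mu}$. The linear terms $-c\nu\abs{\mu}$ and $-l(\mu)\abs{\T}$ will then be dominated, so only finitely many $\mu$ can satisfy the displayed inequality. I will split into two cases. If $l(\mu) \leq \abs{\mu}/4$, I will use the identity $ct(\mu) = \tfrac{1}{2}\sum_i \mu_i(\mu_i + 1 - 2i)$ together with the Cauchy--Schwarz bound $\sum_i \mu_i^2 \geq \abs{\mu}^2/l(\mu)$ to deduce $f(\mu) \geq \abs{\mu}^2/4$ for $\abs{\mu}$ large enough; if instead $l(\mu) > \abs{\mu}/4$, the trivial estimate $\sum_k k\mu_k \geq \sum_{k=1}^{l(\mu)} k = l(\mu)(l(\mu)+1)/2$ gives $\sum_k k\mu_k > \abs{\mu}^2/32$.

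Finally, for each of the finitely many admissible $\mu$, the multiplicity of $L_{c,\nu}(\mu)$ as a composition factor of $M_{c,\nu}(\T)$ is bounded by $\dim \Hom_{Rep(S_\nu)}(X_\mu, S^m \hhh \otimes X_\T)$, which is finite. Hence $M_{c,\nu}(\T)$ has finite length.

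The main obstacle I anticipate is establishing the quadratic bound $cf(\mu) + \sum_k k\mu_k \geq C\abs{\mu}^2$, and this is exactly where the positivity of $c$ is essential. For $\mu$ close to a row, $f(\mu)$ grows quadratically while $\sum_k k\mu_k$ is only linear, so $c > 0$ is needed to make the $cf(\mu)$ term count positively; for $\mu$ close to a column, $f(\mu)$ vanishes but $\sum_k k\mu_k$ carries the quadratic growth. The case split on $l(\mu)$ is designed precisely to interpolate between these two regimes.
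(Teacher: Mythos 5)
Your proposal is correct and follows the same overall strategy as the paper's proof: combine the $\brh$-eigenvalue relation of Proposition \ref{main_for_prop} with the grade lower bound of Proposition \ref{prop:least_grade_mu_in_verma_obj} to obtain an inequality on $\mu$, and then show its left-hand side grows quadratically in $\abs{\mu}$, so that only finitely many $\mu$ can satisfy it. The only real divergence is in how the quadratic growth of $cf(\mu) + \sum_k k\mu_k$ is established. The paper rewrites $ct(\mu)$ via column lengths ($ct(\mu)=\sum_k k\mu\check{}_k - \sum_k k\mu_k$) and then splits on the size of $c$: for $0<c<1$ the two residual sums have coefficients of matching sign so $\frac{c}{2}\abs{\mu}^2$ carries the quadratic growth, while for $c\geq 1$ it substitutes the extremal bounds $\sum_k k\mu\check{}_k \geq \abs{\mu}$ and $\sum_k k\mu_k \leq \abs{\mu}(\abs{\mu}+1)/2$ to reach $\tfrac{1}{2}\abs{\mu}^2 + O(\abs{\mu})$. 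You instead rewrite $ct(\mu)$ via row lengths and split on the shape of $\mu$: when $l(\mu) \leq \abs{\mu}/4$ the bound $\sum_i i\mu_i \leq l(\mu)\abs{\mu}$ already gives $f(\mu) \geq \abs{\mu}^2/4$ (the Cauchy--Schwarz term is extra and not strictly needed), while when $l(\mu) > \abs{\mu}/4$ the nonnegativity of $f(\mu)$ (Lemma \ref{lemma_values_f}) together with $\sum_k k\mu_k \geq l(\mu)(l(\mu)+1)/2 > \abs{\mu}^2/32$ suffices. Your shape-based split makes the role of $c>0$ a bit more transparent (it is exactly what keeps the $cf(\mu)$ term useful in the wide-diagram regime, where $\sum_k k\mu_k$ is only linear), and you also spell out, as the paper leaves implicit, that each of the finitely many admissible $\mu$ can occur with only finite multiplicity because $\dim\Hom_{Rep(S_\nu)}(X_\mu, S^m\hhh\otimes X_\T)$ is finite. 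Both arguments are correct; yours is a valid and slightly more conceptual variant of the same proof.
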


\begin{proof}
Fix $c, \nu$ such that $c\in \bR_{>0}$. We need to prove that $M(\T)$ has only finitely many composition factors. \InnaB{Let $\{L(\mu^{j})\}_{j \in J}$ be the composition factors of $M(\T)$.

Since $L(\mu^{j})$ is a composition factor of $M(\T)$, there exists a positive integer $m^j$ such that $h_{c, \nu}(\mu^j) =h_{c,\nu}(\T)\InnaA{+m^j}$. By Proposition \ref{main_for_prop}, to prove that $J$ is a finite set,} it is enough to prove that there exist only finitely \InnaA{many} Young diagrams $\mu$ such that $X_{\mu}$ lies in degree $m$  of $M(\T)$ and $$m = c\nu(\abs{\mu} -\abs{\T}) + c\left(\frac{\abs{\T}^2-\abs{\mu}^2 -(\abs{\T} - \abs{\mu} ) }{2} +ct(\T) - ct(\mu) \right)$$

By Proposition \ref{prop:least_grade_mu_in_verma_obj}, it is, in fact, enough to prove that 
$$c\nu(\abs{\mu} -\abs{\T}) +c\left(\frac{\abs{\T}^2-\abs{\mu}^2 -(\abs{\T} - \abs{\mu} ) }{2} +ct(\T) - ct(\mu) \right) < \InnaE{n(\mu)} - \ell(\mu)\abs{\T} - \frac{\InnaE{\abs{\T}^2 }}{2} -\frac{\abs{\T}}{2} $$

for all but finitely many Young diagrams $\mu$.

Recall that $ct(\mu) = \InnaE{n(\mu\check{})} -\InnaE{n(\mu)}$, so subtracting LHS from RHS in the above expression, \InnaE{we obtain}

\begin{align*}
 &-c\nu\abs{\mu} +c\nu\abs{\T} -cf(\T)+c\frac{\abs{\mu}^2 - \abs{\mu}}{2}  +\InnaF{c \cdot n(\mu\check{})} -\InnaF{c \cdot n(\mu)} +\\
&+\InnaE{n(\mu)} - \ell(\mu)\abs{\T} - \frac{\InnaE{\abs{\T}^2 }}{2} -\frac{\abs{\T}}{2} = c\frac{\abs{\mu}^2 - \abs{\mu}}{2} -\\
&-c\nu\abs{\mu} +\InnaF{c \cdot n(\mu\check{})} -(c-1) \InnaF{\cdot n(\mu)} - \ell(\mu)\abs{\T} - \frac{\InnaE{\abs{\T}^2 }}{2} -\frac{\abs{\T}}{2}+ c\nu\abs{\T} -cf(\T)
\end{align*}

where $f(\T):=\frac{\abs{\T}^2 -\abs{\T}}{2} +ct(\T)$.

We need to show that for all but finitely many Young diagrams $\mu$, \InnaE{the expression below}

$$ c\frac{\abs{\mu}^2 - \abs{\mu}}{2} -c\nu\abs{\mu} +\InnaF{c \cdot n(\mu\check{})} -(c-1) \InnaF{\cdot n(\mu)} - \ell(\mu)\abs{\T} - \frac{\InnaE{\abs{\T}^2 }}{2} -\frac{\abs{\T}}{2}+ c\nu\abs{\T} -cf(\T)$$
\InnaE{ is positive (keep in mind that the parameter $\nu$ and the Young diagram $\T$ remain fixed).}

Since $l(\mu) \leq \abs{\mu}$, it is in fact enough to check that \InnaE{for all Young diagrams $\mu$, the expression}
\begin{equation}\label{eq:finite_length_pos_c}
 \frac{c}{2}\abs{\mu}^2 - \InnaE{\left(\frac{c}{2}+c\nu+\abs{\T} \right)\abs{\mu}} + \InnaF{c \cdot n(\mu\check{})} -(c-1) \InnaF{\cdot n(\mu)}  
\end{equation}
\InnaE{ is bounded below by a polynomial in $\abs{\mu}$ of positive degree, and with a positive leading coefficient}.

We now have to consider two cases separately:

- If $0 < c <1$, then $ \InnaF{c \cdot n(\mu\check{})} -(c-1) \InnaF{\cdot n(\mu)} >0$, and so \InnaE{the expression in Equation \eqref{eq:finite_length_pos_c} is bounded below by the polynomial}
$$\frac{c}{2}\abs{\mu}^2 - \InnaE{\left(\frac{c}{2}+c\nu+\abs{\T} \right)\abs{\mu}}$$

- If $c \geq 1$, then using \InnaE{Lemma \ref{lem:func_n_max_min}, \InnaE{we obtain}:

 \begin{align*}
  &\frac{c}{2}\abs{\mu}^2 - \InnaE{\left(\frac{c}{2}+c\nu+\abs{\T} \right)\abs{\mu}} + \InnaF{c \cdot n(\mu\check{})} -(c-1) \InnaF{\cdot n(\mu)}   \geq\\ 
&\geq \frac{c}{2}\abs{\mu}^2 - \InnaE{\left(\frac{c}{2}+c\nu+\abs{\T} \right)\abs{\mu}}+ c\abs{\mu} - (c-1)\frac{\abs{\mu}(\abs{\mu}+1)}{2} = \\
& =  \frac{1}{2}\abs{\mu}^2 + \left(\frac{1}{2} -c\nu - \abs{\T}\right)\abs{\mu}
 \end{align*}

We conclude that when $c \geq 1$, the expression in Equation \eqref{eq:finite_length_pos_c} is bounded below by the polynomial
$$\frac{1}{2}\abs{\mu}^2 + \left(\frac{1}{2} -c\nu - \abs{\T}\right)\abs{\mu}$$
as wanted.}
 

\end{proof}

\subsection{Length of Verma objects for \texorpdfstring{$c \notin \bQ$}{irrational c}}\label{ssec:length_of_Verma_obj_irrat_c}

\begin{proposition}
For any $c \notin \bQ$, the Verma object $M_{c, \nu}(\T)$ is of finite length.

\end{proposition}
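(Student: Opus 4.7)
The plan is to extend the strategy of the previous proposition to irrational $c$: since the positivity argument used there relied on $c$ being a positive real, I will replace it by an irrationality argument that still bounds the Young diagrams $\mu$ appearing as lowest weights of composition factors of $M_{c,\nu}(\T)$. Set $a := \abs{\mu} - \abs{\T} \in \bZ$, $b' := f(\mu) - f(\T) \in \bZ$, $A := c\nu$ and $B := c$. Proposition \ref{main_for_prop} says that each composition factor $L_{c,\nu}(\mu)$ of $M_{c,\nu}(\T)$ gives a non-negative integer $m$ with $m = Aa - Bb'$, and my goal is to show that this constraint, combined with the grade lower bound of Proposition \ref{prop:least_grade_mu_in_verma_obj}, forces $\mu$ to lie in a finite set.

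First I would perform a case analysis on the $\bQ$-linear dependencies of $\{1, A, B\}$ in $\bC$. If $1, A, B$ are $\bQ$-linearly independent, then the identity $m \cdot 1 - a \cdot A + b' \cdot B = 0$ forces $m = a = b' = 0$, so $\abs{\mu} = \abs{\T}$ and only finitely many $\mu$ are possible. Otherwise, fix a non-trivial relation $p + qA + rB = 0$ with $p, q, r \in \bQ$. Because $B = c \notin \bQ$, the case $q = 0$ would yield $B \in \bQ$, so $q \neq 0$; substituting $A = -(p+rB)/q$ into $m = Aa - Bb'$ and using $B \notin \bQ$ forces $b' = \beta a$ and $m = \gamma a$ for fixed $\beta := -r/q$ and $\gamma := -p/q$ in $\bQ$ (in the sub-case $\gamma = 0$ we get $m = 0$ and hence $\mu = \T$, so again only finitely many $\mu$ occur).

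The main obstacle is the remaining 1D case, where $a$ is a priori unbounded. Here I would combine the forced relation $f(\mu) = \beta \abs{\mu} + C_0$, with $C_0 := f(\T) - \beta \abs{\T}$, with the lower bound $m \geq \sum_{k \geq 1} \mu_k k - l(\mu)\abs{\T} - \frac{3}{2}\abs{\T}^2 - \frac{\abs{\T}}{2}$ from Proposition \ref{prop:least_grade_mu_in_verma_obj}. Writing $n := \abs{\mu}$, the relation yields $ct(\mu) = \beta n + C_0 - \binom{n}{2}$; together with the elementary identity $\sum_{k \geq 1} \mu_k k = \frac{1}{2}\sum_i \mu_i^2 + \frac{n}{2} - ct(\mu)$ and the bounds $\sum_i \mu_i^2 \geq n$ and $l(\mu) \leq n$, I obtain that the right-hand side of the bound in Proposition \ref{prop:least_grade_mu_in_verma_obj} is at least $\frac{n^2}{2} + O(n)$, whereas $m = \gamma(n - \abs{\T}) = O(n)$ is linear in $n$. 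For all sufficiently large $n$ this inequality fails, so $n$ is bounded; hence only finitely many $\mu$ arise and $M_{c,\nu}(\T)$ has finite length.
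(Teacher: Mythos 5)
Your proof is correct and follows essentially the same strategy as the paper: extract from Proposition \ref{main_for_prop} the constraint relating $m$, $\abs{\mu}-\abs{\T}$, and $f(\mu)-f(\T)$; use the irrationality of $c$ to force these quantities to be proportional with fixed rational ratios; and then derive finiteness by comparing the linear growth of $m$ against the quadratic lower bound from Proposition \ref{prop:least_grade_mu_in_verma_obj}. Two small differences are worth noting. First, where the paper argues geometrically (all the lines $\mathcal{L}_{\T,\mu^j,m^j}$ pass through the irrational point $(1/c,\nu)$ and so must coincide, via the lemma in Subsection \ref{ssec:prop_lines_L_tau_mu_m}), you argue directly via $\bQ$-linear dependence of $\{1, c\nu, c\}$; these are the same fact in different language, with the same conclusion that $f(\mu)-f(\T)$ and $m$ are fixed rational multiples of $\abs{\mu}-\abs{\T}$. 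Second, your closing estimate uses $\sum_k \mu_k k = \frac{1}{2}\sum_i \mu_i^2 + \frac{n}{2} - ct(\mu)$ together with $\sum_i \mu_i^2 \geq n$ and $l(\mu)\leq n$, which is somewhat cleaner and more robust than the paper's route: the paper's intermediate inequality $\sum_k \mu\check{}_k k \geq \frac{l(\mu)^2}{2} - \frac{l(\mu)}{2} + \abs{\mu}$ is in fact false as stated (the correct identity replaces $l(\mu)$ by $\mu_1$; the stated bound fails already for a single column of length $\geq 3$), though this does not break the paper's argument since the much weaker $\sum_k \mu\check{}_k k \geq \abs{\mu}$ already gives the needed quadratic growth from the $\abs{\mu}^2/2$ term. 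One thing to make explicit in a final write-up: after obtaining $m=\gamma a$ with $\gamma\neq 0$, note that $m>0$ forces $\gamma$ and $a=\abs{\mu}-\abs{\T}$ to have the same sign, so either $\abs{\mu}<\abs{\T}$ (trivially finitely many $\mu$) or $a>0$ and the quadratic-versus-linear comparison bounds $\abs{\mu}$.
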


\begin{proof}
 Assume that for some $c \notin \bQ, \nu$, a Verma object $M_{c, \nu}(\T)$ has infinite length.

\InnaB{Let $\{L_{c, \nu}(\mu^{j})\}_{j \in J}$ be the composition factors of $M_{c, \nu}(\T)$.

Since $L_{c, \nu}(\mu^{j})$} is a composition factor of $M_{c, \nu}(\T)$, there exists a positive integer $m^j$ such that $h_{c, \nu}(\mu^j) =h_{c,\nu}(\T)\InnaA{+m^j}$, and $X_{\mu^j} \subset S^{m^j} \hhh \otimes \T$. Similarly to the proof of Proposition \ref{main_for_prop}, for any $j$ we have: 
$$ (\InnaE{c'}, \nu) \in \mathcal{L}_{\T, \mu^j, m^j}$$

 Since $ (\InnaE{c'}, \nu) \in \mathcal{L}_{\T, \mu^j, m^j}$ for any $j$, and $c \notin \bQ$, Subsection \ref{ssec:prop_lines_L_tau_mu_m} implies that for all $j$ the lines $\mathcal{L}_{\T, \mu^j, m^j}$ coincide; that is, the equations $$\frac{1}{c} m^j + \nu(\abs{\T} -\abs{\mu^j}) = \frac{\abs{\T}^2-\abs{\mu^j}^2 -(\abs{\T} - \abs{\mu^j} ) }{2} +ct(\T) - ct(\mu^j) $$ define the same line for all $j$.

This means that there exist constants $C, C' \in \bQ$ such that for any $j$,
\begin{align*}
& \frac{1}{\abs{\mu^j} -\abs{\T}} \left(\frac{\abs{\T}^2-\abs{\mu^j}^2 -(\abs{\T} - \abs{\mu^j} ) }{2} +ct(\T) - ct(\mu^j)\right) =C'\\ 
& \text{ and } \frac{\abs{\mu^j} -\abs{\T}}{m^j} =C 
\end{align*}

Since $X_{\mu^j} \subset S^{m^j} \hhh \otimes \T$, Pieri's rule (Proposition \ref{Pieri}) implies that $\abs{C} \leq 1$, and Proposition \ref{prop:least_grade_mu_in_verma_obj} implies that for any $j$,
$$m^j \geq \sum_{k \geq 1} (\mu^j)_k  k - \ell(\mu^j)\abs{\T} - \frac{\InnaE{\abs{\T}^2 }}{2} -\frac{\abs{\T}}{2}$$

So there exist constants $C, C'$ such that for any $j$, the following conditions hold for $\mu:= \mu^j, m: =m^j$:
\begin{cond}\label{cond:irr_c_fin_length}
\begin{align}
&\frac{1}{\abs{\mu} -\abs{\T}} \left(\frac{\abs{\mu}^2 - \abs{\mu} }{2} +ct(\mu) - f(\T)\right) =C'\\ 
&\frac{\abs{\mu} -\abs{\T}}{m} =C \\
&m^j \geq \sum_{k \geq 1} (\mu)_k  k - \ell(\mu)\abs{\T} - \frac{\InnaE{\abs{\T}^2 }}{2} -\frac{\abs{\T}}{2}
\end{align}
\end{cond}

with $f(\T) = \frac{\abs{\T}^2 - \abs{\T} }{2} +ct(\T)$ as in Subsection \ref{ssec:rmrks_main_for}. 

We will show that for \InnaE{fixed} $C, C'$ as above, Condition \ref{cond:irr_c_fin_length} can only hold for a finite number of Young diagrams $\mu$. 

First, notice that we can assume that $C>0$ (otherwise $\mu \leq \abs{\T}$ and we are done). 

\InnaE{Combining} the second and third \InnaE{requirements} of Condition \ref{cond:irr_c_fin_length} give: 
$$C^{-1}(\abs{\mu} -\abs{\T}) \geq \InnaE{n(\mu)} - \ell(\mu)\abs{\T} - \frac{\InnaE{\abs{\T}^2 }}{2} -\frac{\abs{\T}}{2}$$
i.e.
\begin{equation}\label{eq:irrat_c_eq1}
\InnaE{n(\mu)}  \leq \frac{\InnaE{\abs{\T}^2 }}{2} +\frac{\abs{\T}}{2} - C^{-1}\abs{\T}+ \ell(\mu)\abs{\T} + C^{-1}\abs{\mu} 
\end{equation}

The first \InnaE{requirement} of Condition \ref{cond:irr_c_fin_length} gives:
\begin{equation}\label{eq:irrat_c_eq2}
 \frac{\abs{\mu}^2 - \abs{\mu} }{2} +ct(\mu) -C'\abs{\mu} = f(\T)-C'\abs{\T}
\end{equation}
 
Now, $ct(\mu) = \InnaE{n(\mu\check{})} - \InnaE{n(\mu)}$, so \InnaE{combining Inequality \eqref{eq:irrat_c_eq1} and Equation \eqref{eq:irrat_c_eq2} gives}
\begin{align*}
 &\frac{\abs{\mu}^2 - \abs{\mu} }{2}  -C'\abs{\mu} = f(\T)-C'\abs{\T} - ct(\mu) = f(\T)-C'\abs{\T} +\InnaE{n(\mu)} -  \InnaE{n(\mu\check{})} \leq \\
& \leq f(\T)-C'\abs{\T} +\frac{\InnaE{\abs{\T}^2 }}{2} +\frac{\abs{\T}}{2} - C^{-1}\abs{\T}+ \ell(\mu)\abs{\T} + C^{-1}\abs{\mu} - \InnaE{n(\mu\check{})}
\end{align*}
 
\InnaE{The last inequality means that for any Young diagram $\mu$ satisfying Condition \ref{cond:irr_c_fin_length}, the expression
\begin{equation}\label{eq:irrat_c_eq3}
  \frac{\abs{\mu}^2 }{2} -\left(\frac{1 }{2}+C'+C^{-1} \right)\abs{\mu} -l(\mu)\abs{\T} + \InnaE{n(\mu\check{})} 
\end{equation}
 is bounded by a function of $\T$ (which is fixed).}
 
\InnaE{We} now consider the expression $\InnaE{n(\mu\check{})}$. We can rewrite it as $$\InnaE{n(\mu\check{})} = \sum_{1 \leq k \leq \ell(\mu)} (\mu\check{}_k -1) (k-1) +\frac{l(\mu)^2}{2} -\frac{l(\mu)}{2} +\abs{\mu}$$

The summand $\sum_{1 \leq k \leq \ell(\mu)} (\mu\check{}_k -1) (k-1)$ is clearly non-negative, so $$\InnaE{n(\mu\check{})}  \geq \frac{l(\mu)^2}{2} -\frac{l(\mu)}{2} +\abs{\mu}$$


 Applying the inequality $\InnaE{n(\mu\check{})}  \geq \frac{l(\mu)^2}{2} -\frac{l(\mu)}{2} +\abs{\mu}$, to Inequality \ref{eq:irrat_c_eq3}, we  see that \InnaE{for any Young diagram $\mu$ satisfying Condition \ref{cond:irr_c_fin_length}, the expression 
\begin{align*}
 \frac{\abs{\mu}^2 }{2} +\left(\frac{1 }{2}-C'-C^{-1} \right)\abs{\mu}+ \frac{l(\mu)^2}{2} -\left(\frac{1}{2} +\abs{\T}\right)l(\mu) 
\end{align*}
is bounded by a function of $\T$. This clearly means that in such a case, $\abs{\mu}$ is bounded (by a function of $\T$); we conclude that $M_{c, \nu}(\T)$ has finitely many composition factors.}

\end{proof}

\end{document}